\newcommand{\thickhline}{%
    \noalign {\ifnum 0=`}\fi \hrule height 1pt
    \futurelet \reserved@a \@xhline
}
\definecolor{darkblue}{rgb}{0.0,0,0.7} 
\definecolor{darkred}{rgb}{0.7,0,0} 
\numberwithin{equation}{section}
\newcommand{\Z}{\mathbb{Z}}
\newcommand{\R}{\mathbb{R}}
\newcommand{\llangle}{\langle\langle}
\newcommand{\rrangle}{\rangle\rangle}
\newcommand{\N}{\mathbb{N}}
\newcommand{\C}{\mathbb{C}}
\newcommand{\B}{\mathcal{B}}
\newcommand{\im}{\text{Im}}
\newcommand{\M}{\mathcal{M}}
\newcommand{\D}{\mathcal{D}}
\newcommand{\into}{\hookrightarrow}
\newcommand{\cube}[3]{\theta(\theta(#1,#2),\theta(#1,#3))}
\newcommand{\xto}{\xrightarrow}
\newcommand{\col}[6]{\left\{\!\!\!\left\{\begin{pmatrix}#1\\#2\end{pmatrix},\begin{pmatrix}#3\\#4\end{pmatrix},\begin{pmatrix}#5\\#6\end{pmatrix}\right\}\!\!\!\right\}}
\theoremstyle{plain}
\newtheorem{theorem}{Theorem}[section]
\newtheorem{lemma}[theorem]{Lemma}
\newtheorem{proposition}[theorem]{Proposition}
\newtheorem{corollary}[theorem]{Corollary}
\theoremstyle{definition}
\newtheorem{definition}[theorem]{Definition}
\newtheorem{example}[theorem]{Example}
\newtheorem{remark}[theorem]{Remark}
\newtheorem{notation}[theorem]{Notation}
\title[Braid groups of $J$-reflection groups and associated Garside structures]{Braid groups of $J$-reflection groups and associated classical and dual Garside structures}
\author{Igor Haladjian}
\address{Institut Denis Poisson, CNRS UMR 7019, Faculté des Sciences et Techniques, Université de Tours, Parc de Grandmont, 
37200 TOURS, France}
\DeclareRobustCommand{\SkipTocEntry}[5]{}
\begin{document}

%\tableofcontents
\thispagestyle{empty}

\begin{abstract}
The family of $J$-reflection groups can be seen as a combinatorial generalisation of irreducible rank two complex reflection groups and was introduced by the author in a previous article. In this article, we define the braid groups associated to $J$-reflection groups, which coincide with the complex braid group when the $J$-reflection group is finite. We show that the isomorphism types of the braid groups only depend on the reflection isomorphism types of the corresponding $J$-reflection groups. Moreover, we show that these braid groups are always abstractly isomorphic to circular groups. At the same time, we show that the center of these braid groups is cyclic and sent onto the center of the corresponding $J$-reflection groups under the natural quotient. Finally, we exhibit two Garside structures for each braid group of $J$-reflection group. These structures generalise the classical and dual Garside structures (when defined) of rank two irreducible complex reflection groups. In particular, the dual Garside structure of $J$-reflection groups provides candidates for dual monoids associated to all irreducible complex reflection groups of rank two, which coincide with their dual monoids in the sense of Bessis and Gobet when it is defined.
\end{abstract}
\maketitle

\tableofcontents
\section{Introduction}
Coxeter groups are groups admitting a presentation of the form 
\begin{equation}\label{CoxDef}
\left\langle\begin{array}{c|c} S\, &\,(st)^{\mathrm{m}_{s,t}}=1, \forall s,t\in S
\end{array}\right\rangle
\end{equation}
with $\mathrm{m}_{s,t}\in \N^*\cup\{\infty\}$, such that  $\mathrm{m}_{s,s}=1$ and $\mathrm{m}_{s,t}=\mathrm{m}_{t,s}\geq 2$ for all $s\neq t\in S$, with the convention that $\mathrm{m}_{s,t}=\infty$ means that there are no relations between $s$ and $t$. For such a presentation $P$, the associated Artin group is the group with presentation
\begin{equation}\label{ArtinDef}
\left\langle\begin{array}{c|c} S\, &\,\underbrace {sts\cdots}_{m_{s,t}\, \text{factors}}=\underbrace{tst\cdots}_{m_{s,t} \, \text{factors}}, \forall s\neq t\in S, m_{s,t}\neq \infty
\end{array}\right\rangle.
\end{equation}
Coxeter groups are linked to finite real reflection groups via the following theorem of Coxeter: 
\begin{theorem}[\cite{CoxeterClassification2} and \cite{CoxeterClassification}]
A group is a finite Coxeter group if and only if it is a finite real reflection group.
\end{theorem}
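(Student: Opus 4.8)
The statement is an equivalence, so the plan is to prove the two implications separately. Starting with a finite group $W$ presented as in \eqref{CoxDef}, with generating set $S$ and exponents $\mathrm{m}_{s,t}$ (all finite, since $W$ is finite), I would first build the geometric (Tits) representation. Let $V=\bigoplus_{s\in S}\R\alpha_s$ and define the symmetric bilinear form $B$ by $B(\alpha_s,\alpha_t)=-\cos(\pi/\mathrm{m}_{s,t})$ and $B(\alpha_s,\alpha_s)=1$. For each $s\in S$ set $\sigma_s(v)=v-2B(\alpha_s,v)\alpha_s$. A direct computation shows that each $\sigma_s$ is an involution preserving $B$ and that $\sigma_s\sigma_t$ restricts to a rotation of order $\mathrm{m}_{s,t}$ on the plane $\R\alpha_s\oplus\R\alpha_t$; hence the $\sigma_s$ satisfy the relations of \eqref{CoxDef} and assemble into a homomorphism $\rho\colon W\to\mathrm{GL}(V)$.

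The two things to establish are then that $\rho$ is faithful and that its image is an honest Euclidean reflection group. Faithfulness is Tits's theorem: I would prove it by analysing the set of roots $\{w\alpha_s\}$ and the associated cone, showing that distinct elements of $W$ send the $\alpha_s$ to genuinely distinct sign patterns of combinations of the $\alpha_s$, so that a nontrivial reduced word acts nontrivially. Since $W$ is \emph{finite}, I would next show that $B$ is positive definite: averaging an arbitrary inner product over the finite image $\rho(W)$ produces a $\rho(W)$-invariant positive definite form, and comparing it with $B$ on each dihedral plane forces $B$ itself to be definite. With $B$ positive definite, $(V,B)$ is a Euclidean space and each $\sigma_s$ is a genuine orthogonal reflection, so $\rho(W)$ is a finite real reflection group isomorphic to $W$.

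\textbf{From reflection groups to Coxeter groups.} Conversely, let $W\subset O(V)$ be a finite real reflection group and let $\mathcal{A}$ be its arrangement of reflecting hyperplanes. The connected components of $V\setminus\bigcup\mathcal{A}$, the \emph{chambers}, are permuted simply transitively by $W$; I would fix a fundamental chamber $C$ and take $S$ to be the reflections in the walls of $C$. A gallery argument, crossing one wall at a time to pass from $C$ to any other chamber, together with simple transitivity, shows that $S$ generates $W$. Setting $\mathrm{m}_{s,t}$ to be the order of $st$, which encodes the dihedral angle between the corresponding walls, the Coxeter relations manifestly hold in $W$.

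The crux, and the step I expect to be the main obstacle, is showing that these relations are \emph{all} the relations, i.e. that the abstract Coxeter group on the data $(S,\mathrm{m})$ surjects isomorphically onto $W$. The clean approach is a filling argument: a relation in $W$ corresponds to a closed gallery based at $C$, and every such loop should be expressible, within the chamber complex, as a concatenation of the elementary loops around codimension-two faces, where exactly two walls meet and the local stabiliser is a dihedral group of order $2\mathrm{m}_{s,t}$. Since those elementary loops are precisely the Coxeter relations, the presentation follows; equivalently, one may establish the exchange and deletion conditions and solve the word problem directly. Assembling the two implications then yields the claimed equivalence.
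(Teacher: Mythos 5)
The paper itself contains no proof of this statement: it is quoted as classical background, with the citations pointing to Coxeter's 1934 and 1935 papers. So there is no internal argument to compare yours against; the relevant comparison is with the literature. Your route is the standard modern, classification-free proof (Tits' geometric representation with the form $B(\alpha_s,\alpha_t)=-\cos(\pi/\mathrm{m}_{s,t})$ for one implication; chambers, galleries and the filling argument around codimension-two faces for the other, as in Bourbaki or Humphreys), whereas Coxeter's cited proof is essentially a matching of two explicit classifications -- the discrete groups generated by reflections on one side and the finite groups with presentation \eqref{CoxDef} on the other. Your approach buys uniformity and avoids case analysis; Coxeter's buys the explicit list of types, which is what later parts of the theory (and this paper's context) actually use. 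Structurally, your two implications and the key lemmas (faithfulness of $\rho$, positive definiteness of $B$ for finite $W$, simple transitivity on chambers, completeness of the Coxeter relations via elementary loops) are exactly the right skeleton, and the converse direction is fine as sketched.

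One step as stated would not go through: ``comparing it with $B$ on each dihedral plane forces $B$ itself to be definite.'' Positive definiteness of all the rank-two restrictions of $B$ to the planes $\R\alpha_s\oplus\R\alpha_t$ does \emph{not} imply definiteness of $B$: whenever all $\mathrm{m}_{s,t}$ are finite these restrictions are automatically positive definite (the Gram determinant is $1-\cos^2(\pi/\mathrm{m}_{s,t})>0$), yet the affine forms, e.g.\ type $\tilde{A}_2$ with all $\mathrm{m}_{s,t}=3$, are only positive semidefinite. Moreover, two invariant forms restricted to a plane that is not $W$-invariant need not be proportional there, so the plane-by-plane comparison has no purchase. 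The correct use of your averaged invariant inner product is global: since $\rho(W)$ is finite, $V$ is completely reducible; on each irreducible summand Schur's lemma makes any two invariant symmetric bilinear forms proportional, and $B(\alpha_s,\alpha_s)=1>0$ pins the sign (reducing first to connected Coxeter graphs so that each summand contains some $\alpha_s$). Alternatively, one can note that the radical of $B$ is $\rho(W)$-stable and rule it out using the root system from your faithfulness step. With that repair, the proposal is a correct proof along the standard lines.
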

The most famous example among finite Coxeter groups
is the symmetric group $S_{n}$, which can be realized as a reflection group over $\R$ and admits an associated braid group, the so-called Artin braid group $B_n$. This situation is generalised in the following manner: for any finite real reflection group $W\subset \mathrm{GL}(V)$ with $V$ a finite-dimensional $\R$-vector space, one defines its braid group $\B(W)$ as $\pi_1(V^{\mathrm{reg}}/W)$, where $V^\mathrm{reg}$ is the complement in $\C\otimes_\R V$ of the union of the hyperplanes fixed by at least one reflection of $W$. It turns out that given a finite real reflection group with Presentation \eqref{CoxDef}, the group $\B(W)$ is isomorphic to the associated Artin group with Presentation \eqref{ArtinDef} (see \cite{BrieskornF}). 

The braid group of finite complex reflection groups is defined in the same way as in the real case. In \cite{BMR}, Broué, Malle and Rouquier give presentations of all complex reflection groups, which we refer to as the BMR presentations. In almost all cases, the BMR presentations turns out to have a similar link to the complex braid group as Coxeter presentations of real reflection groups have to their associated Artin groups:

\begin{theorem}[\cite{Bannai},\cite{BessisKP},\cite{BessisMichel},\cite{BrieskornF},\cite{BMR} and \cite{Garnier31}]\label{BraidTorsion}
Let $W$ be a complex reflection group different from $G_{24},G_{27},G_{33}$ or $G_{34}$. The group presentation obtained by removing the torsion relations from its BMR presentation is a group presentation for $\B_\C(W)$, and the obtained generators in $\B_\C(W)$ are braid reflections. Moreover, the center of $\B_\C(W)$ is cyclic and is sent onto the center of $W$ under the natural quotient $\B_\C(W)\to W$. 
\end{theorem}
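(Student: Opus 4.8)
The plan is to reduce first to the case where $W$ acts irreducibly on $V$, and then to establish the two assertions in turn. A general complex reflection group splits as a direct product $W = W_1 \times \cdots \times W_r$ of irreducible ones acting on mutually complementary subspaces, so that $V^{\mathrm{reg}}$ is a product, $\B(W) \cong \B(W_1) \times \cdots \times \B(W_r)$, and the BMR presentation is the juxtaposition of the individual presentations together with commutations between generators attached to distinct factors; since all three statements respect this splitting, I may assume $W$ irreducible (the center assertion being properly a statement about irreducible $W$, since $Z(W)$ need not be cyclic otherwise). Throughout I use the Shephard--Todd--Chevalley theorem to identify $V/W$ with $\C^n$ and $V^{\mathrm{reg}}/W$ with the complement of the discriminant hypersurface $\mathcal{H} \subset \C^n$, so that $\B(W) = \pi_1(\C^n \setminus \mathcal{H})$.

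For the presentation I would proceed by the Zariski--van Kampen method. The meridians of the smooth components of $\mathcal{H}$ generate $\pi_1(\C^n \setminus \mathcal{H})$, and each such meridian---a braid reflection---maps to a distinguished reflection of $W$ under the covering quotient $\B(W) \to W$ coming from the Galois cover $V^{\mathrm{reg}} \to V^{\mathrm{reg}}/W$; this produces the generating set of the BMR presentation. A complete set of relations is then read off from a generic pencil of hyperplanes: restricting to a generic line exhibits the group as a quotient of the free group on the meridians by the relations induced by the braid monodromy around the special members of the pencil, each of which is controlled by a rank-two local sub-arrangement. For the infinite series $G(de,e,n)$ I would instead use the Fadell--Neuwirth-type fibration of the discriminant complement obtained by forgetting one coordinate, running an induction on $n$ through the resulting homotopy exact sequence; this yields the series uniformly. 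The finitely many exceptional groups are dispatched by explicit, computer-assisted monodromy computations. In each case one verifies that the relations obtained are precisely the braid relations of the BMR presentation, i.e. that presentation with the torsion relations $\mathbf{s}^{e_s}=1$ deleted.

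For the center, I would first produce the expected generator. Since $W$ is irreducible, Schur's lemma forces $Z(W)$ to consist of the scalar matrices it contains, say $Z(W)=\mu_c$, and for a basepoint $x_0 \in V^{\mathrm{reg}}$ the path $t \mapsto \exp(2\pi i t/c)\,x_0$ projects to a loop $\beta$ in $V^{\mathrm{reg}}/W$ because $\exp(2\pi i/c)x_0$ lies in the $W$-orbit of $x_0$. Writing $\tilde{\gamma}$ for the lift of an arbitrary loop $\gamma$ at $[x_0]$, the rotation homotopy $(s,t) \mapsto [\exp(2\pi i s/c)\,\tilde{\gamma}(t)]$, whose two sides at $s=0,1$ both equal $\gamma$ while its two sides at $t=0,1$ both trace $\beta$, witnesses $\beta\gamma = \gamma\beta$; hence $\beta$ is central, and by construction it maps to a generator of $Z(W)=\mu_c$, which already gives surjectivity onto $Z(W)$. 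To see that the center is exactly $\langle \beta \rangle$, hence infinite cyclic, I would use the $\C^\ast$-bundle $V^{\mathrm{reg}}/W \to \pr(V^{\mathrm{reg}})/W$ coming from scalar multiplication and its homotopy exact sequence, which identifies $\langle\beta\rangle$ with the image of the fiber's $\pi_1$; the point is then to rule out any central element surviving nontrivially in $\pi_1$ of the projectivized complement. Equivalently, once either Garside structure on $\B(W)$ is in hand, the center of a Garside group is the cyclic subgroup generated by the least central power of the Garside element, and one identifies this generator with $\beta$.

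The main obstacle is the \emph{completeness of the presentation}: showing that the braid relations exhaust the relations among the meridians is not formal and, in the above outline, is done type-by-type through delicate monodromy bookkeeping. The conceptual device that removes the case analysis is the (difficult) theorem that $V^{\mathrm{reg}}/W$ is a $K(\pi,1)$ space, which guarantees that a presentation extracted from a generic pencil is already complete; establishing this property is where the genuine depth lies. The second delicate point is the \emph{upper bound on the center}---proving there is nothing beyond $\langle\beta\rangle$---which requires either Springer's theory of regular elements to pin down the relevant centralizers or the Garside-theoretic computation of the center: producing the central element is easy, whereas excluding all others is the real work.
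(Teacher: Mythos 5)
The first thing to say is that the paper contains no proof of this statement: Theorem \ref{BraidTorsion} is imported wholesale from the cited literature (Bannai for rank two, BMR and Bessis--Michel for the presentations, Bessis and Garnier for the center), and the only editorial content is the remark that the argument is uniform for real reflection groups and case-by-case otherwise. Measured against that literature, your outline is a faithful reconstruction of how those references actually proceed: meridian generators and the identification of braid reflections via the Galois cover (BMR \S 2.B), fibration induction for the series $G(de,e,n)$, computer-assisted monodromy for the exceptional types, the central loop $t\mapsto e^{2\pi i t/c}x_0$ with the rotation-square homotopy proving centrality and surjectivity onto $Z(W)$, and deferral of the two hard steps to deep inputs. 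Your observation that the center assertion implicitly requires $W$ irreducible (otherwise $Z(\B(W))\cong\Z^r$ is not cyclic) is a correct sharpening of the statement as printed.

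Two claims in your sketch are inaccurate as stated, though both are repairable by leaning harder on the citations. First, completeness of the generic-pencil presentation of $\pi_1(\C^n\setminus\mathcal H)$ is the content of the Zariski--van Kampen theorem itself and does not require the $K(\pi,1)$ property, which concerns higher homotopy; the genuinely hard, non-formal step is matching the braid-monodromy relations with the BMR relations, and in the literature this is done type by type, exactly as the paper's remark records. Second, the assertion that the center of a Garside group is the cyclic group generated by the least central power of the Garside element is false in general: a direct product of two Garside monoids is again Garside, and the associated group has center containing $\Z^2$. So the Garside shortcut for the upper bound $Z(\B(W))=\langle\beta\rangle$ does not come for free; that half of the theorem is the BMR center conjecture, whose proof runs through Springer's theory of regular elements and centralizers, completed only recently in the cases covered by \cite{Garnier31} --- which is precisely why that reference is attached to the statement. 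Since your proposal explicitly defers to those results at exactly these points, it should be read as an accurate roadmap of the cited proofs rather than an independent argument, which is the same status the theorem has in this paper.
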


\begin{remark}\label{RemarkBraidTorsionIntro}
It was shown in \cite{BessisKP} that the same result as Theorem \ref{BraidTorsion} holds for the complex braid groups of $G_{24},G_{27},G_{33}$ and $G_{34}$ with the group presentations given in \cite{BessisMichel}.
\end{remark}

\begin{remark}
The proof of Theorem \ref{BraidTorsion} and Remark \ref{RemarkBraidTorsionIntro} is uniform for real reflection groups, whereas it is a case by case verification for the complex reflection groups that cannot be realised as real reflection groups.
\end{remark}

In \cite{AA}, Achar and Aubert introduced a family of groups called $J$-groups which generalizes the rank two complex reflection groups. The $J$-groups are defined by generators and relations:
\begin{definition}[{\cite[Introduction]{AA}}]
Let $k,n,m\in \N_{\geq 2}$. The group $J(k,n,m)$ is defined by the following presentation:
\begin{equation*}
\left\langle\begin{array}{c|c}
s,t,u\, &\, s^k=t^n=u^m=1; \, stu=tus=ust
\end{array}\right\rangle.
\end{equation*}
More generally, given pairwise coprime integers $k',n',m'\in \N^*$ such that $x'$ divides $x$ for each $x\in \{k,n,m\}$, the group $J\begin{pmatrix}k & n & m\\ k' & n' & m'\end{pmatrix}$ is defined as the normal closure $\llangle s^{k'},t^{n'},u^{m'}\rrangle_{J(k,n,m)}$ of $\{s^{k'},t^{n'},u^{m'}\}$ in $J(k,n,m)$.
\end{definition}
In \cite{AA}, Achar and Aubert completely classify the finite $J$-groups, which turn out to be the rank two complex reflection groups (see Theorem \ref{FiniteJGroup} below). In \cite{Gobet Toric}, Gobet studies a family of (non-necessarily finite) $J$-groups which coincides with the family of groups that he calls toric reflection groups (\cite[Theorem 2.12]{Gobet Toric}). He also defines the notion of reflections for $J$-groups (see \cite[Definition 2.2]{Gobet Toric}) and classifies toric reflection groups up to reflection isomorphisms. Finally, he introduces (see Definition \ref{BraidToric}) an associated braid group for toric reflection groups in the same spirit as Theorem \ref{BraidTorsion}, and proves that the image of its (cyclic) center under the natural quotient is the center of the toric reflection group.\\

In \cite{VCRG}, the author defined the family of $J$-reflection groups as the $J$-groups of the form $W_b^c(k,bn,cm):=J\begin{pmatrix} k & bn & cm \\ 1 & n & m\end{pmatrix}$. The family of $J$-reflection groups includes all rank two complex reflection groups, as well as all toric reflection groups (the latter corresponding to the $J$-reflection groups with parameters $b$ and $c$ equal to 1). A presentation where all generators are reflections is given for all $J$-reflection groups (see \cite[Theorem 2.29]{VCRG}), which coincides with the BMR presentation when the groups are finite (see \cite[Remark 2.44]{VCRG}). Moreover, the center of $J$-reflection groups is shown to be cyclic (see \cite[Theorem 2.52]{VCRG}) and the $J$-reflection groups are classified up to reflection isomorphisms (see \cite[Theorem 3.11] {VCRG}).\\

In the first part of this article we define braid groups associated to $J$-reflection groups by removing the torsion relations of the presentations obtained in \cite[Theorem 2.29]{VCRG}. For the convenience of the reader, we write the presentation here:
\begin{definition}\label{DefBraidIntro}
Let $n,m\in \N^*$ be two coprime integers and let $m=qn+r$ with $0\leq q$ and $0\leq r\leq n-1$. Let $\B_*^*(n,m)$ be the group defined by the following presentation:
\begin{equation*}
\begin{aligned}
&(1) \,\, \mathrm{Generators}\!:\,  \{x_1,\dots,x_n,y,z\};\\
&(2) \,\, \mathrm{Relations}\!: \,\\
  &  x_1\cdots x_nyz=zx_1\cdots x_ny,\\
       & x_{i+1}\cdots x_nyz\delta^{q-1}x_1\cdots x_{i+r}=x_i\cdots x_nyz\delta^{q-1}x_1\cdots x_{i+r-1}, \, \forall 1\leq i \leq n-r,\\
      &  x_{i+1}\cdots x_nyz\delta^qx_1\cdots x_{i+r-n}=x_i\cdots x_nyz\delta^qx_1\cdots x_{i+r-n-1},\, \forall n-r+1\leq i \leq n,
\end{aligned}
\end{equation*}
where $\delta$ denotes $x_1\cdots x_ny$.\\
For $k,b,c\in \N_{\geq 2}$ we define this group to be \textbf{the braid group associated to} $W_b^c(k,bn,cm)$ and when useful, we also refer to it as $\mathcal B(W_b^c(k,bn,cm))$.\\
If $m\geq 2$, the group $\B_*(n,m)$ is defined as $\B_*^*(n,m)/\llangle z\rrangle$.
%which gives the following presentation:
%\begin{equation*}
%\begin{aligned}
%&(1) \,\, \mathrm{Generators}\!:\,  \{x_1,\dots,x_n,y\};\\
%&(2) \,\, \mathrm{Relations}\!: \,\\
%       & x_{i+1}\cdots x_ny\delta^{q-1}x_1\cdots x_{i+r}=x_i\cdots x_ny\delta^{q-1}x_1\cdots x_{i+r-1}, \, \forall 1\leq i \leq n-r,\\
 %     &  x_{i+1}\cdots x_ny\delta^qx_1\cdots x_{i+r-n}=x_i\cdots x_ny\delta^qx_1\cdots x_{i+r-n-1},\, \forall n-r+1\leq i \leq n.
%\end{aligned}
%\end{equation*}
%where $\delta$ denotes $x_1\cdots x_ny$.\\
For $k,b\geq 2$, we define this group to be \textbf{the braid group associated to} $W_b^1(k,bn,m)$ and when useful, we also refer to it as $\B (W_b^1(k,bn,m))$.\\
If $n\geq 2$, the group $\B^*(n,m)$ is defined as $\B^*_*(n,m)/\llangle y \rrangle$.
%which gives the following presentation:
%\begin{equation*}
%\begin{aligned}
%&(1) \,\, \mathrm{Generators}\!:\,  \{x_1,\dots,x_n,z\};\\
%&(2) \,\, \mathrm{Relations}\!: \,\\
%  &  x_1\cdots x_nz=zx_1\cdots x_n,\\
%       & x_{i+1}\cdots x_nz\delta^{q-1}x_1\cdots x_{i+r}=x_i\cdots x_nz\delta^{q-1}x_1\cdots x_{i+r-1}, \, \forall 1\leq i \leq n-r,\\
 %     &  x_{i+1}\cdots x_nz\delta^qx_1\cdots x_{i+r-n}=x_i\cdots x_nz\delta^qx_1\cdots x_{i+r-n-1},\, \forall n-r+1\leq i \leq n.
%\end{aligned}
%\end{equation*}
%where $\delta$ denotes $x_1\cdots x_n$.\\
For $k,c\geq 2$, we define this group to be \textbf{the braid group associated to} $W^c_1(k,n,cm)$ and when useful, we also refer to it as $\B (W^c_1(k,n,cm))$.\\
For $n,m$ two coprime integers larger than or equal to $2$, the group $\B(n,m)$ is defined as $\B_*^*(n,m)/\llangle y,z\rrangle$. For $k,n,m\geq 2$, the group $\B(n,m)$ is defined to be \textbf{the braid group associated to} $W_1^1(k,n,m)$. Note that $\B(n,m)$ is already defined as $\B(W_1^1(k,n,m))$ in \cite[Introduction]{Gobet Toric}.
\end{definition}
\begin{remark}
Combining \cite[Remark 2.44]{VCRG} with Theorem \ref{BraidTorsion}, we immediately get that whenever $W_b^c(k,bn,cm)$ is finite, the given presentation of its associated braid group as a $J$-reflection group coincides with the BMR presentation of its complex braid group. Therefore, the generators of the aforementioned presentation generalize braid reflections in complex braid groups, and we shall refer to them as such.
\end{remark}

We show that if two $J$-reflection groups are isomorphic in reflection, their braid groups are isomorphic. Moreover, we explicitly compute their (abstract) isomorphism type (which depends on the number of conjugacy classes of reflecting hyperplanes, see Definition \ref{DefReflections}) and identify their center:
\begin{theorem}[Theorems \ref{BraidG33} and \ref{braid2nm}]\label{ThmBraidIntro1}
Let $W$ be a $J$-reflection group.\\
(i) If $W$ has one conjugacy class of reflecting hyperplanes, its braid group is isomorphic to a torus knot group.\\
(ii) If $W$ has two conjugacy classes of reflecting hyperplanes, its braid group is isomorphic to a dihedral Artin group.\\
(iii) If $W$ has three conjugacy classes of reflecting hyperplanes, its braid group is isomorphic to the circular group $G(3,3)$.\\
In case (i), the generators of the braid group correspond to meridians in the torus knot groups. In cases (ii) and (iii), even though the groups are isomorphic to torus link groups, the generators of the braid groups do not correspond to meridians.
\end{theorem}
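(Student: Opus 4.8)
The proof naturally splits according to the number of conjugacy classes of reflecting hyperplanes, which by the classification of $J$-reflection groups in \cite{VCRG} is exactly what determines whether the associated braid group is $\B(n,m)$ (one class), one of $\B_*(n,m)$ or $\B^*(n,m)$ (two classes), or $\B_*^*(n,m)$ (three classes). Since reflection-isomorphic $J$-reflection groups have isomorphic braid groups, it suffices to treat these four presentations, the coprime pair $(n,m)$ being the only remaining data. The plan is to attack the three-class group $\B_*^*(n,m)$ first (this is Theorem \ref{BraidG33}), since $\B_*(n,m)$, $\B^*(n,m)$ and $\B(n,m)$ are its quotients by $\llangle z\rrangle$, $\llangle y\rrangle$ and $\llangle y,z\rrangle$ respectively, and then to read off cases (i) and (ii) (Theorem \ref{braid2nm}) by specialising.

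For the three-class case I would work entirely with the presentation of Definition \ref{DefBraidIntro} and reduce it by Tietze transformations. The key observation is that, writing $m=qn+r$, one has $\gcd(n,r)=\gcd(n,m)=1$, so both families of relations indexed by $i$ can be rewritten in the conjugacy form $x_i = A_i\,x_{i+r}\,A_i^{-1}$ (indices mod $n$), where $A_i$ is an explicit word in $y$, $z$, $\delta$ and the remaining $x_j$. Because the shift $j\mapsto j+r \pmod{n}$ is a single $n$-cycle, iterating these relations expresses every $x_j$ in terms of $x_1$, $y$ and $z$, and I would use $n-1$ of them to eliminate $x_2,\dots,x_n$. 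A deficiency count ($\B_*^*(n,m)$ has $n+2$ generators and $n+1$ relations, against $3$ generators and $2$ relations for the circular group) shows that exactly two relations should survive, and the goal is to prove that, after the eliminations, relation (1) together with the one unused shift relation become precisely the circular relations $stu=tus=ust$ defining $G(3,3)$, for a suitable triple $\{s,t,u\}$ built from $x_1,y,z$ (with $\delta$ playing the role of the central element). I expect this last simplification to be the main obstacle: it is where the parameter-independence of $G(3,3)$ is genuinely forced, and it requires careful bookkeeping of the two ranges of $i$ and of the powers $\delta^{q-1},\delta^q$ rather than any conceptual input.

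Cases (i) and (ii) would then follow by specialising the reduced presentation. Setting $y=z=1$ collapses $\B(n,m)$ to the group on $x_1,\dots,x_n$ carrying only the cyclic conjugacy relations; this is the braid group of a toric reflection group, which by \cite{Gobet Toric} is the $(n,m)$-torus knot group $\langle a,b\mid a^n=b^m\rangle$, and the identification sends each $x_i$ to a meridian. For two classes I would kill a single generator ($z$ for $\B_*(n,m)$, $y$ for $\B^*(n,m)$); the surviving $x$-relations again reduce all the $x_i$ to one generator, leaving a two-generator one-relator group that I would recognise as a dihedral Artin group $A(I_2(p))$ with $p$ read off explicitly from $n$ and $m$. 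Here I expect $p$ to be even, consistent with the assertion that this is the group of the two-component torus link $T(2,p)$.

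Finally, for the statements about meridians: in case (i) the identification with the torus knot group is geometric and the $x_i$ are visibly meridians. In cases (ii) and (iii) the isomorphism to a torus link group is purely abstract, and I would prove that the braid reflections are \emph{not} meridians by a peripheral invariant, since abelianisation alone does not obstruct: meridians of distinct components, together with their longitudes, span prescribed peripheral $\Z^2$'s and have controlled images both in $H_1$ and in the central (Seifert-fibre) direction, and one checks that the braid reflections $x_1,y,z$ (respectively the two dihedral generators) fail these conditions. This negative part is not the main difficulty, but it does require producing explicit meridian representatives inside the torus link group to compare against.
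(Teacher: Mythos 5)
Your global strategy --- treat $\B_*^*(n,m)$ first and obtain cases (i) and (ii) by specialising along the quotients killing $y$ and $z$, with case (i) quoted from Gobet --- is exactly the paper's architecture, and your observation that the defining relations take the conjugacy form $x_iw=wx_{i+r}$ with the $r$-shift acting as an $n$-cycle is also how the paper begins (Lemmas \ref{xvw} and \ref{xvW}, used there to prove that $\Delta=\delta^mz^n$ is central). But the core of your plan --- eliminate $x_2,\dots,x_n$ by Tietze moves and check that the two ``surviving'' relations are the circular relations --- has a genuine gap at precisely the step you flag as bookkeeping. A Tietze elimination of $x_{i+r}$ requires a relation $x_{i+r}=V$ with $V$ a word \emph{not} involving $x_{i+r}$; here the conjugator $w$ (in any of its representative words, e.g.\ $z\delta^qx_1\cdots x_r$ with $\delta=x_1\cdots x_ny$) contains every $x_j$, including $x_{i+r}$, so the shift relations are not defining relations for the generators you want to remove, and the deficiency count ($n+2$ generators, $n+1$ relations) is only a heuristic: it certifies neither that exactly two relations survive nor which ones. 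Nothing in your sketch supplies the mechanism that closes this; in particular the triple $\{s,t,u\}$ cannot simply be ``read off''. In the paper it is produced by an explicit pair of mutually inverse morphisms (Propositions \ref{BtoG(3,3)} and \ref{G(3,3)toB}) whose formulas involve the modular inverses $n_{(m)},m_{(n)}$ and the identity $nn_{(m)}+mm_{(n)}=nm+1$ (Lemma \ref{CoprimeLemma}) --- e.g.\ $\psi(s)=x_2\cdots x_nyz^{n-m_{(n)}}\delta^{n_{(m)}-1}$ --- together with the nontrivial generation statement that $x_1$, $x_2\cdots x_ny$ and $z$ generate $\B_*^*(n,m)$ (Lemma \ref{ThreeGenerate}), which is what makes $\psi$ surjective. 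This arithmetic input is entirely absent from your outline, so the ``careful bookkeeping'' step is not a computation you have set up but the whole theorem. A minor slip in the same direction: it is $\Delta=\delta^mz^n$, not $\delta$, that corresponds to the central element $stu$; the element $\delta$ is not central in $\B_*^*(n,m)$.

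The same gap recurs in case (ii): after killing $z$ you again face an elimination with self-referential conjugators, whereas the paper avoids it by first passing to the already-reduced presentation $\langle s,t,u \mid stu=tus=ust,\ t^m=(ust)^{m-n_{(m)}}\rangle$ (Corollary \ref{braid2nm}) and then writing down explicit inverse morphisms to $G(2,2m)$ built from a B\'ezout relation $xn_{(m)}+y(m-n_{(m)})=1$ (Lemmas \ref{LemmaForIso2Conj1}, \ref{Iso2Conj1} and \ref{Iso2Conj2}); note your expected even parameter is confirmed as $p=2m$. On the meridian statements your plan is reasonable but goes beyond what the paper itself does: the case (i) identification is imported from the toric reflection group literature, and the non-meridian assertions in cases (ii) and (iii) are asserted in the introduction rather than argued via peripheral structure in the body; so this part is not where your proposal fails, but you would be proving something the paper only states.
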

In particular, Theorem \ref{ThmBraidIntro1} implies that the braid group of a $J$-reflection group is always a circular group, generalising what is true for irreducible complex reflection groups of rank two (see \cite[Theorems 1 and 2]{Bannai}). It also implies that the braid group of a $J$-reflection group is always Garside (see \cite[Example 5]{Origines}).
\begin{theorem}[Theorem \ref{BraidIsoType} and  Corollary \ref{CenterBraidtoFINDANAME}]\label{CenterBraidToJref}
Let $W_1$ and $W_2$ be $J$-reflection groups. The following hold:\\
(i) The center of $\B(W_1)$ is cyclic and sent onto the center of $W_1$ under the natural quotient $\B(W_1)\to W_1$.\\
(ii) If $W_1$ and $W_2$ are isomorphic in reflection, the groups $\B(W_1)$ and $\B(W_2)$ are isomorphic.
\end{theorem}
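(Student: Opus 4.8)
The plan is to reduce both parts to the explicit identifications of Theorem \ref{ThmBraidIntro1}, which present every braid group of a $J$-reflection group as a torus knot group, a dihedral Artin group, or the single group $G(3,3)$, and then to argue separately in these three regimes.

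For part (ii), the starting observation is that a reflection isomorphism $W_1\xrightarrow{\sim}W_2$ carries reflections to reflections and is in particular a group isomorphism, so it induces a bijection between the conjugacy classes of reflecting hyperplanes of $W_1$ and those of $W_2$. Hence the two groups share the same number of such classes, and by Theorem \ref{ThmBraidIntro1} their braid groups lie in the same one of the three cases. In case (iii) both braid groups are literally $G(3,3)$ and nothing remains to check. In cases (i) and (ii) the abstract type still depends on discrete data — the pair $(p,q)$ of a torus knot group $\langle a,b\mid a^p=b^q\rangle$, respectively the integer $e$ of a dihedral Artin group. Here I would exploit that the presentation of Definition \ref{DefBraidIntro} involves only the coprime pair $(n,m)$ and the choice of quotient (governed by whether $b$ and $c$ equal $1$), never the exponent $k$; so the braid group is a function of $(n,m)$ and the quotient type alone. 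It then suffices to invoke the reflection-isomorphism classification \cite[Theorem 3.11]{VCRG} to see that reflection-isomorphic groups yield matching data up to the symmetries under which the presentations are manifestly invariant (for instance $(p,q)\leftrightarrow(q,p)$ for $\langle a,b\mid a^p=b^q\rangle$), whence $\B(W_1)\cong\B(W_2)$.

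For part (i), Theorem \ref{ThmBraidIntro1} again reduces cyclicity of the center to a standard fact in each case: the torus knot group $\langle a,b\mid a^p=b^q\rangle$ has center the infinite cyclic group generated by the amalgamating element $a^p=b^q$, while dihedral Artin groups and $G(3,3)$ have infinite cyclic center generated by a power of their Garside element (consistent with the Garside structures announced after Theorem \ref{ThmBraidIntro1}). To prove that this center surjects onto $Z(W)$, I would exhibit an explicit generator $\zeta$ of $Z(\B(W))$ as a word in the braid reflections — naturally a power of $\delta=x_1\cdots x_ny$, corrected by the surviving $z$-factors — verify its centrality directly from the relations of Definition \ref{DefBraidIntro}, and compute its image $\bar\zeta$ under the quotient $\B(W)\to W$. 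Since $Z(W)$ is cyclic by \cite[Theorem 2.52]{VCRG}, it then suffices to show that $\bar\zeta$ has the full order of $Z(W)$, which amounts to comparing $\bar\zeta$ with the central element of $W$ already computed in \cite{VCRG}.

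The main obstacle is precisely this last surjectivity in part (i): isolating the correct generator of $Z(\B(W))$ and ruling out that its image in $W$ is a proper power of a generator of $Z(W)$. This cannot be argued uniformly, since the central element is described differently in the torus-knot, dihedral, and $G(3,3)$ regimes, and the bookkeeping must also track the three quotients $\B_*(n,m)$, $\B^*(n,m)$ and $\B(n,m)$. The purely toric situation $b=c=1$ is already settled by \cite{Gobet Toric}, so the genuinely new content is the verification, when $b\geq 2$ or $c\geq 2$, that passing to the torsion quotient neither kills the central generator nor reduces the order of its image.
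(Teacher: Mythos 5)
Your decomposition is the paper's own: both parts are proved by splitting according to the number of conjugacy classes of reflecting hyperplanes (Proposition \ref{NumberConj}), feeding in the identifications of Theorems \ref{BraidG33} and \ref{braidG2n} together with the classification Theorem \ref{Classification3}, and quoting \cite{Gobet Toric} for the one-class (toric) case. Two calibration points, though. First, in part (ii) with two classes, Theorem \ref{Classification3} reduces the problem to the pairs $\{\B_*(n,m),\B^*(m,n)\}$, and this symmetry is \emph{not} manifest from the presentations of Definition \ref{DefBraidIntro}: the paper proves it as Corollary \ref{braid2nm} by transporting both groups through the $G(3,3)$-isomorphism to the presentations \eqref{PresBraid21} and \eqref{PresBraid22} and then relabelling $(s,t,u)$. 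Your ``manifestly invariant symmetries'' covers the torus-knot case $(p,q)\leftrightarrow(q,p)$ but not this one, so a genuine (if short) computation remains where your sketch waves. Second, in part (i), verifying from the relations that your candidate $\zeta$ is central only gives $\langle\zeta\rangle\subseteq Z(\B(W))$; to get equality the paper tracks the specific element $\Delta=\delta^m z^n$ (Lemma \ref{Delta=Delta}) through the explicit isomorphisms, where it maps to $stu$, whose cyclic group is $Z(G(3,3))$ (Corollary \ref{CenterBraid3}), respectively to $(ab)^m$, which generates $Z(G(2,2m))$ by \cite[Theorem 7.2]{BrieskornSaito}. That identification of the \emph{generator}, not mere centrality, is what the explicit morphisms $\varphi$ and $\psi$ are for.

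On the other hand, the step you single out as the main obstacle is in fact vacuous. Theorem \ref{CenterFINDANAME} (\cite[Theorem 2.52]{VCRG}) states that $Z(W_b^c(k,bn,cm))$ is generated by precisely $(x_1\cdots x_ny)^m z^n$, that is, by the image of $\Delta$ on the nose, and Remark \ref{Remark/Notation} records the corresponding elements in the quotients $\B_*(n,m)$, $\B^*(n,m)$ and $\B(n,m)$, where Corollary \ref{CenterSpecialCases} applies. So once $Z(\B(W))=\langle\Delta\rangle$ is established, surjectivity onto $Z(W)$ is immediate: the comparison you propose succeeds trivially, there is no order computation, no risk of hitting a proper power of the generator, and no non-uniform bookkeeping across the three regimes or across the torsion quotients for $b\geq 2$ or $c\geq 2$.
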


\begin{remark}[Remark \ref{TruncatedDihedral}]
Specialising to the $J$-reflection groups of the form $W(k,2,m)$ with $m$ odd or $W_n(k,n,m)$, we obtain the family of truncated Artin groups of dihedral type (see \cite[Theorem 1.1]{Gobet Toric} and \cite[Corollary 2.31]{VCRG}). In particular, Theorem \ref{CenterBraidToJref} shows that the center of a truncated Artin group of dihedral type is the image of the center of the corresponding dihedral Artin group under the quotient.
\end{remark}

In \cite{Origines}, Dehornoy and Paris treat Artin groups of spherical type as examples of Garside groups. Moreover, generalising the work of Brady for type A Coxeter groups in \cite{BradyKP} and Brady \& Watt for type B and D Coxeter groups in \cite{BradyWattKP}, Bessis constructed in \cite{DualBessis} a dual presentation for all Artin groups of spherical type, realising them as interval Garside groups. In general, the monoid corresponding to the BMR presentation for braid groups of irreducible complex reflection groups is not Garside (see, e.g \cite{Garnier31}). However, in the case of irreducible complex reflection group of rank two, it is Garside. Moreover, the braid groups associated to the groups $G(e,e,r)$ are Garside, as shown in \cite{BessisCorran} and \cite{CorranPicantin}. In \cite{NeaimeInterval}, Neaime realised the braid groups associated to the groups $G(e,e,r)$ as interval groups. In \cite{CLL}, Corran-Lee-Lee then proved that the braid groups associated to the groups $G(de,e,r)$ are quasi-Garside.

In the second part of this article, we show that the presentations of the braid groups of $J$-reflection groups given in Definition \ref{DefBraidIntro} are Garside. More precisely, for $W:=W_b^c(k,bn,cm)$ a $J$-reflection group with $n\leq m$, define $\M(W)$ to be the monoid with the same presentation as the one of $\B(W)$ given in Definition \ref{DefBraidIntro}. The following theorem holds:
\begin{theorem}[Theorem \ref{ClassicalGarside}]\label{ClassicalGarsideIntro}
The monoid $\M(W)$ is Garside. 
\end{theorem}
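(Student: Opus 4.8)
The plan is to verify directly the defining properties of a Garside monoid for the presentation in Definition \ref{DefBraidIntro}: atomicity, left and right cancellativity, existence of conditional lcms and gcds on both sides, and a Garside element whose divisors form a finite generating set. The starting observation is that the presentation is homogeneous, i.e.\ each relation equates two words of equal length; one checks this by a quick letter count using $m=qn+r$ and $|\delta|=n+1$, where $\delta=x_1\cdots x_n y$. Hence word length descends to an additive norm $\lambda\colon\M(W)\to\N$ with $\lambda^{-1}(0)=\{1\}$ and $\lambda(ab)=\lambda(a)+\lambda(b)$, so $\M(W)$ is atomic with atoms exactly $x_1,\dots,x_n,y,z$. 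This also forces the set of divisors of any fixed element to be finite, a fact used repeatedly below.

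Next I would set up subword reversing. Reading the relations with the cyclic ordering $x_1,x_2,\dots,x_n,y,z$ of the atoms in mind, each relation equates two words whose distinct initial letters are adjacent in this cyclic order: the relations indexed by $i$ give the complement of $x_i$ past $x_{i+1}$, the case $i=n$ gives the complement of $x_n$ past $y$, and the first relation $\delta z=z\delta$ gives the complement of $z$ past $x_1$. Thus the presentation is right-complemented on adjacent pairs. I would extend the right-complement to all pairs of atoms by reversing and then verify the cube condition for every triple of atoms; its validity makes the reversing complete, yielding left cancellativity together with conditional right lcms. The left-handed statements I would obtain either from an anti-automorphism of the presentation induced by word reversal (which should preserve the cyclic ordering of atoms) or by running left reversing symmetrically. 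Combined with atomicity, this gives right cancellativity and gcds on both sides, so $\M(W)$ is cancellative with the expected divisibility structure.

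Finally I would produce the Garside element. The natural candidate is $\Delta$, the right lcm of all atoms, which the reversing relations should identify with an explicit rotation-invariant word built from $\delta$ and $z$ (for instance of the form $\delta^{q}z\,x_1\cdots x_r$, to be pinned down by the computation; here the hypothesis $n\le m$ guarantees $q\ge 1$, so the exponents $q-1$ and $q$ appearing in the relations are nonnegative). It then remains to check that every atom both left- and right-divides $\Delta$, that the set of left divisors of $\Delta$ equals the set of its right divisors, and that this set, finite by atomicity, generates $\M(W)$ (immediate, as it contains all atoms). Together with cancellativity and conditional lcms, every pair of elements divides a common power of $\Delta$, so the full lattice property follows; this exhibits $\Delta$ as a Garside element and completes the proof.

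The step I expect to be the main obstacle is the cube-condition/completeness verification: the complements between adjacent atoms are long words involving powers of $\delta$ and the letter $z$, and the number of relations grows with $n$, so confirming that reversing an arbitrary pair of atoms terminates and computes their lcm requires a uniform combinatorial analysis of the rotation relations across the parameters $n$ and $m$. A secondary difficulty is establishing that the left- and right-divisor sets of $\Delta$ coincide; I expect to handle this by exploiting the symmetry of $\Delta$ under the rotation encoded by the relations, which should intertwine left and right divisibility.
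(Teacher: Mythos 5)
Your overall skeleton is the paper's: homogeneity of the presentation gives Noetherian divisibility and finiteness of divisor sets; the presentation is right-complemented with complements only between ``adjacent'' atoms, and one completes it to a right-full presentation by computing the missing complements (the paper does this explicitly in Corollary \ref{ClassicalMonoid2}); one then verifies a cube condition to get left-cancellativity and conditional right-lcms via Proposition \ref{CubeCondition} (Lemmas \ref{MagicCube} and \ref{CubeClassical}), and finishes with a Garside element plus Lemmas \ref{Deltamultiples} and \ref{LcmGcd}. However, two of your specific steps would fail as written. First, there is no anti-automorphism of the presentation ``induced by word reversal'': reversing relation \eqref{classicalBraidPres1:1} and relabelling $x_i\mapsto x_{n+1-i}$ produces words in which $y$ sits on the wrong side of the $x$-blocks (one gets $\delta=yx_1\cdots x_n$ rather than $x_1\cdots x_ny$), and the paper never establishes $\M_*^*(n,m)\cong\M_*^*(n,m)^{\mathrm{op}}$. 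Your fallback of running left reversing is exactly what the paper does, but note it is a second, genuinely independent computation: one first derives a different presentation of the opposite monoid (Lemma \ref{OpClassical}), enlarges it to a right-full one (Corollary \ref{OpClassical2}), and re-checks the cube condition (Lemma \ref{cubeOpClassical}). Op-self-duality holds only for the quotients $\M_*(n,m)$ and $\M^*(n,m)$ (Lemmas \ref{MagicPropertyyOp} and \ref{MagicPropertyzOp}), not visibly for $\M_*^*(n,m)$ itself.

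Second, your candidate Garside element is wrong. The word $\delta^qzx_1\cdots x_r$ represents the element $w$ of Definition \ref{defDelta}, and $w$ is not even a common right-multiple of all atoms: the right-lcm of $x_1$ and $y$ is $yw=wy=W$, of length $\ell(w)+1$. Taking $W$ instead does make every atom a left-divisor, but $W$ is not central ($x_iW=Wx_{i+r-n}$ by Lemma \ref{xvW}), so the equality $\mathrm{Div}_L=\mathrm{Div}_R$ — which you correctly flag as a difficulty — has no evident proof from ``rotation symmetry''; the rotation shifts indices rather than exchanging prefixes with suffixes. The paper's fix is the standard one you stopped just short of: pass to the central element $\Delta=w^{n-r}W^r=\delta^mz^n$ (Remark \ref{PositiveDelta}), for which centrality plus left-cancellativity gives $\mathrm{Div}_L(\Delta)=\mathrm{Div}_R(\Delta)$ immediately (Lemma \ref{CenterDiv}), and the inclusions $\mathrm{Div}(w)\cup\mathrm{Div}(W)\subset\mathrm{Div}(\Delta)$ give generation (Lemma \ref{DeltaGarsideM}); compare Remark \ref{RemarkDeltaCentral}. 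Finally, your proposal only treats the presentation with generators $x_1,\dots,x_n,y,z$, i.e.\ the case $b,c\geq 2$; the statement also covers $b=1$ or $c=1$, where $\M(W)$ is $\M_*(n,m)$ or $\M^*(n,m)$. The paper does not redo reversing there but transports the Garside structure along the quotients killing $y$ or $z$, via the compatibility condition $(\hyperref[Champagne]{C2})$, op-self-duality of the quotient, and Proposition \ref{MagicGarsideOp}; your plan would need either this mechanism or a separate cube-condition verification for each of the smaller presentations.
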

\begin{remark}
Theorem \ref{ClassicalGarsideIntro} provides new Garside structures for the groups mentioned in Theorem \ref{ThmBraidIntro1}. Moreover, the atoms of $\M(W)$ are braid reflections under the identification $\M(W)\into \B(W)$.
\end{remark}
Finally, in \cite{DualBessis} Bessis proposed a definition of dual braid monoids for some irreducible rank two complex reflection groups, which was extended  to toric reflection groups by Gobet in \cite{Gobet Toric}. 

The last part of this article consists in giving a second Garside presentation for the braid group of every $J$-reflection group. If the $J$-reflection group is finite, this second presentation coincides with the dual presentation given in \cite{DualBessis} and \cite{Gobet Toric} whenever it is defined. Moreover, this presentation is again a reflection presentation (that is, the generators are again braid reflections). For the convenience of the reader, we write the presentations here: 
   \begin{proposition}[Proposition \ref{DualPresentationsBraidGroups}]\label{DualPresentationsBraidGroupsIntro}
Let $n,m\in \N^*$ with $n\leq m$ and assume $n\wedge m=1$. Then $\B_*^*(n,m)\cong \B_*^*(m,n)$ admits the following presentation:
\begin{equation*}
\begin{aligned}
    &(1) \,\, \mathrm{Generators}\!:\,  \{x_1,\dots,x_m,y,z_1,\dots,z_m\};\\
&(2) \,\, \mathrm{Relations}\!: \,\\
  &x_iz_{i+1}=z_ix_i , \, \forall 1\leq i\leq m-1, \, \\ &x_{m}yz_1=z_{m}x_{m}y,\\
  &z_{i+1}x_{i+1}\cdots x_{i+n}=z_ix_i\cdots x_{i+n-1} , \, \forall 1\leq i\leq m-n,\\
       & z_{i+1}x_{i+1}\cdots x_{m}yx_1\cdots x_{i+n-m}=z_ix_i\cdots x_{m}yx_1\cdots x_{i+n-m-1} , \,   \forall m-n+1\leq i\leq m-1,\\
      & yz_1x_1\cdots x_{n}=z_{m}x_{m}yx_1\cdots x_{n-1}.
\end{aligned}
\end{equation*}
If $m\geq 2$, the group $\B_*(n,m)\cong \B^*(m,n)$ admits the following presentation:
\begin{equation*}
\begin{aligned}
    &(1) \,\, \mathrm{Generators}\!:\,  \{x_1,\dots,x_m,z_1,\dots,z_m\};\\
&(2) \,\, \mathrm{Relations}\!: \,\\
  &x_iz_{i+1}=z_ix_i , \, \forall 1\leq i\leq m-1, \,\,\\
  &x_mz_1=z_mx_m,\\
  &z_{i+1}x_{i+1}\cdots x_{i+n}=z_ix_i\cdots x_{i+n-1} , \, \forall 1\leq i\leq m-1,\,\,\,\,\,\,\,\,\,\,\,\,\,\,\,\,\,\,\,\,\,\,\,\,\,\,\,\,\,\,\,\,\,\,\,\,\,\,\,\,\,\,\,\,\,\,\,\,\,\,\,\,\,\,\,\,\,\,\,\,\,\,\,\,\,\,\,\,\,
\end{aligned}
\end{equation*}
where indices are taken modulo $m$.\\\\
If $n\geq 2$, the group $\B^*(n,m)\cong \B_*(m,n)$ admits the following presentation: 
\begin{equation*}
\begin{aligned}
    &(1) \,\, \mathrm{Generators}\!:\,  \{x_1,\dots,x_m,y\};\\
&(2) \,\, \mathrm{Relations}\!: \,\\
  &x_{i+1}\cdots x_{i+n}=x_i\cdots x_{i+n-1} , \, \forall 1\leq i\leq m-n,\\
       & x_{i+1}\cdots x_{m}yx_1\cdots x_{i+n-m}=x_i\cdots x_{m}yx_1\cdots x_{i+n-m-1},  \,   \forall m-n+1\leq i\leq m-1,\,\,\,\,\,\,\,\,\,\,\,\,\\
       & yx_1\cdots x_n=x_myx_1\cdots x_{n-1}.
\end{aligned}
\end{equation*}
\end{proposition}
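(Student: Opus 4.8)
The plan is to obtain all three presentations from the classical presentation of Definition \ref{DefBraidIntro} by Tietze transformations, treating the first one (for $\B_*^*(n,m)\cong\B_*^*(m,n)$) as the main case and deducing the other two as quotients. Since $n\wedge m=1$ and $n\le m$, I may assume $n<m$, the case $n=m=1$ being immediate. The natural starting point is the classical presentation of $\B_*^*(m,n)$, which already carries $m$ generators $x_1,\dots,x_m$: here the Euclidean division of the second parameter by the first degenerates to $n=0\cdot m+n$, so that $q=0$, the exponent $\delta^{q-1}$ becomes $\delta^{-1}$ and $\delta^{q}$ becomes $\delta^{0}=1$, where $\delta=x_1\cdots x_m y$. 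That presentation then consists of the commutation $\delta z=z\delta$ together with the two indexed families, for a total of $m+1$ relations.

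The heart of the argument is to enlarge the generating set by $z_i:=(x_1\cdots x_{i-1})^{-1}z\,(x_1\cdots x_{i-1})$ for $2\le i\le m$, with $z_1:=z$. Each new generator is introduced together with its defining relation, which is equivalent to $x_iz_{i+1}=z_ix_i$; this produces the $m-1$ relations of the first family of the dual presentation. Using $\delta z=z\delta$ one computes, on the one hand, $x_{i+1}\cdots x_m y\,z=z_{i+1}x_{i+1}\cdots x_m y$, and, after the telescoping $x_{i+1}\cdots x_m y\,\delta^{-1}=(x_1\cdots x_i)^{-1}$, the identity $(x_1\cdots x_i)^{-1}z\,x_1\cdots x_{i+n}=z_{i+1}x_{i+1}\cdots x_{i+n}$. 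Substituting these into the middle and last classical families turns them exactly into the third and fourth families and the final relation of the dual presentation, while the commutation $\delta z=z\delta$ becomes the second family relation $x_myz_1=z_mx_my$. I would then verify the reverse implications, the only nonobvious one being that, given the first family, the second family forces $z_1\delta=\delta z_1$; this makes the passage a genuine Tietze equivalence. A count of relations ($m+1$ classical relations plus $m-1$ defining relations, against $2m$ dual relations) confirms that nothing has been lost. This identifies the dual presentation with $\B_*^*(m,n)$, hence with $\B_*^*(n,m)$ through the isomorphism $\B_*^*(n,m)\cong\B_*^*(m,n)$ supplied by the computation of isomorphism types in Theorem \ref{ThmBraidIntro1} (alternatively, by running the analogous computation from the $\B_*^*(n,m)$ side).

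The remaining two presentations follow by passing to the quotients of Definition \ref{DefBraidIntro}, read on the $\B_*^*(m,n)$ side where the above isomorphism interchanges the roles of $y$ and $z$. Imposing $y=1$ turns the second family into $x_mz_1=z_mx_m$ and merges the third and fourth families together with the final relation into the single cyclic family $z_{i+1}x_{i+1}\cdots x_{i+n}=z_ix_i\cdots x_{i+n-1}$ with indices modulo $m$, one instance of which is then redundant; this yields the presentation of $\B^*(m,n)\cong\B_*(n,m)$. Imposing instead $z_1=\dots=z_m=1$ — legitimate because the $z_i$ are mutually conjugate, so that $\llangle z_1\rrangle=\llangle z_1,\dots,z_m\rrangle$ — trivialises the first two families and leaves precisely the presentation of $\B_*(m,n)\cong\B^*(n,m)$.

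I expect the main obstacle to be the index bookkeeping of the Tietze computation rather than any conceptual difficulty: one must handle correctly the cyclic wrap-around in the products $x_{i+1}\cdots x_{i+n}$, the negative power $\delta^{-1}$ arising from the degenerate division $q=0$, and the extremal indices — the boundary $i=m$, where the tail $x_{i+1}\cdots x_m$ is empty, and the split between the ranges $1\le i\le m-n$ and $m-n+1\le i\le m$. Organising every cancellation through the single conjugating word $x_1\cdots x_{i-1}$ and the commutation $\delta z=z\delta$ should keep these manipulations routine.
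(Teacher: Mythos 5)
Your proposal is correct and takes essentially the same route as the paper's proof: it starts from the classical presentation of $\B_*^*(m,n)$ with the degenerate division $q=0$, introduces the conjugates $z_i=(x_1\cdots x_{i-1})^{-1}z\,x_1\cdots x_{i-1}$ with defining relations $x_iz_{i+1}=z_ix_i$, uses $\delta z=z\delta$ to rewrite the indexed families and turn the commutation into $x_myz_1=z_mx_my$, invokes $\B_*^*(n,m)\cong\B_*^*(m,n)$ and Corollary \ref{braid2nm} for the identifications, and derives the other two presentations via the quotients $y=1$ and $z_1=1$. Your explicit remarks on the redundant instance of the cyclic family and on $\llangle z_1\rrangle=\llangle z_1,\dots,z_m\rrangle$ merely spell out steps the paper leaves implicit.
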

It turns out that the presentations given in Proposition \ref{DualPresentationsBraidGroupsIntro} are Garside as well. More precisely, for $W:=W_b^c(k,bn,cm)$ a $J$-reflection group with $n\leq m$, define $\mathcal D(W)$ to be the monoid with the same presentation as the one of $\B(W)$ given in Proposition \ref{DualPresentationsBraidGroupsIntro}. The following theorem holds:
\begin{theorem}[Theorem \ref{GarsideDual}]
The monoid $\mathcal D(W)$ is Garside. 
\end{theorem}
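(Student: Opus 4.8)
The plan is to verify the four defining properties of a Garside monoid for $\mathcal D(W)$: atomicity, left and right cancellativity, the existence of conditional left and right lcms and gcds, and the existence of a Garside element. I would carry out the argument in full for the generic presentation of $\mathcal B_*^*(n,m)$ and obtain the monoids underlying $\mathcal B_*(n,m)$, $\mathcal B^*(n,m)$ and $\mathcal B(n,m)$ by repeating the computation after deleting the suppressed generators ($z$, $y$, or both) from every relation. Atomicity is the cheap step: each relation in Proposition \ref{DualPresentationsBraidGroupsIntro} equates two words of equal length in the generators, so word length descends to an additive norm $\nu\colon \mathcal D(W)\to\N$ with $\nu(g)=0$ if and only if $g=1$. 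Hence $1$ is the only invertible element and left- and right-divisibility are Noetherian, exactly as the homogeneity of the classical presentation was used for $\mathcal M(W)$ in Theorem \ref{ClassicalGarsideIntro}.

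The heart of the proof is cancellativity together with the conditional lattice property, and I would obtain both simultaneously through Dehornoy's subword-reversing machinery. One first checks that the presentation is right-complemented, i.e. that for each ordered pair of distinct generators there is at most one defining relation in which those generators are the leading letters. For $\mathcal B_*^*(n,m)$ the pairs carrying a relation are $\{x_i,z_i\}$ (from $x_iz_{i+1}=z_ix_i$, and from the seam relation $x_myz_1=z_mx_my$ when $i=m$), the consecutive pairs $\{z_i,z_{i+1}\}$ (from the two window families), and $\{y,z_m\}$ (from $yz_1x_1\cdots x_n=z_mx_myx_1\cdots x_{n-1}$); every other ordered pair carries none. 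This defines a right-complement, and the plan is to prove that the induced reversing is complete by verifying the cube condition on all relevant triples of generators. Completeness yields left cancellativity and ensures that any two elements with a common right multiple admit a right lcm computed by reversing; running the mirror left-reversing gives right cancellativity and left lcms. Together with the Noetherianity supplied by $\nu$, this shows $\mathcal D(W)$ is Gaussian.

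It then remains to exhibit a Garside element $\Delta$, for which I would take the element representing the central (Coxeter-type) element $\delta$ of $\mathcal B(W)$ — the single traversal of the cyclic relation word, represented for example by $x_1\cdots x_m y$ in the presentation of $\mathcal B^*(n,m)$ and by its analogue in each other case. Three things must be checked. First, the set of left divisors of $\Delta$ is finite, which is immediate from the bound given by $\nu$. Second, these divisors generate $\mathcal D(W)$, since every generator divides $\Delta$ and so the divisors contain all atoms. Third, and most importantly, $\Delta$ is balanced, i.e. its left and right divisors coincide; here the rotational symmetry of the relations — conjugation by $\delta$ realising the index shift $i\mapsto i+1$ — is the decisive tool, combined with the reversing computations of the previous step. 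A balanced element with finite, generating divisor set in a Gaussian monoid is a Garside element, so $\mathcal D(W)$ is Garside. Equivalently, one may phrase the last two steps in the language of germs, presenting the finite set of simples as the interval $[1,\Delta]$ and checking directly that it is closed under complement and quotient and forms a lattice.

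I expect the cube condition to be the main obstacle. The difficulty concentrates at the seam of the cyclic word, where $y$ interacts with $z_1$ and $z_m$ through $x_myz_1=z_mx_my$ and $yz_1x_1\cdots x_n=z_mx_myx_1\cdots x_{n-1}$, and where the windows split into the two regimes $i\le m-n$ and $i>m-n$ according to whether a block of $n$ consecutive indices crosses the seam. A triple-by-triple verification is unwieldy, so I would organise it by the position of the three chosen generators relative to the seam and use the shift symmetry $i\mapsto i+1$ to reduce to a bounded list of orbit representatives, leaving only the seam-straddling triples as genuinely new computations. A lighter secondary obstacle is the balancedness of $\Delta$, where the same rotational symmetry does the work.
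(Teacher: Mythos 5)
Your high-level architecture (homogeneity gives Noetherianity, a cube-condition/reversing argument gives cancellativity and conditional lcms, and a balanced element with finite generating divisor set finishes the job) is exactly the paper's, but your plan breaks at its central step: you propose to run the reversing machinery on the dual presentation \eqref{dual3} \emph{as given}, whose syntactic complement is only partially defined — you list the pairs carrying a relation and treat the remaining pairs as unproblematic. With that partial complement the $\theta$-cube condition genuinely fails, so reversing is not complete and the criterion cannot be applied. Concretely, take the triple $(x_1,z_1,z_2)$: from $x_1z_2=z_1x_1$ (relation \eqref{dual3:1}) one has $\theta(x_1,z_1)=z_2$ and $\theta(z_1,x_1)=x_1$, and from $z_2x_2\cdots x_{n+1}=z_1x_1\cdots x_n$ (relation \eqref{dual3:3} with $i=1$) one has $\theta(z_1,z_2)=x_1\cdots x_n$; hence $\theta(\theta(z_1,x_1),\theta(z_1,z_2))=\theta(x_1,x_1\cdots x_n)=x_2\cdots x_n$ is defined, while $\theta(\theta(x_1,z_1),\theta(x_1,z_2))$ is undefined because no defining relation pairs $x_1$ with $z_2$. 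So the cube condition fails for this triple, and no restriction to ``relevant'' triples rescues the argument: Proposition \ref{CubeCondition} would conclude that pairs with undefined complement (such as $x_1,x_2$) admit no common right-multiple, which is false since every generator divides the central element $\Delta$. The missing idea, which is where most of the paper's Section 7 effort goes, is to first \emph{enlarge} Presentation \eqref{dual3} by derived relations to the right-full presentation \eqref{DualMonoidPres2} (Corollary \ref{DualMonoid2}; and analogously \eqref{dualN=12} for $n=1$ and \eqref{OpDualPres4} for the opposite monoid, after computing an explicit presentation of $\D_*^*(n,m)^{\mathrm{op}}$ in Lemma \ref{OpDual}), and only then verify the sharp cube condition via the criterion $(C1)$.

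Two further points are under-supported. First, your Garside element — the single traversal such as $x_1\cdots x_my$ — is not central, and its balancedness does not follow from ``rotational symmetry'' alone; proving $\mathrm{Div}_L=\mathrm{Div}_R$ for it is precisely the nontrivial step you defer. The paper sidesteps this by taking $\Delta=w^{m-n}W^n$ with $w=z_1x_1\cdots x_n$ and $W=z_{m-n+1}x_{m-n+1}\cdots x_my$: the commutation rules $x_iw=wx_{i+n}$, $z_iw=wz_{i+n}$, etc.\ (Lemmas \ref{GarsideDualLemma1} and \ref{GarsideDualLemma2}) make $\Delta$ central, whence balancedness is immediate from left-cancellativity by Lemma \ref{CenterDiv}. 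Second, for the quotients $\D_*(n,m)$ and $\D^*(n,m)$, ``repeating the computation after deleting the suppressed generators'' is not free: deleting letters from relations can change the monoid uncontrollably (the paper's example $\langle a,b\mid aba=b^2\rangle$ in Section 5 makes this point), and one must check that the deleted presentation $\langle S\mid R\rangle/X$ actually agrees with the reversing-compatible presentation $\langle S\mid R\rangle_X$ — this is the content of condition $(C2)$ and of Lemmas \ref{MagicPropertyDualz} and \ref{MagicDualy} — before transporting the Garside structure along the quotient via Proposition \ref{MagicGarsideOp}, together with the isomorphisms $\D_*(n,m)\cong\D_*(n,m)^{\mathrm{op}}$ and $\D^*(n,m)\cong\D^*(n,m)^{\mathrm{op}}$ (Lemmas \ref{MagicPropertyDualzOp} and \ref{MagicPropertyDualyOp}).
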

We refer to the monoid $\D(W)$ as the dual monoid for $W$, since it is the dual monoid associated to $W$ whenever $W$ is finite and has an associated dual monoid. In particular, the monoids $\D(W)$ provide combinatorial candidates for what could be defined as the dual monoids associated to the irreducible complex reflection groups of rank two which did not have one yet. In the following table, we  explicitly write these new presentations:
\begin{table}[hbt]
\centering
\begin{tabular}{|c|c|}
\hline
Group & Dual presentation \\
\hline
$G(cd,d,2)$& $\left\langle
       \begin{array}{l|cl}
           x_1,\dots,x_{d}     \,        & x_ix_{i+1}=x_jx_{j+1}\, \, \forall 1\leq i<j\leq d-1\\
   \,\,\,\,\,\,\,\,\,\,\,\,   y     \,   &\,    x_{d-1}x_dy=x_dyx_1=yx_1x_2

                          \end{array}
     \right\rangle$
\\
\hline
$G(2cd,2d,2)$&$\left\langle
       \begin{array}{l|cl}
           x_1,\dots,x_{d},y     \,        & x_iz_{i+1}=z_ix_i=z_{i+1}x_{i+1} \, \, \forall i\in [d-1]\\
      z_1,\dots,z_{d}     \,   &\,  x_dyz_1=yz_1x_1=z_dx_dy

                          \end{array}
     \right\rangle$
\\
\hline
$G_{13}$&$\left\langle
       \begin{array}{l|cl}
           x_1,x_2,x_3     \,        & x_1z_2=z_1x_1, \, x_2z_3=z_2x_2,\, x_3z_1=z_3x_3 \\
      z_1,z_2,z_3   \,   &\,   z_1x_1x_2=z_2x_2x_3=z_3x_3x_1

                          \end{array}
     \right\rangle$
\\
\hline
$G_{15}$& $\left\langle
       \begin{array}{l|cl}
           x_1,x_{2},y     \,        & z_1x_1=z_2x_2=x_1z_2\\
      z_1,z_{2}     \,   &\,    x_2yz_1=yz_1x_1=z_2x_2y

                          \end{array}
     \right\rangle$
\\
\hline

\end{tabular}
\caption{New dual monoids for some rank two complex reflection groups.}
\end{table}

This article is organised as follows: In section 2, we recall the necessary definitions and theorems about $J$-groups and complex reflection groups. In section 3, we define the braid groups associated to $J$-reflection groups, study their isomorphism type and show that they are well-defined up to reflection isomorphism. Moreover, we show that the center of braid groups of $J$-reflection groups is sent onto the center of the corresponding $J$-reflection group. Finally, we exhibit explicit reflection isomorphisms between braid groups associated to $J$-groups that are isomorphic in reflection. In section 4, we exhibit a second presentation for braid groups associated to $J$-reflection groups, which coincides with the dual presentation of irreducible rank two complex braid groups whenever the $J$-group is finite and the corresponding braid group already has a dual presentation defined. In section 5, we recall some prerequisites on Garside theory and prove useful lemmas to show that some homogeneous monoids are Garside. In particular, in specific cases the Garside structure is preserved by quotient monoids. Sections 6 and 7 are dedicated to proving that in the case where the two presentations of braid groups of $J$-reflection groups given in Sections 3 and 4 are positive, the corresponding monoids are Garside.
\addtocontents{toc}{\SkipTocEntry}
\section*{Acknowledgments}
This work is part of my PhD thesis, done under the supervision of Thomas Gobet and Cédric Lecouvey at Université de Tours, Institut Denis Poisson. I thank Thomas Gobet for his careful and numerous readings of all redaction stages of this article. I also thank him for his precious advice, suggestions and his answers to my questions. I also thank Cédric Lecouvey for his careful reading at numerous stages of this article and his advice. Moreover, some early stage computations were done using Magma's free online calculator.

\section{\texorpdfstring{$J$-reflection groups}{}}
$J$-reflection groups were defined by the author in \cite{VCRG} as a generalisation of rank two complex reflection groups. These groups are part of a family of groups called $J$-groups which also generalise rank two complex reflection groups, introduced by Achar \& Aubert in \cite{AA}.\\
In this section, we recall the definition of $J$-groups and $J$-reflection groups and state some useful results about them. 
\begin{definition}[{\cite[Introduction]{AA}}]\label{defJParent}
Let $k,n,m\in \N_{\geq 2}$. The group $J(k,n,m)$ is defined by the following presentation:
\begin{equation}\label{PresParent}
\left\langle\begin{array}{c|c}
s,t,u\, &\, s^k=t^n=u^m=1; \, stu=tus=ust
\end{array}\right\rangle.
\end{equation}
The elements $s,t$ and $u$ are called the \textbf{canonical generators} of $J(k,n,m)$.\\
More generally, given pairwise coprime integers $k',n',m'\in \N^*$ such that $x'$ divides $x$ for each $x\in \{k,n,m\}$, the group $H:=J\begin{pmatrix}k & n & m\\ k' & n' & m'\end{pmatrix}$ is defined as the normal closure $\llangle s^{k'},t^{n'},u^{m'}\rrangle_{J(k,n,m)}$ of $\{s^{k'},t^{n'},u^{m'}\}$ in $J(k,n,m)$. Groups of this form are called $J$-\textbf{groups}. Whenever $k',n'$ or $m'$ is equal to 1 we omit it in the notation.\\
Finally, the multiset of columns $C(H)$ is defined as $\col{k}{k'}{n}{n'}{m}{m'}$.
\end{definition}

\begin{definition}[{\cite[Definition 2.13]{VCRG}}]
Let $k,b,n,c,m\in \N^*$ with $k,bn,cm\geq 2$ and $n\wedge m=1$. The group $W_b^c(k,bn,cm)$ is defined as the $J$-group $J\begin{pmatrix} k & bn & cm \\ & n & m\end{pmatrix}$. A group of this form is called a \textbf{$J$-reflection group}.
\end{definition}

\begin{notation}
Whenever $b$ or $c$ is equal to $1$ we omit it in the notation. More precisely, the group $J\begin{pmatrix} k & n & cm \\ & n & m\end{pmatrix}$ is denoted by $W^c(k,n,cm)$, the group $J\begin{pmatrix} k & bn & m \\ & n & m\end{pmatrix}$ is denoted by $W_b(k,bn,m)$ and the group $J\begin{pmatrix} k & n & m \\ & n & m\end{pmatrix}$ is denoted by $W(k,n,m)$.
\end{notation}

\begin{remark}
The notation $W(k,n,m)$ already appears in \cite{Gobet Toric}, where these groups are shown to be isomorphic to so called toric reflection groups. In this paper, we shall call a group of the form $W(k,n,m)$ a \textbf{toric reflection group}, following conventions from \cite{Gobet Toric}.
\end{remark}

A complex reflection group $W$ is a finite subgroup of $\mathrm{GL}_n(\C)$ for some $n\in \N^*$ generated by complex reflections of $\C^n$ (i.e finite order automorphisms whose set of fixed points is a complex hyperplane). If the natural representation $W\into \mathrm{GL}_n(\C)$ is irreducible we call $W$ an irreducible complex reflection group. Moreover, the rank of $W$ is defined as the dimension of the orthogonal complement of the subspace of $\C^n$ fixed pointwise by $W$. As shown by Achar and Aubert in \cite{AA}, both $J$-groups and $J$-reflection groups generalise irreducible complex reflection groups of rank two:

\begin{theorem}[{\cite[Theorem 1.2 and Table 1]{AA}}]\label{FiniteJGroup}
A group is a finite $J$-group if and only if it is a complex reflection group of rank two. Moreover, every such group is a $J$-reflection group.
\end{theorem}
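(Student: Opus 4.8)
The plan is to split the biconditional into its two implications and to organise both around the central element $\delta := stu = tus = ust$. First I would record the basic structural fact that $\delta$ is central in $J(k,n,m)$: from $stu = tus$ one reads off $s^{-1}\delta = tu = \delta s^{-1}$, so $s$ commutes with $\delta$, and the cyclic symmetry of the defining relations gives the same for $t$ and $u$. Quotienting by the (automatically normal) central subgroup $\langle\delta\rangle$ sets $stu=1$, hence $u=(st)^{-1}$, and yields the ordinary (von Dyck) triangle group
\begin{equation*}
J(k,n,m)/\langle \delta\rangle \;\cong\; \langle s,t \mid s^k = t^n = (st)^m = 1\rangle =: D(k,n,m),
\end{equation*}
so that $J(k,n,m)$ sits in a central extension $1 \to \langle\delta\rangle \to J(k,n,m) \to D(k,n,m)\to 1$. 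This controls finiteness on the nose: if $D(k,n,m)$ is infinite then so is $J(k,n,m)$, and $D(k,n,m)$ is finite precisely when $1/k+1/n+1/m>1$.

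For the direction ``finite $J$-group $\Rightarrow$ complex reflection group of rank two'', I would first reduce to the spherical case. Finiteness of $J(k,n,m)$ forces $D(k,n,m)$ finite, hence $(k,n,m)$ (up to reordering) lies in the short list $(2,2,m),\,(2,3,3),\,(2,3,4),\,(2,3,5)$, i.e. the dihedral, tetrahedral, octahedral and icosahedral cases. The substantive point is then to show that in each such case $\delta$ has finite order and that the resulting finite group is a genuine rank two complex reflection group. The cleanest route is to exhibit an explicit two-dimensional representation in which $s,t,u$ act as complex reflections of $\C^2$; one checks that $\delta=stu$ then acts as a scalar (a root of unity), which simultaneously forces $\langle\delta\rangle$ to be finite and identifies the central quotient with the rotation subgroup of the corresponding polyhedral group. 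Matching the resulting central extensions against the Shephard--Todd list identifies each finite $J(k,n,m)$ with a specific $G_i$ (the tetrahedral family $G_4$--$G_7$, the octahedral family $G_8$--$G_{15}$, the icosahedral family $G_{16}$--$G_{22}$, or the imprimitive groups $G(de,e,2)$ in the dihedral case). The same analysis must be rerun for the general $J$-groups $J\begin{pmatrix} k & n & m \\ k' & n' & m'\end{pmatrix}$, where the normal-closure data $(k',n',m')$ selects the precise member of each family and pins down the reflection orders.

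For the converse ``rank two complex reflection group $\Rightarrow$ $J$-reflection group'', I would proceed through the Shephard--Todd classification, using that every irreducible rank two complex reflection group is a central extension of a dihedral or polyhedral group and carries at most three orbits of reflecting hyperplanes. Choosing one reflection $s,t,u$ from each orbit (or the appropriate power when an orbit is generated by a higher-order reflection) and comparing with the BMR presentations, one recognises exactly the relations $s^k=t^n=u^m=1$ together with $stu=tus=ust$, and then verifies that the normal-closure description matches, realising the group in the $J$-reflection form $W_b^c(k,bn,cm)=J\begin{pmatrix} k & bn & cm \\ 1 & n & m\end{pmatrix}$ with $n\wedge m=1$ for suitable parameters. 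Carrying this out group by group produces Table 1.

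The main obstacle I anticipate is twofold and essentially computational. First, proving that $\delta$ has finite order in each spherical case---equivalently, that the central extension does not split off an infinite cyclic factor---for which a faithful reflection representation (or a direct order computation for $\delta$) seems unavoidable; this is the one place where a genuinely new geometric input is needed rather than pure diagram combinatorics. Second, the exhaustive bookkeeping required to match the parameter data $(k,n,m,k',n',m')$ to the individual Shephard--Todd groups and thereby verify Table 1 entry by entry. Neither step is conceptually deep, but both demand care, and it is the finiteness-of-$\delta$ argument that carries the real weight of the classification.
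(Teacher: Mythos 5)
The first thing to say is that the paper does not prove this statement at all: Theorem \ref{FiniteJGroup} is imported verbatim from Achar--Aubert (\cite[Theorem 1.2 and Table 1]{AA}), so your proposal can only be compared with that classification. In outline you reconstruct its skeleton correctly: the centrality of $\delta=stu$, the central extension $1\to\langle\delta\rangle\to J(k,n,m)\to D(k,n,m)\to 1$ over the von Dyck triangle group, the reduction of finiteness to the spherical list together with the finite order of $\delta$, and the two-way matching with the Shephard--Todd list via BMR-type presentations are all genuine ingredients of the Achar--Aubert argument (and of its later reuse by Gobet and in \cite{VCRG}).

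There is, however, a real logical gap at exactly the step you yourself flag as carrying ``the real weight''. You claim that exhibiting a two-dimensional representation in which $s,t,u$ act as complex reflections and $\delta$ acts as a scalar root of unity ``simultaneously forces $\langle\delta\rangle$ to be finite''. It does not: a representation identifies a finite \emph{quotient} of $J(k,n,m)$ and hence bounds the group from below, never from above; a priori $\delta$ could have infinite order with the representation simply non-faithful. Your fallback, a \emph{faithful} reflection representation, is circular, since faithfulness cannot be certified before the order of the group --- equivalently of $\delta$ --- has been bounded. The upper bound must be extracted from the presentation by word manipulation. A concrete illustration: in $J(2,2,2)$ one eliminates $u=ts\delta$ and the relation $u^2=1$ becomes $(ts)^2\delta^2=1$ with $\delta$ central; conjugating this relation by $s$ gives $(ts)^{-2}\delta^2=1$, whence $(ts)^4=1$ and $\delta^4=1$, so $|J(2,2,2)|\leq 16$, and the surjection onto $G(4,2,2)$ (of order $16$) then upgrades this to an isomorphism. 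No representation-theoretic computation produces the relation $\delta^4=1$; this word-level bound, carried out case by case and again for the normal closures $J\begin{pmatrix}k&n&m\\k'&n'&m'\end{pmatrix}$, is where the actual content of the classification lies. Two smaller omissions: the statement concerns all rank-two complex reflection groups, including the reducible ones $\mu_a\times\mu_b$, which a sweep through the irreducible Shephard--Todd entries does not see; and the ``moreover'' clause requires exhibiting, for each finite $J$-group, parameters of the special shape $W_b^c(k,bn,cm)=J\begin{pmatrix}k&bn&cm\\1&n&m\end{pmatrix}$ with $n\wedge m=1$, which is genuine bookkeeping beyond merely recognising the relations $stu=tus=ust$ in a BMR presentation.
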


Theorem \ref{FiniteJGroup} suggests $J$-groups and $J$-reflection groups as candidates for analogs of Coxeter groups for complex reflection groups of rank two. We focus our study on $J$-reflection groups, for which group presentations by generators and relations are known.

\begin{theorem}[{\cite[Theorem 2.29]{VCRG}}]\label{GeneralPres}
Let $k,b,n,c,m\in \N^*$ be such that $k,bn,cm\geq 2$ and assume $n\wedge m=1$. Write $m=qn+r$ with $q\geq 0$ and $0\leq r\leq n-1$\footnote{Since $n\wedge m=1$, the case $r=0$ corresponds to $n=1$.}. Then $W_b^c(k,bn,cm)$ admits the following presentation:
\begin{equation}\label{GeneralPresW} 
\begin{aligned}
&(1) \,\, \mathrm{Generators}\!:\,  \{x_1,\dots,x_n,y,z\};\\
&(2) \,\, \mathrm{Relations}\!: \,\\
&x_i^k=y^b=z^c=1\, \forall i=1,\cdots,n, \\
  &  x_1\cdots x_nyz=zx_1\cdots x_ny,\\
       & x_{i+1}\cdots x_nyz\delta^{q-1}x_1\cdots x_{i+r}=x_i\cdots x_nyz\delta^{q-1}x_1\cdots x_{i+r-1}, \, \forall 1\leq i \leq n-r,\\
      &  x_{i+1}\cdots x_nyz\delta^qx_1\cdots x_{i+r-n}=x_i\cdots x_nyz\delta^qx_1\cdots x_{i+r-n-1},\, \forall n-r+1\leq i \leq n.
\end{aligned}
\end{equation}
where $\delta$ denotes $x_1\cdots x_ny$.\\
In terms of the canonical generators of $J(k,bn,cm)$, we have $x_i=t^{i-1}st^{1-i}$ for all $i\in [n]$, $y=t^n$ and $z=u^m$.
\end{theorem}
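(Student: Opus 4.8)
The plan is to realise $W:=W_b^c(k,bn,cm)$ as the kernel of an explicit surjection onto a finite cyclic group and to read off the presentation \eqref{GeneralPresW} by the Reidemeister--Schreier method, using a central element to drive the simplifications.

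First I would rewrite the relations of the parent group $J(k,bn,cm)$. The relations $stu=tus=ust$ are equivalent to $[s,tu]=1$ and $[st,u]=1$; from these it follows that $\delta_J:=stu$ commutes with $s$ and with $st$, hence with $t$ and with $u$, so that $\delta_J$ is central in $J(k,bn,cm)$. This central element is what controls the later bookkeeping: under the intended identification $x_i\mapsto t^{i-1}st^{1-i}$, $y\mapsto t^n$, $z\mapsto u^m$, a short computation using $t^ist^{-i}\cdot t=t\cdot t^{i-1}st^{-(i-1)}$ gives $\delta=x_1\cdots x_ny\mapsto (st)^n=\delta_J^{\,n}u^{-n}$, so $\delta$ and $z$ have commuting images and the central role of $\delta_J$ is transparent.

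Since $k'=1$, the subgroup $W=\llangle s,t^n,u^m\rrangle$ already contains $s$, so in $J/W$ the image of $s$ is trivial and the relation $stu=ust$ collapses to $[\bar t,\bar u]=1$; together with $\bar t^{\,n}=\bar u^{\,m}=1$ and $n\wedge m=1$ this shows $J/W\cong \Z/n\times\Z/m\cong\Z/(nm)$. Hence $W=\ker\phi$ for the quotient map $\phi$, with $\phi(s)=0$, $\phi(t)=(1,0)$ and $\phi(u)=(0,1)$. I would then take the prefix-closed Schreier transversal $T=\{t^au^b:0\le a<n,\ 0\le b<m\}$, which bijects with the cosets. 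Reidemeister--Schreier then produces Schreier generators $\gamma(t^au^b,\ast)$ for $\ast\in\{s,t,u\}$, among which the essential ones are exactly $x_{a+1}=\gamma(t^a,s)=t^ast^{-a}$ for $0\le a<n$, $y=\gamma(t^{n-1},t)=t^n$ and $z=\gamma(u^{m-1},u)=u^m$ --- recovering the stated expressions for $x_i,y,z$. Every remaining Schreier generator is either trivial (for instance $\gamma(t^a,t)=1$ when $a<n-1$) or eliminated by Tietze transformations: the identity $usu^{-1}=t^{-1}st$ (a restatement of $[s,tu]=1$) lets one trade a conjugation by $u$ for a conjugation by $t^{-1}$, and applying it repeatedly together with $[st,u]=1$ expresses each $u^bsu^{-b}$ through the $x_i,y,z$. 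The relators $s^k,t^{bn},u^{cm}$ rewrite to $x_i^k=1$, $y^b=1$, $z^c=1$, while the two commutator relators rewrite, after pushing all occurrences of $u$ past the powers of $t$ and invoking the centrality of $\delta_J$, to the braid relations displayed in \eqref{GeneralPresW}.

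The crux of the argument, and the step I expect to be the main obstacle, is the bookkeeping in this last rewriting. Conjugation by $u$ reduces to conjugation by $t^{-1}$ with period $n$ (because $t^n=y$), whereas the generator $z=u^m$ and the relator $u^{cm}$ sit at distance $m$ from the identity; reconciling the period $n$ with the step $m$ is exactly what forces the Euclidean split $m=qn+r$, the two index ranges $1\le i\le n-r$ and $n-r+1\le i\le n$, and the powers $\delta^{q-1}$ and $\delta^{q}$ appearing in \eqref{GeneralPresW}. I expect the delicate point to be checking that the Tietze-reduced relation set is \emph{exactly} the displayed one, uniformly in $q$ and $r$ --- retaining no redundant relation and omitting no consequence. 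Organising this reduction as an induction along the Euclidean data $m=qn+r$ (the same symmetry that later yields $\B_*^*(n,m)\cong\B_*^*(m,n)$) is the natural way to keep it under control; by contrast, well-definedness of the identification and surjectivity onto $W$ are routine once the centrality of $\delta_J$ and the identity $\delta\mapsto(st)^n$ are available.
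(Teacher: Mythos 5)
A preliminary remark on scope: the paper you are being compared against does not prove this statement at all — Theorem \ref{GeneralPres} is imported verbatim from \cite[Theorem 2.29]{VCRG}, so there is no internal proof to match, and your proposal must stand on its own. Its preparatory steps do check out: the centrality of $\delta_J=stu$ follows from $[s,tu]=[st,u]=1$ exactly as you say; the quotient $J(k,bn,cm)/W$ is indeed $\langle \bar t,\bar u \mid \bar t^n=\bar u^m=1,\ \bar t\bar u=\bar u\bar t\rangle\cong\Z/n\times\Z/m$ (setting $s=1$ collapses $stu=tus=ust$ to $[\bar t,\bar u]=1$), so $W=\llangle s,t^n,u^m\rrangle$ is the kernel of $\phi$; the transversal $\{t^au^b\}$ is prefix-closed; the identities $usu^{-1}=t^{-1}st$, $\delta\mapsto(st)^n=\delta_J^{\,n}u^{-n}$, and the identifications $x_{a+1}=t^ast^{-a}$, $y=t^n$, $z=u^m$ are all correct.

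The genuine gap is that the proposal stops exactly where the content of the theorem begins, and you say so yourself: the rewriting of the relators into the displayed relations is the step you ``expect to be the main obstacle,'' and it is never carried out. Concretely, Reidemeister--Schreier here produces $2nm+1$ Schreier generators and $5nm$ rewritten relators, to be reduced to $n+2$ generators and $2n+3$ relations; this reduction \emph{is} the theorem. Two specific omissions: (a) your list of ``essential'' generators understates the elimination problem — for instance $\gamma(t^au^{m-1},u)=t^au^mt^{-a}$ for every $0\le a<n$ and $\gamma(t^{n-1}u^b,t)=t^{n-1}u^btu^{-b}$ for every $0\le b<m$ must all be expressed through $x_1,\dots,x_n,y,z$, and it is precisely in collapsing the conjugates $t^au^mt^{-a}$ (via $u^m=(st)^{-m}\delta_J^{\,m}$ and the expansion of $(st)^m$ as a product of $x_i$'s with $y$ insertions governed by $m=qn+r$) that the relations with $\delta^{q-1}$ and $\delta^{q}$ over the split ranges $1\le i\le n-r$ and $n-r+1\le i\le n$ are created — this is not a routine Tietze cleanup but the source of the presentation; (b) the completeness direction — that after substituting the eliminated generators into all rewritten relators, every resulting relation is a consequence of the displayed ones, uniformly in $q$ and $r$ — is addressed only by the phrase ``I expect,'' with no indication of how the induction on the Euclidean data would actually be organized. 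Since \eqref{GeneralPresW} asserts an exact presentation, both directions of this bookkeeping are the substance of the proof, and as written the proposal is a credible strategy outline rather than a proof.
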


\begin{example}
In the case of $W_3^2(2,3,4)=J\begin{pmatrix} 2 & 3 & 4 \\ &  & 2 \end{pmatrix}\cong G_{15}$, Presentation \eqref{GeneralPresW} is

\begin{equation*}\left\langle
\begin{array}{l|cl}
& x_1^2=y^3=z^2=1, \\
x_1,y,z\, &\, x_1yz=zx_1y\\
\,& \,x_1yzx_1y=yzx_1yx_1
\end{array}
\right\rangle,\end{equation*}
which is the BMR presentation of $G_{15}$.
\end{example}

\begin{corollary}[{\cite[Corollary 2.38]{VCRG}}]\label{QuotientCircularToric}
Let $b,c\in \N^*$, $k,n,m\geq 2$ and assume $n\wedge m=1$. We have the following commutative square of quotients: 
\begin{equation}\label{CommSquareFINDANAME}\begin{tikzcd}
	{W_b^c(k,bn,cm)} & {W^c(k,n,cm)} \\
	{W_b(k,bn,m)} & {W(k,n,m)}
	\arrow["{y=1}", two heads, from=1-1, to=1-2]
	\arrow["{z=1}", two heads, from=1-2, to=2-2]
	\arrow["{z=1}"', two heads, from=1-1, to=2-1]
	\arrow["{y=1}"', two heads, from=2-1, to=2-2]
\end{tikzcd}\end{equation}

\end{corollary}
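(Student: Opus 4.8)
The plan is to read off presentations of all four groups directly from Theorem \ref{GeneralPres} and to observe that these presentations differ only in the order relations imposed on the generators $y$ and $z$. Indeed, each of the four groups is of the form $W_{b'}^{c'}(k,b'n,c'm)$ with $b'\in\{1,b\}$ and $c'\in\{1,c\}$: the top-left group is $(b',c')=(b,c)$, the top-right is $(1,c)$, the bottom-left is $(b,1)$, and the bottom-right is $(1,1)$. Since $k\geq 2$, $n\geq 2$, $m\geq 2$ forces $k,b'n,c'm\geq 2$ in every case, and since $n\wedge m=1$ throughout, Theorem \ref{GeneralPres} applies with the same reduced parameters $n$ and $m$ (hence the same $q$ and $r$ from $m=qn+r$) to all four groups. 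Consequently all four presentations share the generating set $\{x_1,\dots,x_n,y,z\}$ and, word for word, the same last three families of relations in \eqref{GeneralPresW} involving $\delta=x_1\cdots x_ny$. The only differences are the relations $y^{b'}=1$ and $z^{c'}=1$; in particular $b'=1$ reads $y=1$ and $c'=1$ reads $z=1$.

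With these presentations in hand, I would construct each arrow as a quotient by the normal closure of a generator. For the top horizontal map, imposing $y=1$ on the presentation of $W_b^c(k,bn,cm)$ makes $y^b=1$ redundant and leaves the $\delta$-relations untouched, so that the resulting presentation is exactly that of $W^c(k,n,cm)$; hence $W^c(k,n,cm)\cong W_b^c(k,bn,cm)/\llangle y\rrangle$, and the quotient map is the arrow labelled $y=1$. The other three arrows are obtained identically: $W_b(k,bn,m)\cong W_b^c(k,bn,cm)/\llangle z\rrangle$ (arrow $z=1$), $W(k,n,m)\cong W^c(k,n,cm)/\llangle z\rrangle$ (arrow $z=1$), and $W(k,n,m)\cong W_b(k,bn,m)/\llangle y\rrangle$ (arrow $y=1$), each time using that imposing $y=1$ (resp. $z=1$) renders $y^{b'}=1$ (resp. $z^{c'}=1$) redundant while preserving the $\delta$-relations verbatim. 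Each map is surjective because the generators $x_1,\dots,x_n$ and the surviving one of $y,z$ are in the image, and the killed generator is already trivial in the target.

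Finally I would verify commutativity. Both composites $W_b^c(k,bn,cm)\to W^c(k,n,cm)\to W(k,n,m)$ and $W_b^c(k,bn,cm)\to W_b(k,bn,m)\to W(k,n,m)$ are induced by the assignment $x_i\mapsto x_i$, $y\mapsto 1$, $z\mapsto 1$, so each is the canonical projection onto $W_b^c(k,bn,cm)/\llangle y,z\rrangle$. Imposing both $y=1$ and $z=1$ on the presentation of $W_b^c(k,bn,cm)$ yields precisely the presentation of $W(k,n,m)$ furnished by Theorem \ref{GeneralPres}, so this common quotient is $W(k,n,m)$ and the square commutes.

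The only point requiring care, rather than a genuine obstacle, is confirming that the three $\delta$-relation families in \eqref{GeneralPresW} are literally identical across the four presentations, so that they are automatically respected by each quotient map even though $y$ (or $z$) is sent to $1$. This is guaranteed because those relations are determined solely by $n$, $m$ (through $q$ and $r$) and the formal symbol $\delta=x_1\cdots x_ny$, none of which changes when $b$ or $c$ is set to $1$; the map preserves them regardless of the fact that $\delta$ itself becomes $x_1\cdots x_n$ in the quotient.
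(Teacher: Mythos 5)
Your proof is correct: since the paper itself imports this corollary from \cite[Corollary 2.38]{VCRG} without reproving it, the natural derivation is exactly the one you give, reading off from Theorem \ref{GeneralPres} that all four groups share the generators $\{x_1,\dots,x_n,y,z\}$ and the same $\delta$-relations (which depend only on $n,m$ through $q,r$), with only the torsion exponents $y^{b'}=1$, $z^{c'}=1$ varying, so that each arrow is the quotient by $\llangle y\rrangle$ or $\llangle z\rrangle$ and both composites coincide with the projection onto $W_b^c(k,bn,cm)/\llangle y,z\rrangle\cong W(k,n,m)$. Your closing remark correctly handles the only delicate point, namely that the $\delta$-relations are preserved verbatim under killing $y$ or $z$ even though $\delta$ itself degenerates.
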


In \cite[Tables 1-3]{BMR} Broué, Malle and Rouquier gave presentations by generators and relations for all rank two complex reflection groups, where generators are reflections. We call such presentations the \textbf{BMR presentations} for rank two complex reflection groups.
\begin{remark}[{\cite[Remark 2.44]{VCRG}}]\label{BMRpres1}
Whenever $W_b^c(k,bn,cm)$ is finite, Presentation \eqref{GeneralPresW} agrees with the BMR presentation.
\end{remark}

Generalising what happens for irreducible complex reflection groups of rank two, the center of $J$-reflection groups is cyclic, described as follows: 

\begin{theorem}[{\cite[Theorem 2.52 (2)]{VCRG}}]\label{CenterFINDANAME}
Let $k,b,n,c,m\in \N^*$ with $k,bn,cm\geq 2$ and assume $n\wedge m=1$. With the notation of Theorem \ref{GeneralPres}, let $\Delta$ be the element $(x_1\cdots x_ny)^mz^n\in W_b^c(k,bn,cm)$. The center of $W_b^c(k,bn,cm)$ is cyclic, generated by $\Delta$.
\end{theorem}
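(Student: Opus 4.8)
The plan is to realize $\Delta$ as a power of the central element $\rho := stu$ of the parent group $J(k,bn,cm)$, and then to pin down the center via a central extension over a von Dyck (triangle) group. First I would record that $\rho$ is central: the relations $stu=tus=ust$ give $s(tu)=(tu)s$, hence $s\rho=\rho s$, and symmetrically $t\rho=\rho t$ and $u\rho=\rho u$, so $\rho$ commutes with all canonical generators. Next, using $x_i=t^{i-1}st^{1-i}$ and $y=t^n$ from Theorem \ref{GeneralPres}, a one-line induction yields the telescoping identity $x_1\cdots x_n\,t^n=(st)^n$, so that $\delta=x_1\cdots x_n y=(st)^n$. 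Since $z=u^m$ and $st=\rho u^{-1}$, centrality of $\rho$ gives $(st)^{nm}=\rho^{nm}u^{-nm}$, whence
\[
\Delta=\delta^m z^n=(st)^{nm}u^{nm}=\rho^{nm}.
\]
Because $\rho$ is central in $J(k,bn,cm)$ and $\Delta=\delta^m z^n$ is manifestly a word in the generators $x_i,y,z$ of $W_b^c(k,bn,cm)$, the element $\Delta$ lies in $W_b^c(k,bn,cm)$ and is central there.

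To locate $\langle\Delta\rangle$ inside $\langle\rho\rangle$, I would compute the quotient $J(k,bn,cm)/W_b^c(k,bn,cm)$. As $W_b^c(k,bn,cm)=\llangle s,t^n,u^m\rrangle$, imposing $s=1$, $t^n=1$, $u^m=1$ collapses $stu=tus=ust$ to $tu=ut$, so the quotient is $\langle t,u\mid t^n=u^m=1,\,tu=ut\rangle\cong \Z/n\times\Z/m\cong\Z/nm$ (using $n\wedge m=1$). The image of $\rho=stu$ is $tu$, a generator of $\Z/nm$, hence of order exactly $nm$; therefore $\rho^j\in W_b^c(k,bn,cm)$ iff $nm\mid j$, i.e. $\langle\rho\rangle\cap W_b^c(k,bn,cm)=\langle\rho^{nm}\rangle=\langle\Delta\rangle$.

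For cyclicity and generation I would use the central extension $1\to\langle\rho\rangle\to J(k,bn,cm)\xrightarrow{\pi} D\to 1$, where $D=\langle s,t\mid s^k=t^{bn}=(st)^{cm}=1\rangle$ is the von Dyck group obtained by setting $\rho=1$ (so $u=(st)^{-1}$). Restricting $\pi$ to $W_b^c(k,bn,cm)$ and invoking the previous paragraph yields a central extension $1\to\langle\Delta\rangle\to W_b^c(k,bn,cm)\xrightarrow{\pi}\pi\bigl(W_b^c(k,bn,cm)\bigr)\to 1$. Any $g\in Z(W_b^c(k,bn,cm))$ has $\pi(g)\in Z\bigl(\pi(W_b^c(k,bn,cm))\bigr)$, so it is enough to show that this latter center is trivial: then $g\in\ker\pi=\langle\Delta\rangle$, giving $Z(W_b^c(k,bn,cm))=\langle\Delta\rangle$, cyclic. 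In the finite case $W_b^c(k,bn,cm)$ is an irreducible complex reflection group of rank two by Theorem \ref{FiniteJGroup}, so its center is cyclic by Schur's lemma and one only needs to identify the generator with $\Delta$. When the group is infinite and $D$ is of hyperbolic type, $\pi(W_b^c(k,bn,cm))$ is an infinite normal subgroup of a non-elementary Fuchsian group, hence centerless, and the argument closes.

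The hard part will be the Euclidean regime, where $D$ is virtually $\Z^2$ and $\pi(W_b^c(k,bn,cm))$ may itself be abelian, so that $Z(\pi(W_b^c(k,bn,cm)))$ need not vanish. There I would argue directly that the central extension $1\to\langle\Delta\rangle\to W_b^c(k,bn,cm)\to\pi(W_b^c(k,bn,cm))\to 1$ is non-degenerate, i.e. the induced commutator pairing on $\pi(W_b^c(k,bn,cm))$ has trivial radical, so that no central element survives outside $\langle\Delta\rangle$; concretely one checks on Presentation \eqref{GeneralPresW} that no word lying outside $\langle\Delta\rangle$ commutes with every generator. This degenerate/abelian regime, together with matching $\Delta$ to the scalar generator of the center in the finite case, is where the genuine work lies; the centrality of $\Delta$ and the intersection computation $\langle\rho\rangle\cap W_b^c(k,bn,cm)=\langle\Delta\rangle$ are comparatively routine.
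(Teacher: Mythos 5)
This theorem is not proved in the present paper at all: it is imported verbatim from \cite[Theorem 2.52 (2)]{VCRG}, so your proposal can only be measured against that external proof's task, not against an argument in this text. Your reduction skeleton is sound, and its routine steps check out: the telescoping $x_1\cdots x_i=(st)^it^{-i}$ gives $\delta=(st)^n$, hence $\Delta=(stu)^{nm}$ with $\rho=stu$ central in $J(k,bn,cm)$; killing $s,t^n,u^m$ does collapse the braid-type relations to $tu=ut$, so $J(k,bn,cm)/W_b^c(k,bn,cm)\cong\Z/n\times\Z/m\cong\Z/nm$ with $\rho\mapsto tu$ of order exactly $nm$, whence $\langle\rho\rangle\cap W_b^c(k,bn,cm)=\langle\Delta\rangle$; and since $\rho$ is central, the kernel of $J(k,bn,cm)\to D$ is exactly $\langle\rho\rangle$, so the hyperbolic regime closes correctly (an infinite normal subgroup of a non-elementary cocompact Fuchsian group is non-elementary, hence centerless, and $\pi(W_b^c(k,bn,cm))$ is infinite because it has index $nm$).

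The genuine gaps are the two regimes you yourself flag, and they are exactly where the theorem's content sits. In the spherical case, Schur's lemma (even granting it applies) yields only that $Z(W)$ is cyclic; the theorem asserts $Z(W)=\langle\Delta\rangle$ on the nose, and since $\langle\Delta\rangle$ is a priori a proper subgroup of the center, one must compare the order of $(stu)^{nm}$ with $|Z(W)|$ across the rank-two classification — a concrete case check you do not perform. Moreover your appeal to Schur silently assumes irreducibility, which Theorem \ref{FiniteJGroup} as stated does not supply: ``complex reflection group of rank two'' includes reducible groups, for which the scalar-center argument is unavailable, so even cyclicity is not free in every finite case. In the Euclidean regime the proposal is circular: ``one checks on Presentation \eqref{GeneralPresW} that no word lying outside $\langle\Delta\rangle$ commutes with every generator'' is a restatement of the inclusion $Z(W)\subseteq\langle\Delta\rangle$, not a method for proving it, and the commutator-pairing reformulation still requires computing that pairing on $\pi(W)$ — which, as you note, may be abelian, so the image-of-center trick genuinely fails there and nothing is offered in its place. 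In short: a correct and standard skeleton (centrality of $\Delta=(stu)^{nm}$, the index-$nm$ computation, and the von Dyck quotient are all right, and this is the natural route one would expect \cite{VCRG} and \cite{Gobet Toric} to follow), but the spherical identification and the entire Euclidean case remain unproved, so as it stands this is a program rather than a proof.
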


\begin{corollary}[{\cite[Corollary 2.53]{VCRG} and \cite[Theorem 3.3 (2)]{Gobet Toric}}]\label{CenterSpecialCases}
Specialising Theorem \ref{CenterFINDANAME} when $b$ or $c$ is equal to $1$ gives the following description of the center:\\
(i) If $m\geq 2$, the center of $W_b(k,bn,m)$ is $\langle (x_1\cdots x_ny)^m\rangle$.\\
(ii) If $n\geq 2$, the center of $W^c(k,n,cm)$ is $\langle (x_1\cdots x_n)^mz^n\rangle$.\\
(iii) If $n,m\geq 2$, the center of $W(k,n,m)$ is $\langle (x_1\cdots x_n)^m\rangle$.
\end{corollary}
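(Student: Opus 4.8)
The plan is to obtain all three statements as immediate specializations of Theorem \ref{CenterFINDANAME}, using the fact that each of the groups in question is itself a $J$-reflection group: by the Notation above, $W_b(k,bn,m)=W_b^1(k,bn,m)$ is the case $c=1$, $W^c(k,n,cm)=W_1^c(k,n,cm)$ is the case $b=1$, and $W(k,n,m)=W_1^1(k,n,m)$ is the case $b=c=1$. The crucial point is how the central generator $\Delta=(x_1\cdots x_ny)^mz^n$ of Theorem \ref{CenterFINDANAME} simplifies under these specializations, using the torsion relations $y^b=z^c=1$ appearing in the presentation of Theorem \ref{GeneralPres}.

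For part (i) I would apply Theorem \ref{CenterFINDANAME} with $c=1$. When $c=1$ the relation $z^c=1$ becomes $z=1$, so $z^n=1$ and $\Delta=(x_1\cdots x_ny)^m$; hence the center is $\langle(x_1\cdots x_ny)^m\rangle$. For part (ii) I would take $b=1$, so that $y^b=1$ forces $y=1$ and $\Delta=(x_1\cdots x_n)^mz^n$. For part (iii) I would take $b=c=1$, so that both $y=1$ and $z=1$ and $\Delta=(x_1\cdots x_n)^m$. In each case the conclusion of Theorem \ref{CenterFINDANAME} that the center is cyclic and generated by $\Delta$ transfers verbatim, once $\Delta$ has been rewritten in its reduced form.

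I do not expect a genuine obstacle here, as the argument is essentially substitution into the formula of Theorem \ref{CenterFINDANAME}; the only point requiring attention is checking that the nondegeneracy hypothesis $k,bn,cm\geq 2$ of that theorem still holds after each specialization. This is precisely the role of the extra assumptions in the statement: setting $c=1$ forces $cm=m$, so one must assume $m\geq 2$ (part (i)); setting $b=1$ forces $bn=n$, so one must assume $n\geq 2$ (part (ii)); and setting $b=c=1$ requires both $n\geq 2$ and $m\geq 2$ (part (iii)). With these hypotheses in place the three descriptions of the center follow directly.
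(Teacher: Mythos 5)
Your proposal is correct and matches the paper's intended argument exactly: the corollary is obtained by specialising Theorem \ref{CenterFINDANAME}, noting that the torsion relations $y^b=1$, $z^c=1$ of Presentation \eqref{GeneralPresW} kill $y$ (when $b=1$) or $z$ (when $c=1$), so that $\Delta=(x_1\cdots x_ny)^mz^n$ reduces to the stated generators. Your remark on the hypotheses $m\geq 2$, $n\geq 2$ ensuring $k,bn,cm\geq 2$ after specialisation is also the right (and only) point needing care.
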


In \cite{Gobet Toric}, Gobet defines a notion of generalised reflection for $J$-groups and the corresponding notion of reflection isomorphism. When the $J$-group is finite, the set of its generalised reflections agrees with the set of its complex reflections. This makes $J$-reflection groups not only an algebraic generalisation of complex reflection groups of rank two, but a generalisation as (abstract) reflection groups. 

\begin{definition}[{\cite[Definition 2.2]{Gobet Toric}}]\label{reflections}
Let $H:=J\begin{pmatrix} k & n & m\\ k' & n'& m' \end{pmatrix}$ be a $J$-group. Define the set $R(H)$ of \textbf{algebraic reflections} of $H$ to be the set of conjugates in $J(k,n,m)$ of non-trivial powers of $s^{k'},t^{n'}$ or $u^{m'}$.
\end{definition}
\begin{lemma}[{\cite[(Proof of) Lemma 2.5]{Gobet Toric}}]\label{LemmaOrderIntro}
The order of $s,t$ and $u$ in $J(k,n,m)$ are respectively $k,n$ and $m$.
\end{lemma}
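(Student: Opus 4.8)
The plan is to bound the order of each canonical generator from below by passing to the abelianization of $J(k,n,m)$, and from above by the defining torsion relations. First I would note that the relations $s^k=t^n=u^m=1$ in $J(k,n,m)$ immediately force the order of $s$ (resp.\ $t$, $u$) to divide $k$ (resp.\ $n$, $m$). It therefore remains to exhibit, for each generator, a quotient of $J(k,n,m)$ in which its image already has order equal to the full value $k$ (resp.\ $n$, $m$); combined with the divisibility just observed, this pins the order down exactly.

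The key observation is that the relations $stu=tus=ust$ become vacuous in any abelian quotient: writing the group additively, each of them reads $s+t+u=s+t+u$. Hence the abelianization $J(k,n,m)^{\mathrm{ab}}$ is presented by the generators $s,t,u$ subject only to $ks=nt=mu=0$ together with commutativity, so that
\[
J(k,n,m)^{\mathrm{ab}} \;\cong\; \Z/k\Z \oplus \Z/n\Z \oplus \Z/m\Z,
\]
with $s,t,u$ mapping to the three standard generators. In this group the image of $s$ has order exactly $k$, and likewise the images of $t$ and $u$ have orders exactly $n$ and $m$.

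To conclude, I would invoke the elementary fact that a group homomorphism can only divide the order of an element of finite order: if $\pi\colon J(k,n,m)\onto J(k,n,m)^{\mathrm{ab}}$ denotes the canonical projection, then $\ord(\pi(s))$ divides $\ord(s)$. Since $\ord(\pi(s))=k$ and $\ord(s)\mid k$, both must equal $k$; the same argument applied to $t$ and $u$ completes the proof. There is essentially no real obstacle here beyond tracking the direction of divisibility correctly: the abelianization supplies the lower bound $k\mid\ord(s)$, while the defining relation $s^k=1$ supplies the matching upper bound $\ord(s)\mid k$, and it is the combination of the two — rather than either one alone — that yields the exact value.
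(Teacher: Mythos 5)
Your proof is correct: the torsion relations give $\ord(s)\mid k$, and since the relations $stu=tus=ust$ die in any abelian quotient, the abelianization of $J(k,n,m)$ is indeed $\Z/k\oplus\Z/n\oplus\Z/m$, which forces $k\mid\ord(s)$ (and similarly for $t$ and $u$). This is essentially the same argument as in the cited proof of Gobet, which also obtains the lower bound by projecting onto an abelian quotient where the generators visibly have orders exactly $k$, $n$ and $m$.
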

\begin{remark}\label{PresRef}
Combining Theorem \ref{GeneralPres} and Lemma \ref{LemmaOrderIntro}, the (non trivial\footnote{The generators $y$ and $z$ can be trivial if $b$ or $c$ are equal to 1.}) generators of Presentation \eqref{GeneralPresW} are algebraic reflections.
\end{remark}

\begin{definition}[\texorpdfstring{\cite[Definition 2.3]{Gobet Toric}}{}]\label{DefReflectionIso}
Let $H_1$, $H_2$ be two $J$-groups. A group homomorphism $H_1\xto\varphi H_2$ is a \textbf{reflection morphism} if $\varphi(R(H_1))\subset R(H_2)\cup\{1_{H_2}\}$. Moreover, the groups $H_1$ and $H_2$ are said to be \textbf{isomorphic in reflection} if there exists a group isomorphism $H_1\to H_2$ sending $R(H_1)$ onto $R(H_2)$. In this case, we write $$H_1\cong_{ref} H_2.$$
\end{definition}

\begin{remark}[{\cite[Remark 2.4]{Gobet Toric}}]\label{isopermute}
The reflection isomorphism type of $J\begin{pmatrix} k & n & m\\ k'& n' &m' \end{pmatrix}$ is invariant under column permutations, thus if two $J$-groups $H_1$ and $H_2$ are such that $C(H_1)=C(H_2)$, they are isomorphic in reflection.
\end{remark}

We describe this isomorphism explicitly for $J$-reflection groups in terms of Presentation \eqref{GeneralPresW}. Let then $k,b,n,c,m\in \N^*$ with $k,bn,cm\geq 2$ and $n\wedge m=1$. With the notation of Theorem \ref{GeneralPres}, write $a_1,\dots,a_m,p,q$ for the elements of $W_c^b(k,cm,bn)$ corresponding to $x_1,\dots,x_n,y,z$ in Presentation \eqref{GeneralPresW}. Writing $f$ for the induced isomorphism $W_b^c(k,bn,cm)\to W_c^b(k,cm,bn)$, we have the following:

\begin{proposition}[{\cite[Proposition 4.5]{VCRG}}]\label{prop1}
For all $i\in [n]$, writing $i-1=g_im+h_i$ the euclidean division of $i-1$ by $m$ we have 
\begin{equation}
f(x_i)=((a_1\cdots a_mp)^{g_i}a_1\cdots a_{h_i})a_{h_i+1}^{-1}((a_1\cdots a_mp)^{g_i}a_1\cdots a_{h_i})^{-1}.
\end{equation}

Moreover, we have
\begin{equation}
f(y)=q^{-1}, \, f(z)=p^{-1}.
\end{equation}
\end{proposition}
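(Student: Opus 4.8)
The plan is to realise $f$ as the restriction of a single explicit reflection isomorphism between the two \emph{parent} groups, and only then to translate through the expressions for the generators in terms of canonical generators supplied by Theorem \ref{GeneralPres}. Write $s,t,u$ for the canonical generators of $J(k,bn,cm)$ (of orders $k,bn,cm$) and $\bar s,\bar t,\bar u$ for those of $J(k,cm,bn)$ (of orders $k,cm,bn$). The column multisets of $W_b^c(k,bn,cm)$ and $W_c^b(k,cm,bn)$ agree up to the transposition of their last two columns, so Remark \ref{isopermute} already guarantees a reflection isomorphism; the first task is to pin it down on canonical generators. I claim that $s\mapsto \bar s^{-1}$, $t\mapsto \bar u^{-1}$, $u\mapsto \bar t^{-1}$ extends to a homomorphism $\Phi\colon J(k,bn,cm)\to J(k,cm,bn)$.

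Verifying well-definedness is the first and conceptually most delicate step. The order relations are immediate because $\bar u$ has order $bn$ and $\bar t$ order $cm$. The point requiring care is the braided relation: applying $\Phi$ to $stu=tus=ust$ produces $\bar s^{-1}\bar u^{-1}\bar t^{-1}=\bar u^{-1}\bar t^{-1}\bar s^{-1}=\bar t^{-1}\bar s^{-1}\bar u^{-1}$, which is exactly the inverse of the chain $\bar t\bar u\bar s=\bar s\bar t\bar u=\bar u\bar s\bar t$ holding in $J(k,cm,bn)$. Thus the cyclic relation is respected precisely because inverting a product reverses the cyclic order, and this is why the inverses in the definition of $\Phi$ are forced rather than optional. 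Building the obvious symmetric candidate for $\Phi^{-1}$ shows $\Phi$ is an isomorphism, and since each canonical generator maps to a power of a canonical generator it is a reflection isomorphism. As $\Phi(s)=\bar s^{-1}$, $\Phi(t^n)=\bar u^{-n}$ and $\Phi(u^m)=\bar t^{-m}$ all lie in $W_c^b(k,cm,bn)=\llangle \bar s,\bar t^m,\bar u^n\rrangle$, normality of this subgroup lets $\Phi$ carry $W_b^c(k,bn,cm)=\llangle s,t^n,u^m\rrangle$ into it; the same argument for $\Phi^{-1}$ shows the restriction $f:=\Phi|$ is the desired isomorphism.

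It then remains to compute $f$ on the generators of Presentation \eqref{GeneralPresW}. Using $y=t^n$, $z=u^m$ and the analogues $p=\bar t^m$, $q=\bar u^n$ for the target, one reads off $f(y)=(\bar u^n)^{-1}=q^{-1}$ and $f(z)=(\bar t^m)^{-1}=p^{-1}$ at once. For $x_i=t^{i-1}st^{1-i}$ one gets $f(x_i)=\bar u^{1-i}\bar s^{-1}\bar u^{i-1}$, so the real content is to match this conjugate of $\bar s^{-1}$ with the displayed product in the $a_j$ and $p$. Here I would use the telescoping identity $a_1\cdots a_\ell=(\bar s\bar t)^{\ell-1}\bar s\,\bar t^{-(\ell-1)}$ (immediate from $a_j=\bar t^{j-1}\bar s\bar t^{1-j}$), together with the clean consequence $a_1\cdots a_m p=(\bar s\bar t)^m$. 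Writing $i-1=g_im+h_i$, these collapse the conjugating factor to $(a_1\cdots a_m p)^{g_i}a_1\cdots a_{h_i}=(\bar s\bar t)^{g_im+h_i-1}\bar s\,\bar t^{-(h_i-1)}$ when $h_i\geq 1$, after which a short cancellation identifies the displayed element with $(\bar s\bar t)^{i-1}\bar s^{-1}(\bar s\bar t)^{-(i-1)}$.

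The final step, which I expect to be the main obstacle, is to reconcile $(\bar s\bar t)^{i-1}\bar s^{-1}(\bar s\bar t)^{-(i-1)}$ with $\bar u^{1-i}\bar s^{-1}\bar u^{i-1}$, i.e. to trade conjugation by $(\bar s\bar t)^{i-1}$ for conjugation by $\bar u^{i-1}$. The key structural facts are that $c:=\bar s\bar t\bar u$ is central in $J(k,cm,bn)$ (which follows directly from $\bar s\bar t\bar u=\bar t\bar u\bar s=\bar u\bar s\bar t$ by conjugating $c$ in turn by each generator) and that consequently $\bar u$ commutes with $\bar s\bar t=c\bar u^{-1}$. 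Hence $(\bar s\bar t)^{i-1}=c^{i-1}\bar u^{-(i-1)}$, and since $c^{i-1}$ is central it drops out of the conjugation, leaving exactly $\bar u^{1-i}\bar s^{-1}\bar u^{i-1}=f(x_i)$. I anticipate that the bookkeeping around the euclidean division, including the degenerate case $h_i=0$ where the empty-product convention must be checked, is the only genuinely fiddly part; the conceptual crux is the centrality of $\bar s\bar t\bar u$, which is precisely what forces the two a priori different conjugating elements to agree.
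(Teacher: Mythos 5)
This proposition is imported verbatim from \cite[Proposition 4.5]{VCRG}; the present paper gives no proof of it, so there is no in-paper argument to compare against. Your proof is correct and self-contained. The two genuinely load-bearing points both check out: (1) the assignment $s\mapsto \bar s^{-1}$, $t\mapsto \bar u^{-1}$, $u\mapsto \bar t^{-1}$ respects $stu=tus=ust$ precisely because inversion reverses the cyclic order, turning the image relations into the inverses of $\bar s\bar t\bar u=\bar t\bar u\bar s=\bar u\bar s\bar t$, and the obvious symmetric map is a two-sided inverse, so $\Phi$ is an isomorphism restricting (by normality of $W_c^b(k,cm,bn)=\llangle \bar s,\bar t^m,\bar u^n\rrangle$ and surjectivity of $\Phi$) to an isomorphism of the $J$-reflection subgroups; (2) the identification of the conjugator works, since $a_1\cdots a_\ell=(\bar s\bar t)^{\ell}\bar t^{-\ell}$ gives $(a_1\cdots a_mp)^{g_i}a_1\cdots a_{h_i}=(\bar s\bar t)^{i-1}\bar t^{-h_i}$, the $\bar t^{\pm h_i}$ factors cancel against $a_{h_i+1}^{-1}=\bar t^{h_i}\bar s^{-1}\bar t^{-h_i}$, and centrality of $\bar s\bar t\bar u$ (exactly as the paper uses centrality of $stu$ in $G(3,3)$, cf.\ Remark \ref{rem1}) converts conjugation by $(\bar s\bar t)^{i-1}=c^{i-1}\bar u^{1-i}$ into conjugation by $\bar u^{1-i}$, matching $f(x_i)=\bar u^{1-i}\bar s^{-1}\bar u^{i-1}$. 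One small remark: the case $h_i=0$ you flag as needing care is in fact handled uniformly by your telescoping formula, since $a_1\cdots a_{h_i}=(\bar s\bar t)^{h_i}\bar t^{-h_i}$ is the identity for $h_i=0$ and the middle term is then just $a_1^{-1}=\bar s^{-1}$; and note that well-definedness only needs $\bar u^{bn}=\bar t^{cm}=1$ from the defining relations, not the exact orders of Lemma \ref{LemmaOrderIntro}.
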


The braid group of a complex reflection group $W$ is defined as the fundamental group of the so called regular orbit-space of $W$ (see \cite[Section 2.B]{BMR} for a precise definition). It follows immediately from the definition that whenever two complex reflection groups are isomorphic in reflection, their braid groups are isomorphic. Thus, when we define (combinatorially) braid groups associated to $J$-reflection groups, a desirable asset is that two groups that are isomorphic in reflection share the same braid group. In order to do so, a classification of $J$-reflection groups up to reflection isomorphisms is useful, and was done in \cite{VCRG}:

\begin{theorem}[{\cite[Theorem 3.11]{VCRG}}]\label{Classification3}
The $J$-reflection groups $W_1$ and $W_2$ are isomorphic in reflection if and only if $C(W_1)=C(W_2)$.
\end{theorem}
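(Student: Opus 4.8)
The backward implication is essentially free: if $C(W_1)=C(W_2)$, then by Remark \ref{isopermute} the two $J$-groups are isomorphic in reflection, i.e. $W_1\cong_{ref}W_2$. So all the content lies in the forward direction. Write $W_i=W_{b_i}^{c_i}(k_i,b_in_i,c_im_i)$, so that $C(W_i)=\col{k_i}{1}{b_in_i}{n_i}{c_im_i}{m_i}$, and fix a reflection isomorphism $\varphi\colon W_1\to W_2$. The plan is to show that the multiset of columns is an invariant of $\varphi$.

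First I would isolate the reflection-theoretic invariants that $\varphi$ preserves. Since $\varphi$ is a group isomorphism carrying $R(W_1)$ bijectively onto $R(W_2)$ (Definition \ref{DefReflectionIso}), it preserves: the order of each reflection; the partition of reflections into \emph{reflecting hyperplanes}, where two reflections are declared equivalent when they generate the same maximal cyclic subgroup of reflections; and the conjugacy classes of such hyperplanes. By Lemma \ref{LemmaOrderIntro} together with the torsion relations of Presentation \eqref{GeneralPresW}, the reflections of $W_i$ fall into at most three conjugacy classes of reflecting hyperplanes, in bijection with the nontrivial canonical generators $s,t^{n_i},u^{m_i}$, whose reflections have orders $k_i,b_i,c_i$ respectively; these are exactly the ratios $x/x'$ attached to the columns $\begin{pmatrix}x\\x'\end{pmatrix}$ of $C(W_i)$. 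Thus $\varphi$ already matches the number of nontrivial columns (the number of conjugacy classes of reflecting hyperplanes) and the multiset of reflection orders $\{k_i,b_i,c_i\}$.

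The crux is to recover the \emph{numerators}, namely the coprime pair $(n_i,m_i)$ and its pairing with the reflection orders $b_i,c_i$; this is invisible to reflection orders alone, as the abelianization of $W_i$ is $\Z/k_i\times\Z/b_i\times\Z/c_i$ and already forgets $n_i,m_i$. For this I would exploit the canonical toric quotient of Corollary \ref{QuotientCircularToric}: killing the $t$- and $u$-classes (setting $y=z=1$) produces the toric reflection group $W(k_i,n_i,m_i)$. The subgroup normally generated by one conjugacy class of reflecting hyperplanes being reflection-intrinsic, $\varphi$ should descend to a reflection isomorphism $W(k_1,n_1,m_1)\cong_{ref}W(k_2,n_2,m_2)$, after which Gobet's classification of toric reflection groups up to reflection isomorphism (\cite{Gobet Toric}) forces $k_1=k_2$ and $\{n_1,m_1\}=\{n_2,m_2\}$. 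To pin down the pairing of $n_i$ with $b_i$ (resp. $m_i$ with $c_i$), I would count the reflecting hyperplanes inside each class, using the explicit description of reflections as conjugates of powers of $s,t^{n_i},u^{m_i}$ in the parent group $J(k_i,b_in_i,c_im_i)$; this cardinality depends on the relevant numerator and, together with the reflection order of the class, reconstructs the full column $\begin{pmatrix}b_in_i\\n_i\end{pmatrix}$ (resp. $\begin{pmatrix}c_im_i\\m_i\end{pmatrix}$). Assembling the three columns yields $C(W_1)=C(W_2)$.

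The hardest part is exactly this last step: controlling conjugacy of reflections in the infinite parent $J(k,bn,cm)$ precisely enough to (a) verify that the subgroup normally generated by one class of reflecting hyperplanes is intrinsic and that the induced quotient map is again a reflection morphism, so that the descent to the toric quotient is legitimate, and (b) compute the number of reflecting hyperplanes per class and show it determines the numerator. I would treat the degenerate cases where some $b_i$ or $c_i$ equals $1$ (so there are only one or two classes of reflecting hyperplanes, as in the toric and two-class situations) separately and first, since there the pairing ambiguity disappears and Gobet's toric classification, supplemented by Theorem \ref{FiniteJGroup} in the finite subcases, applies most directly.
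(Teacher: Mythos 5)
You should first be aware that this paper does not prove Theorem \ref{Classification3} at all: it is imported verbatim from \cite[Theorem 3.11]{VCRG}, so there is no in-paper proof to match your attempt against, and your proposal has to stand on its own. Judged that way, your backward direction (via Remark \ref{isopermute}) and your preliminary reductions (invariance of the number of conjugacy classes of reflecting hyperplanes and of the multiset of reflection orders $\{k_i,b_i,c_i\}$, which are the column ratios) are correct. But the forward direction has two genuine gaps. First, the descent to toric quotients is not justified: a reflection isomorphism $\varphi\colon W_1\to W_2$ permutes conjugacy classes of reflecting hyperplanes, but nothing forces it to carry $\{[y]_{\mathfrak c},[z]_{\mathfrak c}\}$ of $W_1$ onto $\{[y]_{\mathfrak c},[z]_{\mathfrak c}\}$ of $W_2$, since distinct classes can share the same reflection order (e.g.\ $k_2=b_1$). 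If the classes are cross-matched, the induced isomorphism relates $W(k_1,n_1,m_1)$ not to $W(k_2,n_2,m_2)$ but to the quotient of $W_2$ by the normal closure of $[x_1]_{\mathfrak c}$ and one further class; and that quotient is far from toric — setting $s=1$ in $J(k,bn,cm)$ turns $stu=tus=ust$ into $tu=ut$, so killing the class of the $x_i$'s collapses the group to an abelian one. Ruling out (or exploiting) such cross-matchings is a real step that your outline asserts rather than proves.

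Second, and more seriously, your mechanism for recovering the numerators fails. In an infinite $J$-reflection group every conjugacy class of reflecting hyperplanes contains infinitely many hyperplanes, so ``the number of reflecting hyperplanes per class'' is a countably infinite cardinal and cannot distinguish $n_i$ from $m_i$. Moreover, every invariant you actually marshal is blind to the crucial pairing: take $b=2$, $c=3$ and coprime $n,m\geq 2$ with $6\mid m-n$, say $(n,m)=(5,11)$. Then $W_2^3(k,10,33)$ and $W_2^3(k,22,15)$ have the same number of classes, the same multiset of reflection orders $\{k,2,3\}$, the same abelianization $\Z/k\times\Z/2\times\Z/3$, the same toric quotient $W(k,5,11)$, and even the same image of the central generator $\Delta=(x_1\cdots x_ny)^mz^n$ in the abelianization (namely $(55,1,2)$ in both cases), yet their column multisets $\{(k,1),(10,5),(33,11)\}$ and $\{(k,1),(22,11),(15,5)\}$ differ — so by the very theorem they are not isomorphic in reflection. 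Hence the pairing of $n_i$ with $b_i$, which you correctly identify as the crux, cannot be extracted from Gobet's toric classification plus counting; one needs finer data, e.g.\ the single-class quotients $W^{c}(k,n,cm)$ and $W_b(k,bn,m)$ of Corollary \ref{QuotientCircularToric} together with a classification of one- and two-class $J$-reflection groups (and the same example with $c=1$ shows the two-class case is itself nontrivial, contrary to your suggestion that the degenerate cases follow ``most directly''). This is in effect the content of the cited proof in \cite{VCRG}, and it is missing from your outline.
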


The last concept we introduce in this subsection is that of conjugacy classes of reflecting hyperplanes. We will see in the next section that the number of conjugacy classes of reflecting hyperplanes of $J$-reflection groups partly dictates the isomorphism type of their associated braid groups.

\begin{definition}[{\cite[Definition 2.7]{Gobet Toric} and \cite[Definition 3.1]{VCRG}}]\label{DefReflections}
Let $H$ be a $J$-group. Let $\sim$ be the equivalence relation on $R(H)$ generated by the relations $r^a\sim r^b$ for all $1\leq a,b<o(r)$, $r\in R(H)$ and write $[r]$ for the equivalence class of $r$ in $R(H)$. Now, define the set $\mathcal H(H)$ of \textbf{reflecting hyperplanes} of $H$ to be $\{[r]\}_{r\in R(H)}$.\\
Finally, for $r\in R(H)$, define $[r]_\mathfrak c$ as $\cup_{h\in H}[hrh^{-1}]$ and write $\mathcal H_\mathfrak c(H)$ for $\{[r]_\mathfrak c\}_{r\in R(H)}$. The elements of $\mathcal H_\mathfrak c(H)$ are called \textbf{conjugacy classes of reflecting hyperplanes}
\end{definition}

\begin{proposition}[{\cite[Proposition 3.3 and Corollary 3.5]{VCRG}}]\label{NumberConj}
The number of conjugacy classes of reflecting hyperplanes of the $J$-reflection group $W_b^c(k,bn,cm)$ is $3-\delta_{1,b}-\delta_{1,c}$. More precisely, with the notation of Theorem \ref{GeneralPres} the set $\mathcal H_\mathfrak c(W_b^c(k,bn,cm))$ is equal to $\{[x_1]_\mathfrak c,[y]_\mathfrak c,[z]_\mathfrak c\}\backslash \{\emptyset\}$, where we define $[1]_\mathfrak c$ to be $\emptyset$. Moreover, the number of conjugacy classes of reflecting hyperplanes of $J$-reflection groups is invariant under reflection isomorphisms.
\end{proposition}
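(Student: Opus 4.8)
The plan is to prove the three assertions of the proposition separately: that $\{[x_1]_\mathfrak c,[y]_\mathfrak c,[z]_\mathfrak c\}$ exhausts $\mathcal H_\mathfrak c(W)$, that these three classes are pairwise distinct precisely when nonempty (which then pins down the cardinality), and that the count is a reflection‑isomorphism invariant. Throughout I write $W=W_b^c(k,bn,cm)=\llangle s,t^n,u^m\rrangle_{J(k,bn,cm)}$ and use the identifications $x_1=s$, $y=t^n$, $z=u^m$ coming from Theorem \ref{GeneralPres}; by Lemma \ref{LemmaOrderIntro} the orders of $x_1,y,z$ are $k,b,c$ respectively. In particular $x_1$ is always a genuine reflection, whereas $y=1\iff b=1$ and $z=1\iff c=1$, so that $[y]_\mathfrak c=\emptyset\iff b=1$ and $[z]_\mathfrak c=\emptyset\iff c=1$. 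Since $s=s^{k'}$, $t^n=t^{n'}$ and $u^m=u^{m'}$ are generating reflections of $W$ (when nontrivial), the three classes do lie in $\mathcal H_\mathfrak c(W)$, which gives one inclusion; the content is the reverse one.

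The heart of the matter is the exhaustion step. By Definition \ref{reflections} every element of $R(W)$ is a $J$-conjugate $g s^a g^{-1}$ of a nontrivial power of $s$ (or of $t^n$, $u^m$); since $o(gsg^{-1})=k$ and $1\le a<k$, such an element is $\sim$‑equivalent to $gsg^{-1}$, so it suffices to prove that $gsg^{-1}\in[x_1]_\mathfrak c$ for every $g\in J$, and likewise $g t^n g^{-1}\in[y]_\mathfrak c$, $g u^m g^{-1}\in[z]_\mathfrak c$. Two ingredients make this manageable. First, the element $\zeta=stu=tus=ust$ is central in $J$ (a direct check from the defining relations shows $\zeta$ commutes with each of $s,t,u$); since $u=t^{-1}s^{-1}\zeta$ and $t=s^{-1}\zeta u^{-1}$, we get $J=\langle s,t\rangle\langle\zeta\rangle=\langle s,u\rangle\langle\zeta\rangle$, and as $\zeta$ is central, conjugation by an arbitrary $g\in J$ reduces to conjugation by a word in $s,t$ only, or in $s,u$ only, whichever is convenient. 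Second, the $i$‑th braid‑type relation of Presentation \eqref{GeneralPresW} rewrites (move the final letter of one side across) as $B_ix_{i+r}=x_iB_i$ for an explicit word $B_i$ in the generators, exhibiting $x_i$ and $x_{i+r}$ as conjugate in $W$; because $n\wedge m=1$ forces $r\wedge n=1$, the shift $i\mapsto i+r$ (indices mod $n$) is an $n$‑cycle, so all the $x_i$ are $W$‑conjugate.

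With these in hand the exhaustion is an induction on word length, using that $W\trianglelefteq J$, so that the $W$‑parts of conjugators remain in $W$. For $[x_1]_\mathfrak c$ I conjugate only by $s,t$: conjugation by $s\in W$ preserves the class, while $tst^{-1}=x_2$ is $W$‑conjugate to $x_1$, so a $W$‑conjugate of a power of $x_1$ stays in $[x_1]_\mathfrak c$ after conjugating by $t$. For $[y]_\mathfrak c$ I again use $s,t$: here $tyt^{-1}=y$ and $s\in W$, so stability is immediate; symmetrically, for $[z]_\mathfrak c$ I use $s,u$, with $uzu^{-1}=z$ and $s\in W$. This yields $\mathcal H_\mathfrak c(W)=\{[x_1]_\mathfrak c,[y]_\mathfrak c,[z]_\mathfrak c\}\setminus\{\emptyset\}$. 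I expect this exhaustion to be the main obstacle: the difficulty is purely organisational, namely arranging the reduction so that the awkward ``cross'' conjugations (such as $usu^{-1}$) never have to be computed — and the centrality of $\zeta$ is exactly what lets me avoid them by trading the third generator for a power of a central element.

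For distinctness and the count I would pass to the abelianization $\phi\colon W\hookrightarrow J\to J^{\mathrm{ab}}\cong \Z/k\times\Z/bn\times\Z/cm$. Since conjugation becomes trivial in the target, every reflection of $[x_1]_\mathfrak c$ maps into the first coordinate axis minus the origin, every reflection of $[y]_\mathfrak c$ into the (nonzero part of the) cyclic subgroup of order $b$ of the second factor, and every reflection of $[z]_\mathfrak c$ into the (nonzero part of the) cyclic subgroup of order $c$ of the third factor; these three image sets are pairwise disjoint, so the nonempty classes among the three are pairwise distinct. Combined with $y=1\iff b=1$ and $z=1\iff c=1$, the nonempty classes number $1+(1-\delta_{1,b})+(1-\delta_{1,c})=3-\delta_{1,b}-\delta_{1,c}$. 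Finally, invariance is formal: a reflection isomorphism $\varphi$ restricts to a bijection $R(H_1)\to R(H_2)$ and, being a group isomorphism, preserves orders, powers and conjugacy; it therefore carries the relation $\sim$ to $\sim$ and $H_1$‑conjugacy to $H_2$‑conjugacy, inducing a bijection $\mathcal H_\mathfrak c(H_1)\to\mathcal H_\mathfrak c(H_2)$, whence the number of conjugacy classes of reflecting hyperplanes is a reflection‑isomorphism invariant.
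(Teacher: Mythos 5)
The paper itself contains no proof of this proposition: it is recalled verbatim from \cite{VCRG} (Proposition 3.3 and Corollary 3.5 there), so there is no in-paper argument to compare yours against, and your proof can only be judged on its own merits. So judged, it is correct and essentially complete. Your two reductions are sound: $\zeta=stu$ is central (e.g.\ $s(tus)=(stu)s$), and $u=t^{-1}s^{-1}\zeta$, $t=s^{-1}\zeta u^{-1}$ give $J=\langle s,t\rangle\langle\zeta\rangle=\langle s,u\rangle\langle\zeta\rangle$, so each class $[\cdot]_\mathfrak c$ is stabilised by all of $J$ once one conjugation per remaining generator is checked; and the braid-type relations of Presentation \eqref{GeneralPresW} do rewrite as $B_ix_{i+r}=x_iB_i$ with $B_i$ a word in the generators of $W$, while $r\wedge n=1$ makes $i\mapsto i+r$ an $n$-cycle, so all the $x_i$ are $W$-conjugate — this is exactly the computation of Lemmas \ref{xvw} and \ref{xvW} read at the group level. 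Two points should be made explicit in a full write-up. First, your induction also needs conjugation by $t^{-1}$ (resp.\ $u^{-1}$); this reduces to the positive case via $t^{-1}=t^{n-1}y^{-1}$ with $y=t^n\in W$ (resp.\ $u^{-1}=u^{m-1}z^{-1}$), using normality of $W$ in $J$ to keep the $W$-parts of conjugators inside $W$. Second, a hyperplane class $[r]$ is in general larger than the set of nontrivial powers of $r$, since $\sim$ can connect $r$ to a reflection $\rho$ with $\rho^a=r$; your abelianization step absorbs this, because the three families of reflections (the $J$-conjugates of nontrivial powers of $s$, of $t^n$, of $u^m$) have $\phi$-images in three pairwise disjoint punctured coordinate axes, hence are pairwise disjoint subsets of $R(W)$, each closed under nontrivial powers and under $J$-conjugation, and therefore under the whole equivalence generating $[\cdot]_\mathfrak c$ — which yields both exhaustion-compatibility and the distinctness needed for the count $3-\delta_{1,b}-\delta_{1,c}$ (note $k\geq 2$ always, so $[x_1]_\mathfrak c\neq\emptyset$). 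The invariance claim is, as you say, formal, since a reflection isomorphism carries $R(H_1)$ onto $R(H_2)$ preserving orders, powers and $H$-conjugacy.
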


\section{\texorpdfstring{Braid groups associated to $J$-reflection groups}{}}

For a finite rank two complex reflection group $W$, \cite[Theorem 2.27]{BMR} states that its BMR presentation is such that removing the torsion relations of this presentation gives a presentation of the braid group of $W$. In this section, we define the braid group associated to a $J$-reflection group in this way, so that braid groups of finite $J$-reflection groups always coincide with their complex braid groups. Moreover, we identify the (abstract) isomorphism types of all braid groups associated to $J$-reflection groups. This allows us to generalise a pattern already visible for rank two complex reflection groups: the isomorphism type of braid groups is influenced by the number of conjugacy classes of the corresponding complex reflection groups. Moreover, the identification of their isomorphism type shows that braid groups associated to $J$-reflection groups are well defined up to reflection isomorphism type and that they are Garside groups. 

\subsection{\texorpdfstring{Definition of braid groups associated to $J$-reflection groups}{}}
In this subsection, we define the braid groups associated to $J$-reflection groups. In order to give this definition, we need to distinguish cases depending on whether $b$ or $c$ are equal to $1$. Indeed, we define braid groups of $J$-reflection groups as groups presented by generators and relations by removing the torsion relations of Presentation \eqref{GeneralPresW} (after removing from the presentation the generators equal to the identity for the cases where $1\in\{b,c\}$). 
\begin{definition}\label{BraidPresentation1}
Let $n,m\in \N^*$ be two coprime integers and write $m=qn+r$ with $0\leq q$ and $0\leq r\leq n-1$. Let $\B_*^*(n,m)$ be the group defined by the following presentation:
\begin{subequations}
\label{classicalBraidPres1} 
\begin{align}
&(1) \,\, \mathrm{Generators}\!:\,  \{x_1,\dots,x_n,y,z\};\notag\\
&(2) \,\, \mathrm{Relations}\!: \,\notag\\
  &  x_1\cdots x_nyz=zx_1\cdots x_ny,\label{classicalBraidPres1:1}\\
       & x_{i+1}\cdots x_nyz\delta^{q-1}x_1\cdots x_{i+r}=x_i\cdots x_nyz\delta^{q-1}x_1\cdots x_{i+r-1}, \, \forall 1\leq i \leq n-r,\label{classicalBraidPres1:2}\\
      &  x_{i+1}\cdots x_nyz\delta^qx_1\cdots x_{i+r-n}=x_i\cdots x_nyz\delta^qx_1\cdots x_{i+r-n-1},\, \forall n-r+1\leq i \leq n,\label{classicalBraidPres1:3}
\end{align}
\end{subequations}
where $\delta$ denotes $x_1\cdots x_ny$.\\
For $k,b,c\in \N_{\geq 2}$ we define this group to be \textbf{the braid group associated to} $W_b^c(k,bn,cm)$ and when useful, we also refer to it as $\mathcal B(W_b^c(k,bn,cm))$.
\end{definition}

\begin{definition}\label{BraidPresentation2}
Let $n,m\in \N^*$ be two coprime integers and assume $m\geq 2$. Moreover, write $m=qn+r$ with $0\leq q$ and $0\leq r\leq n-1$. Let $\B_*(n,m)$ be the group defined by the following presentation:
\begin{subequations}\label{BraidPresy1}
\begin{align}
&(1) \,\, \mathrm{Generators}\!:\,  \{x_1,\dots,x_n,y\};\notag\\
&(2) \,\, \mathrm{Relations}\!: \notag\,\\
       & x_{i+1}\cdots x_ny\delta^{q-1}x_1\cdots x_{i+r}=x_i\cdots x_ny\delta^{q-1}x_1\cdots x_{i+r-1}, \, \forall 1\leq i \leq n-r,\label{BraidPresy1:1}\\
      &  x_{i+1}\cdots x_ny\delta^qx_1\cdots x_{i+r-n}=x_i\cdots x_ny\delta^qx_1\cdots x_{i+r-n-1},\, \forall n-r+1\leq i \leq n.\label{BraidPresy1:2}
\end{align}
\end{subequations}
where $\delta$ denotes $x_1\cdots x_ny$.\\
For $k,b\geq 2$, we define this group to be \textbf{the braid group associated to} $W_b(k,bn,m)$ and when useful, we also refer to it as $\B (W_b(k,bn,m))$. 
\end{definition}

\begin{definition}\label{BraidPresentation3}
Let $n,m\in \N^*$ be two coprime integers and assume $n\geq 2$. Moreover, write $m=qn+r$ with $0\leq q$ and $0\leq r\leq n-1$. Let $\B^*(n,m)$ be the group defined by the following presentation:
\begin{subequations}\label{BraidPresz1}
\begin{align}
&(1) \,\, \mathrm{Generators}\!:\,  \{x_1,\dots,x_n,z\};\notag\\
&(2) \,\, \mathrm{Relations}\!: \,\notag\\
  &  x_1\cdots x_nz=zx_1\cdots x_n,\label{BraidPresz1:1}\\
       & x_{i+1}\cdots x_nz\delta^{q-1}x_1\cdots x_{i+r}=x_i\cdots x_nz\delta^{q-1}x_1\cdots x_{i+r-1}, \, \forall 1\leq i \leq n-r,\label{BraidPresz1:2}\\
      &  x_{i+1}\cdots x_nz\delta^qx_1\cdots x_{i+r-n}=x_i\cdots x_nz\delta^qx_1\cdots x_{i+r-n-1},\, \forall n-r+1\leq i \leq n.\label{BraidPresz1:3}
\end{align}
\end{subequations}
where $\delta$ denotes $x_1\cdots x_n$.\\
For $k,c\geq 2$, we define this group to be \textbf{the braid group associated to} $W^c(k,n,cm)$ and when useful, we also refer to it as $\B (W^c(k,n,cm))$. 
\end{definition}

\begin{definition}\label{BraidToric}
Let $n,m$ be two coprime integers larger than or equal to $2$. We define $\B(n,m)$ to be the group defined by the presentation
\begin{equation}\label{BraidPresyz1}
\langle x_1,\dots,x_n\,|\, x_i\cdots x_{i+m-1}=x_j\cdots x_{j+m-1} \, \forall 1\leq i<j\leq n\rangle,
\end{equation}
where indices are taken modulo $n$. For $k,n,m\geq 2$, the group $\B(n,m)$ is defined to be \textbf{the braid group associated to} $W(k,n,m)$. Note that $\B(n,m)$ is already defined as $\B(W(k,n,m))$ in \cite[Introduction]{Gobet Toric}.
\end{definition}

As intended from this generalisation, whenever $W_b^c(k,bn,cm)$ is finite the corresponding presentation among \eqref{classicalBraidPres1}-\eqref{BraidPresyz1} is the BMR presentation of its complex braid group (see \cite[Theorem 2.27 and Tables 1-3]{BMR}). In Subsections \ref{Section33} and \ref{Section34}, we show that the isomorphism type of the braid group associated to a given $J$-reflection group is invariant under reflection isomorphisms, using Theorem \ref{Classification3}.

\begin{remark}\label{CommutativeBraidQuotients}
Let $n,m\geq 2$ be two coprime integers. We have the following commutative square of quotients:

\begin{equation}\label{CommFINDANAMEbraids}
\xymatrix{\B_*^*(n,m) \ar[r]^{y=1}\ar[d]^{z=1} &\B^*(n,m)\ar[d]_{z=1} & \\
\B_*(n,m) \ar[r]^{y=1}& \B(n,m) & 
}\end{equation}

\end{remark}

\begin{remark}
If $n=1$ (respectively $m=1$) we still have an epimorphism 
\begin{equation}\label{QuotientBraidsNorM1}\B_*^*(n,m)\xto{z=1} \B_*(n,m)\, \text{ (respectively $\B_*^*(n,m)\xto{y=1} \B^*(n,m)$)},
\end{equation} 
but the Square \eqref{CommFINDANAMEbraids} contains non-defined groups.
\end{remark}

\subsection{\texorpdfstring{A central element in braid groups associated to $J$-reflection groups}{}}
In this subsection, we determine a central element of braid groups associated to $J$-reflection groups, which in fact generates the center of these braid groups, as is shown in Subsections \ref{Section33} and \ref{Section34}. Moreover, we exhibit a generating family of cardinality three for $\mathcal B_*^*(n,m)$. In this subsection, we use the notation of Presentations \eqref{classicalBraidPres1}-\eqref{BraidPresyz1}.

%%%%%%%%%%%%%%%%%%%%%%%%%%%%%%%%%%%%%%%%%%%%%%%%%%%%%%%%%%%%
%%%%%%%%%%%%%%%%%%%%%%%%%%%%%%%%%%%%%%%%%%%%%%%%%%%%%%%%%%%%
%%%%%%%%%%%%%%%%%%%%%%%%%%%%%%%%%%%%%%%%%%%%%%%%%%%%%%%%%%%%%%%%%%%%%%%%%%%%%%%%%%%%%%%%%%%%%%%%%%%%%%%%%%%%%%%%%%%%%%%%%%%%%%%%%%%%%%%%%%%%%%%%%%%%%%%%%%%%%%%%%%%%%%%%%%%%%%%%%%%%
%%%%%%%%%%%%%%%%%%%%%%%%%%%%%%%%%%%%%%%%%%%%%%%%%%%%%%%%%%%%
%%%%%%%%%%%%%%%%%%%%%%%%%%%%%%%%%%%%%%%%%%%%%%%%%%%%%%%%%%%%%%%%%%%%%%%%%%%%%%%%%%%%%%%%%%%%%%%%%%%%%%%%%%%%%%%%%%%%%%%%
%%%%%%%%%%%%%%%%%%%%%%%%%%%%%%%%%%%%%%%%%%%%%%%%%%%%%%%%%%%%
%%%%%%%%%%%%%%%%%%%%%%%%%%%%%%%%%%%%%%%%%%%%%%%%%%%%%%%%%%%%
%%%%%%%%%%%%%%%%%%%%%%%%%%%%%%%%%%%%%%%%%%%%%%%%%%%%%%%%%%%%%%%%%%%%%%%%%%%%%%%%%%%%%%%%%%%%%%%%%%%%%%%%%%%%%%%%%%%%%%%%%%%%%%%%%%%%%%%%%%%%%%%%%%%%%%%%%%%%%%%%%%%%%%%%%%%%%%%%%%%%
%%%%%%%%%%%%%%%%%%%%%%%%%%%%%%%%%%%%%%%%%%%%%%%%%%%%%%%%%%%%
%%%%%%%%%%%%%%%%%%%%%%%%%%%%%%%%%%%%%%%%%%%%%%%%%%%%%%%%%%%%%%%%%%%%%%%%%%%%%%%%%%%%%%%%%%%%%%%%%%%%%%%%%%%%%%%%%%%%%%%%%%%%%%%%%%%%%%%%%%%%%%%%%%%%%%%%%%%%%%%%%%%%%%%%%%%%%%%%%%%%

\begin{definition}\label{defDelta}
Let $n,m\geq 1$ be two coprime integers and write $m=qn+r$ with $0\leq q$ and $0\leq r\leq n-1$.
Define $w\in \B_*^*(n,m)$ to be the element represented by one of the words in $$\{z\delta^qx_1\cdots x_r\}\cup \{x_i\cdots x_nyz\delta^{q-1}x_1\cdots x_{i+r-1}\}_{i\in [n-r+1]},$$ which is well defined by \eqref{classicalBraidPres1:1} and \eqref{classicalBraidPres1:2}. Moreover, define $W$ to be the element $wy\in \B_*^*(n,m)$ (equivalently, using \eqref{classicalBraidPres1:3}, the element represented by one of the words in $\{x_i\cdots x_nyz\delta^qx_1\cdots x_{i+r-n-1}\}_{i\in \llbracket n-r+1,n+1\rrbracket}$).\\
Finally, define $\Delta$ to be $w^{n-r}W^r$.
\end{definition}
%%%%%%%%%%%%%%%%%%%%%%%%%%%%%%%%%%%%%%%%%%%%%%%%%%%%%%%%%%%%
%%%%%%%%%%%%%%%%%%%%%%%%%%%%%%%%%%%%%%%%%%%%%%%%%%%%%%%%%%%%%%%%%%%%%%%%%%%%%%%%%%%%%%%%%%%%%%%%%%%%%%%%%%%%%%%%%%%%%%%%%%%%%%%%%%%%%%%%%%%%%%%%%%%%%%%%%%%%%%%%%%%%%%%%%%%%%%%%%%%%
%%%%%%%%%%%%%%%%%%%%%%%%%%%%%%%%%%%%%%%%%%%%%%%%%%%%%%%%%%%%
%%%%%%%%%%%%%%%%%%%%%%%%%%%%%%%%%%%%%%%%%%%%%%%%%%%%%%%%%%%%%%%%%%%%%%%%%%%%%%%%%%%%%%%%%%%%%%%%%%%%%%%%%%%%%%%%%%%%%%%%%%%%%%%%%%%%%%%%%%%%%%%%%%%%%%%%%%%%%%%%%%%%%%%%%%%%%%%%%%%%
%%%%%%%%%%%%%%%%%%%%%%%%%%%%%%%%%%%%%%%%%%%%%%%%%%%%%%%%%%%%
%%%%%%%%%%%%%%%%%%%%%%%%%%%%%%%%%%%%%%%%%%%%%%%%%%%%%%%%%%%%%%%%%%%%%%%%%%%%%%%%%%%%%%%%%%%%%%%%%%%%%%%%%%%%%%%%%%%%%%%%%%%%%%%%%%%%%%%%%%%%%%%%%%%%%%%%%%%%%%%%%%%%%%%%%%%%%%%%%%%%
%%%%%%%%%%%%%%%%%%%%%%%%%%%%%%%%%%%%%%%%%%%%%%%%%%%%%%%%%%%%
%%%%%%%%%%%%%%%%%%%%%%%%%%%%%%%%%%%%%%%%%%%%%%%%%%%%%%%%%%%%%%%%%%%%%%%%%%%%%%%%%%%%%%%%%%%%%%%%%%%%%%%%%%%%%%%%%%%%%%%%
%%%%%%%%%%%%%%%%%%%%%%%%%%%%%%%%%%%%%%%%%%%%%%%%%%%%%%%%%%%%
%%%%%%%%%%%%%%%%%%%%%%%%%%%%%%%%%%%%%%%%%%%%%%%%%%%%%%%%%%%%
%%%%%%%%%%%%%%%%%%%%%%%%%%%%%%%%%%%%%%%%%%%%%%%%%%%%%%%%%%%%%%%%%%%%%%%%%%%%%%%%%%%%%%%%%%%%%%%%%%%%%%%%%%%%%%%%%%%%%%%%%%%%%%%%%%%%%%%%%%%%%%%%%%%%%%%%%%%%%%%%%%%%%%%%%%%%%%%%%%%%
%%%%%%%%%%%%%%%%%%%%%%%%%%%%%%%%%%%%%%%%%%%%%%%%%%%%%%%%%%%%
%%%%%%%%%%%%%%%%%%%%%%%%%%%%%%%%%%%%%%%%%%%%%%%%%%%%%%%%%%%%%%%%%%%%%%%%%%%%%%%%%%%%%%%%%%%%%%%%%%%%%%%%%%%%%%%%%%%%%%%%

The first goal of this section is to show the following result:
\begin{proposition}\label{DeltaCentral}
Let $n,m\geq 1$ be two coprime integers. The element $\Delta\in \B_*^*(n,m)$ defined as in Definition \ref{defDelta} is central.
\end{proposition}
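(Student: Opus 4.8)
The plan is to prove that $\Delta$ is central by showing that it commutes with each of the generators $x_1,\dots,x_n,y,z$, which suffices since these generate $\B_*^*(n,m)$. The starting point is to reinterpret the defining relations as conjugation relations for the elements $w$ and $W$ of Definition \ref{defDelta}. Writing a relation in \eqref{classicalBraidPres1:2} in the form $\mathrm{word}(i)=\mathrm{word}(i+1)$, where $\mathrm{word}(i)=x_i\cdots x_nyz\delta^{q-1}x_1\cdots x_{i+r-1}=w$, and cancelling the common middle factor $x_i^{-1}w$, one obtains $w^{-1}x_iw=x_{i+r}$ for $1\le i\le n-r$; doing the same with \eqref{classicalBraidPres1:3} yields $W^{-1}x_iW=x_{i+r-n}$ for $n-r+1\le i\le n$. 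Together these say that conjugation by $w$ and by $W$ implements the cyclic shift $x_i\mapsto x_{i+r}$ (indices mod $n$) on complementary ranges of indices. Relation \eqref{classicalBraidPres1:1} records the only other structural input, namely that $z$ commutes with $\delta=x_1\cdots x_ny$.

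First I would deal with $z$ and $\delta$. Using the several equivalent words for $w$ and $W$ furnished by Definition \ref{defDelta} together with $\delta z=z\delta$, I would telescope the product $w^{n-r}W^r$ into the closed form $\Delta=\delta^m z^n$: a letter count already shows the two words have the same length, and the relations let one slide the letters into place. In this form, commutation of $\Delta$ with $z$ and with $\delta$ is immediate, since $\delta$ and $z$ commute.

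The heart of the argument is commutation of $\Delta$ with the individual reflections $x_j$ and with $y$. Here the exponents $n-r$ and $r$ are exactly what is needed: tracking the action via the shift relations above, conjugation by $w^{n-r}W^r$ moves the index of each $x_j$ by a total of $r(n-r)+r\cdot r=rn\equiv 0\pmod n$, so that the permutation it induces on $\{x_1,\dots,x_n\}$ is trivial and each $x_j$ returns to itself. Concretely this expresses that $\Delta$ performs one full turn of the cyclic shift; conceptually it reflects the fact that in the parent group $J(k,bn,cm)$ the element $stu$ is central (it commutes with each of $s,t,u$ by the braid-type relations $stu=tus=ust$) and $\Delta$ is a power of it, since $\delta^m z^n=(st)^{nm}u^{nm}=(stu)^{nm}$. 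Commutation with $y$ would then follow from commutation with $\delta=x_1\cdots x_ny$ and with the $x_i$.

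The main obstacle is precisely this last step: conjugation by $w$ (respectively $W$) is a clean shift only on part of the index range, and on the complementary part it is twisted by a factor of $y$ coming from the identity $W=wy$. Making the composite $w^{n-r}W^r$ collapse to the identity permutation therefore requires careful bookkeeping of the wraparound together with the cancellation of these $y$-twists, and it is this interplay, rather than any single relation, that carries the weight of the proof. Once it is in place, the closed form $\Delta=\delta^m z^n$ packages the remaining commutations with $y$ and $z$ cleanly, completing the verification that $\Delta$ is central.
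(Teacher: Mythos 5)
Your proposal is correct and follows essentially the same route as the paper's proof: the conjugation/shift relations $x_iw=wx_{i+r}$ and $x_iW=Wx_{i+r-n}$ (Lemmas \ref{xvw} and \ref{xvW}), the closed form $\Delta=\delta^mz^n$ (Lemma \ref{Delta=Delta}) giving commutation with $z$, and the orbit-counting argument that pushing $x_i$ through $w^{n-r}W^r$ uses the $w$-shift exactly $n-r$ times and the $W$-shift exactly $r$ times (by coprimality of $n$ and $r$, together with $wW=Ww$), for a total index shift of $nr\equiv 0\pmod n$. The only cosmetic differences are that the paper obtains commutation with $y$ directly from $yw=wy$ and $yW=Wy$ rather than via $\delta$ and the $x_i$, and that your remark identifying the image of $\Delta$ with $(stu)^{nm}$ in the parent $J$-group is (as you say) motivation rather than proof, since the quotient map is not injective.
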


In order to prove Proposition \ref{DeltaCentral}, we show some preliminary results: 

\begin{lemma}\label{xvw}
Let $n,m\geq 1$ be two coprime integers and $i\in [n-r]$ where $m=qn+r$ with $0\leq q$ and $0\leq r\leq n-1$. Let $w$ be as in Definition \ref{defDelta}. Then $x_iw=wx_{i+r}$. Moreover, we have $yw=wy$.
\end{lemma}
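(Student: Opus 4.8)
The plan is to read off both commutation relations directly from the two families of defining words for $w$ and $W$ supplied by Definition \ref{defDelta}, exploiting the fact that consecutive representatives differ only by moving a single generator from the left end of the word to the right end.

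For the first assertion, fix $1\le i\le n-r$ and write $w_j:=x_j\cdots x_nyz\delta^{q-1}x_1\cdots x_{j+r-1}$ for $j\in[n-r+1]$, so that relation \eqref{classicalBraidPres1:2} is exactly $w_{i+1}=w_i$ and every $w_j$ represents the common element $w$. Setting $A:=x_{i+1}\cdots x_nyz\delta^{q-1}x_1\cdots x_{i+r-1}$, one has $w_i=x_iA$ (prepending $x_i$) and $w_{i+1}=Ax_{i+r}$ (appending $x_{i+r}$). Since $w_i=w_{i+1}=w$, substituting $A=x_i^{-1}w$ into the equality $x_iA=Ax_{i+r}$ yields $x_iw=wx_{i+r}$, which is the claim. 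The only point to watch is that the index bookkeeping survives the degenerate situations (e.g.\ $i=n$ when $r=0$, where products such as $x_{i+1}\cdots x_n$ become empty); these cause no trouble.

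For the second assertion I would use the two descriptions of $W$ recorded in Definition \ref{defDelta}. By definition $W=wy$. On the other hand, writing $W_j:=x_j\cdots x_nyz\delta^qx_1\cdots x_{j+r-n-1}$ for $j\in\llbracket n-r+1,n+1\rrbracket$, relation \eqref{classicalBraidPres1:3} shows that all the $W_j$ represent $W$; in particular the last one, $W_{n+1}=yz\delta^qx_1\cdots x_r$ (the initial block $x_{n+1}\cdots x_n$ being empty), represents $W$. Since $z\delta^qx_1\cdots x_r$ is one of the defining words for $w$, we obtain $W_{n+1}=yw$, and therefore $wy=W=yw$, as desired.

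I do not expect a serious obstacle: the argument is essentially bookkeeping. The only delicate points are keeping the index ranges and the telescoping/empty products consistent, and recalling that the equality $w=z\delta^qx_1\cdots x_r$ (which makes $w$ well defined in the first place) rests on \eqref{classicalBraidPres1:1} in the guise $\delta z=z\delta$. Conceptually both statements express the same phenomenon, namely that each defining word of $w$ or $W$ is a rotation of its neighbour, the conjugation relations $x_iw=wx_{i+r}$ and $yw=wy$ being precisely the cost of performing that rotation.
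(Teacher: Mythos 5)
Your proof is correct and follows essentially the same route as the paper: the identity $x_iw=wx_{i+r}$ comes from reading the single positive word $x_i x_{i+1}\cdots x_nyz\delta^{q-1}x_1\cdots x_{i+r}$ both as $x_i\cdot(x_{i+1}\cdots x_nyz\delta^{q-1}x_1\cdots x_{i+r})$ and as $(x_i\cdots x_nyz\delta^{q-1}x_1\cdots x_{i+r-1})\cdot x_{i+r}$ — your substitution $A=x_i^{-1}w$ is just a group-level rephrasing of this regrouping — while your derivation of $yw=wy$ from the two descriptions of $W$ in Definition \ref{defDelta} invokes precisely the equivalence (via \eqref{classicalBraidPres1:1} and \eqref{classicalBraidPres1:3}) that the paper's chain $yw=yz\delta^qx_1\cdots x_r=x_{n-r+1}\cdots x_nyz\delta^q=wy$ re-derives explicitly, so no circularity arises. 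The only point worth noting is that your use of $x_i^{-1}$ would not transfer verbatim to the monoid $\M_*^*(n,m)$, where the paper later reuses this proof (Remark \ref{PositiveDelta}); the inverse-free regrouping you describe in words does.
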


\begin{proof}
For all $i\in [n-r]$, we have
\begin{equation*}
\begin{aligned}
x_iw&\underset{\eqref{classicalBraidPres1:2}}{=}x_i(x_{i+1}\cdots x_nyz\delta^{q-1}x_1\cdots x_{i+r})\\ &=(x_i\cdots x_nyz\delta^{q-1}x_1\cdots x_{i+r-1})x_{i+r}\underset{\eqref{classicalBraidPres1:2}}=wx_{i+r}.
\end{aligned} 
\end{equation*}
Moreover, we have
\begin{equation*}
    \begin{aligned}
        yw&\underset{\eqref{classicalBraidPres1:2}}=y(x_1\cdots x_nyz\delta^{q-1}x_1\cdots x_r)\underset{\eqref{classicalBraidPres1:1}}=yz\delta^qx_1\cdots x_r\underset{\eqref{classicalBraidPres1:3}}=x_{n-r+1}\cdots x_nyz\delta^q\\&=(x_{n-r+1}\cdots x_nyz\delta^{q-1}x_1\cdots x_n)y\underset{\eqref{classicalBraidPres1:2}}=wy.
    \end{aligned}
\end{equation*}
This concludes the proof.
\end{proof}
%%%%%%%%%%%%%%%%%%%%%%%%%%%%%%%%%%%%%%%%%%%%%%%%%%%%%%%%%%%%
%%%%%%%%%%%%%%%%%%%%%%%%%%%%%%%%%%%%%%%%%%%%%%%%%%%%%%%%%%%%
%%%%%%%%%%%%%%%%%%%%%%%%%%%%%%%%%%%%%%%%%%%%%%%%%%%%%%%%%%%%%%%%%%%%%%%%%%%%%%%%%%%%%%%%%%%%%%%%%%%%%%%%%%%%%%%%%%%%%%%%%%%%%%%%%%%%%%%%%%%%%%%%%%%%%%%%%%%%%%%%%%%%%%%%%%%%%%%%%%%%
%%%%%%%%%%%%%%%%%%%%%%%%%%%%%%%%%%%%%%%%%%%%%%%%%%%%%%%%%%%%
%%%%%%%%%%%%%%%%%%%%%%%%%%%%%%%%%%%%%%%%%%%%%%%%%%%%%%%%%%%%%%%%%%%%%%%%%%%%%%%%%%%%%%%%%%%%%%%%%%%%%%%%%%%%%%%%%%%%%%%%
%%%%%%%%%%%%%%%%%%%%%%%%%%%%%%%%%%%%%%%%%%%%%%%%%%%%%%%%%%%%
%%%%%%%%%%%%%%%%%%%%%%%%%%%%%%%%%%%%%%%%%%%%%%%%%%%%%%%%%%%%
%%%%%%%%%%%%%%%%%%%%%%%%%%%%%%%%%%%%%%%%%%%%%%%%%%%%%%%%%%%%%%%%%%%%%%%%%%%%%%%%%%%%%%%%%%%%%%%%%%%%%%%%%%%%%%%%%%%%%%%%%%%%%%%%%%%%%%%%%%%%%%%%%%%%%%%%%%%%%%%%%%%%%%%%%%%%%%%%%%%%
%%%%%%%%%%%%%%%%%%%%%%%%%%%%%%%%%%%%%%%%%%%%%%%%%%%%%%%%%%%%
%%%%%%%%%%%%%%%%%%%%%%%%%%%%%%%%%%%%%%%%%%%%%%%%%%%%%%%%%%%%%%%%%%%%%%%%%%%%%%%%%%%%%%%%%%%%%%%%%%%%%%%%%%%%%%%%%%%%%%%%%%%%%%%%%%%%%%%%%%%%%%%%%%%%%%%%%%%%%%%%%%%%%%%%%%%%%%%%%%%%
%%%%%%%%%%%%%%%%%%%%%%%%%%%%%%%%%%%%%%%%%%%%%%%%%%%%%%%%%%%%
%%%%%%%%%%%%%%%%%%%%%%%%%%%%%%%%%%%%%%%%%%%%%%%%%%%%%%%%%%%%%%%%%%%%%%%%%%%%%%%%%%%%%%%%%%%%%%%%%%%%%%%%%%%%%%%%%%%%%%%%%%%%%%%%%%%%%%%%%%%%%%%%%%%%%%%%%%%%%%%%%%%%%%%%%%%%%%%%%%%%
%%%%%%%%%%%%%%%%%%%%%%%%%%%%%%%%%%%%%%%%%%%%%%%%%%%%%%%%%%%%
%%%%%%%%%%%%%%%%%%%%%%%%%%%%%%%%%%%%%%%%%%%%%%%%%%%%%%%%%%%%%%%%%%%%%%%%%%%%%%%%%%%%%%%%%%%%%%%%%%%%%%%%%%%%%%%%%%%%%%%%%%%%%%%%%%%%%%%%%%%%%%%%%%%%%%%%%%%%%%%%%%%%%%%%%%%%%%%%%%%%
\begin{lemma}\label{xvW}
Let $n,m\geq 1$ be coprime integers and $k\in [r]$ where $m=qn+r$ with $0\leq q$ and $0\leq r\leq n-1$. Let $W$ be as in Definition \ref{defDelta}. Then $x_{n-r+k}W=Wx_{k}$. Moreover, we have $yW=Wy$.
\end{lemma}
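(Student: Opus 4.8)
The plan is to mirror the proof of Lemma~\ref{xvw}, replacing relation~\eqref{classicalBraidPres1:2} by relation~\eqref{classicalBraidPres1:3} and using the family of representatives of $W$ given in Definition~\ref{defDelta} instead of those of $w$. For the commutation $yW=Wy$ I would not perform any word manipulation at all: since $W=wy$ by definition and $yw=wy$ by Lemma~\ref{xvw}, one simply computes $yW=y(wy)=(yw)y=(wy)y=Wy$.

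For the main identity $x_{n-r+k}W=Wx_k$ with $k\in[r]$, the key observation is that $n-r+k$ ranges exactly over $\{n-r+1,\dots,n\}$, which is precisely the range of the index $i$ in relation~\eqref{classicalBraidPres1:3} and also a valid index for the representative $W=x_i\cdots x_nyz\delta^qx_1\cdots x_{i+r-n-1}$. Setting $i=n-r+k$, this representative reads $W=x_{n-r+k}\cdots x_nyz\delta^qx_1\cdots x_{k-1}$, while relation~\eqref{classicalBraidPres1:3} for the same $i$ rewrites it as $W=x_{n-r+k+1}\cdots x_nyz\delta^qx_1\cdots x_k$. Prepending $x_{n-r+k}$ to this second form and then peeling off the trailing $x_k$ yields
\[
x_{n-r+k}W=x_{n-r+k}\cdots x_nyz\delta^qx_1\cdots x_k=(x_{n-r+k}\cdots x_nyz\delta^qx_1\cdots x_{k-1})x_k=Wx_k,
\]
where the outer equalities are the two instances of relation~\eqref{classicalBraidPres1:3} (first to pass from $W$ to its shifted form, then to recover $W$ from its index-$(n-r+k)$ representative). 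This is the entire computation.

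The only points requiring care are the two boundary cases, which I would check at the end are absorbed by the empty-product conventions: for $k=r$ (that is $i=n$) the leading block $x_{n-r+k+1}\cdots x_n$ is empty and $W$ takes the form $yz\delta^qx_1\cdots x_r$, whereas for $k=1$ (that is $i=n-r+1$) the trailing block $x_1\cdots x_{k-1}$ is empty; in both cases the displayed chain of equalities still parses correctly. I do not expect any genuine obstacle here, since the argument is a direct transcription of Lemma~\ref{xvw} one relation higher, so the main thing to get right is the bookkeeping of indices and the empty products at the two ends of the range.
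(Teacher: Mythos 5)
Your proof is correct and is essentially identical to the paper's: the chain $x_{n-r+k}W=x_{n-r+k}\cdots x_nyz\delta^qx_1\cdots x_k=(x_{n-r+k}\cdots x_nyz\delta^qx_1\cdots x_{k-1})x_k=Wx_k$ is exactly the computation in the paper, using the two representatives of $W$ from \eqref{classicalBraidPres1:3} with indices $i=n-r+k+1$ and $i=n-r+k$, and the commutation $yW=Wy$ via $W=wy$ and Lemma \ref{xvw} is also the paper's argument. Your boundary-case bookkeeping ($k=1$ and $k=r$ absorbed by empty products, the latter being the $i=n+1$ representative) is a harmless elaboration of what the paper leaves implicit.
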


\begin{proof}
For all $k\in[r]$, we have
\begin{equation*}
    \begin{aligned}
        x_{n-r+k}W&\underset{\eqref{classicalBraidPres1:3}}=x_{n-r+k}x_{n-r+k+1}\cdots x_nyz\delta^qx_1\cdots x_{k}\\&=(x_{n-r+k}\cdots x_nyz\delta^qx_1\cdots x_{k-1})x_k\underset{\eqref{classicalBraidPres1:3}}=Wx_k.
    \end{aligned}
\end{equation*}
Moreover, by Lemma \ref{xvw} $w$ commutes with $y$, so that $W=wy$ commutes with $y$.
\end{proof}
%%%%%%%%%%%%%%%%%%%%%%%%%%%%%%%%%%%%%%%%%%%%%%%%%%%%%%%%%%%%
%%%%%%%%%%%%%%%%%%%%%%%%%%%%%%%%%%%%%%%%%%%%%%%%%%%%%%%%%%%%%%%%%%%%%%%%%%%%%%%%%%%%%%%%%%%%%%%%%%%%%%%%%%%%%%%%%%%%%%%%%%%%%%%%%%%%%%%%%%%%%%%%%%%%%%%%%%%%%%%%%%%%%%%%%%%%%%%%%%%%
%%%%%%%%%%%%%%%%%%%%%%%%%%%%%%%%%%%%%%%%%%%%%%%%%%%%%%%%%%%%
%%%%%%%%%%%%%%%%%%%%%%%%%%%%%%%%%%%%%%%%%%%%%%%%%%%%%%%%%%%%%%%%%%%%%%%%%%%%%%%%%%%%%%%%%%%%%%%%%%%%%%%%%%%%%%%%%%%%%%%%
%%%%%%%%%%%%%%%%%%%%%%%%%%%%%%%%%%%%%%%%%%%%%%%%%%%%%%%%%%%%
%%%%%%%%%%%%%%%%%%%%%%%%%%%%%%%%%%%%%%%%%%%%%%%%%%%%%%%%%%%%
\begin{remark}\label{Wr=0}
If $n=m=1$, we have $r=0$ so that the statement of Lemma \ref{xvW} is vacuously true.
\end{remark}
%%%%%%%%%%%%%%%%%%%%%%%%%%%%%%%%%%%%%%%%%%%%%%%%%%%%%%%%%%%%%%%%%%%%%%%%%%%%%%%%%%%%%%%%%%%%%%%%%%%%%%%%%%%%%%%%%%%%%%%%%%%%%%%%%%%%%%%%%%%%%%%%%%%%%%%%%%%%%%%%%%%%%%%%%%%%%%%%%%%%
%%%%%%%%%%%%%%%%%%%%%%%%%%%%%%%%%%%%%%%%%%%%%%%%%%%%%%%%%%%%
%%%%%%%%%%%%%%%%%%%%%%%%%%%%%%%%%%%%%%%%%%%%%%%%%%%%%%%%%%%%%%%%%%%%%%%%%%%%%%%%%%%%%%%%%%%%%%%%%%%%%%%%%%%%%%%%%%%%%%%%

\begin{remark}\label{remarkMod}
If $i\in [n-r]$, we can use the equality $x_iw=wx_{i+r}$ and if $i\in \llbracket n-r+1,n\rrbracket$, we can use the equality $x_iW=Wx_{i+r-n}$. In both cases, we see that the index of the $x_l$ in the right of these equalities is the unique element $j$ of $[n]$ such that $j\equiv i+r\,(\text{mod $n$})$.
\end{remark}

Remark \ref{remarkMod} motivates the following definition:

%%%%%%%%%%%%%%%%%%%%%%%%%%%%%%%%%%%%%%%%%%%%%%%%%%%%%%%%%%%%
%%%%%%%%%%%%%%%%%%%%%%%%%%%%%%%%%%%%%%%%%%%%%%%%%%%%%%%%%%%%
%%%%%%%%%%%%%%%%%%%%%%%%%%%%%%%%%%%%%%%%%%%%%%%%%%%%%%%%%%%%%%%%%%%%%%%%%%%%%%%%%%%%%%%%%%%%%%%%%%%%%%%%%%%%%%%%%%%%%%%%%%%%%%%%%%%%%%%%%%%%%%%%%%%%%%%%%%%%%%%%%%%%%%%%%%%%%%%%%%%%
%%%%%%%%%%%%%%%%%%%%%%%%%%%%%%%%%%%%%%%%%%%%%%%%%%%%%%%%%%%%
%%%%%%%%%%%%%%%%%%%%%%%%%%%%%%%%%%%%%%%%%%%%%%%%%%%%%%%%%%%%%%%%%%%%%%%%%%%%%%%%%%%%%%%%%%%%%%%%%%%%%%%%%%%%%%%%%%%%%%%%
%%%%%%%%%%%%%%%%%%%%%%%%%%%%%%%%%%%%%%%%%%%%%%%%%%%%%%%%%%%%
%%%%%%%%%%%%%%%%%%%%%%%%%%%%%%%%%%%%%%%%%%%%%%%%%%%%%%%%%%%%
%%%%%%%%%%%%%%%%%%%%%%%%%%%%%%%%%%%%%%%%%%%%%%%%%%%%%%%%%%%%%%%%%%%%%%%%%%%%%%%%%%%%%%%%%%%%%%%%%%%%%%%%%%%%%%%%%%%%%%%%%%%%%%%%%%%%%%%%%%%%%%%%%%%%%%%%%%%%%%%%%%%%%%%%%%%%%%%%%%%%
\begin{definition}
We say that $x_i$ is \textbf{integrable against} $w$ if $i\in [n-r]$ (resp. against $W$ if $i\in \llbracket n-r+1,n\rrbracket$), and in this case we say that $x_i$ \textbf{integrates} as $x_j$, where $j$ is the unique element of $[n]$ such that $j\equiv i+r \, (\text{mod}\, n)$.
\end{definition}
%%%%%%%%%%%%%%%%%%%%%%%%%%%%%%%%%%%%%%%%%%%%%%%%%%%%%%%%%%%%
%%%%%%%%%%%%%%%%%%%%%%%%%%%%%%%%%%%%%%%%%%%%%%%%%%%%%%%%%%%%%%%%%%%%%%%%%%%%%%%%%%%%%%%%%%%%%%%%%%%%%%%%%%%%%%%%%%%%%%%%%%%%%%%%%%%%%%%%%%%%%%%%%%%%%%%%%%%%%%%%%%%%%%%%%%%%%%%%%%%%
%%%%%%%%%%%%%%%%%%%%%%%%%%%%%%%%%%%%%%%%%%%%%%%%%%%%%%%%%%%%
%%%%%%%%%%%%%%%%%%%%%%%%%%%%%%%%%%%%%%%%%%%%%%%%%%%%%%%%%%%%%%%%%%%%%%%%%%%%%%%%%%%%%%%%%%%%%%%%%%%%%%%%%%%%%%%%%%%%%%%%%%%%%%%%%%%%%%%%%%%%%%%%%%%%%%%%%%%%%%%%%%%%%%%%%%%%%%%%%%%%

To be able to prove Proposition \ref{DeltaCentral}, we need one last technical lemma:
\begin{lemma}\label{Delta=Delta}
Let $n,m\geq1$ be two coprime integers and let $\Delta$ be as in Definition \ref{defDelta}. Then $\Delta=\delta^mz^n$.
\end{lemma}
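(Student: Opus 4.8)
The plan is to work entirely inside the group $\B_*^*(n,m)$ of Definition \ref{BraidPresentation1} and to reorganise $\Delta=w^{n-r}W^r$ into $\delta^m z^n$ by first assembling all the loose generators $x_i$ into complete blocks $x_1\cdots x_n y=\delta$ and only afterwards collecting the copies of $z$. I first record the two commutations that drive everything: relation \eqref{classicalBraidPres1:1} is exactly $\delta z=z\delta$, and since $n\wedge m=1$ and $m=qn+r$ we have $n\wedge r=1$, so $i\mapsto i+r$ is a single $n$-cycle on $\{1,\dots,n\}$. I also fix convenient representatives from Definition \ref{defDelta}: $w=z\delta^q x_1\cdots x_r$, and, taking the index $i=n-r+1$ in the family defining $W$, $W=x_{n-r+1}\cdots x_n y z\delta^q$. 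The degenerate case $n=1$ (equivalently $r=0$) is immediate, since then $q=m$, $W^r$ is empty, and $\Delta=w=z\delta^m=\delta^m z$; so I assume $2\le n$ and $1\le r\le n-1$ below.

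Next I expand $\Delta=w^{n-r}W^r$ with these representatives. Each of the $n-r$ factors $w$ carries a tail $x_1\cdots x_r$, each of the $r$ factors $W$ carries a head $x_{n-r+1}\cdots x_n$ immediately followed by a $y$, and apart from these the only letters are the $n$ powers $z\delta^q$. The heart of the argument is to migrate every tail letter rightwards through the factors to its right: crossing a $w$ sends $x_i$ to $x_{i+r}$ by Lemma \ref{xvw}, crossing a $W$ sends $x_i$ to $x_{i+r-n}$ by Lemma \ref{xvW}, and by Remark \ref{remarkMod} this is in both cases the representative $x_j$ with $j\equiv i+r\pmod n$ (the $y$'s are carried along freely, as $y$ commutes with both $w$ and $W$). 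Because $i\mapsto i+r$ is an $n$-cycle, these crossings let me fill the gaps $x_{r+1},\dots,x_{n-r}$ and complete, together with the $W$-heads and the trailing $y$'s, exactly the increasing blocks $x_1 x_2\cdots x_n y=\delta$.

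A bookkeeping count confirms that the assembly closes up. The loose material consists of $r(n-r)$ tail letters and $r^2$ head letters, i.e. $rn$ of the $x_i$, together with $r$ copies of $y$; this is precisely the content of $r$ blocks $\delta=x_1\cdots x_n y$. Since there are in addition $n$ explicit factors $z\delta^q$, once the $x_i$'s and $y$'s have been absorbed into $\delta$'s I am left with a product of $n$ letters $z$ and $nq+r=m$ letters $\delta$ (in some order). Finally, $\delta z=z\delta$ slides all the $z$'s to the right, giving $\delta^{m}z^{n}$, as required.

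I expect the genuine difficulty to be the migration of the second paragraph: a tail letter $x_i$ may cross a factor $w$ only when $i\le n-r$ and a factor $W$ only when $i>n-r$ (exactly the notion of being integrable against $w$ or against $W$), so the pattern $w^{n-r}W^r$ must be shown compatible with the indices being moved, and one must check that each completed block genuinely reads $x_1 x_2\cdots x_n$ in increasing order rather than merely containing the correct multiset of indices. I would make this precise by induction on the number $n-r$ of $w$-factors, peeling off the leftmost $w$, pushing its tail $x_1\cdots x_r$ rightwards one factor at a time via Lemmas \ref{xvw} and \ref{xvW} and Remark \ref{remarkMod}, and then applying the induction hypothesis to the shorter word; the computations for $(n,m)=(2,3)$ and $(3,4)$ furnish the template for the crossing pattern, and the coprimality $n\wedge r=1$ together with $\delta z=z\delta$ are exactly what make the induction close.
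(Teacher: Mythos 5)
Your overall skeleton --- assemble the loose tails, heads and $y$'s into $r$ literal blocks $x_1\cdots x_ny=\delta$, then count ($n$ copies of $z$, $qn+r=m$ copies of $\delta$) and finish with $\delta z=z\delta$ --- is the same as the paper's, and your bookkeeping paragraph matches the counts in \eqref{Length1}--\eqref{Length3}. The gap is the migration step, which you rightly flag as the difficulty, but your proposed resolution fails: in the \emph{fixed} order $w^{n-r}W^r$ the scheme cannot be completed. Note that a letter's index upon reaching a given factor is forced: by Lemmas \ref{xvw} and \ref{xvW} each crossing shifts the index by $r$ modulo $n$, letters $x_i$ do not commute with one another so they cannot overtake each other, and in the fixed order every tail letter must cross each intervening factor to reach a $W$-head, the only place a new block can close (each block needs a $y$, and the loose $y$'s sit exactly in the $W$-heads). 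Now take $(n,m)=(5,7)$, so $q=1$, $r=2$, $\Delta=w^3W^2$ with $w=z\delta x_1x_2$ and $W=x_4x_5yz\delta$. The letter $x_2$ in the tail of the first $w$ crosses the second $w$ (index $2\mapsto 4$) and then confronts the third $w$ with index $4>n-r=3$: it is not integrable against $w$, it cannot belong to a block there (the next letter to its right is a $z$), and it blocks everything behind it; since both $W$-heads lie beyond the third $w$, no schedule of crossings in the order $w^{n-r}W^r$ can ever absorb it. Even in your template case $(3,4)$ the migrated letter $x_1\mapsto x_2$ halts in front of the unique $W$ (it is not integrable against it) and the argument survives only because that halt happens to be a block position; for $(5,7)$ it is not. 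Separately, your ``induction on the number $n-r$ of $w$-factors'' is not well-formed: $n-r$ is determined by the pair $(n,m)$, and after peeling one $w$ the remaining product $w^{n-r-1}W^r$ is not the $\Delta$ of any smaller instance, so there is no statement available as induction hypothesis.

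The missing ingredient is that the $n$ factors commute pairwise ($W=wy$ and $yw=wy$ give $wW=Ww$), so the product may --- and must --- be taken in an adaptively chosen order rather than $w^{n-r}W^r$ (for $(5,7)$ the order $w\,w\,W\,w\,W$ works). This is precisely how the paper's proof of Lemma \ref{Delta=Delta} proceeds: it builds words $w_0,\dots,w_{n-1}$, at each step appending the representative of $w$ or of $W$ (drawn from the full families of Definition \ref{defDelta}, not one fixed spelling) that begins at the index one past the last letter written so far; coprimality of $n$ and $r$ shows the last letters run through $x_r,x_{2r},\dots$, hitting each of $x_1,\dots,x_{n-1}$ exactly once, so exactly $r$ of the appended factors are $W$'s and the word represents $w^{n-r}W^r=\Delta$, while by construction the concatenation literally spells copies of $x_1\cdots x_ny$ interleaved with $z$'s and powers of $\delta$; the length count you already have then identifies the element with $\delta^mz^n$ exactly as in \eqref{Length1}--\eqref{Length3}. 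So keep your first and third paragraphs, but replace the fixed-order migration by this adaptive assembly (or first prove and systematically invoke the reordering $wW=Ww$ before each obstructed crossing); as written, the proposal does not go through.
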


\begin{proof}
In this proof, we construct a word and show that this word both represents $\Delta$ and $\delta^mz^n$ in $\mathcal B_*^*(n,m)$.
For $l\in \llbracket0,n-1\rrbracket$ we define a word $w_l\in \{x_1,\dots,x_n,y,z,\delta,\delta^{-1}\}^*$ inductively as $w_0=x_1\cdots x_nyz\delta^{q}\delta^{-1}x_1\cdots x_r$ and 
\begin{equation*}w_{l+1}=\begin{cases}
    w_lx_{j+1}\cdots x_nyz\delta^{q}\delta^{-1}x_1\cdots x_{j+r} \, \text{if $w_l$ ends by $x_j$, \,$j\in [n-r-1]$,}\\
    w_lx_{n-r+1}\cdots x_nyz\delta^q, \, \text{if $w_l$ ends with $x_{n-r}$},
    \\
    w_lx_{j+1}\cdots x_nyz\delta^qx_1\cdots x_{j+r-n} \, \text{if $w_l$ ends by $x_j$, \, $j\in \llbracket n-r+1,n-1\rrbracket$}.
\end{cases}\end{equation*}
Note that by $\eqref{classicalBraidPres1:2}$ and $\eqref{classicalBraidPres1:3}$, the element represented by $w_{l+1}$ in $\B_*^*(n,m)$ is equal to $w_lw$ if $w_l$ ends with $x_j$, $j\in [n-r-1]$ and is equal to $w_lW$ otherwise.\\
\fbox{The word $w_{n-1}$ represents $\Delta$ in $\B_*^*(n,m)$:}
Since indices are taken modulo $n$, if the last letter of $w_l$ is $x_j$ with $j\neq n-r$, the last letter of $w_{l+1}$ is $x_{j+r}$. In addition, we have that $n$ and $r$ are coprime since $n$ and $m$ are coprime, which shows that the only element $l$ of $\llbracket 0,n-2\rrbracket$ such that $(l+1)r\equiv n-r \, (\text{mod} \, n)$ is $n-2$. In particular, as the last letter of $w_0$ is $x_r$, we see that the last letter of $w_l$ is $x_{(l+1)r}$ for all $l=0,\dots,n-2$ and the last letter of $w_{n-1}$ is $\delta$. Moreover, since $n$ and $r$ are coprime, the set of last letters of the words $w_0,\dots,w_{n-2}$ is $\{x_{(l+1)r}\}_{l\in \llbracket 0,n-2\rrbracket}=\{x_1,\dots,x_{n-1}\}$.
This shows that amongst the last letter of $w_0,\dots,w_{n-2}$, each element of the set $\{x_{n-r},\dots,x_{n-1}\}$ appears exactly once. Thus, for $l\in \llbracket 1,n-2\rrbracket$ there are exactly $r$ times where $w_{l+1}$ represents $w_lW$ in $\B_*^*(n,m)$ and $n-r-1$ times where $w_{l+1}$ represents $w_lw$ in $\B_*^*(n,m)$.\\
Using Lemma \ref{xvw}, the elements $w$ and $W=wy$ commute so that $w_{n-1}$ represents $w_0w^{n-r-1}W^{r}=w^{n-r}W^r=\Delta$ since $w_0$ represents $w$ by $\eqref{classicalBraidPres1:2}$. It remains to show that $w_{n-1}$ represents $\delta^mz^n$ in $\B_*^*(n,m)$.\\
\fbox{The word $w_{n-1}$ represents $\delta^mz^n$ in $\B_*^*(n,m)$:}
Observe that whenever $w_l$ ends with $x_j$, $j\in [n-r-1]$, the word length (in terms of the alphabet $\{x_1,\dots,x_n,y,z,\delta,\delta^{-1}\}$) of $w_{l+1}$ is equal to that of $w_l$ plus $n+r+q+3$. Otherwise, it is equal to that of $w_l$ plus $r+q+2$. Since the length of $w_0$ is $n+q+r+3$, we get that $w_{n-1}$ is a word starting by $x_1$ of length 
\begin{equation}\label{Length1}
    \ell(w_{n-1})=(n+r+q+3)+(n-r-1)(n+r+q+3)+r(r+q+2)=nq+3n+n^2-r,
\end{equation}
with exactly $n$ occurrences of $z$, each of which comes after a $y$ and before a power of $\delta$. Removing from $w_{n-1}$ every instance of the letters $\delta,\delta^{-1}$ and $z$ leaves a word $\tilde w_{n-1}$ consisting of a succession of the word $x_1\cdots x_ny$. For any word $m$ and letter $x$, denote by $|m|_x$ the number of instances of $x$ in $m$. With this notation, we have
\begin{equation}\label{Length2}
|w_{n-1}|_{\delta}=q(n-r)+qr=qn,\, |w_{n-1}|_{\delta^{-1}}=n-r,\, |w_{n-1}|_z=(n-r)+r=n.
\end{equation}
Combining equations \eqref{Length1} and \eqref{Length2}, the word $\tilde w_{n-1}$ obtained from $w_{n-1}$ after removing every instances of the letters $\delta,\delta^{-1}$ and $z$ has length 
\begin{equation}\label{Length3}
nq+3n+n^2-r-qn-(n-r)-n=n(n+1).    
\end{equation}
Since $\ell(x_1\cdots x_ny)=n+1$, the word $\tilde w_{n-1}$ is a succession of $n$ copies of $x_1\cdots x_ny$. Finally, by $\eqref{classicalBraidPres1:1}$ the elements $z$ and $\delta=x_1\cdots x_ny$ commute so that the element represented by $w_{n-1}$ in $\B_*^*(n,m)$ is $\delta^{n+qn-(n-r)}z^n=\delta^{qn+r}z^n=\delta^mz^n$. This concludes the proof.
\end{proof}
%%%%%%%%%%%%%%%%%%%%%%%%%%%%%%%%%%%%%%%%%%%%%%%%%%%%%%%%%%%%
%%%%%%%%%%%%%%%%%%%%%%%%%%%%%%%%%%%%%%%%%%%%%%%%%%%%%%%%%%%%%%%%%%%%%%%%%%%%%%%%%%%%%%%%%%%%%%%%%%%%%%%%%%%%%%%%%%%%%%%%%%%%%%%%%%%%%%%%%%%%%%%%%%%%%%%%%%%%%%%%%%%%%%%%%%%%%%%%%%%%
%%%%%%%%%%%%%%%%%%%%%%%%%%%%%%%%%%%%%%%%%%%%%%%%%%%%%%%%%%%%
%%%%%%%%%%%%%%%%%%%%%%%%%%%%%%%%%%%%%%%%%%%%%%%%%%%%%%%%%%%%%%%%%%%%%%%%%%%%%%%%%%%%%%%%%%%%%%%%%%%%%%%%%%%%%%%%%%%%%%%%%%%%%%%%%%%%%%%%%%%%%%%%%%%%%%%%%%%%%%%%%%%%%%%%%%%%%%%%%%%%
%%%%%%%%%%%%%%%%%%%%%%%%%%%%%%%%%%%%%%%%%%%%%%%%%%%%%%%%%%%%
%%%%%%%%%%%%%%%%%%%%%%%%%%%%%%%%%%%%%%%%%%%%%%%%%%%%%%%%%%%%%%%%%%%%%%%%%%%%%%%%%%%%%%%%%%%%%%%%%%%%%%%%%%%%%%%%%%%%%%%%
%%%%%%%%%%%%%%%%%%%%%%%%%%%%%%%%%%%%%%%%%%%%%%%%%%%%%%%%%%%%
%%%%%%%%%%%%%%%%%%%%%%%%%%%%%%%%%%%%%%%%%%%%%%%%%%%%%%%%%%%%
%%%%%%%%%%%%%%%%%%%%%%%%%%%%%%%%%%%%%%%%%%%%%%%%%%%%%%%%%%%%%%%%%%%%%%%%%%%%%%%%%%%%%%%%%%%%%%%%%%%%%%%%%%%%%%%%%%%%%%%%%%%%%%%%%%%%%%%%%%%%%%%%%%%%%%%%%%%%%%%%%%%%%%%%%%%%%%%%%%%%
%%%%%%%%%%%%%%%%%%%%%%%%%%%%%%%%%%%%%%%%%%%%%%%%%%%%%%%%%%%%
%%%%%%%%%%%%%%%%%%%%%%%%%%%%%%%%%%%%%%%%%%%%%%%%%%%%%%%%%%%%%%%%%%%%%%%%%%%%%%%%%%%%%%%%%%%%%%%%%%%%%%%%%%%%%%%%%%%%%%%%
\begin{proof}[Proof of Proposition \ref{DeltaCentral}]
It is enough to show that $\Delta$ commutes with every element of $\{x_1,\dots,x_n,y,z\}$. First, the element $\Delta$ is a product of powers of $w$ and $W$, thus by combining Lemmas \ref{xvw} and \ref{xvW} we get that $y$ and $\Delta$ commute.
Similarly, by Lemma \ref{Delta=Delta} we see that $z$ and $\Delta$ commute since $\Delta$ is a product of powers of $z$ and $\delta$, both of which commute with $z$.\\
Let then $i\in [n]$. Integrating $x_i$ $n$ times amounts to using the equality $x_jw=wx_{j+r}$ $n-r$ times and the equality $x_jW=Wx_{j+r}$ $r$ times depending on whether $x_j$ is integrable against $w$ (which happens $n-r$ times) or $W$ (which happens $r$ times). In other words, since $w$ and $W$ commute, we have $x_iw^{n-r}W^r=w^{n-r}W^rx_{i+nr}=w^{n-r}W^rx_i$. This shows that $x_i$ commutes with $\Delta$ for all $i\in [n]$, which concludes the proof.
\end{proof}
%%%%%%%%%%%%%%%%%%%%%%%%%%%%%%%%%%%%%%%%%%%%%%%%%%%%%%%%%%%%
%%%%%%%%%%%%%%%%%%%%%%%%%%%%%%%%%%%%%%%%%%%%%%%%%%%%%%%%%%%%
%%%%%%%%%%%%%%%%%%%%%%%%%%%%%%%%%%%%%%%%%%%%%%%%%%%%%%%%%%%%%%%%%%%%%%%%%%%%%%%%%%%%%%%%%%%%%%%%%%%%%%%%%%%%%%%%%%%%%%%%%%%%%%%%%%%%%%%%%%%%%%%%%%%%%%%%%%%%%%%%%%%%%%%%%%%%%%%%%%%%
%%%%%%%%%%%%%%%%%%%%%%%%%%%%%%%%%%%%%%%%%%%%%%%%%%%%%%%%%%%%
\begin{remark}[Notation]\label{Remark/Notation}
The image of $\Delta$ under any of the quotients in the Square \eqref{CommFINDANAMEbraids} is central by Proposition \ref{DeltaCentral}. In any of the groups $\B_*(n,m)$, $\B^*(n,m)$ and $\B(n,m)$, we still call $\Delta$ the resulting element in the quotient. Moreover, using Lemma \ref{Delta=Delta} we have the following:\\
$\bullet$ In $\B_*(n,m)$, the element $\Delta$ is $(x_1\cdots x_ny)^m$.\\
$\bullet$ In $\B^*(n,m)$, the element $\Delta$ is $(x_1\cdots x_n)^mz^n$.\\
$\bullet$ In $\B(n,m)$, the element $\Delta$ is $(x_1\cdots x_n)^m$. Note that in this case, the element $\Delta$ generates the center of $\B(n,m)$ (combine \cite[Ch.3, Section C, exercise 4]{Rolfsen} and \cite[Lemma 3.1 and Remark 4.13]{Gobet Garside}).
\end{remark}
%%%%%%%%%%%%%%%%%%%%%%%%%%%%%%%%%%%%%%%%%%%%%%%%%%%%%%%%%%%%%%%%%%%%%%%%%%%%%%%%%%%%%%%%%%%%%%%%%%%%%%%%%%%%%%%%%%%%%%%%
%%%%%%%%%%%%%%%%%%%%%%%%%%%%%%%%%%%%%%%%%%%%%%%%%%%%%%%%%%%%
%%%%%%%%%%%%%%%%%%%%%%%%%%%%%%%%%%%%%%%%%%%%%%%%%%%%%%%%%%%%
%%%%%%%%%%%%%%%%%%%%%%%%%%%%%%%%%%%%%%%%%%%%%%%%%%%%%%%%%%%%%%%%%%%%%%%%%%%%%%%%%%%%%%%%%%%%%%%%%%%%%%%%%%%%%%%%%%%%%%%%%%%%%%%%%%%%%%%%%%%%%%%%%%%%%%%%%%%%%%%%%%%%%%%%%%%%%%%%%%%%
%%%%%%%%%%%%%%%%%%%%%%%%%%%%%%%%%%%%%%%%%%%%%%%%%%%%%%%%%%%%
\subsection{\texorpdfstring{Small generating sets for braid groups associated to $J$-reflection groups}{}}
The goal of this subsection is devoted to proving the following lemma:
\begin{lemma}\label{ThreeGenerate}
The three elements $x_1,x_2\cdots x_ny$ and $z$ generate $\B_*^*(n,m)$.    
\end{lemma}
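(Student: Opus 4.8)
The plan is to show that the subgroup $H := \langle x_1,\, x_2\cdots x_ny,\, z\rangle$ equals $\B_*^*(n,m)$. Since $z\in H$ and $\delta = x_1\cdot(x_2\cdots x_ny)\in H$, we have $H = \langle x_1,\delta,z\rangle$. Writing $p_j := x_1\cdots x_j$ for the partial products (so $p_0 = 1$, $p_1 = x_1$), we have $x_j = p_{j-1}^{-1}p_j$ for every $j$ and $y = p_n^{-1}\delta$; hence it suffices to prove that every $p_j$ with $0\le j\le n$ lies in $H$. (We may assume $n\ge 2$, the case $n=1$ being trivial since then $x_2\cdots x_ny=y$.)

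First I would rewrite the defining relations in terms of the $p_j$ and of the two elements $\zeta := z\delta^q$ and $\eta := z\delta^{q+1}$, both of which lie in $H$. Using \eqref{classicalBraidPres1:1} (that is, $\delta z = z\delta$) together with $x_{i+1}\cdots x_ny = p_i^{-1}\delta$, relation \eqref{classicalBraidPres1:2} becomes $p_i^{-1}\zeta p_{i+r} = p_{i-1}^{-1}\zeta p_{i+r-1}$ for $1\le i\le n-r$, and \eqref{classicalBraidPres1:3} becomes $p_i^{-1}\eta p_{i+r-n} = p_{i-1}^{-1}\eta p_{i+r-n-1}$ for $n-r+1\le i\le n$. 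In other words $p_i^{-1}\zeta p_{i+r}$ is independent of $i$ and equals $w = \zeta p_r$ (the element of Definition \ref{defDelta}), while $p_i^{-1}\eta p_{i+r-n}$ is independent of $i$ and equals $W = p_{n-r}^{-1}\eta$. In particular $w\in H$ and $W\in H$ as soon as $p_r$ and $p_{n-r}$ lie in $H$.

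These identities yield the forward recurrences $p_{j+r} = \zeta^{-1}p_j\zeta\,p_r$ (for $0\le j\le n-r$) and $p_{j} = \eta^{-1}p_{j+n-r}\,p_{n-r}^{-1}\eta$ (for $0\le j\le r$), each of which shifts the index by $r$ modulo $n$. Since $n\wedge m = 1$ forces $n\wedge r = 1$, the map $j\mapsto j+r\ (\mathrm{mod}\ n)$ is a single $n$-cycle, so starting from $p_0 = 1$ these recurrences express every $p_j$ as an explicit word in $\zeta$, $\eta$ and the two seed products $p_r, p_{n-r}$. The crux is then to pin down these seeds: I would combine the two boundary instances that touch $p_0 = 1$, namely $p_{r+1} = (\zeta^{-1}x_1\zeta)\,p_r$ from \eqref{classicalBraidPres1:2} at $i=1$ and $p_{n-r+1} = (\eta x_1\eta^{-1})\,p_{n-r}$ from \eqref{classicalBraidPres1:3} at $i=n-r+1$, with the remaining anchor $p_1 = x_1$, to get a system whose solution expresses $p_r$ (equivalently $w$) and $p_{n-r}$ (equivalently $W$) as words in $x_1,\delta,z$. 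When $n-r=1$ one reads off $W = x_1^{-1}\eta\in H$ at once and finishes by integrating $x_1$ through all indices via Lemmas \ref{xvw} and \ref{xvW}; in general the seeds come out as conjugates of $x_1$ by products of $\zeta,\eta$ (for instance, in the first genuinely wrapped case $n=5,\ r=2$ one finds $p_r = (\zeta\eta)x_1(\zeta\eta)^{-1}x_1$). Substituting back places every $p_j$ in $H$, whence all $x_j = p_{j-1}^{-1}p_j$ and $y = p_n^{-1}\delta$ lie in $H$, proving the lemma.

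The main obstacle is precisely the resolution of this cyclic dependency: the recurrences determine all partial products only up to the unknown seeds $p_r,p_{n-r}$, and these two elements are coupled because going once around the $r$-cycle does not close up at the group level, the defect being the central element $\Delta$. The two features that make the elimination possible are that $n\wedge r = 1$ (so there is a single cycle visiting both seeds) and that we have two genuine anchors $p_0 = 1$ and $p_1 = x_1$; I expect the bookkeeping of this elimination, uniformly in $n,r,q$, to be the only delicate point, the rest being the routine rewriting above.
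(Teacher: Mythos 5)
Your setup is correct and verifiable: with $p_j=x_1\cdots x_j$, $\zeta=z\delta^q$, $\eta=z\delta^{q+1}$, relations \eqref{classicalBraidPres1:1}--\eqref{classicalBraidPres1:3} do become $p_i^{-1}\zeta p_{i+r}=\zeta p_r=w$ and $p_i^{-1}\eta p_{i+r-n}=p_{n-r}^{-1}\eta=W$, the reduction of the lemma to ``all $p_j\in H$'' is sound, and your $n=5$, $r=2$ computation (giving $p_2=\zeta\eta\,x_1\,\eta^{-1}\zeta^{-1}x_1$) checks out, as does the case $n-r=1$. But the proof is not complete: the step you yourself flag as the crux --- eliminating the seeds $p_r$ and $p_{n-r}$ uniformly in $n,r,q$ --- is never carried out, only illustrated. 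Worse, the most natural closing move fails: each recurrence has the form $p\mapsto c^{-1}pu$ with $c\in\{\zeta,\eta\}\subset H$ and $u\in\{w,W\}$, so composing once around the full $r$-cycle from $p_1=x_1$ back to itself yields $x_1=C^{-1}x_1\,w^{n-r}W^r=C^{-1}x_1\Delta$ with $C\in H$ a product of $\zeta$'s and $\eta$'s. Since $\Delta=\delta^mz^n\in H$ is already known to be central (Lemma \ref{Delta=Delta}), this full-cycle relation is the identity $\Delta=x_1^{-1}Cx_1$ and carries no information separating $w$ from $W$. In your example the resolution came from a \emph{proper sub-path} plus a fortuitous cancellation of the shape $p_2\,\zeta\,p_2\,p_2^{-1}\zeta^{-1}$; that such a cancellation pattern exists for every pair $(n,r)$ is exactly what needs proving, and nothing in the proposal supplies it. As written, the argument establishes the lemma only in the cases you computed.

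It is worth contrasting this with the paper's proof, which sidesteps the seed-coupling entirely by a different choice of auxiliary elements: instead of the partial products $p_j$, it uses the complementary products $X_i=x_1\cdots\hat x_i\cdots x_ny$. For these, relations \eqref{classicalBraidPres1:2} and \eqref{classicalBraidPres1:3} (after conjugating by $x_1\cdots x_{i-1}$ and $x_{i+r-1}\cdots x_ny$ and using $\delta z=z\delta$) become the clean intertwining identities $X_i\,z\delta^q=z\delta^q\,X_{i+r}$ and $X_i\,z\delta^{q+1}=z\delta^{q+1}\,X_{i+r-n}$, whose coefficients $z\delta^q,z\delta^{q+1}$ lie in $H$ and which involve \emph{no} unknown seeds. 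Membership in $H$ then propagates around the $r$-cycle directly from the single anchor $X_1=x_2\cdots x_ny\in H$ (using $n\wedge r=1$ exactly as you do), and a separate elementary induction (Lemma \ref{GenerateXi}) shows $\{X_1,\dots,X_n,x_1,z\}$ generates $\B_*^*(n,m)$. If you want to salvage your route, the cheapest repair is to prove your seeds lie in $H$ by borrowing this observation: your equations show $p_j\in H$ for all $j$ as soon as $w,W\in H$, and $w,W$ can be reached through the $X_i$-relations rather than through an ad hoc elimination.
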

%%%%%%%%%%%%%%%%%%%%%%%%%%%%%%%%%%%%%%%%%%%%%%%%%%%%%%%%%%%%%%%%%%%%%%%%%%%%%%%%%%%%%%%%%%%%%%%%%%%%%%%%%%%%%%%%%%%%%%%%%%%%%%%%%%%%%%%%%%%%%%%%%%%%%%%%%%%%%%%%%%%%%%%%%%%%%%%%%%%%
%%%%%%%%%%%%%%%%%%%%%%%%%%%%%%%%%%%%%%%%%%%%%%%%%%%%%%%%%%%%

For all $i\in[n]$ we introduce the element $X_i:=x_1\cdots \hat x_i\cdots x_ny\in \B_*^*(n,m)$, with the convention that $X_{i+n}=X_i$ for all $i\in \Z$. 

\begin{lemma}\label{GenerateXi}
The set $\{X_1,\dots,X_n,z,x_1\}\subset \B_*^*(n,m)$ generates $\B_*^*(n,m)$.
\end{lemma}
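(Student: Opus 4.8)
The plan is to show that the subgroup $H:=\langle X_1,\dots,X_n,z,x_1\rangle$ of $\B_*^*(n,m)$ contains each of the standard generators $x_1,\dots,x_n,y,z$; since $x_1$ and $z$ belong to our set by hypothesis, it suffices to recover $x_2,\dots,x_n$ and $y$. The first observation is that $\delta=x_1\cdots x_ny$ already lies in $H$: indeed $X_1=x_2\cdots x_ny$, so $\delta=x_1X_1\in H$. This single element $\delta$ is the engine of the whole argument.

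Next I would record, for each $i\in[n]$, the identity expressing $x_i$ (up to conjugacy) in terms of $\delta$ and $X_i$. Writing $c_i:=x_1\cdots x_{i-1}$ (with $c_1=1$) and $d_i:=x_{i+1}\cdots x_ny$, we have $\delta=c_ix_id_i$ and $X_i=c_id_i$, whence $\delta X_i^{-1}=c_ix_ic_i^{-1}$. Thus $\delta X_i^{-1}$ is simply the conjugate of $x_i$ by the prefix $c_i=x_1\cdots x_{i-1}$. This is a purely formal word identity, using no relation of the presentation but only free cancellation, so it holds in $\B_*^*(n,m)$.

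I would then run an induction on $i$ from $1$ to $n$ with the hypothesis that $x_1,\dots,x_{i-1}\in H$, equivalently $c_i\in H$. For $i=1$ this is the base case $c_1=1\in H$. Assuming $c_i\in H$, the element $\delta X_i^{-1}=c_ix_ic_i^{-1}$ lies in $H$ (as $\delta,X_i\in H$), so $x_i=c_i^{-1}(\delta X_i^{-1})c_i\in H$; then $c_{i+1}=c_ix_i\in H$, which closes the induction. After the final step we have $x_1,\dots,x_n\in H$, and therefore $y=(x_1\cdots x_n)^{-1}\delta\in H$. Together with $x_1,z\in H$ this gives $H=\B_*^*(n,m)$.

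There is essentially no hard obstacle here: the content of the lemma is the bookkeeping of the two identities $\delta=x_1X_1$ and $\delta X_i^{-1}=c_ix_ic_i^{-1}$, the second of which reduces the recovery of $x_i$ to already having recovered the prefix $x_1\cdots x_{i-1}$. The only point requiring minor care is to order the induction so that the conjugating prefix $c_i$ is produced before it is used; the fact that $\delta$ is obtained right at the start from $x_1$ and $X_1$ is exactly what makes the induction both start and propagate.
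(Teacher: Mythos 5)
Your proof is correct and takes essentially the same route as the paper: your identity $\delta X_i^{-1}=c_ix_ic_i^{-1}$ with $\delta=x_1X_1$ is exactly the paper's conjugation formula $x_{i+1}=(x_1\cdots x_i)^{-1}x_1X_1X_{i+1}^{-1}(x_1\cdots x_i)$, and both arguments run the same induction on the prefix $x_1\cdots x_{i-1}$. The only cosmetic differences are the index shift and that you recover $y$ as $(x_1\cdots x_n)^{-1}\delta$ instead of $(x_2\cdots x_n)^{-1}X_1$.
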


\begin{proof}
We show by induction on $i\in [n]$ that $\{x_1,\dots,x_i\}\subset \langle X_1,\dots,X_n,x_1\rangle$.\\
For $i=1$, this is trivial. Assume then that $\{x_1,\dots,x_i\}\subset\langle X_1,\dots,X_n,x_1\rangle$ for some $i\in [n-1]$. Then we have 
\begin{equation*}
    \begin{aligned}
        x_{i+1}&=(x_1\cdots x_i)^{-1}(x_1\cdots x_ny)(x_{i+2}\cdots x_ny)^{-1}(x_1\cdots x_i)^{-1}(x_1\cdots x_i)\\&=(x_1\cdots x_i)^{-1}x_1X_1X_{i+1}^{-1}(x_1\cdots x_i),
    \end{aligned}
\end{equation*}
which shows that $x_{i+1}\in \langle x_1,\dots,x_i,X_1,X_{i+1}\rangle\subset\langle X_1,\dots,X_n,x_1\rangle$ by induction hypothesis.\\
We get that $\{x_1,\dots,x_n,z\}\subset \langle X_1,\dots,X_n,z,x_1\rangle$. Since $y=(x_2\cdots x_n)^{-1}X_1$, this shows that 
\begin{equation*}\langle X_1,\dots,X_n,z,x_1\rangle=\B_*^*(n,m),\end{equation*} which concludes the proof.
\end{proof}

%%%%%%%%%%%%%%%%%%%%%%%%%%%%%%%%%%%%%%%%%%%%%%%%%%%%%%%%%%%%%%%%%%%%%%%%%%%%%%%%%%%%%%%%%%%%%%%%%%%%%%%%%%%%%%%%%%%%%%%%%%%
%%%%%%%%%%%%%%%%%%%%%%%%%%%%%%%%%%%%%%%%%%%%%%%%%%%%%%%%%%%%
%%%%%%%%%%%%%%%%%%%%%%%%%%%%%%%%%%%%%%%%%%%%%%%%%%%%%%%%%%%%%%%%%%%%%%%%%%%%%%%%%%%%%%%%%%%%%%%%%%%%%%%%%%%%%%%%%%%%%%%%%%%%%%%%%%%%%%%%%%%%%%%%%%%%%%%%%%%%%%%%%%%%%%%%%%%%%%%%%%%%
%%%%%%%%%%%%%%%%%%%%%%%%%%%%%%%%%%%%%%%%%%%%%%%%%%%%%%%%%%%%
%%%%%%%%%%%%%%%%%%%%%%%%%%%%%%%%%%%%%%%%%%%%%%%%%%%%%%%%%%%%%%%%%%%%%%%%%%%%%%%%%%%%%%%%%%%%%%%%%%%%%%%%%%%%%%%%%%%%%%%%%%%%%%%%%%%%%%%%%%%%%%%%%%%%%%%%%%%%%%%%%%%%%%%%%%%%%%%%%%%%
%%%%%%%%%%%%%%%%%%%%%%%%%%%%%%%%%%%%%%%%%%%%%%%%%%%%%%%%%%%%
\begin{proof}[Proof of Lemma \ref{ThreeGenerate}]
Using Lemma \ref{GenerateXi}, it is enough to show that $\{X_1,\dots,X_n\}\subset \langle x_1,x_2\cdots x_ny,z\rangle$ to conclude the proof.\\
First, one sees that $\delta=x_1(x_2\cdots x_ny)\in \langle x_1,x_2\cdots x_ny,z\rangle$. Now, observe that for all $i\in [n-r]$, the relation 
\begin{equation*}
    x_{i+1}\cdots x_nyz\delta^{q-1}x_1\cdots x_{i+r}=x_i\cdots x_nyz\delta^{q-1}x_1\cdots x_{i+r-1}
\end{equation*}
coming from $\eqref{classicalBraidPres1:2}$ yields 
\begin{equation*}
\begin{aligned} &(x_1\cdots x_{i-1})(x_{i+1}\cdots x_nyz\delta^{q-1}x_1\cdots x_{i+r})(x_{i+r-1}\cdots x_ny)\\ &= (x_1\cdots x_{i-1})(x_i\cdots x_nyz\delta^{q-1}x_1\cdots x_{i+r-1})(x_{i+r-1}\cdots x_ny).
\end{aligned}\end{equation*}
Using \eqref{classicalBraidPres1:1}, the right hand side is equal to $z\delta^qX_{i+r}$, hence we get $X_iz\delta^q=z\delta^qX_{i+r}$. Similarly, for $i\in \llbracket n-r+1,n\rrbracket$ the relation 
\begin{equation*}
    x_{i+1}\cdots x_nyz\delta^qx_1\cdots x_{i+r-n}=x_i\cdots x_nyz\delta^qx_1\cdots x_{i+r-n-1}
\end{equation*}
coming from \eqref{classicalBraidPres1:3} yields $X_iz\delta^{q+1}=z\delta^{q+1}X_{i+r-n}$.\\Since $X_{i+r-n}=X_{i+r}$ and $\{z\delta^q,z\delta^{q+1}\}\subset \langle x_1,x_2\cdots x_ny,z\rangle,$ we see that for all $i\in [n]$, the element $X_i$ belongs to $\langle x_1,x_2\cdots x_ny,z\rangle$ if and only if $X_{i+r}$ does. Since $n$ and $r$ are coprime, we get that $\{X_1,\dots,X_n\}\subset \langle x_1,x_2\cdots x_ny,z\rangle$ if and only if $X_1\in \langle x_1,x_2\cdots x_ny,z\rangle$, which holds since $X_1=x_2\cdots x_ny$. This concludes the proof.
\end{proof}
\subsection{\texorpdfstring{The isomorphism type of $\B_*^*(n,m)$}{}}\label{Section33}
It is known (see \cite[Theorems 1 and 2]{Bannai}) that the braid groups of rank two complex reflection groups with three conjugacy classes of reflecting hyperplanes are isomorphic to the group defined by the presentation 
\begin{equation}\label{defG33}
    \left\langle\begin{array}{c|cc}
     s,t,u \, & \, stu=tus=ust
    \end{array}\right\rangle.
\end{equation}
Following the notation of \cite{GarnierHoso}, we denote by $G(3,3)$ the group defined by Presentation \eqref{defG33}. The goal of this subsection is to show the following theorem, generalising the existing pattern for rank two complex reflection groups:

\begin{theorem}\label{BraidG33}
Let $n,m\in \N^*$ be two coprime integers. Then the group $\B_*^*(n,m)$ is abstractly isomorphic to $G(3,3)$. In particular, using Proposition \ref{NumberConj}, if a $J$-reflection group has three conjugacy classes of reflecting hyperplanes its associated braid group is isomorphic to $G(3,3)$.
\end{theorem}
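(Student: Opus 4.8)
The plan is to prove the cleaner statement that $\B_*^*(n,m)\cong F_2\times\Z$, where $F_2$ is free of rank two, and then to identify $G(3,3)$ with $F_2\times\Z$. For the latter I would run Tietze transformations on Presentation \eqref{defG33}: introduce $c:=stu$ as a new generator and eliminate $u=t^{-1}s^{-1}c$. The relations $stu=tus=ust$ then become exactly $[s,c]=1$ and $[t,c]=1$, so that $G(3,3)=\langle s,t,c\mid [s,c]=[t,c]=1\rangle\cong\langle s,t\rangle\times\langle c\rangle\cong F_2\times\Z$, with $\langle c\rangle$ the (infinite cyclic) center. Thus it is enough to construct an isomorphism $\B_*^*(n,m)\cong F_2\times\Z$ carrying center onto center.

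On the $\B_*^*(n,m)$ side I would first repackage Lemmas \ref{xvw} and \ref{xvW} as a semidirect product. With $w$, $W=wy$ and $\Delta=w^{n-r}W^r$ as in Definition \ref{defDelta}, the commutation $wy=yw$ from Lemma \ref{xvw} gives the convenient identity $\Delta=w^{n-r}(wy)^r=w^ny^r$. Lemmas \ref{xvw} and \ref{xvW} say precisely that conjugation by $w$ fixes $y$, sends $x_i\mapsto x_{i+r}$ for $i\le n-r$, and sends $x_i\mapsto yx_{i+r-n}y^{-1}$ for $i>n-r$; in particular $w$ normalises $N:=\langle x_1,\dots,x_n,y\rangle$. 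Since $w=z\delta^qx_1\cdots x_r$ recovers $z$, the set $N\cup\{w\}$ generates $\B_*^*(n,m)$, and the homomorphism $\B_*^*(n,m)\to\Z$ recording the total exponent of $z$ (well defined on \eqref{classicalBraidPres1:1}--\eqref{classicalBraidPres1:3}, as each side carries exactly one $z$) kills $N$ and sends $w\mapsto1$. Hence $\langle w\rangle\cong\Z$, $N\cap\langle w\rangle=1$, and $\B_*^*(n,m)=N\rtimes\langle w\rangle$ with $w$ acting by the cyclic shift above; this shift is periodic, since $w^n$ acts on $N$ as an inner automorphism (conjugation by a power of $y$), consistently with $\Delta=w^ny^r$ being central by Proposition \ref{DeltaCentral}.

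The heart of the argument is to show that the central quotient $\overline{\B}:=\B_*^*(n,m)/\langle\Delta\rangle$ is free of rank two. In $\overline{\B}$ the relation $\Delta=1$ reads $\bar w^{\,n}=\bar y^{\,-r}$; as $\bar w$ and $\bar y$ commute and $n\wedge r=1$ (because $n\wedge m=1$), the subgroup $\langle\bar w,\bar y\rangle$ is cyclic, generated by a single $\bar v$ with both $\bar w$ and $\bar y$ expressible as powers of $\bar v$. Every $\bar x_i$ is a conjugate of $\bar x_1$ by a power of $\bar w$, hence of $\bar v$, and once $\bar w,\bar y$ are rewritten through $\bar v$ the shift relations coming from \eqref{classicalBraidPres1:2} and \eqref{classicalBraidPres1:3} become tautologies. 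A normal-form bookkeeping then collapses the presentation to $\langle\bar x_1,\bar v\mid\ \rangle$, i.e. $\overline{\B}\cong F_2$. Since $\Delta$ has infinite order (it is nonzero in $\B_*^*(n,m)^{\mathrm{ab}}\cong\Z^3$, where all $x_i$ become equal), and since $H^2(F_2;\Z)=0$ forces the central extension $1\to\langle\Delta\rangle\to\B_*^*(n,m)\to F_2\to1$ to split, I would conclude $\B_*^*(n,m)\cong F_2\times\langle\Delta\rangle\cong F_2\times\Z\cong G(3,3)$, with $\Delta\mapsto c=stu$. The last sentence of the statement is then immediate from Proposition \ref{NumberConj}.

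I expect the main obstacle to be exactly the freeness of $\overline{\B}$: checking that, after $\bar y$ and $\bar w$ are absorbed into the single cyclic variable $\bar v$, the relations \eqref{classicalBraidPres1:2} and \eqref{classicalBraidPres1:3} impose no residual relation between $\bar x_1$ and $\bar v$. The coprimality $n\wedge r=1$ is what makes the shift a single $n$-cycle and pins every $\bar x_i$ to a conjugate of $\bar x_1$; the delicate point is controlling the $y$-twist on wrap-around so that nothing extra is forced. A cohomology-free alternative for the final step is to exhibit the complement directly: set $s:=x_1$, let $t$ be a lift of $\bar v$ built from $w$ and $y$, and $c:=\Delta$, verify that $\langle x_1,t\rangle$ is free by the same normal-form analysis and that it commutes with and complements $\langle\Delta\rangle$, yielding the explicit isomorphism $G(3,3)\to\B_*^*(n,m)$ given by $s\mapsto x_1$, $t\mapsto t$, $u\mapsto t^{-1}x_1^{-1}\Delta$.
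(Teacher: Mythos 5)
Your route is genuinely different from the paper's — the paper proves Theorem \ref{BraidG33} by exhibiting the explicit pair of homomorphisms $\varphi,\psi$ of Propositions \ref{BtoG(3,3)} and \ref{G(3,3)toB}, built from the Bézout identity $nn_{(m)}+mm_{(n)}=nm+1$ of Lemma \ref{CoprimeLemma}, and checking $\varphi\circ\psi=\mathrm{Id}_{G(3,3)}$ together with surjectivity of $\psi$ via the three-element generating set of Lemma \ref{ThreeGenerate} — but as written your argument has a genuine gap exactly where you flag it. The claim that $\overline{\B}:=\B_*^*(n,m)/\langle\Delta\rangle$ is free of rank two carries the entire weight of the theorem (everything else — the Tietze identification $G(3,3)\cong F_2\times\Z$, the $z$-exponent homomorphism, $\Delta=w^ny^r$, infinite order of $\Delta$ via $\B_*^*(n,m)^{\mathrm{ab}}\cong\Z^3$, and the splitting of the central extension since free groups have trivial $H^2$ — is routine and correct), and it is asserted through ``normal-form bookkeeping'' rather than proved. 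Two verifications are missing. First, to compute the quotient from a presentation you need more than the group-theoretic decomposition $\B_*^*(n,m)=N\rtimes\langle w\rangle$: you need that after adjoining $w$ and eliminating $z$, the relations \eqref{classicalBraidPres1:1}--\eqref{classicalBraidPres1:3} are \emph{equivalent} to $[y,w]=1$ together with the $n$ conjugation relations (equivalently, that $N$ is free on $x_1,\dots,x_n,y$). Lemmas \ref{xvw} and \ref{xvW} give only the forward implication; the converse direction of the Tietze exchange must be checked, and without it you cannot legitimately present $\overline{\B}$ by the conjugation relations plus $\Delta=1$.

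Second, the wrap-around holonomy, which you correctly identify as the delicate point but leave open. It does come out right, and here is the computation that closes it: in $\overline{\B}$ the subgroup $\langle\bar w,\bar y\rangle$ is a quotient of $\Z^2/\langle(n,r)\rangle\cong\Z$ (using $n\wedge r=1$), with $\bar w\mapsto \bar v^{\,r}$ and $\bar y\mapsto \bar v^{\,-n}$ for a single generator $\bar v$. The conjugation relations then read $\bar x_{i+r}=\bar v^{\,-r}\bar x_i\bar v^{\,r}$ for $i\leq n-r$ and $\bar x_{i+r-n}=\bar v^{\,-(r-n)}\bar x_i\bar v^{\,r-n}$ for $i>n-r$, so traversing the single $n$-cycle $i\mapsto i+r \pmod n$ once imposes $\bar x_1=\bar v^{\,-c}\bar x_1\bar v^{\,c}$ with total twist $c=(n-r)\cdot r+r\cdot(r-n)=0$; hence no residual relation between $\bar x_1$ and $\bar v$ arises and $\overline{\B}\cong F_2$ follows (for instance by viewing the rewritten presentation as an ascending chain of Tietze eliminations of $\bar x_2,\dots,\bar x_n$). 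With these two points supplied your proof is complete, and it buys something the paper's does not — a structural explanation (mapping torus over a basis-permuting automorphism, central quotient free, extension splits) for \emph{why} the answer is $F_2\times\Z$ — at the cost of losing the explicit formulas for the isomorphism, which the paper needs later (e.g.\ in Corollary \ref{braid2nm} and in the reflection-isomorphism statements) and which your cohomological splitting does not produce canonically.
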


A consequence of Theorems \ref{Classification3} and \ref{BraidG33} is the following theorem, which is proven at the end of this subsection:
\begin{theorem}\label{BraidIsoType}
If two $J$-reflection groups are isomorphic in reflection, their associated braid groups are isomorphic.
\end{theorem}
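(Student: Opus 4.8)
The plan is to factor the entire statement through the column invariant. By Theorem \ref{Classification3}, the relation $W_1 \cong_{ref} W_2$ is equivalent to $C(W_1) = C(W_2)$, so it suffices to prove that the isomorphism type of $\B(W)$ is a function of the multiset $C(W) = \{\!\!\{\binom{k}{1}, \binom{bn}{n}, \binom{cm}{m}\}\!\!\}$ alone. The first thing I would isolate is how much of $C(W)$ the presentations \eqref{classicalBraidPres1}--\eqref{BraidPresyz1} actually remember: since the torsion relations have been deleted, the reflection orders $k,b,c$ enter only through whether $b \geq 2$ and whether $c \geq 2$ (equivalently, whether the generators $y$ and $z$ survive), with $k$ playing no role at all, while the combinatorics of the relations is governed by the pair $(n,m)$. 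Reading a column $\binom{x}{x'}$ as the datum $(\text{bottom } x',\ \text{ratio } x/x')$, the braid group therefore depends only on the multiset
\[ M(W) \;=\; \{\!\!\{\,(n,\ b\geq 2),\ (m,\ c\geq 2)\,\}\!\!\}, \]
whose two members are the (bottom entry, is the ratio $\geq 2$?) pairs of the two non-$s$ columns.

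The key preliminary lemma is that $M(W)$ is recoverable from $C(W)$ with no choice involved. The $s$-column $\binom{k}{1}$ always contributes the pair $(1, \mathrm{true})$ because $k \geq 2$; and the constraints $bn, cm \geq 2$ force any non-$s$ column of bottom $1$ to have ratio $\geq 2$ and any non-$s$ column of ratio $1$ to have bottom $\geq 2$. Consequently, deleting a single copy of $(1,\mathrm{true})$ from the full $(\text{bottom},\ \text{ratio}\geq 2?)$ multiset of $C(W)$ returns $M(W)$ unambiguously, whichever depth-$1$ column one declares to be the $s$-column. Moreover, by Proposition \ref{NumberConj} the number of conjugacy classes of reflecting hyperplanes equals the number of columns of $C(W)$ of ratio $\geq 2$, so it too is read off from $C(W)$ and is equal for $W_1$ and $W_2$; this pins down which of the three presentation shapes occurs.

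I would then conclude by the resulting trichotomy on $M(W)$. When $M(W) = \{\!\!\{(n,\mathrm{T}),(m,\mathrm{T})\}\!\!\}$ (three classes), Theorem \ref{BraidG33} gives $\B(W) = \B_*^*(n,m) \cong G(3,3)$ independently of $(n,m)$, so this case is immediate. When $M(W) = \{\!\!\{(n,\mathrm{F}),(m,\mathrm{F})\}\!\!\}$ (one class), $\B(W) = \B(n,m)$ and $M(W)$ remembers only the unordered pair $\{n,m\}$, so I would invoke $\B(n,m) \cong \B(m,n)$, which is obtained from the isomorphism $\B_*^*(n,m) \cong \B_*^*(m,n)$ of Proposition \ref{DualPresentationsBraidGroupsIntro} by further killing $y$ and $z$ (it is also the classical torus-knot symmetry). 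The genuinely delicate case is the two-class one, $M(W) = \{\!\!\{(a,\mathrm{T}),(b,\mathrm{F})\}\!\!\}$: according to whether the surviving reflection is $y$ or $z$, the very same multiset is realised either as $\B_*(a,b)$ or as $\B^*(b,a)$, and these must be reconciled through the transpose isomorphism $\B_*(n,m) \cong \B^*(m,n)$ of Proposition \ref{DualPresentationsBraidGroupsIntro}. Collecting the three cases yields $\B(W_1) \cong \B(W_2)$.

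The main obstacle is precisely this two-class bookkeeping: a reflection isomorphism can transpose the $t$-part and the $u$-part, so $C(W)$ does not record whether the nontrivial non-$s$ reflection came from $y$ or from $z$, and the naive presentation can flip between the $\B_*$ and $\B^*$ shapes under such a transposition. The argument hinges on knowing that these transposed presentations define isomorphic groups, i.e.\ on having the symmetry $\B_*(n,m) \cong \B^*(m,n)$ in hand. The degenerate subcases $n = 1$ or $m = 1$ are a secondary nuisance, but they are neutralised by $bn, cm \geq 2$ as explained above and require only a short verification rather than a separate argument.
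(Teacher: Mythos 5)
Your proposal is correct and follows essentially the same route as the paper: reduce to equality of column multisets via Theorem \ref{Classification3}, split into cases by the number of conjugacy classes of reflecting hyperplanes (Proposition \ref{NumberConj}), and dispatch the three cases using Theorem \ref{BraidG33}/Corollary \ref{Braid3nm}, the transpose isomorphism $\B_*(n,m)\cong\B^*(m,n)$ of Corollary \ref{braid2nm}, and the symmetry $\B(n,m)\cong\B(m,n)$ in the one-class (toric) case. The only variations are cosmetic: you spell out the bookkeeping showing the presentation data is recoverable from $C(W)$, which the paper leaves implicit, and you source the toric-case symmetry from Proposition \ref{DualPresentationsBraidGroups} (or torus-knot symmetry) where the paper cites \cite[Corollary 1.3]{Gobet Toric}.
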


In the proof of Theorem \ref{BraidG33}, we exhibit an isomorphism sending $\Delta\in \B_*^*(n,m)$ to $stu\in G(3,3)$. In particular, we have the following corollary:
\begin{corollary}\label{CenterBraid3}
Let $n,m\in \N^*$ be two coprime integers. The center of $\B_*^*(n,m)$ is cyclic, generated by $\Delta$. In particular, using Theorem \ref{CenterFINDANAME}, for $k,b,c\in \N_{\geq2}$ the center of $\B_*^*(n,m)=\B(W_b^c(k,bn,cm))$ is sent onto the center of $W_b^c(k,bn,cm)$ under the natural quotient $\B(W_b^c(k,bn,cm))\to W_b^c(k,bn,cm)$.
\end{corollary}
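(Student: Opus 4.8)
The plan is to reduce the entire statement to Theorem \ref{BraidG33} together with a one-line computation of the centre of $G(3,3)$, everything else being transport of structure. Recall from (the proof of) Theorem \ref{BraidG33} that there is an isomorphism $\Phi\colon\B_*^*(n,m)\iso G(3,3)$ with $\Phi(\Delta)=stu$. Since an isomorphism maps centre to centre, $\Phi\bigl(Z(\B_*^*(n,m))\bigr)=Z(G(3,3))$; hence, once I know $Z(G(3,3))=\langle stu\rangle$, I get $\Phi\bigl(Z(\B_*^*(n,m))\bigr)=\langle stu\rangle=\langle\Phi(\Delta)\rangle=\Phi(\langle\Delta\rangle)$, and applying $\Phi^{-1}$ yields $Z(\B_*^*(n,m))=\langle\Delta\rangle$. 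This gives the first assertion.

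The one genuine computation is thus $Z(G(3,3))=\langle stu\rangle$, and I would establish it by identifying $G(3,3)\cong F_2\times\Z$. Writing $\sigma:=stu$, a direct manipulation of Presentation \eqref{defG33} shows $\sigma$ is central (for instance $s\sigma=s(tus)=(stu)s=\sigma s$ using $\sigma=tus$ and $\sigma=stu$, and symmetrically for $t$ and $u$). Quotienting by the central subgroup $\langle\sigma\rangle$ forces $u=(st)^{-1}$, so the quotient map $\pi\colon G(3,3)\onto G(3,3)/\langle\sigma\rangle$ identifies the target with the free group $F_2=\langle\bar s,\bar t\rangle$; as $F_2$ is free, $\bar s\mapsto s,\ \bar t\mapsto t$ defines a section of $\pi$. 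Because $\ker\pi=\langle\sigma\rangle$ is central and $\sigma$ has infinite order (it is already nonzero in the abelianisation $\Z^3$), this exhibits $G(3,3)\cong F_2\times\Z$ with $\Z=\langle\sigma\rangle$, so $Z(G(3,3))=Z(F_2)\times\langle\sigma\rangle=\langle\sigma\rangle$. (Alternatively this centre is available from \cite{Bannai} or \cite{GarnierHoso}.)

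For the \emph{in particular} part I would argue directly on the quotient. Fix $k,b,c\in\N_{\geq2}$, so that $\B_*^*(n,m)=\B(W_b^c(k,bn,cm))$ by Definition \ref{BraidPresentation1}, and let $q\colon\B(W_b^c(k,bn,cm))\onto W_b^c(k,bn,cm)$ be the natural quotient, obtained by reinstating the torsion relations $x_i^k=y^b=z^c=1$. This map sends each generator to the generator of the same name, hence sends a given word to the same word. By Lemma \ref{Delta=Delta}, $\Delta=(x_1\cdots x_n y)^m z^n$ in $\B_*^*(n,m)$, so $q(\Delta)=(x_1\cdots x_n y)^m z^n\in W_b^c(k,bn,cm)$, which by Theorem \ref{CenterFINDANAME} is precisely a generator of the cyclic centre $Z(W_b^c(k,bn,cm))$. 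Therefore $q\bigl(Z(\B_*^*(n,m))\bigr)=q(\langle\Delta\rangle)=\langle q(\Delta)\rangle=Z(W_b^c(k,bn,cm))$, i.e.\ the centre of the braid group is mapped onto the centre of the $J$-reflection group.

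I expect no serious obstacle: the argument is pure transport of structure along Theorem \ref{BraidG33}. The only two points needing care are the self-contained identification $Z(G(3,3))=\langle stu\rangle$ (handled by the $F_2\times\Z$ splitting above) and the observation that the two elements denoted $\Delta$ — the central element of $\B_*^*(n,m)$ from Lemma \ref{Delta=Delta} and the central element of $W_b^c(k,bn,cm)$ from Theorem \ref{CenterFINDANAME} — are represented by the same word $(x_1\cdots x_n y)^m z^n$, which is exactly what makes them correspond under $q$ and forces the surjectivity onto $Z(W)$.
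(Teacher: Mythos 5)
Your proposal is correct and follows essentially the same route as the paper: transport $Z(G(3,3))=\langle stu\rangle$ through the isomorphism of Theorem \ref{BraidG33} (which sends $\Delta$ to $stu$), then invoke Theorem \ref{CenterFINDANAME} together with Lemma \ref{Delta=Delta} to see that $q(\Delta)=(x_1\cdots x_ny)^mz^n$ generates $Z(W_b^c(k,bn,cm))$. The only cosmetic difference is that you compute $Z(G(3,3))$ via the full splitting $G(3,3)\cong F_2\times\Z$, whereas the paper uses the lighter observation that $G(3,3)/\langle stu\rangle\cong F_2$ has trivial center, so the center must lie in $\langle stu\rangle$; both hinge on the same quotient.
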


\begin{proof}
The group $G(3,3)$ has center $stu$ (one way to see this is to observe that $G(3,3)/\langle stu\rangle$ is isomorphic to the free group on two generators, which has no center), which will show that $Z(\B_*^*(n,m))=\langle\Delta\rangle$ using the isomorphism we exhibit for the proof of Theorem \ref{BraidG33}.\\
Moreover, Theorem \ref{CenterFINDANAME} shows that the image of $\Delta$ under the natural quotient $\B(W_b^c(k,bn,cm))\to W_b^c(k,bn,cm)$ generates the center of $W_b^c(k,bn,cm)$.
\end{proof}

\begin{corollary}\label{Braid3nm}
Let $n,m\in \N^*$ be two coprime integers. The groups $\B_*^*(n,m)$ and $\B_*^*(m,n)$ are isomorphic. In particular, using Theorem \ref{Classification3} and Proposition \ref{NumberConj}, if two $J$-reflection groups with three conjugacy classes of reflecting hyperplanes are isomorphic in reflection, they have the same associated braid group.
\end{corollary}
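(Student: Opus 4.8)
The plan is to obtain both assertions directly from results already proved, with essentially no new computation. For the first isomorphism $\B_*^*(n,m)\cong\B_*^*(m,n)$, I would first note that coprimality is symmetric ($n\wedge m=1\iff m\wedge n=1$), so that both groups are defined. Then I invoke Theorem~\ref{BraidG33} twice: it identifies each of $\B_*^*(n,m)$ and $\B_*^*(m,n)$ with one and the same group $G(3,3)$. Composing $\B_*^*(n,m)\iso G(3,3)\iso\B_*^*(m,n)$ yields the claimed isomorphism. This step is immediate.

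For the refinement, let $W_1,W_2$ be $J$-reflection groups with three conjugacy classes of reflecting hyperplanes that are isomorphic in reflection, and write $W_j=W_{b_j}^{c_j}(k_j,b_jn_j,c_jm_j)$ for $j\in\{1,2\}$. First I would apply Proposition~\ref{NumberConj}: the number of conjugacy classes equals $3-\delta_{1,b}-\delta_{1,c}$, so having three of them forces $b_j,c_j\geq 2$, whence $\B(W_j)=\B_*^*(n_j,m_j)$ by Definition~\ref{BraidPresentation1}. Next, Theorem~\ref{Classification3} gives $C(W_1)=C(W_2)$, that is
\[
\col{k_1}{1}{b_1n_1}{n_1}{c_1m_1}{m_1}=\col{k_2}{1}{b_2n_2}{n_2}{c_2m_2}{m_2}.
\]
Projecting each column onto its bottom entry turns this into the equality of multisets $\multl 1,n_1,m_1\multr=\multl 1,n_2,m_2\multr$, and cancelling one copy of $1$ on each side gives $\multl n_1,m_1\multr=\multl n_2,m_2\multr$. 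Hence either $(n_1,m_1)=(n_2,m_2)$, in which case $\B(W_1)=\B(W_2)$ on the nose, or $(n_1,m_1)=(m_2,n_2)$, in which case $\B(W_1)=\B_*^*(m_2,n_2)\cong\B_*^*(n_2,m_2)=\B(W_2)$ by the first isomorphism. In all cases $\B(W_1)\cong\B(W_2)$.

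There is no genuine obstacle: all the weight is carried by Theorem~\ref{BraidG33} and Theorem~\ref{Classification3}, which are available. The only step demanding a little care is the passage from equality of column multisets to the equality $\multl n_1,m_1\multr=\multl n_2,m_2\multr$ of unordered pairs, since when $n_j$ or $m_j$ equals $1$ the value $1$ occurs in the bottom-entry multiset with multiplicity greater than one. This is precisely why I would phrase that step as a cancellation in the cancellative monoid of finite multisets rather than as a matching of individual columns, and it is exactly the residual $n\leftrightarrow m$ ambiguity of the unordered matching that the first isomorphism is invoked to absorb.
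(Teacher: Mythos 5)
Your proposal is correct and follows essentially the same route as the paper: Theorem \ref{BraidG33} applied twice gives $\B_*^*(n,m)\cong G(3,3)\cong\B_*^*(m,n)$, and the combination of Theorem \ref{Classification3} with Proposition \ref{NumberConj} reduces the reflection-isomorphic case to the pair $\{\B_*^*(n,m),\B_*^*(m,n)\}$. Your only addition is spelling out the multiset-cancellation step (projecting $C(W_1)=C(W_2)$ to bottom entries and cancelling the copy of $1$), which the paper leaves implicit in the phrase ``their associated braid groups are amongst $\{\B_*^*(n,m),\B_*^*(m,n)\}$''; this is a legitimate and careful elaboration, not a different argument.
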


\begin{proof}
By Theorem \ref{BraidG33} the groups $\B_*^*(n,m)$ and $\B_*^*(m,n)$ are both isomorphic to $G(3,3)$, hence they are isomorphic to each other. Moreover, Theorem \ref{Classification3} shows that two $J$-reflection groups are isomorphic in reflection if and only if their columns are the same. In particular, using Proposition \ref{NumberConj}, if two $J$-reflection groups with three conjugacy classes of reflecting hyperplanes are isomorphic in reflection their associated braid groups are amongst $\{\B_*^*(n,m),\B_*^*(m,n)\}$, which are isomorphic to each other. This concludes the proof.
\end{proof}

In order to prove Theorem \ref{BraidG33} we use the following lemma, for which the author found no reference in the literature.

\begin{definition}
Let $n,m\in \N^*$ be two coprime integers. If $m\neq 1$, define $n_{(m)}$ to be the unique integer in $[m-1]$ such that $\overline{nn_{(m)}}=1\in (\Z/m)^\times$. If $m=1$, define $n_{(m)}$ to be 1.
\end{definition}

\begin{lemma}\label{CoprimeLemma}
Let $n,m\in \N^*$ be two coprime integers. Then we have $$nn_{(m)}+mm_{(n)}=nm+1.$$
\end{lemma}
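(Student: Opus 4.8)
The plan is to unwind the definitions and recognise that $n_{(m)}$ is nothing but the inverse of $n$ modulo $m$ (the representative in $[m-1]$) and $m_{(n)}$ the inverse of $m$ modulo $n$, and then to pin down the specific integer $S:=nn_{(m)}+mm_{(n)}$ by combining a congruence computation with crude size estimates. The point is that $S$ is determined as an integer (not just a residue) once we know both its residue modulo $nm$ and an interval of length less than $nm$ containing it.

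First I would dispose of the degenerate cases $n=1$ or $m=1$ by hand, since there the convention $n_{(m)}=1$ (resp. $m_{(n)}=1$) has to be reconciled with the formula. If $m=1$ then $n_{(m)}=1$ by convention, and $m_{(n)}$ is forced to equal $1$ (it is the unique element of $[n-1]$ congruent to $1$ modulo $n$, or $1$ when $n=1$), so both sides of the claimed identity equal $n+1$; the case $n=1$ is symmetric. Assuming now $n,m\ge 2$, the key observation is that reducing $S$ modulo $m$ annihilates $mm_{(n)}$ and leaves $nn_{(m)}\equiv 1\pmod m$, while reducing modulo $n$ annihilates $nn_{(m)}$ and leaves $mm_{(n)}\equiv 1\pmod n$. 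Hence $S\equiv 1\pmod m$ and $S\equiv 1\pmod n$, and since $n\wedge m=1$ the Chinese Remainder Theorem yields $S\equiv 1\pmod{nm}$, so that $S\in\{1,\,nm+1,\,2nm+1,\dots\}$.

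It then remains to locate $S$ among these candidates via the ranges $1\le n_{(m)}\le m-1$ and $1\le m_{(n)}\le n-1$, which give $n+m\le S\le n(m-1)+m(n-1)=2nm-n-m$. Since $n+m>1$ and $2nm-n-m<2nm+1$, this whole interval lies strictly between $1$ and $2nm+1$, so the only value congruent to $1$ modulo $nm$ that it can contain is $nm+1$; therefore $S=nm+1$, as desired. The argument is essentially routine and I do not expect a genuine obstacle; the only steps requiring care are the bookkeeping in the edge cases $n=1$ or $m=1$ (where the convention must be checked against the formula) and verifying that the size estimates strictly separate $nm+1$ from both neighbouring values $1$ and $2nm+1$, which is immediate here since the bounding interval has length $2nm-2n-2m<nm$.
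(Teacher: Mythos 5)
Your proposal is correct and follows essentially the same route as the paper's proof: both show that $nn_{(m)}+mm_{(n)}-1$ is divisible by $m$ and by $n$, hence by $nm$ by coprimality, and then use the ranges $1\le n_{(m)}\le m-1$, $1\le m_{(n)}\le n-1$ to pin the value down to $nm$. The only cosmetic difference is that you treat the cases $n=1$ or $m=1$ separately, whereas the paper runs the divisibility argument uniformly and merely flags $n=m=1$ in a parenthetical.
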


\begin{proof}
By definition, the number $nn_{(m)}-1$ is divisible by $m$ so that $nn_{(m)}+mm_{(n)}-1$ is divisible by $m$. Similarly, it is divisible by $n$ and since $n,m$ are coprime we get that $nn_{(m)}+mm_{(n)}-1$ is divisible by $nm$. \\Finally, we have $1\leq nn_{(m)},mm_{(n)}<nm$ so that $1\leq nn_{(m)}+mm_{(n)}-1<2nm$, hence $nn_{(m)}+mm_{(n)}-1=nm$ (the case $nn_{(m)}+mm_{(n)}-1=1$ corresponds to $n=m=1$). This concludes the proof.
\end{proof}
%%%%%%%%%%%%%%%%%%%%%%%%%%%%%%%%%%%%%%%%%%%%%%%%%%%%%%%%%%%%
%%%%%%%%%%%%%%%%%%%%%%%%%%%%%%%%%%%%%%%%%%%%%%%%%%%%%%%%%%%%
%%%%%%%%%%%%%%%%%%%%%%%%%%%%%%%%%%%%%%%%%%%%%%%%%%%%%%%%%%%%%%%%%%%%%%%%%%%%%%%%%%%%%%%%%%%%%%%%%%%%%%%%%%%%%%%%%%%%%%%%%%%%%%%%%%%%%%%%%%%%%%%%%%%%%%%%%%%%%%%%%%%%%%%%%%%%%%%%%%%%
%%%%%%%%%%%%%%%%%%%%%%%%%%%%%%%%%%%%%%%%%%%%%%%%%%%%%%%%%%%%
%%%%%%%%%%%%%%%%%%%%%%%%%%%%%%%%%%%%%%%%%%%%%%%%%%%%%%%%%%%%%%%%%%%%%%%%%%%%%%%%%%%%%%%%%%%%%%%%%%%%%%%%%%%%%%%%%%%%%%%%
%%%%%%%%%%%%%%%%%%%%%%%%%%%%%%%%%%%%%%%%%%%%%%%%%%%%%%%%%%%%
%%%%%%%%%%%%%%%%%%%%%%%%%%%%%%%%%%%%%%%%%%%%%%%%%%%%%%%%%%%%
%%%%%%%%%%%%%%%%%%%%%%%%%%%%%%%%%%%%%%%%%%%%%%%%%%%%%%%%%%%%%%%%%%%%%%%%%%%%%%%%%%%%%%%%%%%%%%%%%%%%%%%%%%%%%%%%%%%%%%%%%%%%%%%%%%%%%%%%%%%%%%%%%%%%%%%%%%%%%%%%%%%%%%%%%%%%%%%%%%%%
%%%%%%%%%%%%%%%%%%%%%%%%%%%%%%%%%%%%%%%%%%%%%%%%%%%%%%%%%%%%
%%%%%%%%%%%%%%%%%%%%%%%%%%%%%%%%%%%%%%%%%%%%%%%%%%%%%%%%%%%%%%%%%%%%%%%%%%%%%%%%%%%%%%%%%%%%%%%%%%%%%%%%%%%%%%%%%%%%%%%%%%%%%%%%%%%%%%%%%%%%%%%%%%%%%%%%%%%%%%%%%%%%%%%%%%%%%%%%%%%%
%%%%%%%%%%%%%%%%%%%%%%%%%%%%%%%%%%%%%%%%%%%%%%%%%%%%%%%%%%%%
%%%%%%%%%%%%%%%%%%%%%%%%%%%%%%%%%%%%%%%%%%%%%%%%%%%%%%%%%%%%%%%%%%%%%%%%%%%%%%%%%%%%%%%%%%%%%%%%%%%%%%%%%%%%%%%%%%%%%%%%%%%%%%%%%%%%%%%%%%%%%%%%%%%%%%%%%%%%%%%%%%%%%%%%%%%%%%%%%%%%

We now introduce two morphisms 
\!\!\!\begin{tikzcd}
	{\B_*^*(n,m)} & {G(3,3)}
	\arrow["\varphi", shift left, from=1-1, to=1-2]
	\arrow["\psi", shift left, from=1-2, to=1-1]
\end{tikzcd}\!\!\! and prove that they are inverse to each other, which will prove Theorem \ref{BraidG33}.
%%%%%%%%%%%%%%%%%%%%%%%%%%%%%%%%%%%%%%%%%%%%%%%%%%%%%%%%%%%%
%%%%%%%%%%%%%%%%%%%%%%%%%%%%%%%%%%%%%%%%%%%%%%%%%%%%%%%%%%%%%%%%%%%%%%%%%%%%%%%%%%%%%%%%%%%%%%%%%%%%%%%%%%%%%%%%%%%%%%%%%%%%%%%%%%%%%%%%%%%%%%%%%%%%%%%%%%%%%%%%%%%%%%%%%%%%%%%%%%%%
%%%%%%%%%%%%%%%%%%%%%%%%%%%%%%%%%%%%%%%%%%%%%%%%%%%%%%%%%%%%
%%%%%%%%%%%%%%%%%%%%%%%%%%%%%%%%%%%%%%%%%%%%%%%%%%%%%%%%%%%%%%%%%%%%%%%%%%%%%%%%%%%%%%%%%%%%%%%%%%%%%%%%%%%%%%%%%%%%%%%%%%%%%%%%%%%%%%%%%%%%%%%%%%%%%%%%%%%%%%%%%%%%%%%%%%%%%%%%%%%%

\begin{proposition}\label{BtoG(3,3)}
Let $n,m\in \N^*$ be two coprime integers and write $m=qn+r$ with $q\geq 0$ and $0\leq r\leq n-1$. The map $\{x_1,\dots,x_n,y,z\}\xto\varphi G(3,3)$ sending $x_i$ to $s^{i-1}us^{1-i}$, $y$ to $s^n(ust)^{m_{(n)}-n}$ and $z$ to $t^m(ust)^{n_{(m)}-m}$ extends to a morphism $\B_*^*(n,m)\xto\varphi G(3,3)$.
\end{proposition}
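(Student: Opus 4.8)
The plan is to verify that $\varphi$ respects each of the three families of defining relations \eqref{classicalBraidPres1:1}--\eqref{classicalBraidPres1:3}; as these present $\B_*^*(n,m)$, this is exactly what is needed for $\varphi$ to extend to a group morphism. Every computation takes place in $G(3,3)$, so I first record the arithmetic available there. Set $\zeta:=stu=tus=ust$. The three equalities $stu=tus$, $tus=ust$ and $ust=stu$ say precisely that $s$ commutes with $tu$, that $t$ commutes with $us$, and that $u$ commutes with $st$; from these $\zeta$ is central (for instance $s\zeta=s(tus)=(s\,tu)s=(tu\,s)s=tus^2=\zeta s$, and symmetrically for $t$ and $u$). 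Two consequences will be used repeatedly: first $us=\zeta t^{-1}$, hence $(us)^k=\zeta^k t^{-k}$ for all $k$ by centrality of $\zeta$; second the twist identity $s^{-1}us=tut^{-1}$, which is merely a rewriting of $ust=stu$.

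Next I compute the images of the basic blocks. Telescoping gives, for $a\le b$,
\[
\varphi(x_a\cdots x_b)=s^{a-1}(us)^{b-a+1}s^{-b}=\zeta^{\,b-a+1}s^{a-1}t^{-(b-a+1)}s^{-b}
\]
(with the empty product read as $1$), and in particular $\varphi(\delta)=(us)^n\zeta^{m_{(n)}-n}=\zeta^{m_{(n)}}t^{-n}$, together with $\varphi(y)=s^n\zeta^{m_{(n)}-n}$ and $\varphi(z)=t^m\zeta^{n_{(m)}-m}$. Relation \eqref{classicalBraidPres1:1} reduces to the commutation of $\varphi(\delta)$ and $\varphi(z)$, and both products equal $\zeta^{m_{(n)}+n_{(m)}-m}t^{m-n}$ because powers of the central $\zeta$ and powers of $t$ commute. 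Observe that the exact central exponents $m_{(n)}-n$ and $n_{(m)}-m$ play no role in any relation check --- they only ever add up --- so they are irrelevant to the morphism property; they are pinned down only afterwards by the normalisation $\varphi(\Delta)=stu$, where Lemma \ref{CoprimeLemma} together with $\Delta=\delta^mz^n$ (Lemma \ref{Delta=Delta}) forces the exponent in $\varphi(\Delta)=\zeta^{\,nn_{(m)}+mm_{(n)}-nm}$ to be $1$.

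For the families \eqref{classicalBraidPres1:2} and \eqref{classicalBraidPres1:3} I rewrite relation $i$ as $x_iM=Mx_{i'}$, where $M$ is the common middle word (delete the leading $x_i$ and the trailing $x_{i'}$) and $i'\in[n]$ is the unique index with $i'\equiv i+r\pmod n$, so $i'=i+r$ in \eqref{classicalBraidPres1:2} and $i'=i+r-n$ in \eqref{classicalBraidPres1:3}. Feeding the block formula and $\varphi(y),\varphi(z),\varphi(\delta)$ into $\varphi(M)$, all $\zeta$-powers float to the front with an exponent independent of $i$, while the residual exponent of $t$ equals $(m-qn-r)+1=1$ by $m=qn+r$; thus $\varphi(M)=\zeta^{\ast}s^{i}t\,s^{1-i'}$. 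Conjugating $\varphi(x_i)=s^{i-1}us^{1-i}$ by this element, the central factor cancels and the $s$-powers telescope to $s^{i'-1}(t^{-1}s^{-1}us\,t)s^{1-i'}$; the twist identity gives $t^{-1}(s^{-1}us)t=t^{-1}(tut^{-1})t=u$, so the conjugate is exactly $\varphi(x_{i'})$, completing the check.

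The main obstacle is the bookkeeping for $\varphi(M)$: one must correctly assemble it from the five factors $\varphi(x_{i+1}\cdots x_n)$, $\varphi(y)$, $\varphi(z)$, $\varphi(\delta)^{q-1}$ (resp. $\varphi(\delta)^q$) and $\varphi(x_1\cdots x_{i'-1})$, then confirm both that the $\zeta$-exponent is $i$-independent and that, after the central powers are moved aside, the leftover power of $t$ is exactly $t^1$ --- this last point being where the Euclidean relation $m=qn+r$ is used in an essential way. Once $\varphi(M)$ is brought to the form $\zeta^{\ast}s^it\,s^{1-i'}$, the conjugation step is immediate, and the only genuinely non-formal input is the single identity $s^{-1}us=tut^{-1}$.
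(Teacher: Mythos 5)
Your proposal is correct, and it rests on exactly the same three ingredients as the paper's proof --- centrality of $ust$, the commutation $t(us)=(us)t$, the telescoping formula $\varphi(x_i)\cdots\varphi(x_j)=s^{i-1}(us)^{j-i+1}s^{-j}$, and the Euclidean identity $m=qn+r$ --- but it organizes the verification differently. The paper normalizes the image of the \emph{entire} relation word and shows it is independent of $i$: every word $x_i\cdots x_nyz\delta^{q-1}x_1\cdots x_{i+r-1}$ from the family \eqref{classicalBraidPres1:2} is reduced to $s^{-r}(ust)^{n_{(m)}+q(m_{(n)}-n)}$, and every word from \eqref{classicalBraidPres1:3} to $s^{n-r}(ust)^{n_{(m)}+(q+1)(m_{(n)}-n)}$. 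You instead recast each relation as the conjugation identity $x_iM=Mx_{i'}$ with $i'\equiv i+r \ (\mathrm{mod}\ n)$, compute only the middle word in the normal form $\varphi(M)=\zeta^{\ast}s^{i}t\,s^{1-i'}$ (via $(us)^k=\zeta^kt^{-k}$ with $\zeta=ust$), and close with the single twist identity $s^{-1}us=tut^{-1}$; I checked your exponent bookkeeping (the residual $t$-exponent is $m-qn-r+1=1$ in both families, and the $s$-powers telescope correctly, including the degenerate cases with empty subwords) and it is sound --- note that, as you observe, even the $i$-independence of the $\zeta$-exponent is not needed in your format, only the shape of $\varphi(M)$. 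Your route buys a slightly leaner check (one conjugation per family rather than normalizing both sides of each relation) and makes visible a structural point absent from the paper's proof: since each defining relation uses every generator the same number of times on both sides, the central exponents $m_{(n)}-n$ and $n_{(m)}-m$ in $\varphi(y)$ and $\varphi(z)$ are irrelevant to the morphism property and are pinned down only by the later normalization $\varphi(\Delta)=stu$, via Lemmas \ref{Delta=Delta} and \ref{CoprimeLemma}, which the paper exploits in proving Theorem \ref{BraidG33}. The paper's version, in exchange, exhibits the explicit common value of each relation word, making the $i$-independence completely transparent at a glance.
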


\begin{proof}
It is enough to show that the images of $x_1,\dots,x_n,y,z$ by $\varphi$ satisfy the relations of Presentation \eqref{classicalBraidPres1}. First, observe that for all $i,j\in [n]$ with $1\leq i\leq j+1$, we have 
\begin{equation}\label{eq1BtoG(3,3)}
\varphi(x_i)\cdots\varphi(x_j)=\prod_{k=i-1}^{j-1}s^kus^{-k}=s^{i-1}(us)^{j-i+1}s^{-j},
\end{equation}
where the second equality holds for $i=j+1$ since $s^{(j+1)-1}(us)^{j-(j+1)+1}s^{-j}=1$.

\noindent
\fbox{$x_1\cdots x_nyz=zx_1\cdots x_ny$:}\\
We have 
\begin{equation*}
\varphi(x_1)\cdots\varphi(x_n)\varphi(y)\underset{\eqref{eq1BtoG(3,3)}}=(us)^ns^{-n}s^n(ust)^{m_{(n)}-n} 
=(us)^n(ust)^{m_{(n)}-n}.
\end{equation*}
From now on, by $\varphi(\delta)$ we mean $\varphi(x_1)\cdots \varphi(x_n)\varphi(y)$, which by the above then reads 
\begin{equation}\label{ImDelta}
\varphi(\delta)=(us)^n(ust)^{m_{(n)}-n}.
\end{equation}
Since $ust=tus$ we have that $t$ commutes with $us$. Moreover, the element $ust$ is central so that $\varphi(\delta)$ commutes with $t$ but again since $ust$ is central, we get that $\varphi(\delta)$ commutes with $t^m(ust)^{n_{(m)}-m}=\varphi(z)$. This shows that the relation $x_1\cdots x_nyz=zx_1\cdots x_ny$ is respected by $\varphi$. \\
\noindent
\fbox{$x_{i+1}\cdots x_{n}yz\delta^{q-1}x_1\cdots x_{i+r}=x_i\cdots x_{n}yz\delta^{q-1}x_1\cdots x_{i+r-1}$, $i\in [n-r]$:}\\
We show that the image by $\varphi$ of $x_i\cdots x_nyz\delta^{q-1}x_1\cdots x_{i+r-1}$ is the same for all $i\in [n-r+1]$.\\
In the following calculation, we use the fact that $ust$ is central for establishing the third and the last equalities.\\
For all $i\in [n-r+1]$, we have
\begin{equation*}
\begin{aligned}
&\varphi(x_{i})\cdots \varphi(x_n)\varphi(y)\varphi(z)\varphi(\delta)^{q-1}\varphi(x_1)\cdots \varphi(x_{i+r-1})\\
&\underset{\eqref{eq1BtoG(3,3)}}{=}s^{i-1}(us)^{n-i+1}s^{-n}s^n(ust)^{m_{(n)}-n}t^m(ust)^{n_{(m)}-m}\varphi(\delta)^{q-1}(us)^{i+r-1}s^{1-r-i} \\
&\underset{\eqref{ImDelta}}{=}s^{i-1}(us)^{n-i+1}(ust)^{m_{(n)}-n}t^m(ust)^{n_{(m)}-m}((us)^n(ust)^{m_{(n)}-n})^{q-1}(us)^{i+r-1}s^{1-r-i}\\
&\underset{(us)t=t(us)}{=}s^{i-1}(us)^{n-i+1+(q-1)n+i+r-1}t^{m}(ust)^{m_{(n)}-n+n_{(m)}-m+(q-1)(m_{(n)}-n)}s^{1-r-i} \\
&=s^{i-1}(us)^mt^m(ust)^{m_{(n)}-n+n_{(m)}-m+(q-1)(m_{(n)}-n)}s^{1-r-i}\, \, \text{since} \, \, qn+r=m\\
&=s^{i-1}(ust)^{m_{(n)}-n+n_{(m)}-m+(q-1)(m_{(n)}-n)+m}s^{1-r-i}\, \, \text{since} \,\, (us)t=t(us)\\
&=s^{-r}(ust)^{n_{(m)}+q(m_{(n)}-n)},
\end{aligned}
\end{equation*}
which does not depend on $i$.

\noindent
\fbox{$x_{i+1}\cdots x_{n}yz\delta^{q}x_1\cdots x_{i+r-n}=x_i\cdots x_{n}yz\delta^{q}x_1\cdots x_{i+r-n-1}$, $i\in \llbracket n-r+1,n\rrbracket$:}\\
We show that the image by $\varphi$ of $x_{i+1}\cdots x_nyz\delta^qx_1\cdots x_{i+r-n}$ is the same for all $i\in \llbracket n-r,n\rrbracket$.\\
In the following calculation, we use the fact that $ust$ is central for establishing the second, third and the last equalities.\\
For all $i=n-r,\dots,n$, we have
\begin{equation*}
\begin{aligned}
&\varphi(x_{i+1})\cdots \varphi(x_n)\varphi(y)\varphi(z)\varphi(\delta)^q\varphi(x_1)\cdots \varphi(x_{i+r-n})\\
&\underset{\eqref{eq1BtoG(3,3)}}{=}s^i(us)^{n-i}s^{-n}s^n(ust)^{m_{(n)}-n}t^m(ust)^{n_{(m)}-m}\varphi(\delta)^q(us)^{i+r-n}s^{n-r-i} \\
&\underset{\eqref{ImDelta}}{=}s^i(us)^{n-i}s^{-n}s^n(ust)^{m_{(n)}-n}t^m(ust)^{n_{(m)}-m}(us)^{qn}(ust)^{q(m_{(n)}-n)}(us)^{i+r-n}s^{n-r-i} \\
&=s^i(us)^{n-i+qn+i+r-n}t^{m}(ust)^{m_{(n)}-n+n_{(m)}-m+q(m_{(n)}-n)}s^{n-r-i} \, \, \text{since} \,\, (us)t=t(us)\\
&=s^i(us)^mt^m(ust)^{m_{(n)}-n+n_{(m)}-m+q(m_{(n)}-n)}s^{n-r-i}\, \, \text{since} \, \, qn+r=m\\
&=s^i(ust)^{m_{(n)}-n+n_{(m)}-m+q(m_{(n)}-n)+m}s^{n-r-i}\, \, \text{since} \,\, (us)t=t(us)\\
&=s^{n-r}(ust)^{n_{(m)}+(q+1)(m_{(n)}-n)},
\end{aligned}
\end{equation*}
which does not depend on $i$. This concludes the proof.
\end{proof}

\begin{proposition}\label{G(3,3)toB}
Let $n,m\in \N^*$ be two coprime integers and write $m=qn+r$ with $q\geq 0$ and $0\leq r\leq n-1$. The map $\{s,t,u\}\xto\psi \mathcal B_*^*(n,m)$
sending $s$ to $x_2\cdots x_nyz^{n-m_{(n)}}\delta^{n_{(m)}-1}$, $t$ to $z^{m_{(n)}}\delta^{m-n_{(m)}}$ and $u$ to $x_1$ extends to a morphism $G(3,3)\xto\psi \B_*^*(n,m)$.
\end{proposition}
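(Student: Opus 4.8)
The plan is to use the universal property of the presentation \eqref{defG33}: the assignment $\psi$ extends to a group homomorphism precisely when its values satisfy the two defining relations, i.e. when $\psi(s)\psi(t)\psi(u) = \psi(t)\psi(u)\psi(s) = \psi(u)\psi(s)\psi(t)$ in $\B_*^*(n,m)$. Rather than compare the three cyclic products pairwise, I would show that each of them equals the central element $\Delta = \delta^m z^n$ of Lemma \ref{Delta=Delta}; all three equalities then follow at once, and as a bonus one records that $\psi(stu)=\Delta$, consistent with the fact (used before Corollary \ref{CenterBraid3}) that $\varphi(\Delta)=stu$.

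The computation rests on three facts. First, relation \eqref{classicalBraidPres1:1} says exactly that $\delta=x_1\cdots x_ny$ and $z$ commute, so any word in the letters $z$ and $\delta$ may be freely rearranged. Second, writing $X_1=x_2\cdots x_ny$ as before, one has $x_1X_1=\delta$. Third, $\Delta$ is central by Proposition \ref{DeltaCentral}. Using the first fact, I would begin by collapsing the product $\psi(s)\psi(t)$: since the $z$- and $\delta$-powers all sit to the right of $X_1$ and commute with one another, the exponents telescope via $(n-m_{(n)})+m_{(n)}=n$ and $(n_{(m)}-1)+(m-n_{(m)})=m-1$, giving the clean form $\psi(s)\psi(t)=X_1z^n\delta^{m-1}$.

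With this in hand the three products are quick. One has $\psi(u)\psi(s)\psi(t)=x_1X_1z^n\delta^{m-1}=\delta z^n\delta^{m-1}=z^n\delta^m=\Delta$. For $\psi(t)\psi(u)\psi(s)$ I would first compute $\psi(u)\psi(s)=x_1X_1z^{n-m_{(n)}}\delta^{n_{(m)}-1}=z^{n-m_{(n)}}\delta^{n_{(m)}}$ and then multiply on the left by $\psi(t)=z^{m_{(n)}}\delta^{m-n_{(m)}}$, again letting the exponents telescope to $z^n\delta^m=\Delta$. The third product, $\psi(s)\psi(t)\psi(u)=X_1z^n\delta^{m-1}x_1$, is the only one where $X_1$ and $x_1$ straddle the central block and cannot simply be merged; this is the main (and essentially only) obstacle. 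I would circumvent it by left-multiplying by $x_1$: then $x_1\cdot(X_1z^n\delta^{m-1}x_1)=\delta z^n\delta^{m-1}x_1=z^n\delta^mx_1=\Delta x_1=x_1\Delta$ by centrality, and cancelling $x_1$ (we are in a group) yields $\psi(s)\psi(t)\psi(u)=\Delta$ as well.

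Since all three cyclic products equal $\Delta$, the relations of $G(3,3)$ are respected and $\psi$ extends to a morphism. It is worth noting that Lemma \ref{CoprimeLemma} plays no role here — the relevant exponent sums telescope on their own — so that lemma is presumably reserved for the subsequent verification that $\varphi$ and $\psi$ are mutually inverse.
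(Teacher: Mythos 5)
Your proof is correct and takes essentially the same route as the paper: both arguments show that each of the three cyclic products equals $\Delta=\delta^mz^n$ using the commutation $\delta z=z\delta$ together with Lemma \ref{Delta=Delta}, and both handle the stubborn product $\psi(s)\psi(t)\psi(u)$ by the same maneuver of conjugating by $x_1$ (your left-multiply-and-cancel step is exactly the paper's identity $\psi(s)\psi(t)\psi(u)=x_1^{-1}\psi(u)\psi(s)\psi(t)x_1$) and invoking the centrality of $\Delta$ from Proposition \ref{DeltaCentral}. Your closing observation is also accurate: the paper's proof of this proposition makes no use of Lemma \ref{CoprimeLemma}, which only enters later in checking that $\varphi\circ\psi=\mathrm{Id}_{G(3,3)}$.
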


\begin{proof}
It is enough to show that the images of $s,t$ and $u$ satisfy $\psi(s)\psi(t)\psi(u)=\psi(t)\psi(u)\psi(s)=\psi(u)\psi(s)\psi(t)$.
We have
\begin{equation*}
\begin{aligned}
\psi(t)\psi(u)\psi(s)&=z^{m_{(n)}}\delta^{m-n_{(m)}}x_1x_2\cdots x_nyz^{n-m_{(n)}}\delta^{n_{(m)}-1}
\\&\underset{\delta z=z\delta}=\delta^mz^n 
=\Delta \, \, \text{by Lemma \ref{Delta=Delta}},\\
\psi(u)\psi(s)\psi(t)&=x_1x_2\cdots x_n yz^{n-m_{(n)}}\delta^{n_{(m)}-1}z^{m_{(n)}}\delta^{m-n_{(m)}}
\\&\underset{\delta z=z\delta}=\delta^mz^n =\Delta \, \, \text{by Lemma \ref{Delta=Delta}},\\
\end{aligned}
\end{equation*}
and
\begin{equation*}
    \begin{aligned}
        \psi(s)\psi(t)\psi(u)&=x_2\cdots x_n yz^{n-m_{(n)}}\delta^{n_{(m)}-1}z^{m_{(n)}}\delta^{m-n_{(m)}}x_1\\
&=x_1^{-1}x_1x_2\cdots x_n yz^{n-m_{(n)}}\delta^{n_{(m)}-1}z^{m_{(n)}}\delta^{m-n_{(m)}}x_1\\
&=x_1^{-1}\psi(u)\psi(s)\psi(t)x_1
=x_1^{-1}\Delta x_1
=\Delta \, \, \text{by Proposition \ref{DeltaCentral}}.
    \end{aligned}
\end{equation*}
This concludes the proof.
\end{proof}

We are now ready to prove Theorem \ref{BraidG33}:
\begin{proof}[Proof of Theorem \ref{BraidG33}]
Using Propositions \ref{BtoG(3,3)} and \ref{G(3,3)toB}, it is enough to show that $\varphi\circ \psi=Id_{G(3,3)}$ and that $\psi$ is surjective, which will show that $\psi$ and $\varphi$ are inverse to each other.\\\\
\fbox{$\varphi\circ\psi=Id_{G(3,3)}$:}\\\\
\underline{$\varphi\circ \psi(s)=s$:}
In the following calculation, we use the fact that $ust$ is central for establishing the third equality. We have 
\begin{equation*}
\begin{aligned}
\varphi\circ\psi(s)&=\varphi(x_2\cdots x_nyz^{n-m_{(n)}}\delta^{n_{(m)}-1})\\
\underset{\eqref{eq1BtoG(3,3)}}{=}s&(us)^{n-1}s^{-n}s^n(ust)^{m_{(n)}-n}(t^m(ust)^{n_{(m)}-m})^{n-m_{(n)}}(\varphi(\delta))^{n_{(m)}-1}\\
\underset{\eqref{ImDelta}}{=}s&(us)^{n-1}(ust)^{m_{(n)}-n}(t^m(ust)^{n_{(m)}-m})^{n-m_{(n)}}((us)^n(ust)^{m_{(n)}-n})^{n_{(m)}-1}\\
\underset{(us)t=t(us)}{=}&s(us)^{n-1+n(n_{(m)}-1)}t^{m(n-m_{(n)})}(ust)^{(m_{(n)}-n)(1-(n_{(m)}-m)+(n_{(m)}-1))}\\
=s(u&s)^{nn_{(m)}-1}t^{mn-mm_{(n)}}(ust)^{m_{(n)}m-mn}\\
=s(u&s)^{mn-mm_{(n)}}t^{mn-mm_{(n)}}(ust)^{m_{(n)}m-mn} \, \text{by Lemma \ref{CoprimeLemma}}\\
=s(u&st)^{mn-mm_{(n)}+m_{(n)}m-mn} \,\, \text{since} \,\,(us)t=t(us) \\
=s\,\,\,\,\,\,&
\end{aligned}
\end{equation*}

\noindent
\underline{$\varphi\circ\psi(t)=t$:}
In the following calculation, we use the fact that $ust$ is central for establishing the third equality.
We have
\begin{equation*}
\begin{aligned}
\varphi\circ\psi(t)&=\varphi(z^{m_{(n)}}\delta^{m-n_{(m)}})\\
&=(t^m(ust)^{n_{(m)}-m})^{m_{(n)}}((us)^n(ust)^{m_{(n)}-n})^{m-n_{(m)}}\\
&=tt^{mm_{(n)}-1}(us)^{nm-nn_{(m)}}(ust)^{(n_{(m)}-m)(m_{(n)}-(m_{(n)}-n))}\\
&=tt^{nm-nn_{(m)}}(us)^{nm-nn_{(m)}}(ust)^{(n_{(m)}-m)(m_{(n)}-(m_{(n)}-n))}\, \text{by Lemma \ref{CoprimeLemma}}\\
&=t(ust)^{n(m-n_{(m)})+n(n_{(m)}-m)}\,\, \text{since} \,\,(us)t=t(us)  \\
&=t
\end{aligned}
\end{equation*}

\noindent
\underline{$\varphi\circ\psi(u)=u$:}
We have $\varphi\circ\psi(u)=\varphi(x_1)=u$.\\
This shows that $\varphi\circ\psi=Id_{G(3,3)}$.\\

\noindent
\fbox{$\psi$ is surjective:}\\\\
By Lemma \ref{ThreeGenerate}, it is enough to show that $\{x_1,x_2\cdots x_ny,z\}\subset \im(\psi)$.\\\\
\noindent
\underline{$x_1\in \im(\psi):$} We have $\psi(u)=x_1$.\\

\noindent
\underline{$z\in \im(\psi)$:} Recall from the proof of Proposition \ref{G(3,3)toB} that $\psi(ust)=\Delta$. We thus have
\begin{equation*}
\begin{aligned}
\psi(t^m(ust)^{n_{(m)}-m}) &=(z^{m_{(n)}}\delta^{m-n_{(m)}})^m\Delta^{n_{(m)}-m}\\
&=(z^{m_{(n)}}\delta^{m-n_{(m)}})^m(\delta^mz^n)^{n_{(m)}-m} \, \text{since $\Delta=\delta^mz^n$}\\
&\underset{z\delta=\delta z}{=}z^{mm_{(n)}+n(n_{(m)}-m)}\delta^{m(m-n_{(m)})+m(n_{(m)}-m)}\\
&=z \, \, \text{since} \,\, mm_{(n)}+nn_{(m)}-nm=1 \, \, \text{by Lemma \ref{CoprimeLemma} and} \, \, \delta z=z\delta.
\end{aligned}
\end{equation*}

\noindent
\underline{$x_2\cdots x_ny\in \im(\psi)$:} We have seen that $z\in \im(\psi)$. Moreover, by definition we have $z^{m_{(n)}}\delta^{m-n_{(m)}}=\psi(t)$ and by Lemma \ref{Delta=Delta} we have $z^n\delta^m=\Delta=\psi(ust)$, which implies $\{\delta^{m-n_{(m)}},\delta^m\}\subset\langle z,z^{m_{(n)}}\delta^{m-n_{(m)}},z^n\delta^m\rangle\subset \im(\psi)$. In addition, since $n_{(m)}$ is coprime with $m$ we have that $m$ and $m-n_{(m)}$ are coprime, hence $\delta\in \im(\psi)$.
Since $x_1\in \im(\psi)$, we get that $x_1^{-1}\delta=x_2\cdots x_ny\in \im(\psi)$.\\
This concludes the proof.\end{proof}

%%%%%%%%%%%%%%%%%%%%%%%%%%%%%%%%%%%%%%%%%%%%%%%%%%%%%%%%%%%%%%%%%%%%%%%%%%%%%%%%%%%%%%%%%%%%%%%%%%%%%%%
%%%%%%%%%%%%%%%%%%%%%%%%%%%%%%%%%%%%%%%%%%%%%%%%%%%%%%%%%%%%%%%%%%%%%%%%%%%%%%%%%%%%%%%%%%%%%%%%%%%%%%%
%%%%%%%%%%%%%%%%%%%%%%%%%%%%%%%%%%%%%%%%%%%%%%%%%%%%%%%%%%%%%%%%%%%%%%%%%%%%%%%%%%%%%%%%%%%%%%%%%%%%%%%
%%%%%%%%%%%%%%%%%%%%%%%%%%%%%%%%%%%%%%%%%%%%%%%%%%%%%%%%%%%%%%%%%%%%%%%%%%%%%%%%%%%%%%%%%%%%%%%%%%%%%%%
%%%%%%%%%%%%%%%%%%%%%%%%%%%%%%%%%%%%%%%%%%%%%%%%%%%%%%%%%%%%%%%%%%%%%%%%%%%%%%%%%%%%%%%%%%%%%%%%%%%%%%%
%%%%%%%%%%%%%%%%%%%%%%%%%%%%%%%%%%%%%%%%%%%%%%%%%%%%%%%%%%%%%%%%%%%%%%%%%%%%%%%%%%%%%%%%%%%%%%%%%%%%%%%
%%%%%%%%%%%%%%%%%%%%%%%%%%%%%%%%%%%%%%%%%%%%%%%%%%%%%%%%%%%%%%%%%%%%%%%%

\begin{corollary}\label{braid2nm}
Let $n\in \N^*$ and $m\in\N_{\geq 2}$ be two coprime integers. Then the groups $\B_*(n,m)$ and $\B^*(m,n)$ are isomorphic, and they admit the presentation 
\begin{equation}\label{PresBraid2conj}
\left\langle\begin{array}{c|cc}
s,t,u \, & \, stu=tus=ust,\, t^m=(ust)^{m-n_{(m)}}
\end{array}\right\rangle.
\end{equation}
\end{corollary}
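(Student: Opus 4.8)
The plan is to transport both groups to $G(3,3)$ via the isomorphism of Theorem \ref{BraidG33} and read off the two quotients there. By definition $\B_*(n,m)=\B_*^*(n,m)/\llangle z\rrangle$ and $\B^*(m,n)=\B_*^*(m,n)/\llangle y\rrangle$ (both defined, since $m\geq 2$), so it suffices to identify the images of $z$ and of $y$ under the relevant isomorphisms, and then to observe that an isomorphism $\phi\colon G\iso H$ sends $\llangle w\rrangle$ to $\llangle\phi(w)\rrangle$ and hence descends to $G/\llangle w\rrangle\iso H/\llangle\phi(w)\rrangle$. First I would treat $\B_*(n,m)$: by Theorem \ref{BraidG33} the map $\varphi$ of Proposition \ref{BtoG(3,3)} is an isomorphism $\B_*^*(n,m)\iso G(3,3)$ sending $z$ to $t^m(ust)^{n_{(m)}-m}$, so it descends to
\[
\B_*(n,m)=\B_*^*(n,m)/\llangle z\rrangle\;\iso\;G(3,3)/\llangle t^m(ust)^{n_{(m)}-m}\rrangle .
\]
By the universal property of presentations, the right-hand group is presented by the relations of $G(3,3)$ together with $t^m(ust)^{n_{(m)}-m}=1$, that is $t^m=(ust)^{m-n_{(m)}}$. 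This already yields the presentation \eqref{PresBraid2conj} for $\B_*(n,m)$.

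Next I would run the same argument for $\B^*(m,n)=\B_*^*(m,n)/\llangle y\rrangle$, applying Theorem \ref{BraidG33} to the pair $(m,n)$ in place of $(n,m)$. Substituting $n\leftrightarrow m$ in the formula of Proposition \ref{BtoG(3,3)} (so that the modular-inverse subscript $m_{(n)}$ becomes $n_{(m)}$), the generator $y$ of $\B_*^*(m,n)$ maps to $s^m(ust)^{n_{(m)}-m}$, whence $\B^*(m,n)\iso G(3,3)/\llangle s^m(ust)^{n_{(m)}-m}\rrangle$, presented by $stu=tus=ust$ together with $s^m=(ust)^{m-n_{(m)}}$. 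The only remaining point is to reconcile the two extra relations, one phrased in $t^m$ and the other in $s^m$. For this I would invoke the automorphism $\sigma$ of $G(3,3)$ cyclically permuting generators, $s\mapsto t\mapsto u\mapsto s$: one checks directly that $\sigma$ preserves the defining relations and fixes the central element, since $\sigma(ust)=stu=ust$ in $G(3,3)$. Hence $\sigma$ carries $s^m(ust)^{n_{(m)}-m}$ to $t^m(ust)^{n_{(m)}-m}$ and induces $G(3,3)/\llangle s^m(ust)^{n_{(m)}-m}\rrangle\iso G(3,3)/\llangle t^m(ust)^{n_{(m)}-m}\rrangle$. Composing the three isomorphisms gives $\B_*(n,m)\cong\B^*(m,n)$ together with the common presentation \eqref{PresBraid2conj}.

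The argument is essentially bookkeeping layered on top of Theorem \ref{BraidG33}, and I do not expect a genuine obstacle. The two points demanding care are both in the second half: performing the $n\leftrightarrow m$ substitution in $\varphi$ correctly (in particular tracking that $m_{(n)}$ turns into $n_{(m)}$, so that the exponents match those in the $\B_*(n,m)$ case), and verifying that the generator-permuting map $\sigma$ is a genuine automorphism fixing the central element $ust$ rather than merely a homomorphism — which is exactly what makes the $s^m$ relation interchangeable with the $t^m$ relation.
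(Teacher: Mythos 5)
Your proof is correct and takes essentially the same approach as the paper: both transport $\B_*(n,m)$ and $\B^*(m,n)$ to quotients of $G(3,3)$ via the isomorphism of Proposition \ref{BtoG(3,3)} (with $n$ and $m$ swapped in the second case), obtaining the presentations with extra relations $t^m=(ust)^{m-n_{(m)}}$ and $s^m=(ust)^{m-n_{(m)}}$ respectively. Your cyclic automorphism $\sigma\colon s\mapsto t\mapsto u\mapsto s$ is precisely the paper's relabelling of $t$ for $s$, $u$ for $t$ and $s$ for $u$, so the two arguments coincide.
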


\begin{proof}
Combining Theorem \ref{BraidG33} and Proposition \ref{BtoG(3,3)} with Diagram \eqref{CommFINDANAMEbraids}, a presentation for $\B_*(n,m)$ is   
\begin{equation}\label{PresBraid21}
\left\langle\begin{array}{c|cc}
s,t,u \, & \, stu=tus=ust,\, t^m=(ust)^{m-n_{(m)}}
\end{array}\right\rangle.
\end{equation}
Similarly, a presentation for $\B^*(m,n)$ is 
\begin{equation}\label{PresBraid22}
\left\langle\begin{array}{c|cc}
s,t,u \, & \, stu=tus=ust,\, s^m=(ust)^{m-n_{(m)}}
\end{array}\right\rangle.
\end{equation}
Relabelling $t$ for $s$, $u$ for $t$ and $s$ for $u$ in Presentation \eqref{PresBraid22} yields the desired isomorphism. 
\end{proof}
%%%%%%%%%%%%%%%%%%%%%%%%%%%%%%%
%%%%%%%%%%%%%%%%%%%%%%%%%%%%%%%%%%%%%%%%%%%%%%%%%%%%%%%%%%%%%%%%%%%%%%%%%%%%%%%%%%%%%%%%%%%%%%%%%%%%%%%
%%%%%%%%%%%%%%%%%%%%%%%%%%%%%%%%%%%%%%%%%%%%%%%%%%%%%%%%%%%%%%%%%%%%%%%%%%%%%%%%%%%%%%%%%%%%%%%%%%%%%%%
%%%%%%%%%%%%%%%%%%%%%%%%%%%%%%%%%%%%%%%%%%%%%%%%%%%%%%%%%%%%%%%%%%%%%%%%%%%%%%%%%%%%%%%%%%%%%%%%%%%%%%%

\begin{proof}[Proof of Theorem \ref{BraidIsoType}]
By Proposition \ref{NumberConj}, we can separate the proof depending on the number of conjugacy classes of reflecting hyperplanes of $J$-reflection groups.\\
If we consider $J$-reflection groups having three conjugacy classes of reflecting hyperplanes, Theorem \ref{BraidIsoType} is the content of Corollary \ref{Braid3nm}.\\
If we consider $J$-reflection groups having two conjugacy classes of reflecting hyperplanes, Theorem \ref{Classification3} shows by the same argument than the one given in Corollary \ref{Braid3nm} that the pairs of braid groups to consider are of the form $\{\B_*(n,m),\B^*(m,n)\}$. In this case, Corollary \ref{braid2nm} shows that these groups are isomorphic.\\
Finally, if we consider $J$-reflection groups having one conjugacy class of reflecting hyperplanes, we are in the case of toric reflection groups. In this case, the result is already known to be true, as discussed in \cite[Corollary 1.3]{Gobet Toric}.
\end{proof}

\subsection{\texorpdfstring{The isomorphism type of $\B_*(n,m)$}{}}\label{Section34}
It is known (see \cite[Theorems 1 and 2]{Bannai}) that the braid groups of rank two complex reflection groups with two conjugacy classes of reflection hyperplanes are isomorphic to dihedral Artin groups of even type. Recall that for $d\in \N^*$, the dihedral Artin group of type $2d$ is the group with presentation 
\begin{equation}\label{Dihedral}
\left\langle\begin{array}{c|cc}
a,b \, & \, (ab)^d=(ba)^d
\end{array}\right\rangle.
\end{equation}
Following the notation of \cite{GarnierHoso}, we denote by $G(2,2d)$ the group defined by Presentation \eqref{Dihedral}. The goal of this subsection is to show the following theorem, generalising the existing pattern for rank two complex reflection groups: 

\begin{theorem}\label{braidG2n}
Let $n,m\in \N^*$ be two coprime integers and assume $m\geq 2$. Then the group $\B_*(n,m)$ is isomorphic to $G(2,2m)$.
\end{theorem}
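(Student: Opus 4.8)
The plan is to reduce everything to elementary Tietze transformations, starting from the presentation of $\B_*(n,m)$ already produced in Corollary \ref{braid2nm}, namely
\[
\B_*(n,m)\cong\langle\, s,t,u\mid stu=tus=ust,\ t^m=(stu)^{e}\,\rangle,\qquad e:=m-n_{(m)}.
\]
The first thing I would record is that $e$ is coprime to $m$: since $n\,n_{(m)}\equiv 1\pmod m$ we have $\gcd(n_{(m)},m)=1$, hence $\gcd(m-n_{(m)},m)=1$; moreover $n_{(m)}\in[m-1]$ forces $1\le e\le m-1$ (here the hypothesis $m\geq 2$ guarantees $n_{(m)}$, and thus $e$, is well defined and nonzero). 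This coprimality is exactly what the whole argument will hinge on.

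Next I would simplify the target group. Writing $c:=ab$ in $G(2,2m)=\langle a,b\mid (ab)^m=(ba)^m\rangle$ and eliminating $b=a^{-1}c$, the defining relation becomes $c^m=(a^{-1}ca)^m=a^{-1}c^m a$, so that
\[
G(2,2m)\cong\langle\, a,c\mid c^m a=a c^m\,\rangle .
\]
Thus it suffices to bring the presentation of $\B_*(n,m)$ into this two-generator form, the single relation asserting that the $m$-th power of one generator is central.

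For that I would first rewrite $G(3,3)=\langle s,t,u\mid stu=tus=ust\rangle$ by introducing $\zeta:=stu$ and eliminating $u=t^{-1}s^{-1}\zeta$; a direct check (compare the centrality statement underlying Corollary \ref{CenterBraid3}) shows the relations $stu=tus=ust$ collapse to $[\zeta,s]=[\zeta,t]=1$. Adjoining the relation $t^m=\zeta^{e}$ then gives
\[
\B_*(n,m)\cong\langle\, s,t,\zeta\mid [\zeta,s]=[\zeta,t]=1,\ t^m=\zeta^{e}\,\rangle .
\]
Now I would exploit $\gcd(e,m)=1$: choosing $\alpha,\beta\in\Z$ with $\alpha e+\beta m=1$ and setting $\delta:=t^{\alpha}\zeta^{\beta}$, the relation $t^m=\zeta^{e}$ together with the commutativity of $t$ and $\zeta$ yields $\delta^{e}=t$ and $\delta^{m}=\zeta$. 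Hence $t$ and $\zeta$ can both be eliminated in favour of $\delta$: the relation $t^m=\zeta^{e}$ becomes trivial, $[\zeta,t]$ becomes trivial, and $[\zeta,s]$ becomes $s\delta^m=\delta^m s$. This leaves $\B_*(n,m)\cong\langle\, s,\delta\mid \delta^m s=s\delta^m\,\rangle$, which is precisely the simplified form of $G(2,2m)$ above under $s\leftrightarrow a$, $\delta\leftrightarrow c$, proving the theorem.

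The only genuinely delicate point is the introduction of $\delta$: one must verify that a single relation $t^m=\zeta^{e}$ inside the abelian pair $\langle t,\zeta\rangle$ makes both $t$ and $\zeta$ powers of one common element, which is exactly where coprimality of $e$ and $m$ is indispensable (for a non-coprime pair the subgroup $\langle t,\zeta\rangle$ would fail to be cyclic and the reduction would break down). I would also take care that the elimination of $u$ is legitimate, i.e. that $\zeta=stu$ really is central, which is guaranteed by Proposition \ref{DeltaCentral}. Once these Tietze steps are checked, the chain of presentation isomorphisms established above gives $\B_*(n,m)\cong G(2,2m)$.
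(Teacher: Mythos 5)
Your proof is correct, but it takes a genuinely different route from the paper's. The paper proves Theorem \ref{braidG2n} by writing down explicit mutually inverse homomorphisms on Presentation \eqref{PresBraid2conj}: $\varphi\colon G(2,2m)\to\B_*(n,m)$, $a\mapsto (us)^xt^ys^{-1}$, $b\mapsto s$ for Bézout coefficients $xn_{(m)}+y(m-n_{(m)})=1$, and $\psi$ in the other direction (Lemmas \ref{LemmaForIso2Conj1}--\ref{Iso2Conj2}), then verifies the relations and both compositions by direct computation. You instead argue entirely by Tietze transformations: eliminating $u$ via $\zeta:=stu$ turns $G(3,3)$ into $\langle s,t,\zeta\mid [\zeta,s]=[\zeta,t]=1\rangle$, eliminating $b$ via $c:=ab$ turns $G(2,2m)$ into $\langle a,c\mid c^ma=ac^m\rangle$, and the Bézout element $\delta:=t^{\alpha}\zeta^{\beta}$ with $\alpha e+\beta m=1$, $e=m-n_{(m)}$, simultaneously gives $\delta^{e}=t^{\alpha e}\zeta^{\beta e}=t^{\alpha e+\beta m}=t$ and $\delta^{m}=\zeta^{\alpha e+\beta m}=\zeta$, using only $[\zeta,t]=1$ and $t^m=\zeta^{e}$; the eliminations of $t$ and $\zeta$ are then legitimate, and the coprimality $\gcd(m-n_{(m)},m)=1$ is exactly the arithmetic input. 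In fact the two isomorphisms coincide: the paper's $ab\mapsto(us)^xt^y$ is precisely an $m$-th root of $ust=\zeta$ whose $e$-th power is $t$, i.e., your $\delta$. What each approach buys: yours is shorter and more conceptual, exhibiting both groups as one-relator groups of the form $\langle g,h\mid h^m \text{ central among } g\rangle$ and isolating coprimality as the whole content; and since it sends $\Delta=\zeta$ to $c^m=(ab)^m$, the center statement needed for Corollary \ref{CenterBraidtoFINDANAME} still follows. The paper's explicit generator images are, on the other hand, what makes the map's effect on braid reflections and on $\Delta$ immediately visible for later use.

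One cosmetic point: centrality of $stu$ is not actually needed to justify eliminating $u$ --- the substitution $u=t^{-1}s^{-1}\zeta$ is an unconditional Tietze move, and centrality falls out of the rewritten relations; moreover Proposition \ref{DeltaCentral} concerns $\Delta\in\B_*^*(n,m)$, not $stu\in G(3,3)$, so the reference you want there is the elementary observation recorded in Remark \ref{rem1}. This does not affect the validity of your argument.
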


\begin{corollary}\label{BraidIsGarside}
The associated braid group of any $J$-reflection group is Garside. In particular, it is torsion free.
\end{corollary}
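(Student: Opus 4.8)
The plan is to read this off directly from the isomorphism types established in Theorems \ref{BraidG33} and \ref{braidG2n}, so that the corollary becomes a packaging statement requiring no new computation. First I would apply Proposition \ref{NumberConj} to the $J$-reflection group $W$, which records that $W$ has $3-\delta_{1,b}-\delta_{1,c}$ conjugacy classes of reflecting hyperplanes, and I would split the argument into the corresponding three cases.

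In the three-class case, Theorem \ref{BraidG33} identifies $\B(W)$ with $G(3,3)$; in the two-class case, Theorem \ref{braidG2n} together with Corollary \ref{braid2nm} identifies $\B(W)$ with the dihedral Artin group $G(2,2m)$; and in the one-class case $W$ is a toric reflection group, whose braid group is a torus knot group. In each of the three cases $\B(W)$ is therefore (isomorphic to) a circular group, exactly as summarised in Theorem \ref{ThmBraidIntro1}. I would then invoke \cite[Example 5]{Origines}, which exhibits circular groups as Garside groups, to conclude that $\B(W)$ is Garside; since the Garside property is preserved under group isomorphism, this is all that is needed.

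For the final assertion I would appeal to the standard fact from Garside theory that every Garside group, being the group of fractions of a cancellative atomic monoid whose left-divisibility is a lattice, is torsion-free, and apply it to $\B(W)$. I do not expect a genuine obstacle: all the substantive work has already been carried out in Theorems \ref{BraidG33} and \ref{braidG2n} and in the toric case, so the only point demanding attention is the bookkeeping of verifying that each of the three isomorphism types (torus knot group, dihedral Artin group, and $G(3,3)$) is indeed covered by the Garside examples of \cite{Origines}, and that torsion-freeness is invoked with the correct hypotheses.
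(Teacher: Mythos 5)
Your proposal matches the paper's proof essentially verbatim: the paper likewise combines Definition \ref{BraidToric} with Theorems \ref{BraidG33} and \ref{braidG2n} to see that every such braid group is a circular group in the sense of \cite{GarnierHoso}, then invokes \cite[Example 5]{Origines} for the Garside property, with torsion-freeness following from the standard fact about Garside groups. Your case split via Proposition \ref{NumberConj} is exactly the implicit organisation of the paper's argument, so there is nothing to correct.
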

\begin{proof}
With the nomenclature of \cite{GarnierHoso}, Definition \ref{BraidToric} and Theorems \ref{BraidG33} and \ref{braidG2n} show that the braid groups associated to $J$-reflection groups are circular groups. These groups are known to be Garside groups (see \cite[Example 5]{Origines}), which concludes the proof.
\end{proof}

In the proof of Theorem \ref{braidG2n}, we exhibit an isomorphism sending $\Delta\in \B_*(n,m)$ (as in Remark \ref{Remark/Notation}) to $(ab)^m\in G(2,2m)$. In particular, we have the following corollary:
\begin{corollary}\label{CenterBraidtoFINDANAME}
For any $J$-reflection group $H$, the center of its associated braid group $\B(H)$ is $\langle \Delta\rangle\cong\Z$, where $\Delta$ is as in Remark \ref{Remark/Notation}. Moreover, the image of $Z(\B(H))$ under the natural quotient $\B(H)\to H$ is the center of $H$.
\end{corollary}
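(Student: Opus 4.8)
The plan is to split along the number of conjugacy classes of reflecting hyperplanes and to reduce the corollary to results already in hand, the sole new ingredient being the center of a dihedral Artin group.

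By Proposition~\ref{NumberConj} a $J$-reflection group $H$ has one, two or three conjugacy classes of reflecting hyperplanes, and by Theorem~\ref{BraidIsoType} the isomorphism type of $\B(H)$ depends only on the reflection-isomorphism type of $H$. It therefore suffices to prove both claims for one representative in each case: $\B_*^*(n,m)$ (three classes), $\B_*(n,m)$ or $\B^*(n,m)$ (two classes), and $\B(n,m)$ (one class). The three-class case is precisely Corollary~\ref{CenterBraid3}, which already yields $Z(\B_*^*(n,m))=\langle\Delta\rangle\cong\Z$ together with the assertion about the quotient. For the toric one-class group $\B(n,m)$, the equality $Z(\B(n,m))=\langle\Delta\rangle$ is recorded in Remark~\ref{Remark/Notation}.

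The new content is the two-class case, which I would handle via Theorem~\ref{braidG2n}: it gives an isomorphism $\B_*(n,m)\cong G(2,2m)$ sending $\Delta$ to $(ab)^m$ (and $\B^*(n,m)\cong\B_*(m,n)$ by Corollary~\ref{braid2nm}, so treating $\B_*(n,m)$ suffices). It then remains to show that the center of the dihedral Artin group $G(2,2m)=\langle a,b\mid (ab)^m=(ba)^m\rangle$ is infinite cyclic generated by $(ab)^m$. Centrality follows directly from $a(ab)^m=a(ba)^m=(ab)^ma$ and the symmetric computation for $b$. For the reverse inclusion I would quotient by the central subgroup $\langle(ab)^m\rangle$: the relation collapses to give $\langle a,b\mid (ab)^m=1\rangle\cong\Z*(\Z/m)$, a nontrivial free product (here $m\geq 2$), whose center is trivial; since the image of a central element is central, every central element of $G(2,2m)$ lies in $\langle(ab)^m\rangle$, and as $(ab)^m$ has infinite order (its image in the abelianization $\Z^2$ is $(m,m)$) we obtain $Z(G(2,2m))=\langle(ab)^m\rangle\cong\Z$, hence $Z(\B_*(n,m))=\langle\Delta\rangle$.

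For the image under the natural quotient $\B(H)\to H$, I would use that this quotient is obtained by adjoining the torsion relations, so it fixes every generator and sends the word $\Delta$ to the element of the same name in $H$. By Remark~\ref{Remark/Notation} this element equals $(x_1\cdots x_ny)^m$ in $\B_*(n,m)$, $(x_1\cdots x_n)^mz^n$ in $\B^*(n,m)$ and $(x_1\cdots x_n)^m$ in $\B(n,m)$, which by Corollary~\ref{CenterSpecialCases} (and Theorem~\ref{CenterFINDANAME} with Corollary~\ref{CenterBraid3} in the three-class case) are exactly the generators of $Z(W_b(k,bn,m))$, $Z(W^c(k,n,cm))$ and $Z(W(k,n,m))$. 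Thus $Z(\B(H))$ is carried onto $Z(H)$ in every case. The only step needing genuine argument is the computation of $Z(G(2,2m))$; everything else is assembling and bookkeeping across the three cases.
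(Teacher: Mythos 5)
Your overall architecture is exactly the paper's: split by the number of conjugacy classes of reflecting hyperplanes, invoke Corollary \ref{CenterBraid3} for three classes, Remark \ref{Remark/Notation} for the toric (one-class) case, and the isomorphism $\B_*(n,m)\cong G(2,2m)$ sending $\Delta\mapsto (ab)^m$ from Theorem \ref{braidG2n} for two classes, then pass to $H$ via Theorem \ref{CenterFINDANAME} and Corollary \ref{CenterSpecialCases}. The one genuinely different ingredient is your computation of $Z(G(2,2m))$: the paper simply cites \cite[Theorem 7.2]{BrieskornSaito}, whereas you derive it by hand --- direct centrality of $(ab)^m$, the quotient $G(2,2m)/\langle (ab)^m\rangle\cong\Z\ast(\Z/m)$ being a nontrivial free product with trivial center, and infinite order via the abelianization $\Z^2$. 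That argument is correct and pleasantly parallels the paper's own treatment of the three-class case, where $G(3,3)/\langle stu\rangle$ is observed to be free of rank two; it buys self-containedness at essentially no cost.

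There is, however, one real gap, located in the parenthetical ``$\B^*(n,m)\cong\B_*(m,n)$ by Corollary \ref{braid2nm}, so treating $\B_*(n,m)$ suffices.'' An abstract isomorphism yields $Z(\B^*(n,m))\cong\Z$, but the corollary asserts that the center is generated by the \emph{specific} element $\Delta=(x_1\cdots x_n)^m z^n$, and ``$\Delta$ is central and $Z\cong\Z$'' does not give ``$\Delta$ generates $Z$'': a priori $\Delta$ could be a proper power of the generator. Nor can you repair this through the quotient $\B(H)\to H$, since $Z(H)$ may be finite (e.g.\ when $H$ is a finite complex reflection group), so knowing that the image of $\Delta$ generates $Z(H)$ does not force $\Delta$ to generate $Z(\B(H))$. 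The missing step, which the paper supplies explicitly, is that $\Delta$ is represented by $stu$ in both Presentations \eqref{PresBraid21} and \eqref{PresBraid22}, and the relabelling isomorphism of Corollary \ref{braid2nm} permutes $s,t,u$ cyclically, hence fixes $stu$ (as $stu=tus=ust$), so it carries $\Delta$ to $\Delta$. With that one line added, your two-class case closes, and your final paragraph on the image of the center --- image of a central subgroup is central and contains the image of $\Delta$, which generates $Z(H)$ --- is correct as written.
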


\begin{proof}
By Proposition \ref{NumberConj}, we can separate the proof depending on the number of conjugacy classes of reflecting hyperplanes of $J$-reflection groups.\\
If we consider $J$-reflection groups having three conjugacy classes of reflecting hyperplanes, Corollary \ref{CenterBraidtoFINDANAME} is the content of Corollary \ref{CenterBraid3}.\\
We now consider $J$-reflection groups having two conjugacy classes of reflecting hyperplanes.\\
First, it is known that the center of $G(2,2d)$ is $\langle(ab)^d\rangle\cong \Z$ (see \cite[Theorem 7.2]{BrieskornSaito}), which will show that $Z(\B_*(n,m))=\langle\Delta\rangle$ using the isomorphism we exhibit for the proof of Theorem 4.17. Now, since $\Delta\in \B_*^*(n,m)$ is represented by $stu$ in $G(3,3)$, we get that $\Delta\in \B_*(n,m)$ is also represented by $stu$ in Presentation \eqref{PresBraid21}. Similarly, the element $\Delta\in \B^*(m,n)$ is represented by $stu$ in Presentation \eqref{PresBraid22}. Since the isomorphism presented in Corollary \ref{PresBraid2conj} between the groups defined by Presentations \eqref{PresBraid21} and \eqref{PresBraid22} preserves $stu$, we conclude that  $Z(\B^*(m,n))=\langle\Delta\rangle\cong \Z$. Moreover, Theorem \ref{CenterFINDANAME} shows that in both cases the image of $\Delta$ under the natural quotient $\B(H)\to H$ generates the center of $H$. \\
Finally, if we consider $J$-reflection groups having one conjugacy class of reflecting hyperplanes, we are in the case of toric reflection groups. In this case, the result follows by combining Remark \ref{Remark/Notation} and Corollary \ref{CenterSpecialCases}.
\end{proof}

\begin{remark}\label{TruncatedDihedral}
Specialising to the $J$-reflection groups of the form $W(k,2,m)$ with $m$ odd or $W_n(k,n,m)$, we obtain the family of truncated Artin groups of dihedral type (see \cite[Theorem 1.1]{Gobet Toric} and \cite[Corollary 2.31]{VCRG}). In particular, Corollary \ref{CenterBraidtoFINDANAME} shows that the center of a truncated Artin group of dihedral type is the image of the center of the corresponding dihedral Artin group under the quotient.
\end{remark}

We now introduce two morphisms 
\begin{tikzcd}
	{G(2,2m)} & {\B_*(n,m)}
	\arrow["\varphi", shift left, from=1-1, to=1-2]
	\arrow["\psi", shift left, from=1-2, to=1-1]
\end{tikzcd}
and prove that they are inverse to each other, which will prove Theorem \ref{braidG2n}. In order to construct these morphisms, from now on we consider Presentation \eqref{PresBraid2conj} for $\B_*(n,m)$. We start by proving the following lemma:

%%%%%%%%%%%%%%%%%%%%%%%%%%%%%%%%%%%%%%%%%%%%%%%%%%%%%%%%%%%%%%%%%%%%%%%%%%%%%%%%%%%%%%%%%%%%%%%%%%%%%%%
%%%%%%%%%%%%%%%%%%%%%%%%%%%%%%%%%%%%%%%%%%%%%%%%%%%%%%%%%%%%%%%%%%%%%%%%%%%%%%%%%%%%%%%%%%%%%%%%%%%%%%%
%%%%%%%%%%%%%%%%%%%%%%%%%%%%%%%%%%%%%%%%%%%%%%%%%%%%%%%%%%%%%%%%%%%%%%%%%%%%%%%%%%%%%%%%%%%%%%%%%%%%%%%
%%%%%%%%%%%%%%%%%%%%%%%%%%%%%%%%%%%%%%%%%%%%%%%%%%%%%%%%%%%%%%%%%%%%%%%%%%%%%%%%%%%%%%%%%%%%%%%%%%%%%%%
%%%%%%%%%%%%%%%%%%%%%%%%%%%%%%%%%%%%%%%%%%%%%%%%%%%%%%%%%%%%%%%%%%%%%%%%%%%%%%%%%%%%%%%%%%%%%%%%%%%%%%%
%%%%%%%%%%%%%%%%%%%%%%%%%%%%%%%%%%%%%%%%%%%%%%%%%%%%%%%%%%%%%%%%%%%%%%%%%%%%%%%%%%%%%%%%%%%%%%%%%%%%%%%
%%%%%%%%%%%%%%%%%%%%%%%%%%%%%%%%%%%%%%%%%%%%%%%%%%%%%%%%%%%%%%%%%%%%%%%%%%%%%%%%%%%%%%%%%%%%%%%%%%%%%%%
%%%%%%%%%%%%%%%%%%%%%%%%%%%%%%%%%%%%%%%%%%%%%%%%%%%%%%%%%%%%%%%%%%%%%%%%%%%%%%%%%%%%%%%%%%%%%%%%%%%%%%%
%%%%%%%%%%%%%%%%%%%%%%%%%%%%%%%%%%%%%%%%%%%%%%%%%%%%%%%%%%%%%%%%%%%%%%%%%%%%%%%%%%%%%%%%%%%%%%%%%%%%%%%
%%%%%%%%%%%%%%%%%%%%%%%%%%%%%%%%%%%%%%%%%%%%%%%%%%%%%%%%%%%%%%%%%%%%%%%%%%%%%%%%%%%%%

\begin{lemma}\label{LemmaForIso2Conj1}
Let $n,m\in \N^*$ be two coprime integers and assume $m\geq 2$.\\Let $x,y\in \Z$ be such that $xn_{(m)}+y(m-n_{(m)})=1$ (these exist since $m\wedge n_{(m)}=1$, hence $m\wedge (m-n_{(m)})=1$). In Presentation \eqref{PresBraid2conj}, the equality 
\begin{equation}\label{ustxy}
(us)^{mx}t^{my}=ust
\end{equation}
holds. In particular, the element $(us)^{mx}t^{my}$ is central.
\end{lemma}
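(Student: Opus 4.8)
The plan is to work entirely inside Presentation \eqref{PresBraid2conj} and rewrite the left-hand side $(us)^{mx}t^{my}$ as a single power of the central element $c:=ust$; the hypothesis $xn_{(m)}+y(m-n_{(m)})=1$ will then force that power to be $c^1$.

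First I would record the two facts that make everything commute. Using only the relations $stu=tus=ust$, the element $c=ust$ is central in the group of Presentation \eqref{PresBraid2conj}, exactly as observed in the proof of Proposition \ref{BtoG(3,3)}. Moreover, reading the relation $ust=tus$ as $(us)t=t(us)$ shows directly that $t$ commutes with $us$, which is the only commutation I will need when expanding a power of $us$.

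The crux is to express both $(us)^m$ and $t^m$ as powers of $c$. For $t^m$ this is nothing but the torsion-type relation of \eqref{PresBraid2conj}, namely $t^m=c^{\,m-n_{(m)}}$. For $(us)^m$ I would write $us=ct^{-1}$ (from $c=(us)t$) and use that $c$ is central and commutes with $t$ to obtain $(us)^m=c^m t^{-m}$; substituting $t^m=c^{\,m-n_{(m)}}$ then yields $(us)^m=c^{\,n_{(m)}}$. At this point both $(us)^m$ and $t^m$ are central.

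The computation then finishes by
\[
(us)^{mx}t^{my}=\bigl((us)^m\bigr)^x\bigl(t^m\bigr)^y=c^{\,xn_{(m)}}\,c^{\,y(m-n_{(m)})}=c^{\,xn_{(m)}+y(m-n_{(m)})}=c=ust,
\]
where the last exponent collapses to $1$ by the defining identity for $x,y$. The ``in particular'' assertion is then immediate, since $c=ust$ is central. I do not expect a genuine obstacle: the only point demanding care is to justify the commutation of $t$ and $us$ before expanding $(us)^m$, and this is secured by the braid relation $ust=tus$; the remainder is elementary exponent arithmetic.
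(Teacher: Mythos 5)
Your proof is correct and follows essentially the same route as the paper: both arguments reduce the claim to the identities $t^m=(ust)^{m-n_{(m)}}$ and $(us)^m=(ust)^{n_{(m)}}$ and then conclude by the exponent identity $xn_{(m)}+y(m-n_{(m)})=1$. The only (inessential) difference is that you obtain $(us)^m=(ust)^{n_{(m)}}$ by substituting $us=(ust)t^{-1}$ and cancelling, whereas the paper first extracts the intermediate relation $t^{n_{(m)}}=(us)^{m-n_{(m)}}$ from the torsion-type relation and then multiplies by $(us)^{n_{(m)}}$, avoiding inverses.
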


%%%%%%%%%%%%%%%%%%%%%%%%%%%%%%%%%%%%%%%%%%%%%%%%%%%%%%%%%%%%%%%%%%%%%%%%%%%%%%%%%%%%%%%%%%%%%%%%%%%%%%%
%%%%%%%%%%%%%%%%%%%%%%%%%%%%%%%%%%%%%%%%%%%%%%%%%%%%%%%%%%%%%%%%%%%%%%%%%%%%%%%%%%%%%%%%%%%%%%%%%%%%%%%
%%%%%%%%%%%%%%%%%%%%%%%%%%%%%%%%%%%%%%%%%%%%%%%%%%%%%%%%%%%%%%%%%%%%%%%%%%%%%%%%%%%%%%%%%%%%%%%%%%%%%%%
%%%%%%%%%%%%%%%%%%%%%%%%%%%%%%%%%%%%%%%%%%%%%%%%%%%%%%%%%%%%%%%%%%%%%%%%%%%%%%%%%%%%%%%%%%%%%%%%%%%%%%%
%%%%%%%%%%%%%%%%%%%%%%%%%%%%%%%%%%%%%%%%%%%%%%%%%%%%%%%%%%%%%%%%%%%%%%%%%%%%%%%%%%%%%%%%%%%%%%%%%%%%%%%
%%%%%%%%%%%%%%%%%%%%%%%%%%%%%%%%%%%%%%%%%%%%%%%%%%%%%%%%%%%%%%%%%%%%%%%%%%%%%%%%%%%%%%%%%%%%%%%%%%%%%%%
%%%%%%%%%%%%%%%%%%%%%%%%%%%%%%%%%%%%%%%%%%%%%%%%%%%%%%%%%%%%%%%%%%%%%%%%%%%%%%%%%%%%%%%%%%%%%%%%%%%%%%%
%%%%%%%%%%%%%%%%%%%%%%%%%%%%%%%%%%%%%%%%%%%%%%%%%%%%%%%%%%%%%%%%%%%%%%%%%%%%%%%%%%%%%%%%%%%%%%%%%%%%%%%
%%%%%%%%%%%%%%%%%%%%%%%%%%%%%%%%%%%%%%%%%%%%%%%%%%%%%%%%%%%%%%%%%%%%%%%%%%%%%%%%%%%%%%%%%%%%%%%%%%%%%%%
%%%%%%%%%%%%%%%%%%%%%%%%%%%%%%%%%%%%%%%%%%%%%%%%%%%%%%%%%%%%%%%%%%%%%%%%%%%%%%%%%%%%%

\begin{proof}
The equality $t^m=(ust)^{m-n_{(m)}}$ implies that $t^m=(us)^{m-n_{(m)}}t^{m-n_{(m)}}$ since $(us)t=t(us)$, which in turn implies 
\begin{equation}\label{eqLemmaForIso2Conj1}
t^{n_{(m)}}=(us)^{m-n_{(m)}}.
\end{equation} Now, this implies that
\begin{equation}\label{eqLemmaForIso2Conj2}
\begin{aligned}
(us)^m=&(us)^{m-n_{(m)}}(us)^{n_{(m)}}\underset{\eqref{eqLemmaForIso2Conj1}}=t^{n_{(m)}}(us)^{n_{(m)}}\underset{t(us)=(us)t}=(ust)^{n_{(m)}}.
\end{aligned}
\end{equation}

We thus get that
\begin{equation*}
\begin{aligned}
(us)^{mx}t^{my}&\underset{\eqref{eqLemmaForIso2Conj2}}=(ust)^{xn_{(m)}}(ust)^{y(m-n_{(m)})}
=(ust)^{xn_{(m)}+y(m-n_{(m)})}\\
&=ust \, \text{since $xn_{(m)}+y(m-n_{(m)})=1$.}
\end{aligned}
\end{equation*}
\end{proof}

%%%%%%%%%%%%%%%%%%%%%%%%%%%%%%%%%%%%%%%%%%%%%%%%%%%%%%%%%%%%%%%%%%%%%%%%%%%%%%%%%%%%%%%%%%%%%%%%%%%%%%%
%%%%%%%%%%%%%%%%%%%%%%%%%%%%%%%%%%%%%%%%%%%%%%%%%%%%%%%%%%%%%%%%%%%%%%%%%%%%%%%%%%%%%%%%%%%%%%%%%%%%%%%
%%%%%%%%%%%%%%%%%%%%%%%%%%%%%%%%%%%%%%%%%%%%%%%%%%%%%%%%%%%%%%%%%%%%%%%%%%%%%%%%%%%%%%%%%%%%%%%%%%%%%%%
%%%%%%%%%%%%%%%%%%%%%%%%%%%%%%%%%%%%%%%%%%%%%%%%%%%%%%%%%%%%%%%%%%%%%%%%%%%%%%%%%%%%%%%%%%%%%%%%%%%%%%%
%%%%%%%%%%%%%%%%%%%%%%%%%%%%%%%%%%%%%%%%%%%%%%%%%%%%%%%%%%%%%%%%%%%%%%%%%%%%%%%%%%%%%%%%%%%%%%%%%%%%%%%
%%%%%%%%%%%%%%%%%%%%%%%%%%%%%%%%%%%%%%%%%%%%%%%%%%%%%%%%%%%%%%%%%%%%%%%%%%%%%%%%%%%%%%%%%%%%%%%%%%%%%%%
%%%%%%%%%%%%%%%%%%%%%%%%%%%%%%%%%%%%%%%%%%%%%%%%%%%%%%%%%%%%%%%%%%%%%%%%%%%%%%%%%%%%%%%%%%%%%%%%%%%%%%%
%%%%%%%%%%%%%%%%%%%%%%%%%%%%%%%%%%%%%%%%%%%%%%%%%%%%%%%%%%%%%%%%%%%%%%%%%%%%%%%%%%%%%%%%%%%%%%%%%%%%%%%
%%%%%%%%%%%%%%%%%%%%%%%%%%%%%%%%%%%%%%%%%%%%%%%%%%%%%%%%%%%%%%%%%%%%%%%%%%%%%%%%%%%%%%%%%%%%%%%%%%%%%%%
%%%%%%%%%%%%%%%%%%%%%%%%%%%%%%%%%%%%%%%%%%%%%%%%%%%%%%%%%%%%%%%%%%%%%%%%%%%%%%%%%%%%%

\begin{lemma}\label{Iso2Conj1}
Let $n,m\in \N^*$ be two coprime integers and assume $m\geq 2$.\\
Let $x,y\in \Z$ be such that $xn_{(m)}+y(m-n_{(m)})=1$. The map $\{a,b\}\xto\varphi \B_*(n,m)$ sending $a$ to $(us)^xt^ys^{-1}$ and $b$ to $s$ extends to a morphism $G(2,2m)\to \B_*(n,m)$.
\end{lemma}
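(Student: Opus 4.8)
The plan is to verify that the images $\varphi(a)$ and $\varphi(b)$ satisfy the single defining relation $(ab)^m=(ba)^m$ of $G(2,2m)$ in $\B_*(n,m)$; once this holds, the universal property of Presentation \eqref{Dihedral} immediately guarantees that $\varphi$ extends to a group homomorphism. So the whole proof reduces to a computation of two $m$-th powers.

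First I would compute $\varphi(ab)$. Since $\varphi(a)=(us)^xt^ys^{-1}$ and $\varphi(b)=s$, the trailing $s^{-1}$ cancels with $b$, giving $\varphi(ab)=(us)^xt^y$. The relation $ust=tus$ in Presentation \eqref{PresBraid2conj} says precisely that $t$ commutes with $us$, so the two factors $(us)^x$ and $t^y$ commute and raising to the $m$-th power is straightforward: $(\varphi(ab))^m=(us)^{mx}t^{my}$. At this point I would invoke Lemma \ref{LemmaForIso2Conj1}, whose content is exactly that $(us)^{mx}t^{my}=ust$; in particular this element is central in $\B_*(n,m)$. Hence $(\varphi(ab))^m=ust$.

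For the other side, $\varphi(ba)=s(us)^xt^ys^{-1}$, so $(\varphi(ba))^m=s\bigl((us)^xt^y\bigr)^ms^{-1}=s\,(ust)\,s^{-1}$. Using the centrality of $ust$ just established, this conjugate equals $ust$ again, so $(\varphi(ab))^m=ust=(\varphi(ba))^m$ and the relation is verified.

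There is essentially no genuine obstacle here: the computation is short, and the only real inputs are Lemma \ref{LemmaForIso2Conj1} together with the commutation $t(us)=(us)t$ and the centrality of $ust$, both of which are already available from the presentation. The one structural point worth highlighting is that the cancellation in $\varphi(ab)$ and the conjugation form of $\varphi(ba)$ are precisely what collapse both $m$-th powers to the single central element $ust$, which is what makes the dihedral relation hold on the nose.
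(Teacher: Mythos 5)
Your proposal is correct and matches the paper's proof essentially verbatim: both verify the single relation $(\varphi(a)\varphi(b))^m=(\varphi(b)\varphi(a))^m$ by reducing each side, via the commutation $t(us)=(us)t$, to $(us)^{mx}t^{my}$ (the second side conjugated by $s$) and then invoking both assertions of Lemma \ref{LemmaForIso2Conj1}, namely the identity $(us)^{mx}t^{my}=ust$ and its centrality. Nothing is missing.
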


\begin{proof}
It is enough to show that the images of $a$ and $b$ by $\varphi$ satisfy $(\varphi(a)\varphi(b))^m=(\varphi(b)\varphi(a))^m$. We have
\begin{equation*}
\begin{aligned}
(\varphi(a)\varphi(b))^m=&((us)^xt^ys^{-1}s)^m\underset{(us)t=t(us)}=(us)^{mx}t^{my}=ust \, \, \text{by Lemma \ref{LemmaForIso2Conj1},}
\end{aligned}
\end{equation*}
and
\begin{equation*}
\begin{aligned}
(\varphi(b)\varphi(a))^m=&(s(us)^xt^ys^{-1})^m\underset{(us)t=t(us)}=s(us)^{mx}t^{my}s^{-1}\\&=ust \,\,\text{by using both assertions of Lemma \ref{LemmaForIso2Conj1}.}
\end{aligned}
\end{equation*} 
This concludes the proof.
\end{proof}

\begin{lemma}\label{Iso2Conj2}
Let $n,m\in \N^*$ be two coprime integers and assume $m\geq 2$. The map $\{s,t,u\}\xto\psi G(2,2m)$ sending $s$ to $b$, $t$ to $(ab)^{m-n_{(m)}}$ and $u$ to $(ab)^{n_{(m)}}b^{-1}$ extends to a morphism $B_*(n,m)\to G(2,2m)$.
\end{lemma}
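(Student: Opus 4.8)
The plan is to verify that the proposed images $\psi(s)=b$, $\psi(t)=(ab)^{m-n_{(m)}}$ and $\psi(u)=(ab)^{n_{(m)}}b^{-1}$ satisfy the defining relations of Presentation \eqref{PresBraid2conj} for $\B_*(n,m)$, namely the magic relations $stu=tus=ust$ together with $t^m=(ust)^{m-n_{(m)}}$. Since both source and target are given by generators and relations, checking these suffices for $\psi$ to extend to a group homomorphism. Setting $\Delta_0:=(ab)^m\in G(2,2m)$, every computation will funnel through the single identity $\psi(ust)=\Delta_0$.

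The key preliminary step is to establish that $\Delta_0=(ab)^m$ is central in $G(2,2m)$. This follows directly from the defining relation $(ab)^m=(ba)^m$: in the free group on $\{a,b\}$ one has the formal identities $a(ba)^m=(ab)^m a$ and $b(ab)^m=(ba)^m b$, so substituting $(ba)^m=(ab)^m$ yields $a\Delta_0=\Delta_0 a$ and $b\Delta_0=\Delta_0 b$. Hence both generators commute with $\Delta_0$, and $\Delta_0$ is central. I would record this first, as it is precisely what makes the $stu$ relation hold.

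Next I would compute the three words directly. One finds $\psi(ust)=(ab)^{n_{(m)}}b^{-1}\cdot b\cdot(ab)^{m-n_{(m)}}=(ab)^m=\Delta_0$ and $\psi(tus)=(ab)^{m-n_{(m)}}\cdot(ab)^{n_{(m)}}b^{-1}\cdot b=\Delta_0$, while $\psi(stu)=b\cdot(ab)^{m-n_{(m)}}(ab)^{n_{(m)}}\cdot b^{-1}=b\,\Delta_0\,b^{-1}$, which equals $\Delta_0$ by centrality. Thus $\psi(stu)=\psi(tus)=\psi(ust)$, so the magic relations are respected. For the remaining relation I would simply compare exponents: $\psi(t^m)=\big((ab)^{m-n_{(m)}}\big)^m=(ab)^{m(m-n_{(m)})}$, whereas $\psi\big((ust)^{m-n_{(m)}}\big)=\Delta_0^{\,m-n_{(m)}}=(ab)^{m(m-n_{(m)})}$, and these agree.

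The only genuinely non-formal ingredient is the centrality of $(ab)^m$, which is what allows the conjugation by $b$ to disappear in the $stu$ computation; once that is in hand, everything reduces to the cancellation $b^{-1}b=1$ and elementary exponent bookkeeping, with no appeal to the value of $n_{(m)}$ beyond its appearance in the exponents. I therefore expect no real obstacle beyond cleanly establishing that $\Delta_0$ is central.
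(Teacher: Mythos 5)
Your proposal is correct and follows essentially the same route as the paper: both verify that $\psi(stu)=\psi(tus)=\psi(ust)=(ab)^m$ and that $\psi(t)^m=(ab)^{m(m-n_{(m)})}=\psi(ust)^{m-n_{(m)}}$, with the centrality of $(ab)^m$ in $G(2,2m)$ doing all the work in the $stu$ computation. The only difference is that you derive this centrality explicitly from the relation $(ab)^m=(ba)^m$, whereas the paper takes it as known (citing Brieskorn--Saito elsewhere); this is a harmless, self-contained addition.
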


\begin{proof}
It is enough to show that the images of $s,t$ and $u$ satisfy the relations of Presentation \eqref{Dihedral}. \\
\fbox{The relation $stu=tus=ust$:}\\
We have
\begin{equation*}
\begin{aligned}
\psi(s)\psi(t)\psi(u)&=b(ab)^{m-n_{(m)}}(ab)^{n_{(m)}}b^{-1} =(ab)^{m} \, \, \text{since} \, \, (ab)^m \, \, \text{is central},\\
\psi(t)\psi(u)\psi(s)&=(ab)^{m-n_{(m)}}(ab)^{n_{(m)}}b^{-1}b=(ab)^{m},\\
\psi(u)\psi(s)\psi(t)&=(ab)^{n_{(m)}}b^{-1}b(ab)^{m-n_{(m)}}=(ab)^{m}.
\end{aligned}
\end{equation*}
\fbox{The relation $t^m=(stu)^{m-n_{(m)}}$:}\\
We have
\begin{equation*}
\begin{aligned}
\psi(t)^m=(ab)^{m(m-n_{(m)})}=(\psi(s)\psi(t)\psi(u))^{m-n_{(m)}}.
\end{aligned}
\end{equation*}
This concludes the proof.
\end{proof}

We are now ready to prove Theorem \ref{braidG2n}:
\begin{proof}[Proof of Theorem \ref{braidG2n}]

Using Lemmas \ref{Iso2Conj1} and \ref{Iso2Conj2}, it is enough to show that $\varphi\circ\psi=Id_{\B_*(n,m)}$ and $\psi\circ\varphi=Id_{G(2,2m)}$.\\\\
\fbox{$\varphi\circ\psi=Id_{\B_*(n,m)}$:}\\\\
\underline{$\varphi\circ\psi(s)=s$:} We have $\varphi\circ\psi(s)=\varphi(b)=s.$\\\\
\noindent
\underline{$\varphi\circ\psi(t)=t$:} We have
\begin{equation*}
\begin{aligned}
\varphi\circ\psi(t)&=\varphi((ab)^{m-n_{(m)}})=((us)^xt^y)^{m-n_{(m)}}
\underset{(us)t=t(us)}=(us)^{x(m-n_{(m)})}t^{y(m-n_{(m)})}\\&\underset{(us)^{m-n_{(m)}}=t^{n_{(m)}}}=t^{xn_{(m)}}t^{y(m-n_{(m)})}=t \, \, \text{since} \, \, xn_{(m)}+y(m-n_{(m)})=1.
\end{aligned}
\end{equation*}

\noindent
\underline{$\varphi\circ\psi(u)=u$:} We have
\begin{equation*}
\begin{aligned}
\varphi\circ\psi(u)&=\varphi((ab)^{n_{(m)}}b^{-1})
=((us)^xt^y)^{n_{(m)}}s^{-1}
\underset{(us)t=t(us)}=(us)^{xn_{(m)}}t^{yn_{(m)}}s^{-1}\\
&=(us)^{xn_{(m)}}(us)^{y(m-n_{(m)})}s^{-1}\, \text{since $(us)^{m-n_{(m)}}=t^{n_{(m)}}$}\\
&=u\, \, \text{since} \, \, xn_{(m)}+y(m-n_{(m)})=1.
\end{aligned}
\end{equation*}

\noindent
\fbox{$\psi\circ\varphi=Id_{G(2,2m)}$:}\\\\
\underline{$\psi\circ\varphi(a)=a$:} We have
\begin{equation*}
\begin{aligned}
\psi\circ\varphi(a)&=\psi((us)^xt^ys^{-1})=(ab)^{xn_{(m)}}(ab)^{y(m-n_{(m)})}b^{-1}
\\&=a \, \, \text{since} \, \, xn_{(m)}+y(m-n_{(m)})=1.
\end{aligned}
\end{equation*}

\noindent
\underline{$\psi\circ\varphi(b)=b$:} We have $\psi\circ\varphi(b)=\psi(s)=b$.\\
This concludes the proof.
\end{proof}

\subsection{Reflection isomorphisms of braid groups}

The goal of this subsection is to show that the reflection isomorphism given in Proposition \ref{prop1} lifts to braid groups of $J$-reflection groups.

\begin{definition}
Let $\theta$ be the automorphism of $G(3,3)$ sending $s$ to $t^{-1}$, $t$ to $s^{-1}$ and $u$ to $u^{-1}$.
\end{definition}

\begin{remark}\label{rem1}
Recall from Presentation \eqref{defG33} that in $G(3,3)$ we have $t(us)=(us)t$. Thus, for all $N\in\N$ we have $t^N(us)^N=(ust)^N\in Z(G(3,3))$. In particular, for all $N\in\N$ we have $t^{-N}u^{-1}t^N=(us)^Nu^{-1}(us)^{-N}$.
\end{remark}

\begin{notation}
For all $n,m\in\N^*$ with $n\wedge m=1$, write $\{a_1,\dots,a_m,p,q\}$ to be the elements of $\B_*^*(m,n)$ corresponding to $\{x_1,\dots,x_n,y,z\}$ in Presentation \eqref{classicalBraidPres1}. Moreover, we denote by $\varphi_{n,m}$ and $\varphi_{m,n}$ the isomorphisms $\B_*^*(n,m)\to G(3,3)$ and $\B_*^*(m,n)\to G(3,3)$ given in Proposition \ref{BtoG(3,3)}.
\end{notation}

\begin{lemma}\label{lem2}
Let $n,m\in\N^*$ with $n\wedge m=1$.
For all $i\in [n]$, writing $i-1=g_im+h_i$ the euclidean division of $i-1$ by $m$ we have 
\begin{equation}
\varphi_{m,n}(((a_1\cdots a_mp)^{g_i}a_1\cdots a_{h_i})a_{h_i+1}^{-1}((a_1\cdots a_mp)^{g_i}a_1\cdots a_{h_i})^{-1})=(us)^{i-1}u^{-1}(us)^{1-i}.
\end{equation}
\end{lemma}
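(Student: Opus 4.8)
The plan is to reduce the statement to a direct computation inside $G(3,3)$, exploiting the centrality of $ust$ to make the conjugation collapse. First I would record the images of the generators of $\B_*^*(m,n)$ under $\varphi_{m,n}$ by applying Proposition \ref{BtoG(3,3)} with the roles of $n$ and $m$ (and correspondingly of $m_{(n)}$ and $n_{(m)}$) interchanged: namely $\varphi_{m,n}(a_j)=s^{j-1}us^{1-j}$ for $j\in[m]$, together with $\varphi_{m,n}(p)=s^m(ust)^{n_{(m)}-m}$ and $\varphi_{m,n}(q)=t^n(ust)^{m_{(n)}-n}$ (the latter will not actually be needed, since $q$ does not occur in the element). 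The only genuine bookkeeping hazard here is tracking these swaps correctly.

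Next I would compute the image of the ``block'' $a_1\cdots a_m p$. Using the $(m,n)$-version of formula \eqref{eq1BtoG(3,3)}, one has $\varphi_{m,n}(a_1\cdots a_m)=(us)^ms^{-m}$, hence
\[
\varphi_{m,n}(a_1\cdots a_m p)=(us)^ms^{-m}\cdot s^m(ust)^{n_{(m)}-m}=(us)^m(ust)^{n_{(m)}-m},
\]
which is exactly the swapped analogue of \eqref{ImDelta}. Writing this as $(us)^m$ times a central factor, and using $\varphi_{m,n}(a_1\cdots a_{h_i})=(us)^{h_i}s^{-h_i}$, I would push all powers of the central element $ust$ to the right to obtain
\[
\varphi_{m,n}\bigl((a_1\cdots a_m p)^{g_i}a_1\cdots a_{h_i}\bigr)=(us)^{mg_i+h_i}s^{-h_i}(ust)^{g_i(n_{(m)}-m)}.
\]

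The key simplification is that this element has the form $P=R\,s^{-h_i}\,C$ with $R=(us)^{mg_i+h_i}$ and $C=(ust)^{g_i(n_{(m)}-m)}$ central. When I conjugate $\varphi_{m,n}(a_{h_i+1}^{-1})=s^{h_i}u^{-1}s^{-h_i}$ by $P$, the central factor $C$ cancels against $C^{-1}$, and the flanking powers of $s$ telescope, since $s^{-h_i}\cdot s^{h_i}u^{-1}s^{-h_i}\cdot s^{h_i}=u^{-1}$. This leaves
\[
\varphi_{m,n}\bigl(((a_1\cdots a_mp)^{g_i}a_1\cdots a_{h_i})a_{h_i+1}^{-1}((a_1\cdots a_mp)^{g_i}a_1\cdots a_{h_i})^{-1}\bigr)=(us)^{mg_i+h_i}u^{-1}(us)^{-(mg_i+h_i)}.
\]
Finally, since $i-1=g_im+h_i$ by definition of the euclidean division, the exponent $mg_i+h_i$ equals $i-1$, giving $(us)^{i-1}u^{-1}(us)^{1-i}$ as claimed.

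The main, and essentially only, obstacle is organizational: applying Proposition \ref{BtoG(3,3)} in its $(m,n)$-swapped form without confusing the two modular inverses $m_{(n)}$ and $n_{(m)}$, and recognizing early that the centrality of $ust$ simultaneously kills the unwieldy $(ust)$-powers under conjugation and lets the $s$-powers telescope. Once these two observations are in place, the computation is entirely mechanical.
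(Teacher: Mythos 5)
Your proposal is correct and follows essentially the same route as the paper's proof: both compute $\varphi_{m,n}(a_1\cdots a_mp)=(us)^m(ust)^{n_{(m)}-m}$ and $\varphi_{m,n}(a_1\cdots a_{h_i})=(us)^{h_i}s^{-h_i}$ via the $(m,n)$-swapped version of \eqref{eq1BtoG(3,3)}, then use centrality of $ust$ to cancel the $(ust)$-powers under conjugation and let the $s$-powers telescope, concluding with $mg_i+h_i=i-1$. Your only deviation—factoring the central part out of $\varphi_{m,n}((a_1\cdots a_mp)^{g_i}a_1\cdots a_{h_i})$ before conjugating rather than cancelling it inside the conjugation expression—is purely organizational.
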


\begin{proof}
We have 
\begin{equation*}
\begin{aligned}
    \varphi_{m,n}(a_1\cdots a_mp)=u(s u s^{-1})\cdots (s^{m-1}u s^{1-m})s^m(ust)^{n_{(m)}-m}=(us)^m(ust)^{n_{(m)}-m}.
\end{aligned}
\end{equation*}
Moreover, for all $i\in[n]$ we have 
\begin{equation*}
    \begin{aligned}
        \varphi_{m,n}(a_1\cdots a_{h_i})&=u(sus^{-1})\cdots(s^{h_i-1}us^{1-h_{i}})\\
        &=(us)^{h_i}s^{-h_i}
    \end{aligned}
\end{equation*}
\noindent
This shows that
\begin{equation*}
\begin{aligned}
&\varphi_{m,n}(((a_1\cdots a_mp)^{g_i}a_1\cdots a_{h_i})a_{h_i+1}^{-1}((a_1\cdots a_mp)^{g_i}a_1\cdots a_{h_i})^{-1})\\
&=((us)^m(ust)^{n_{(m)}-m})^{g_i}(us)^{h_i}s^{-h_i}s^{h_i}u^{-1}s^{-h_i}(((us)^m(ust)^{n_{(m)}-m})^{g_i}(us)^{h_i}s^{-h_i})^{-1}\\
&=((us)^m)^{g_i}(us)^{h_i}u^{-1}s^{-h_i}(((us)^m)^{g_i}(us)^{h_i}s^{-h_i})^{-1} \, \text{since $ust$ is central}\\
&=(us)^{i-1}u^{-1}(us)^{1-i}.
\end{aligned}
\end{equation*}
This concludes the proof.
\end{proof}

\begin{proposition}
Denote by $\pi_1$ the natural  quotient $\B_*^*(n,m)\to W_b^c(k,bn,cm)$ and by $\pi_2$ the natural quotients $\B_*^*(m,n)\to W_c^b(k,cm,bn)$. Then the following hold:\\
(i) For all $w\in \{x_1,\dots,x_n,y,z\}$, the element $ \varphi_{m,n}^{-1}\circ\theta\circ\varphi_{n,m}(w)$ is conjugate in $\B_*^*(m,n)$ to a power of an element of $\{a_1,\dots,a_m,p,q\}$.\\
(ii) The following square is commutative:
\begin{equation}\begin{tikzcd}
	{\B_*^*(n,m)} & {\B_*^*(m,n)} \\
	{W_b^c(k,bn,cm)} & {W_c^b(k,cm,bn)}
	\arrow["{\varphi_{m,n}^{-1}\circ\theta\circ\varphi_{n,m}}", from=1-1, to=1-2]
	\arrow["{\pi_1}"', from=1-1, to=2-1]
	\arrow["{f}"', from=2-1, to=2-2]
	\arrow["{\pi_2}", from=1-2, to=2-2]
\end{tikzcd}\end{equation}
Under both horizontal arrows, the image of the conjugates of powers of generators are precisely the conjugates of powers of generators.
\end{proposition}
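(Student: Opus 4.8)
The plan is to prove (i) by computing the composite \(\Phi:=\varphi_{m,n}^{-1}\circ\theta\circ\varphi_{n,m}\colon\B_*^*(n,m)\to\B_*^*(m,n)\) explicitly on the generators \(x_1,\dots,x_n,y,z\), and then to read off (ii) by comparing the resulting formulas with Proposition \ref{prop1}. Since every map in the square is a group homomorphism, all claims reduce to checks on these generators. First I would push each generator through \(\varphi_{n,m}\) and apply \(\theta\), using \(\theta(s)=t^{-1}\), \(\theta(t)=s^{-1}\), \(\theta(u)=u^{-1}\), together with the identity \(\theta(ust)=(ust)^{-1}\) (which holds because \(ust=stu\) in \(G(3,3)\), so \(\theta(ust)=u^{-1}t^{-1}s^{-1}=(stu)^{-1}\)).

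For the \(x_i\), this computation gives \(\theta(\varphi_{n,m}(x_i))=t^{1-i}u^{-1}t^{i-1}\). Rewriting this with Remark \ref{rem1} (the centrality of \((ust)^{N}\)) turns it into \((us)^{i-1}u^{-1}(us)^{1-i}\), which is exactly the element evaluated in Lemma \ref{lem2}. Hence, writing \(i-1=g_im+h_i\) for the euclidean division, I obtain \(\Phi(x_i)=((a_1\cdots a_mp)^{g_i}a_1\cdots a_{h_i})\,a_{h_i+1}^{-1}\,((a_1\cdots a_mp)^{g_i}a_1\cdots a_{h_i})^{-1}\), a conjugate of \(a_{h_i+1}^{-1}\). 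For \(y\) and \(z\) the calculation is shorter: using that \(ust\) is central one finds \(\theta(\varphi_{n,m}(y))=t^{-n}(ust)^{n-m_{(n)}}=\varphi_{m,n}(q^{-1})\) and \(\theta(\varphi_{n,m}(z))=s^{-m}(ust)^{m-n_{(m)}}=\varphi_{m,n}(p^{-1})\), so that \(\Phi(y)=q^{-1}\) and \(\Phi(z)=p^{-1}\). Together these three computations prove (i).

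For the commutativity in (ii), it suffices to compare \(\pi_2\circ\Phi\) and \(f\circ\pi_1\) on the generators of \(\B_*^*(n,m)\). As \(\pi_1\) fixes the presentation generators, and \(\pi_2\) sends the braid generators \(a_j,p,q\) of \(\B_*^*(m,n)\) to the like-named reflections of \(W_c^b(k,cm,bn)\), the images of \(x_i,y,z\) under \(\pi_2\circ\Phi\) are precisely the words obtained in (i), now read inside \(W_c^b(k,cm,bn)\). These coincide term by term with \(f(x_i)\), \(f(y)=q^{-1}\) and \(f(z)=p^{-1}\) from Proposition \ref{prop1}; the square therefore commutes. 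In effect the formula for \(\Phi\) in (i) was engineered to match \(f\), so (ii) becomes essentially formal once (i) is in place.

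Finally, for the assertion that both horizontal maps carry conjugates of powers of generators precisely onto conjugates of powers of generators: an isomorphism sending each generator to a conjugate of a power of a generator automatically sends any conjugate of a power of a generator to one, so the forward inclusion for \(\Phi\) is immediate from (i) and, for \(f\), from Proposition \ref{prop1}. The reverse inclusion follows by symmetry, since \(\Phi^{-1}=\varphi_{n,m}^{-1}\circ\theta\circ\varphi_{m,n}\) (as \(\theta^2=\mathrm{id}\)) is the same construction with \(n\) and \(m\) interchanged, and likewise \(f^{-1}\) is the reflection isomorphism of Proposition \ref{prop1} for the pair \((m,n)\); applying the forward inclusion to these inverses closes the loop, with \(f\) being a reflection isomorphism identifying the target set with the algebraic reflections. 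The main obstacle is the bookkeeping in the \(x_i\) case: correctly matching \(t^{1-i}u^{-1}t^{i-1}\) with the conjugating word of Lemma \ref{lem2} through the euclidean division \(i-1=g_im+h_i\) and the centrality rewriting of Remark \ref{rem1}. Once that identification is secured, both halves of (ii) are routine.
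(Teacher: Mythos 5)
Your proposal is correct and follows essentially the same route as the paper: it computes $\varphi_{m,n}^{-1}\circ\theta\circ\varphi_{n,m}$ on the generators using Remark \ref{rem1} and Lemma \ref{lem2} (reproducing the paper's equations for $x_i$, $y$ and $z$), and then deduces (ii) by matching these formulas with Proposition \ref{prop1} on generators. The extra verifications you include ($\theta(ust)=(ust)^{-1}$, $\theta^{2}=\mathrm{id}$, and the symmetry argument for the ``precisely'' claim) are sound and merely make explicit what the paper leaves implicit.
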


\begin{proof}
\fbox{(i)} Let $i\in [n]$ and write $i-1=g_im+h_i$ the euclidean division of $i-1$ by $m$. Then we have
\begin{equation}\label{eq11}
    \begin{aligned}
        &\varphi_{m,n}^{-1}\circ\theta\circ\varphi_{n,m}(x_i)=\varphi_{m,n}^{-1}\circ\theta(s^{i-1}us^{1-i})
        =\varphi_{m,n}^{-1}(t^{1-i}u^{-1}t^{i-1})\\&=\varphi_{m,n}^{-1}((us)^{i-1}u^{-1}(us)^{1-i}) \, \text{by Remark \ref{rem1}}\\
        &=((a_1\cdots a_mp)^{g_i}a_1\cdots a_{h_i})a_{h_i+1}^{-1}((a_1\cdots a_mp)^{g_i}a_1\cdots a_{h_i})^{-1} \, \text{by Lemma \ref{lem2}.}
    \end{aligned}
\end{equation}
Moreover, we have
\begin{equation}\label{eq12}
    \varphi_{m,n}^{-1}\circ\theta\circ\varphi_{n,m}(y)=\varphi_{m,n}^{-1}\circ\theta(s^n(ust)^{m_{(n)}-n})=\varphi_{m,n}^{-1}(t^{-n}(ust)^{n-m_{(n)}})=q^{-1}.
\end{equation}
Similarly, we have \begin{equation}\label{eq13}\varphi_{m,n}^{-1}\circ\theta\circ\varphi_{n,m}(z)=p^{-1}.
\end{equation}
\fbox{(ii)} It follows from combining Proposition \ref{prop1} with Equations \eqref{eq11}-\eqref{eq13}.
\end{proof}

\section{\texorpdfstring{Classical and dual braid monoids associated to $J$-reflection groups}{}}

In this section, we define the classical braid monoid associated to $J$-reflection groups and give another presentation for their braid groups (see Proposition \ref{DualPresentationsBraidGroups}). This presentation coincides with the dual presentation given in \cite{DualBessis} when the group is a real reflection group (in our case, a dihedral group), which in turn corresponds to the presentations given in \cite{BessisCorran}, \cite{CorranPicantin} and \cite{NeaimeInterval} for groups of the form $G(e,e,r)$. Moreover, whenever the group is a toric reflection group, we retrieve the dual presentation defined in \cite{DualBessis} and \cite{Gobet Toric}.

\subsection{The classical braid monoid}
Given $n,m\in\N^*$ with $n\wedge m=1$ and $n\leq m$, write $m=qn+r$ to be the euclidean division of $m$ by $n$. We then have $q-1\geq 0$ so that Presentations \eqref{classicalBraidPres1}-\eqref{BraidToric} are positive. It allows us to give the following definition:

\begin{definition}\label{ClassicalMonoid}
Let $n,m\in\N^*$ with $n\wedge m=1$ and $n\leq m$. \\
Denote by $\M_*^*(n,m)$ the monoid with Presentation \eqref{classicalBraidPres1}.\\
If $m\geq2$, denote by $\M_*(n,m)$ the monoid with Presentation \eqref{BraidPresy1}.\\
If $n\geq 2$, denote by $\M^*(n,m)$ the monoid with Presentation \eqref{BraidPresz1}.\\
If $n,m\geq 2$, denote by $\M(n,m)$ the monoid with Presentation \eqref{BraidToric}.\\
We call these monoids \textbf{classical} and say that they are \textbf{associated} to the braid groups with the respective presentations.
\end{definition}

\begin{example}
For $n=1,m=2$, setting $x=x_1$, Presentation \eqref{classicalBraidPres1} reads 
  \begin{equation*}\left\langle\begin{array}{l|cl}
   					x,y,z\,	 & \, zxy=xyz   \\
    &\, xyzxy=yzxyx
				
                          \end{array}\right\rangle,\end{equation*} 
thus we retrieve the Garside monoid associated to the BMR presentation of $G_{15}$ as well as that of $G(4c,4,2)$ (see \cite[Theorem 2.27 and Tables 1-3]{BMR} for the presentation and \cite[Example 3.25]{GrosBouquinBleu} for the fact that the monoid is Garside).
\end{example}

\begin{example}
For $n=2,m=3$, Presentation \eqref{classicalBraidPres1} reads 
  \begin{equation*}\left\langle\begin{array}{l|cl}
   					x_1,x_2\,	 & \, zx_1x_2y=x_1x_2yz   \\
 \,\, y,z  &\, x_1x_2yzx_1=x_2yzx_1x_2,\, x_2yzx_1x_2y=yzx_1x_2yx_1
				
                          \end{array}\right\rangle.\end{equation*}

\end{example}

\begin{remark}
Whenever $W_b^c(k,bn,cm)$ is finite, the presentation of the classical braid monoid associated to $\B(W_b^c(k,bn,cm))$ coincides with its BMR presentation.
\end{remark}

\begin{remark}\label{PositiveDelta}
In the case $n\leq m$, Presentation \eqref{classicalBraidPres1} is positive so that the proofs of Lemmas \ref{xvw} and \ref{xvW} as well as Remark \ref{remarkMod} only use positive words, hence they are still valid for $\M_*^*(n,m)$.
Finally, the word $w_0$ as well as the words in the set $\{x_{j+1}\cdots x_nyz\delta^q\delta^{-1}x_1\cdots x_{j+r}\}_{j\in[n-r-1]}$ can be seen as representing elements of the monoid $\M_*^*(n,m)$. Following the same proof steps as in Lemma \ref{Delta=Delta}, we thus get that in the case $n\leq m$, the element $\Delta:=w^{n-r}W^r\in \M_*^*(n,m)$ is equal to $\delta^mz^n$, and we conclude that it is central in $\M_*^*(n,m)$ with the same argument as the one in Proposition \ref{DeltaCentral}.
\end{remark}

\subsection{\texorpdfstring{Dual presentation for braid groups associated to $J$-reflection groups}{}}
The goal of this subsection is to give new presentations for braid groups associated to $J$-reflection groups. Those presentations will be called dual, as they correspond to the dual presentations in the sense of \cite{DualBessis} and \cite{Gobet Toric}. The main result of this subsection takes the following form:
    \begin{proposition}\label{DualPresentationsBraidGroups}
Let $n,m\in \N^*$ with $n\leq m$ and assume $n\wedge m=1$. Then $\B_*^*(n,m)\cong \B_*^*(m,n)$ admits the following presentation:
\begin{subequations}\label{dual3}
\begin{align}
    &(1) \,\, \mathrm{Generators}\!:\,  \{x_1,\dots,x_m,y,z_1,\dots,z_m\};\notag\\
&(2) \,\, \mathrm{Relations}\!: \,\notag\\
  &x_iz_{i+1}=z_ix_i , \, \forall 1\leq i\leq m-1, \, \label{dual3:1}\\ &x_{m}yz_1=z_{m}x_{m}y,\label{dual3:2}\\
  &z_{i+1}x_{i+1}\cdots x_{i+n}=z_ix_i\cdots x_{i+n-1} , \, \forall 1\leq i\leq m-n,\label{dual3:3}\\
       & z_{i+1}x_{i+1}\cdots x_{m}yx_1\cdots x_{i+n-m}=z_ix_i\cdots x_{m}yx_1\cdots x_{i+n-m-1} ,(*),\label{dual3:4}\\
      & yz_1x_1\cdots x_{n}=z_{m}x_{m}yx_1\cdots x_{n-1}.\label{dual3:5}\\
      &(*) \forall m-n+1\leq i\leq m-1 \notag
\end{align}
\end{subequations}
If $m\geq 2$, the group $\B_*(n,m)\cong \B^*(m,n)$ admits the following presentation:
\begin{equation}\label{dual3z}
\begin{aligned}
    &(1) \,\, \mathrm{Generators}\!:\,  \{x_1,\dots,x_m,z_1,\dots,z_m\};\\
&(2) \,\, \mathrm{Relations}\!: \,\\
  &x_iz_{i+1}=z_ix_i , \, \forall 1\leq i\leq m-1, \,\,\\
  &x_mz_1=z_mx_m,\\
  &z_{i+1}x_{i+1}\cdots x_{i+n}=z_ix_i\cdots x_{i+n-1} , \, \forall 1\leq i\leq m-1,\,\,\,\,\,\,\,\,\,\,\,\,\,\,\,\,\,\,\,\,\,\,\,\,\,\,\,\,\,\,\,\,\,\,\,\,\,\,\,\,\,\,\,\,\,\,\,\,\,\,\,\,\,\,\,\,\,\,\,\,\,\,\,\,\,\,\,\,\,
\end{aligned}
\end{equation}
where indices are taken modulo $m$.\\\\
If $n\geq 2$, the group $\B^*(n,m)\cong \B_*(m,n)$ admits the following presentation: 
\begin{equation}\label{dual3y}
\begin{aligned}
    &(1) \,\, \mathrm{Generators}\!:\,  \{x_1,\dots,x_m,y\};\\
&(2) \,\, \mathrm{Relations}\!: \,\\
  &x_{i+1}\cdots x_{i+n}=x_i\cdots x_{i+n-1} , \, \forall 1\leq i\leq m-n,\\
       & x_{i+1}\cdots x_{m}yx_1\cdots x_{i+n-m}=x_i\cdots x_{m}yx_1\cdots x_{i+n-m-1},  \,   \forall m-n+1\leq i\leq m-1,\,\,\,\,\,\,\,\,\,\,\,\,\\
       & yx_1\cdots x_n=x_myx_1\cdots x_{n-1}.
\end{aligned}
\end{equation}

\end{proposition}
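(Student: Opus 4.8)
The plan is to treat \eqref{dual3} as the main case and to deduce \eqref{dual3z} and \eqref{dual3y} from it by passing to quotients. Since the isomorphism $\B_*^*(n,m)\cong\B_*^*(m,n)$ is already provided by Corollary \ref{Braid3nm}, it suffices to prove that the group presented by \eqref{dual3} is isomorphic to $\B_*^*(m,n)$, and I would do this by a sequence of Tietze transformations turning \eqref{dual3} into the classical presentation \eqref{classicalBraidPres1} of $\B_*^*(m,n)$. Assume first $n<m$; the degenerate case $n=m=1$ makes \eqref{dual3} visibly a presentation of $G(3,3)$ and is handled directly. Writing the second parameter of $\B_*^*(m,n)$ in terms of the first reads $n=0\cdot m+n$, so the relevant euclidean division has quotient $q'=0$ and remainder $r'=n$; in particular the exponents $\delta^{q'-1}=\delta^{-1}$ appearing in \eqref{classicalBraidPres1} are negative, which is harmless at the level of groups.

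First I would eliminate the generators $z_2,\dots,z_m$. The relations \eqref{dual3:1} read $z_{i+1}=x_i^{-1}z_ix_i$ for $1\leq i\leq m-1$, so inductively $z_i=(x_1\cdots x_{i-1})^{-1}z_1(x_1\cdots x_{i-1})$, and each $z_{i+1}$ may be removed together with the corresponding relation of \eqref{dual3:1}. Writing $z:=z_1$ and $\delta:=x_1\cdots x_my$, this leaves the generators $\{x_1,\dots,x_m,y,z\}$ and the four families \eqref{dual3:2}--\eqref{dual3:5}, each now expressed through $z$ and the $x_i$.

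Next I would check that these transformed relations are exactly \eqref{classicalBraidPres1} for $\B_*^*(m,n)$. Substituting $z_m=(x_1\cdots x_{m-1})^{-1}z(x_1\cdots x_{m-1})$ into \eqref{dual3:2} and moving $x_1\cdots x_{m-1}$ across yields $\delta z=z\delta$, which is precisely \eqref{classicalBraidPres1:1}. Using this centrality, the substitution in \eqref{dual3:3} gives $(x_1\cdots x_i)^{-1}z\,x_1\cdots x_{i+n}=(x_1\cdots x_{i-1})^{-1}z\,x_1\cdots x_{i+n-1}$ for $1\leq i\leq m-n$; rewriting $x_{i+1}\cdots x_my=(x_1\cdots x_i)^{-1}\delta$ and using $\delta z=z\delta$ shows this is \eqref{classicalBraidPres1:2} with $q'=0$, $r'=n$. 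The same substitution identifies \eqref{dual3:4} with \eqref{classicalBraidPres1:3} for $m-n+1\leq i\leq m-1$, while \eqref{dual3:5} produces the remaining case $i=m$ of \eqref{classicalBraidPres1:3}, namely $yzx_1\cdots x_n=x_myzx_1\cdots x_{n-1}$. As every classical relation is recovered and no others are created, the two relation sets generate the same normal subgroup, so the group presented by \eqref{dual3} is isomorphic to $\B_*^*(m,n)$, hence to $\B_*^*(n,m)$.

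Finally I would obtain \eqref{dual3z} and \eqref{dual3y} as quotients of \eqref{dual3}. Adding the relation $y=1$ collapses \eqref{dual3:2} and \eqref{dual3:5} into the missing wraparound case and merges \eqref{dual3:3}--\eqref{dual3:4} into a single family indexed modulo $m$, producing \eqref{dual3z}; by Remark \ref{CommutativeBraidQuotients} this presents $\B^*(m,n)$, which equals $\B_*(n,m)$ by Corollary \ref{braid2nm}. Since $z=z_1$ and every $z_i$ is conjugate to $z_1$, adding $z_1=1$ kills all the $z_i$ and reduces \eqref{dual3} to \eqref{dual3y}, which presents $\B_*(m,n)\cong\B^*(n,m)$ in the same manner. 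The main obstacle is purely the index bookkeeping through the substitutions: one must match the two wraparound relations \eqref{dual3:4}--\eqref{dual3:5} to the single family \eqref{classicalBraidPres1:3}, keep careful track of the ranges under the specialization $q'=0$, $r'=n$, and ensure that the centrality relation \eqref{classicalBraidPres1:1} is derived before it is invoked to simplify the others.
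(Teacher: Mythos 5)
Your proposal is correct and matches the paper's own proof in all essentials: the paper likewise treats $n=m=1$ separately, writes the classical presentation \eqref{classicalBraidPres1} of $\B_*^*(m,n)$ with $q=0$, $r=n$, and converts it to \eqref{dual3} via the same Tietze substitution $z_i=(x_1\cdots x_{i-1})^{-1}z\,x_1\cdots x_{i-1}$ after first establishing $\delta z=z\delta$, then obtains \eqref{dual3z} and \eqref{dual3y} from the quotients $y=1$ and $z_1=1$ together with Corollaries \ref{Braid3nm} and \ref{braid2nm}. You merely run the transformation in the opposite direction (dual to classical rather than classical to dual), which is the same argument read backwards.
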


Before giving the proof of Proposition \ref{DualPresentationsBraidGroups}, we observe the following: 

\begin{remark}\label{dualexists}
Presentation \eqref{dual3z} gives the following presentation of $\B_*(1,m)$:
\begin{equation}\label{dual3zGood}\left\langle
       \begin{array}{l|cl}
           x_1,\dots,x_{m}     \,        & x_iz_{i+1}=z_ix_i \, \, \forall i=1,\dots,m-1,\,\, x_mz_1=z_mx_m, \\
    z_1,\dots,z_{m}     \,  &\,     z_{i+1}x_{i+1}=z_ix_i \, \, \forall i=1,\dots,m-1 \\
     
                          \end{array}
     \right\rangle.\end{equation}
Relabeling $z_i$ by $a_{2i-1}$ and $x_i$ by $a_{2i}$, one retrieves the dual presentation of $\B_*(1,m)=\B(I_2(2m))$ given in \cite{DualBessis}, \cite{BessisCorran} and \cite{CorranPicantin}. 
\end{remark}

\begin{remark}
In presentations \eqref{dual3z} and \eqref{dual3y},
using either of the quotients $$\B_*(m,n)\xrightarrow{y=1}\B(m,n)\xleftarrow{z_1=1}\B^*(m,n)$$ one retrieves the dual presentation of $\B(n,m)$ that was defined in \cite{Gobet Toric}.
\end{remark}

\begin{proof}[Proof of Proposition \ref{DualPresentationsBraidGroups}]
For $n=m=1$, Presentations \eqref{classicalBraidPres1} and \eqref{dual3} both are the same as Presentation \eqref{defG33}.
Assume now that $1\leq n<m$. In this case the Euclidean division of $n$ by $m$ is $n=0*m+n$ so that Presentation \eqref{classicalBraidPres1} for $\B^*_*(m,n)$ is as follows:
\begin{subequations}\label{PresClassicalMN}
\begin{align}
&(1) \,\, \mathrm{Generators}\!:\,  \{x_1,\dots,x_m,y,z\};\notag\\
&(2) \,\, \mathrm{Relations}\!: \,\notag\\
  &  x_1\cdots x_myz=zx_1\cdots x_my,\label{PresClassicalMN:1}\\
       & x_{i}\cdots x_myz\delta^{-1}x_1\cdots x_{i+n-1}=x_1\cdots x_myz\delta^{-1}x_1\cdots x_{n}, \, \forall\, 2\leq i \leq m-n+1,\label{PresClassicalMN:2}\\
      &  x_{i+1}\cdots x_myzx_1\cdots x_{i+n-m}=x_i\cdots x_myzx_1\cdots x_{i+n-m-1},\, \forall m-n+1\leq i \leq m-1,\label{PresClassicalMN:3}\\
      & yzx_1\cdots x_n=x_myzx_1\cdots x_{n-1}.\label{PresClassicalMN:4}
\end{align}
\end{subequations}
where $\delta$ denotes $x_1\cdots x_my$.\\
For all $1\leq i \leq m-n+1$, we have 
\begin{equation}\label{transfo1dual}
\begin{aligned}
    x_i\cdots x_myz\delta^{-1}x_1\cdots x_{i+n-1}&=(x_1\cdots x_{i-1})^{-1}\delta z\delta^{-1}x_1\cdots x_{i+n-1}\\
    &=(x_1\cdots x_{i-1})^{-1}zx_1\cdots x_{i+n-1}\,\, \text{since $\delta z=z\delta$.}
\end{aligned}
\end{equation}
Equation \eqref{transfo1dual} implies that the set of relations \eqref{PresClassicalMN:1}+\eqref{PresClassicalMN:2} is equivalent to the set of relations 
\begin{equation}
    \label{transfo2dual}
    \begin{aligned}
        (zx_1\cdots x_n=((x_1\cdots x_i)^{-1}zx_1\cdots x_i)x_{i+1}\cdots x_{i+n-1} \,\,\forall \, 1\leq i\leq m-n)+\eqref{PresClassicalMN:1}.
    \end{aligned}
\end{equation}
Writing $z_i$ to be $((x_1\cdots x_{i-1})^{-1}zx_1\cdots x_{i-1})$ for all $i\in[m-n+1]$, the set of relations \eqref{transfo2dual} then reads
\begin{equation}
    \label{transfo3dual}
    \begin{aligned}
        (z_1x_1\cdots x_n=z_{i+1}x_{i+1}\cdots x_{i+n-1} \forall \, 1\leq i\leq m-n)+\eqref{PresClassicalMN:1}.
    \end{aligned}
\end{equation}
Similarly, for all $m-n+1\leq i\leq m$ we have 
\begin{equation}\label{transfo4dual}
\begin{aligned}
    x_i\cdots x_myzx_1\cdots x_{i+n-m-1}&=(x_1\cdots x_{i-1})^{-1}\delta zx_1\cdots x_{i+n-m-1}\\
    &=(x_1\cdots x_{i-1})^{-1}z\delta x_1\cdots x_{i+n-m-1}\,\, \text{since $\delta z=z\delta$.}
\end{aligned}
\end{equation}
Writing $z_i$ to be $((x_1\cdots x_{i-1})^{-1}zx_1\cdots x_{i-1})$ for all $i\in\llbracket m-n+2,m\rrbracket$, Equation \eqref{transfo4dual} implies that the set of relations \eqref{PresClassicalMN:1}+\eqref{PresClassicalMN:3} is equivalent to the set of relations 
\begin{equation}
    \label{transfo5dual}
    \begin{aligned}
        (z_{m-n+1}x_{m-n+1}\cdots x_my=z_ix_{i}\cdots x_myx_1\cdots x_{i+n-m-1} \,\,\forall \, m-n+2\leq i\leq &m)\\
        &+\eqref{PresClassicalMN:1},
    \end{aligned}
\end{equation}
In addition, the relation $x_1\cdots x_myz=zx_1\cdots x_my$ is equivalent to 
\begin{equation}\label{transfo6dual}
\begin{aligned}
    x_myz&=((x_1\cdots x_{m-1})^{-1}zx_1\cdots x_{m-1})x_my\\
    &=z_mx_my.
\end{aligned}    
\end{equation}
Finally, the set of relations 
\begin{equation}\label{transfo7dual}
    z_i=((x_1\cdots x_{i-1})^{-1}zx_1\cdots x_{i-1}),\, \forall i=1,\dots,m
\end{equation}
is equivalent to the set of relations
\begin{equation}\label{transfo8dual}
    x_iz_{i+1}=z_ix_i \, \forall 1\leq i \leq m-1.
\end{equation}
Thus, a presentation of $\B_*^*(m,n)$ is given by the generators $\{x_1,\dots,x_m,y,z_1,\dots,z_m\}$ and the set of relations \eqref{transfo3dual}+\eqref{transfo5dual}+\eqref{transfo6dual}+\eqref{transfo8dual}+\eqref{PresClassicalMN:4}, which is precisely Presentation \eqref{dual3}. Since $\B_*^*(m,n)\cong \B_*^*(n,m)$ by Lemma \ref{Braid3nm}, this shows that Presentation \eqref{dual3} is a presentation of $\B_*^*(n,m)$.\\ 
Now, the quotients $\B^*(m,n)\xleftarrow{y=1} \B_*^*(m,n)\xrightarrow{z_1=1}\B_*(m,n)$ provide Presentations \eqref{dual3z} and \eqref{dual3y} for $\B^*(m,n)$ and $\B_*(m,n)$ respectively. Finally, by Lemma \ref{braid2nm} we have $\B_*(m,n)\cong\B^*(n,m)$ and $\B^*(m,n)\cong\B_*(n,m)$, which concludes the proof. 
\end{proof}

%%%%%%%%%%%%%%%%%%%%%%%%%%%%%%%%%%%%%%%%%%%%%%%%%%%%%%%%%%%%%%
%%%%%%%%%%%%%%%%%%%%%%%%%%%%%%%%%%%%%%%%%%%%%%%%%%%%%%%%%%%%%%%%%%%%%%%%%%%%%%%%%%%%%%%%%%%%%%%%%%%%%%%
%%%%%%%%%%%%%%%%%%%%%%%%%%%%%%%%%%%%%%%%%%%%%%%%%%%%%%%%%%%%%%%%%%%%%%%%%%%%%%%%%%%%%%%%%%%%%%%%%%%%%%%
%%%%%%%%%%%%%%%%%%%%%%%%%%%%%%%%%%%%%%%%%%%%%%%%%%%%%%%%%%%%%%%%%%%%%%%%%%%%%%%%%%%%%%%%%%%%%%%%%%%%%%%
\subsection{The dual braid monoid}

\begin{definition}\label{DefDualMonoid}
Let $n,m\in \N^*$ with $n\wedge m=1$ and $n\leq m$.\\
Denote by $\D_*^*(n,m)$ the monoid with Presentation \eqref{dual3}.\\
If $m\geq 2$, denote by $\D_*(n,m)$ the monoid with Presentation \eqref{dual3z}.\\
If $n\geq 2$, denote by $\D^*(n,m)$ the monoid with Presentation \eqref{dual3y}.\\
If $n,m\geq 2$, denote by $\D(n,m)$ the monoid with the same presentation as $G(m,n)$.\\
We call these monoids \textbf{dual} respectively to $\M_*^*(n,m)$, $\M_*(n,m)$, $\M^*(n,m)$ and $\M(n,m)$. We also say that they are \textbf{associated} to the same braid group as their classical counterparts.
\end{definition}
\noindent
Among rank two complex reflection groups, many of the dual monoids defined above are already known (see \cite{DualBessis}, \cite{BessisKP} and \cite{Gobet Toric}). The rank two complex reflection groups for which these monoids seem to be new are those appearing in the following table:
\begin{table}[hbt]
\centering
\begin{tabular}{|c|c|}
\hline
Group & Dual presentation \\
\hline
$G(cd,d,2)$& $\mathcal D^*(2,d)=\left\langle
       \begin{array}{l|cl}
           x_1,\dots,x_{d}     \,        & x_ix_{i+1}=x_jx_{j+1}\, \, \forall 1\leq i<j\leq d-1\\
   \,\,\,\,\,\,\,\,\,\,\,\,   y     \,   &\,    x_{d-1}x_dy=x_dyx_1=yx_1x_2

                          \end{array}
     \right\rangle$
\\
\hline
$G(2cd,2d,2)$&$\mathcal D_*^*(1,d)=\left\langle
       \begin{array}{l|cl}
           x_1,\dots,x_{d},y     \,        & x_iz_{i+1}=z_ix_i=z_{i+1}x_{i+1} \, \, \forall i\in [d-1]\\
      z_1,\dots,z_{d}     \,   &\,  x_dyz_1=yz_1x_1=z_dx_dy

                          \end{array}
     \right\rangle$
\\
\hline
$G_{13}$&$\mathcal D_*(2,3)=\left\langle
       \begin{array}{l|cl}
           x_1,x_2,x_3     \,        & x_1z_2=z_1x_1, \, x_2z_3=z_2x_2,\, x_3z_1=z_3x_3 \\
      z_1,z_2,z_3   \,   &\,   z_1x_1x_2=z_2x_2x_3=z_3x_3x_1

                          \end{array}
     \right\rangle$
\\
\hline
$G_{15}$& $\mathcal D_*^*(1,2)=\left\langle
       \begin{array}{l|cl}
           x_1,x_{2},y     \,        & z_1x_1=z_2x_2=x_1z_2\\
      z_1,z_{2}     \,   &\,    x_2yz_1=yz_1x_1=z_2x_2y

                          \end{array}
     \right\rangle$
\\
\hline

\end{tabular}
\caption{New dual monoids for some rank two complex reflection groups}
\end{table}

\section{\texorpdfstring{Some results about homogeneous Garside monoids}{}}\label{SecGarside}
In this section, we recall the definition of a Garside monoid, some known results about them and construct a framework that will simplify the proofs that the monoids we manipulate are Garside.

\subsection{Summary on Garside monoids}
\noindent
This subsection closely follows \cite[Section 2]{Gobet Monoid 1}. For a more complete exposition of the topic, the author refers to \cite{GrosBouquinBleu}, where all results mentioned below can be found.

\begin{definition}
In a monoid $M$, if $u,v,w\in M$ are such that $uv=w$, we say that $w$ is a \textbf{right-multiple} of $u$ and a \textbf{left-multiple} of $v$, whereas $u$ is a \textbf{left-divisor} of $w$ and $v$ is a \textbf{right-divisor} of $w$.
\end{definition}

\begin{definition}\label{Cancellative}
A monoid $M$ is \textbf{left-} (respectively \textbf{right-}) \textbf{cancellative} if for all $a,b,c\in M$, the equality $ab=ac$ (respectively $ba=ca$) implies $b=c$. A monoid is \textbf{cancellative} if it is both left- and right-cancellative. 
\end{definition}

\begin{definition}\label{OreDef}
A monoid $M$ is an \textbf{Ore monoid} if it is cancellative and admits common left-multiples (that is, for all $a,b\in M$, there exists $a',b'\in M$ such that $a'a=b'b$).
\end{definition}

\begin{theorem}\label{Ore}
An Ore monoid $M$ admits a group of fraction $G(M)$ in which it embeds. Moreover, given any monoid presentation for $M$, the corresponding group presentation is a presentation for $G(M)$.
\end{theorem}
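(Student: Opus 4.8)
The plan is to prove this as the classical Ore localisation theorem: first construct $G(M)$ explicitly as a set of left fractions (legitimate because the common-multiple hypothesis is stated in left form, $a'a=b'b$), then deduce the presentation statement from a universal property.

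First I would build $G(M)$. Thinking of a pair $(a,b)\in M\times M$ as the fraction $a^{-1}b$, I impose the relation $(a,b)\sim(a',b')$ whenever there exist $c,c'\in M$ with $ca=c'a'$ and $cb=c'b'$. Reflexivity and symmetry are immediate; transitivity is the first point where the Ore condition is used: given $(a,b)\sim(a',b')$ via $c,c'$ and $(a',b')\sim(a'',b'')$ via $d,d'$, I choose a common left-multiple $ec'=e'd$ of $c'$ and $d$ and check that $ec$ and $e'd'$ witness $(a,b)\sim(a'',b'')$. Note that the simplification $a^{-1}b=(ca)^{-1}(cb)$ is exactly what this relation encodes.

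Next I would define multiplication. To form the product of $a^{-1}b$ and $c^{-1}d$ one must rewrite $bc^{-1}$ as a left fraction $e^{-1}f$, which is possible precisely because $b$ and $c$ admit a common left-multiple $eb=fc$; one then sets $(a,b)\cdot(c,d):=(ea,fd)$. The main obstacle of the whole argument is checking that this operation is well defined on $\sim$-classes and associative: both are diagram-chases that repeatedly produce common left-multiples and then cancel, using that $M$ is cancellative, to reconcile different choices of representatives. Granting this, $[(1,1)]$ is a two-sided identity and $[(b,a)]$ is inverse to $[(a,b)]$ (since $[(a,a)]=[(1,1)]$), so the classes form a group $G(M)$. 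The map $\iota\colon M\to G(M)$, $b\mapsto[(1,b)]$, is a monoid homomorphism, and it is injective because $[(1,b)]=[(1,b')]$ forces $c=c'$ and $cb=c'b'$ for suitable $c,c'$, whence $b=b'$ by left-cancellativity. This gives the embedding and proves the first assertion.

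For the presentation statement I would first record the universal property: every element of $G(M)$ equals $\iota(a)^{-1}\iota(b)$, so any monoid homomorphism $f\colon M\to H$ into a group $H$ extends uniquely to a group homomorphism $\bar f\colon G(M)\to H$ via $\bar f([(a,b)])=f(a)^{-1}f(b)$, well-definedness following by applying $f$ to the witnesses $c,c'$. Now let $M=\langle S\mid R\rangle$ be a monoid presentation and let $G=\langle S\mid R\rangle$ be the group with the same presentation. Sending generators to generators gives a monoid homomorphism $M\to G$, hence by the universal property a group homomorphism $\bar\phi\colon G(M)\to G$. Conversely, the elements $\iota(s)$ are invertible in $G(M)$ and satisfy the relations $R$ (as $\iota$ is a homomorphism and $R$ holds in $M$), so the group presentation of $G$ yields a homomorphism $\psi\colon G\to G(M)$. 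Since $\bar\phi$ and $\psi$ are mutually inverse on the generating set $S$, they are mutually inverse isomorphisms, giving $G(M)\cong\langle S\mid R\rangle$, as claimed.
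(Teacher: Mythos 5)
The paper contains no proof of Theorem \ref{Ore} to compare against: it is recalled as a classical result (Ore's theorem on groups of fractions), and the subsection containing it explicitly defers all proofs to the literature, in particular to \cite{GrosBouquinBleu}. Your proposal supplies precisely the standard argument those references contain, and it is correct in structure and in every step you actually carry out: you rightly match the left-fraction model $a^{-1}b$ to the paper's left-handed Ore condition $a'a=b'b$; transitivity of $\sim$ via a common left-multiple $ec'=e'd$ of the witnesses is exactly right; injectivity of $\iota$ correctly uses left-cancellativity; and the presentation statement via the universal property (every element of $G(M)$ is $\iota(a)^{-1}\iota(b)$, so monoid homomorphisms to groups extend uniquely, whence the two canonical maps between $G(M)$ and the group $\langle S\mid R\rangle$ are mutually inverse on generators) is the clean and standard route.

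The one caveat is the step you explicitly defer with ``granting this'': well-definedness and associativity of the multiplication $(a,b)\cdot(c,d):=(ea,fd)$ where $eb=fc$. Since the first assertion of the theorem \emph{is} this construction, that deferral skips the bulk of the actual work, and it also hides the only place where \emph{right}-cancellativity is needed: to reconcile two choices of Ore witnesses $eb=fc$ and $e_1b=f_1c$, one takes a common left-multiple $ge=g_1e_1$, deduces $gfc=geb=g_1e_1b=g_1f_1c$, and then cancels $c$ on the right to get $gf=g_1f_1$. Everywhere else your write-up only invokes left-cancellativity, so without this computation a reader could not see why the Ore hypothesis demands cancellativity on both sides. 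With those diagram-chases written out, your proof is complete and coincides with the treatment in the paper's cited sources.
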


\begin{definition}
The divisibility of a monoid $M$ is said to be \textbf{Noetherian} if there exists a function $\lambda:M\to \N$ such that \\
(i) $\forall a,b\in M$, we have $\lambda(ab)\geq \lambda(a)+\lambda(b)$,\\
(ii) 
$\lambda^{-1}(0)=\{1_M\}$.
\end{definition}
\noindent
In order to define Garside monoids, we wish to talk about divisibility as a partial order. Divisibility relation does not always define a poset on a monoid, but it does in the following setting:

\begin{lemma}\label{Poset}
Let $M$ be a left- (respectively right-) cancellative monoid such that $M^\times=\{1_M\}$. Writing $a\leq_L b$ (respectively $a\leq_Rb$ ) if $b$ is a right- (respectively left-) multiple of $a$, the relation $\leq_L$ (respectively $\leq_R$) defines a partial order on $M$.
\end{lemma}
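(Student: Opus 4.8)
The plan is to verify the three axioms of a partial order for $\leq_L$; the case of $\leq_R$ is entirely symmetric, being precisely $\leq_L$ for the opposite monoid $M^{\mathrm{op}}$ (which is left-cancellative exactly when $M$ is right-cancellative, and has the same trivial unit group), so I would only treat $\leq_L$ explicitly and remark that $\leq_R$ follows by this duality.

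Reflexivity and transitivity are immediate and use nothing beyond associativity and the identity. For reflexivity, the equality $a = a\cdot 1_M$ shows $a \leq_L a$. For transitivity, if $a \leq_L b$ and $b \leq_L c$, choose $u,v \in M$ with $au = b$ and $bv = c$; then $a(uv) = (au)v = bv = c$, so $a \leq_L c$.

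The crux is antisymmetry, and this is where both hypotheses genuinely enter. Suppose $a \leq_L b$ and $b \leq_L a$, say $au = b$ and $bv = a$ for some $u,v \in M$. Substituting one into the other gives $a = bv = (au)v = a(uv)$, so that $a\cdot 1_M = a\cdot(uv)$; left-cancelling $a$ yields $uv = 1_M$. Symmetrically, $b = au = (bv)u = b(vu)$ gives $vu = 1_M$. Hence $u$ is invertible with inverse $v$, so $u \in M^\times = \{1_M\}$, i.e. $u = 1_M$, and therefore $b = au = a$.

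I expect no real obstacle here; the only points requiring care are to feed cancellativity the correct equation $a\cdot 1_M = a\cdot(uv)$ rather than trying to cancel a non-identity factor, and to invoke the triviality of $M^\times$ to pass from $uv = vu = 1_M$ to $u = 1_M$. Cancellativity alone produces the two-sided inverse relation but not $u = 1_M$, so the hypothesis $M^\times = \{1_M\}$ is exactly what closes the argument.
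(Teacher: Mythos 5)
Your proof is correct and complete: reflexivity and transitivity are as routine as you say, and your antisymmetry argument correctly isolates the two hypotheses, using left-cancellativity to obtain $uv=1_M$ and $vu=1_M$ and the triviality of $M^\times$ to conclude $u=1_M$. The paper states this lemma without proof (deferring to the standard Garside-theory references), and your argument is exactly the standard one it implicitly relies on, including the duality reduction of $\leq_R$ to $\leq_L$ for $M^{\mathrm{op}}$.
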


\begin{remark}\label{NoetPoset}
If $M$ is a cancellative monoid with Noetherian divisibility, the pairs $(M,\leq_L)$ and $(M,\leq_R)$ are posets.
\end{remark}

\begin{notation}\label{divisors}
Let $M$ be a monoid and $m\in M$. We denote by  $\text{Div}_L(m)$ the set of left-divisors of $m$. Similarly, we denote by $\text{Div}_R(m)$ the set of right-divisors of $m$. Whenever these two sets coincide, we simply denote it by $\text{Div}(m)$.
\end{notation}

\begin{definition}
An element $\Delta$ of a monoid $M$ is a \textbf{Garside element} for $M$ if the following hold:\\
(i) $\text{Div}_L(\Delta)=\text{Div}_R(\Delta)$,\\
(ii) $\langle \text{Div}(\Delta)\rangle=M$,\\
(iii) $\text{Div}(\Delta)$ is finite.\\
In this case, the elements of $\text{Div}(\Delta)$ are called \textbf{simples}.
\end{definition}

\begin{remark}
If $M$ admits a Garside element, it is finitely generated.
\end{remark}

We are ready to define a Garside monoid:

\begin{definition}\label{Garside}
A monoid $M$ is a \textbf{Garside monoid} if the following hold:\\
(i) Divisibility in $M$ is Noetherian,\\
(ii) $M$ is cancellative,\\
(iii) $(M,\leq_L)$ and $(M,\leq_R)$ are lattices,\\
(iv) $M$ admits a Garside element.
\end{definition}
\noindent
We now state some useful lemmas:

\begin{lemma}\label{CenterDiv}
Let $M$ be a left-cancellative monoid and let $u,v\in M$. If $uv\in Z(M)$, then $uv=vu$.
\end{lemma}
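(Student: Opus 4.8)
The plan is to exploit the centrality hypothesis against a cleverly chosen test element, namely $u$ itself, and then use left-cancellativity to finish. The key observation is that although $uv$ commutes with \emph{every} element of $M$, we only need the single instance of this coming from testing against $u$.

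First I would record that since $uv\in Z(M)$, in particular $uv$ commutes with $u$, so that
\begin{equation*}
(uv)u=u(uv).
\end{equation*}
The next step is purely a matter of reassociating both sides: the left-hand side equals $u(vu)$ and the right-hand side equals $u(uv)$, giving
\begin{equation*}
u(vu)=u(uv).
\end{equation*}
Finally, applying left-cancellativity (Definition \ref{Cancellative}) to delete the common left factor $u$ yields $vu=uv$, which is exactly the claim.

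There is no real obstacle here; the only point requiring care is to notice that centrality need not be invoked in full generality. It suffices to use that $uv$ commutes with the particular element $u$, after which a single reassociation and one application of left-cancellation close the argument. In particular, right-cancellativity is never needed, which matches the hypothesis that $M$ is only assumed left-cancellative.
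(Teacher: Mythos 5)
Your proof is correct and is essentially identical to the paper's: both equate $uvu=uuv$ using centrality of $uv$ (tested against $u$) and then left-cancel $u$. The extra remark that only commutation with $u$ is needed is accurate but does not change the argument.
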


\begin{proof}
Since $uv$ is central, we have $uvu=uuv$, which implies $vu=uv$ by left-cancellativity of $M$.
\end{proof}

\begin{lemma}[{\cite[Lemma 2.3]{DehornoyGarside}}]\label{DeltaCentralPower}
In a left-cancellative monoid $M$, any Garside element $\Delta$ admits a central power.
\end{lemma}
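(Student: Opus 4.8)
The plan is to produce an explicit exponent $k$ by attaching to $\Delta$ its \emph{right-complement map}, showing that this map permutes the finite set $\text{Div}(\Delta)$, and then using that permutation to slide copies of $\Delta$ across the generators. First I would define $\partial\colon \text{Div}(\Delta)\to M$ by letting $\partial(a)$ be the unique element with $\Delta=a\,\partial(a)$. This is well defined precisely because $M$ is left-cancellative: if $a\,c=\Delta=a\,c'$ then $c=c'$. Moreover $\partial(a)$ is by construction a right-divisor of $\Delta$, so $\partial(a)\in\text{Div}_R(\Delta)=\text{Div}(\Delta)$ by property (i) of a Garside element, and hence $\partial$ is genuinely a self-map of $\text{Div}(\Delta)$.

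The hard part will be checking that $\partial$ is a bijection, and the subtlety is that only left-cancellativity is available. I would therefore argue via \emph{surjectivity} rather than injectivity: given any $b\in\text{Div}(\Delta)=\text{Div}_R(\Delta)$, write $\Delta=c\,b$ with $c$ a left-divisor of $\Delta$, so $c\in\text{Div}_L(\Delta)=\text{Div}(\Delta)$; comparing $\Delta=c\,\partial(c)$ with $\Delta=c\,b$ and cancelling $c$ on the left gives $\partial(c)=b$. Thus $\partial$ is onto, and since $\text{Div}(\Delta)$ is finite by property (iii), a surjective self-map of a finite set is a bijection. (Attempting injectivity directly would require cancelling $\partial a$ or $\Delta$ on the right, i.e.\ right-cancellativity, which is not assumed — this is exactly why the surjectivity route is the right one.)

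Setting $\phi:=\partial\circ\partial$, I now have a bijection of a finite set, so $\phi$ has finite order; let $k$ be that order, so that $\phi^{k}=\mathrm{id}$. The two factorisations $\Delta=a\,\partial(a)$ and $\Delta=\partial(a)\,\phi(a)$ combine to give, for every $a\in\text{Div}(\Delta)$,
\[
a\,\Delta = a\,\partial(a)\,\phi(a) = \Delta\,\phi(a).
\]
I would then telescope this identity, moving one factor of $\Delta$ across at a time and applying the relation successively to $\phi(a),\phi^{2}(a),\dots$, to obtain $a\,\Delta^{k}=\Delta^{k}\phi^{k}(a)=\Delta^{k}\,a$. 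Finally, since $\text{Div}(\Delta)$ generates $M$ by property (ii), an element commuting with every member of $\text{Div}(\Delta)$ commutes with all of $M$; hence $\Delta^{k}\in Z(M)$, which is the desired central power.
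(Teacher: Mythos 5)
Your proof is correct. The paper gives no proof of this lemma — it is quoted from Dehornoy's \emph{Groupes de Garside} — and your argument (the complement map $\partial$ with $a\,\partial(a)=\Delta$, surjectivity via left-cancellation plus finiteness of $\text{Div}(\Delta)$ to get bijectivity, then $\phi=\partial\circ\partial$, the identity $a\Delta=\Delta\phi(a)$, and telescoping to $a\Delta^k=\Delta^k a$ on a generating set) is essentially the standard argument from that source, including the correct observation that surjectivity, not injectivity, is the route compatible with having only left-cancellativity.
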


%\begin{proof}
%First, observe that for any simple $u$, there exist a unique simple $\tilde u$ such that $u\tilde u=\Delta$. Existence of such a simple follows by definition of simples whereas uniqueness follows from left-cancellativity.\\
%From this, for all simple $u$, we have
%\begin{eqnarray*}
%u\Delta&=u(\tilde {u}\tilde {\tilde {u}})\\
%&=\Delta \tilde {\tilde {u}},
%\end{eqnarray*}
%thus right multiplication by $\Delta$ defines a permutation $\pi_{\Delta}$ of simples and $u\Delta^k=\Delta^k\pi_{\Delta}^k(u)$. Since $\text{Div}(\Delta)$ is finite, $\pi_{\Delta}$ is of finite order, which concludes the proof.
%\end{proof}

\begin{lemma}[{\cite[Lemma 2.4]{DehornoyGarside}}]\label{DeltaPowerGarside}
Let $M$ be a cancellative monoid with Noetherian divisibility and let $\Delta\in M$ be a Garside element. Then, every non trivial power of $\Delta$ is again a Garside element. 
\end{lemma}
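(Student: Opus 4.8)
The plan is to reduce everything to a single structural fact: conjugation by $\Delta$ is a well-defined automorphism of $M$ permuting the finite set $S:=\mathrm{Div}(\Delta)$ of simples. For $s\in S$ define its right and left complements $\partial s,\partial' s$ by $\Delta = s\,(\partial s)=(\partial' s)\,s$. Since $\mathrm{Div}_L(\Delta)=\mathrm{Div}_R(\Delta)=S$, both $\partial$ and $\partial'$ send $S$ into $S$, and cancellativity makes them injective, hence bijective as $S$ is finite, with $\partial'=\partial^{-1}$. From $\Delta=s\,(\partial s)$ and $\Delta=(\partial s)\,(\partial^2 s)$ one gets $s\Delta=\Delta\,\partial^2(s)$; setting $\phi:=\partial^2$ on $S$ and extending multiplicatively yields a map with $m\Delta=\Delta\phi(m)$ for all $m\in M$, which is well-defined because $\phi(m)$ is the \emph{unique} solution of $m\Delta=\Delta\phi(m)$ by left-cancellativity. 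Building the inverse the same way from $\partial'^2$ shows $\phi\in\mathrm{Aut}(M)$ with $\phi(S)=S$, and $\phi(\Delta)=\Delta$ (apply the defining relation to $m=\Delta$ and cancel).

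Next I would produce the group of fractions. A short induction using $t\Delta^{\ell-1}=\Delta^{\ell-1}\phi^{\ell-1}(t)$ shows that any product of simples $s_1\cdots s_\ell$ satisfies $s_1\cdots s_\ell\leq_L\Delta^\ell$, and symmetrically $s_1\cdots s_\ell\leq_R\Delta^\ell$. Since $M=\langle S\rangle$, every element right-divides some power of $\Delta$, so for $a,b\in M$ a large enough power $\Delta^N$ is a common left-multiple of both. Thus $M$ is an Ore monoid and, by Theorem \ref{Ore}, embeds in its group of fractions $G(M)$, inside which $m\Delta^n=\Delta^n\phi^n(m)$ for all $m$ and $n$.

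Now fix $n\geq 1$ and verify the three axioms for $\Delta^n$. For condition (i), first note $\mathrm{Div}_L(\Delta^n)$ is $\phi$-invariant: applying $\phi$ to $\Delta^n=ab$ and using $\phi(\Delta^n)=\Delta^n$ gives $\Delta^n=\phi(a)\phi(b)$. Working in $G(M)$, $a\leq_R\Delta^n$ iff $\Delta^n a^{-1}\in M$, and $\Delta^n a^{-1}=\phi^{-n}(a)^{-1}\Delta^n$, so $a\leq_R\Delta^n$ iff $\phi^{-n}(a)\leq_L\Delta^n$ iff $a\leq_L\Delta^n$ by $\phi$-invariance; hence $\mathrm{Div}_L(\Delta^n)=\mathrm{Div}_R(\Delta^n)=:\mathrm{Div}(\Delta^n)$. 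For condition (iii), use the Noetherian function $\lambda$: the atoms of $M$ lie in the finite set $S$ (an atom that factors through generators must equal one of them), any $a\leq_L\Delta^n$ has $\lambda(a)\leq\lambda(\Delta^n)$ and so decomposes into at most $\lambda(\Delta^n)$ atoms, leaving finitely many possibilities, whence $\mathrm{Div}(\Delta^n)$ is finite. For condition (ii), since $\Delta\leq_L\Delta^n$ and $\Delta\leq_R\Delta^n$ (using transitivity from Remark \ref{NoetPoset}), we have $S=\mathrm{Div}(\Delta)\subseteq\mathrm{Div}(\Delta^n)$, so $\langle\mathrm{Div}(\Delta^n)\rangle=M$. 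This shows $\Delta^n$ is a Garside element.

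The main obstacle is the first step: everything downstream is bookkeeping, while the genuine content is that the coincidence $\mathrm{Div}_L(\Delta)=\mathrm{Div}_R(\Delta)$ together with finiteness of $S$ already forces $\Delta M=M\Delta$ and forces conjugation by $\Delta$ to preserve both $M$ and $S$. The delicate points there are checking that $\phi$ is well-defined on \emph{all} of $M$ (independence of the factorization into simples, which is exactly where cancellativity is used) and that $\partial,\partial'$ are mutually inverse bijections of $S$. I would also remark that, since $\phi$ permutes the finite set $S$, it has finite order; this re-proves Lemma \ref{DeltaCentralPower} ($\Delta^N$ is central once $N$ is a multiple of that order) and is the conceptual reason the Garside property is inherited by powers of $\Delta$.
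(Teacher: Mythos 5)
The paper does not actually prove this lemma—it quotes it with a citation to Dehornoy's \emph{Groupes de Garside}—and your argument is correct and reconstructs essentially that classical proof: the mutually inverse bijections $\partial,\partial'$ of the finite set of simples, the automorphism $\phi$ with $m\Delta=\Delta\phi(m)$ and $\phi(\Delta)=\Delta$, the Ore embedding obtained from $s_1\cdots s_\ell\leq_L\Delta^\ell$, and the transfer of the three Garside axioms to $\Delta^n$ (including the $\phi$-invariance trick for $\mathrm{Div}_L(\Delta^n)=\mathrm{Div}_R(\Delta^n)$ and the atom-counting bound via $\lambda$) all check out. Your closing remark that $\phi$ permutes the finite set of simples and hence has finite order, thereby re-proving Lemma \ref{DeltaCentralPower}, is likewise correct and is indeed the conceptual core of the statement.
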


\begin{remark}\label{RemarkDeltaCentral}
Combining Lemmas \ref{DeltaCentralPower} and \ref{DeltaPowerGarside}, we get that a finitely generated cancellative monoid $M$ with Noetherian divisibility admits a Garside element if and only if it admits a central Garside element.
\end{remark}

\begin{lemma}\label{Deltamultiples}
Let $M$ be a cancellative monoid with Noetherian divisibility admitting a Garside element $\Delta$. Then every pair $(a,b)$ of elements of $M$ admits common left- and right-multiples.
\end{lemma}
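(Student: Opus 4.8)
The plan is to first reduce to the case where the Garside element is central, then to show that every element of $M$ divides a suitable power of $\Delta$ on both sides; common multiples of any pair are then furnished by a single power of $\Delta$.

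First I would note that $M$ is finitely generated, since $\text{Div}(\Delta)$ is finite and generates $M$. By Remark \ref{RemarkDeltaCentral} I may therefore replace $\Delta$ by a central Garside element, which I continue to denote $\Delta$; concretely this uses Lemmas \ref{DeltaCentralPower} and \ref{DeltaPowerGarside}. Because $\langle\text{Div}(\Delta)\rangle=M$, every element of $M$ is a finite product of simples. Each simple $s$ is a left-divisor of $\Delta$, say $\Delta=st$; since $\Delta$ is central and $M$ is left-cancellative, Lemma \ref{CenterDiv} gives $\Delta=st=ts$, so every simple is simultaneously a left- and a right-divisor of $\Delta$.

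The key step is the claim that any product $a=s_1\cdots s_k$ of $k$ simples left-divides $\Delta^k$. I would prove this by induction on $k$, the case $k\leq 1$ being immediate. For the inductive step, write $a=a's_{k+1}$ with $a'=s_1\cdots s_k$; by induction $\Delta^k=a'c$ for some $c$, whence
\begin{equation*}
\Delta^{k+1}=a'c\Delta=a'\Delta c=a's_{k+1}t_{k+1}c=a\,(t_{k+1}c),
\end{equation*}
where the second equality uses the centrality of $\Delta$ and the fourth uses $\Delta=s_{k+1}t_{k+1}$. Thus $a$ left-divides $\Delta^{k+1}$. The symmetric argument, peeling off the leftmost simple and using that each simple is also a right-divisor of $\Delta$, shows that a product of $k$ simples right-divides $\Delta^k$ as well.

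Finally, given $a,b\in M$, I would write them as products of $k_a$ and $k_b$ simples and set $N=\max(k_a,k_b)$. Since $\Delta^k$ left-divides $\Delta^N$ whenever $k\leq N$, both $a$ and $b$ left-divide $\Delta^N$, so $\Delta^N$ is a common right-multiple of $a$ and $b$; the right-divisibility half of the claim shows likewise that $\Delta^N$ is a common left-multiple. The only genuinely delicate point is the reduction to a central Garside element, which legitimises the commutation $c\Delta=\Delta c$ driving the induction; once that is in place the argument is purely formal.
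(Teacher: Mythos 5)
Your proof is correct. One point of comparison is worth noting: the paper itself states Lemma \ref{Deltamultiples} without proof, treating it as a standard fact of Garside theory (in the spirit of \cite{DehornoyGarside} and \cite{GrosBouquinBleu}), so there is no in-paper argument to measure yours against; what you have written is precisely the classical argument, assembled from the paper's own toolkit. Your reduction to a central Garside element via Remark \ref{RemarkDeltaCentral} (i.e.\ Lemmas \ref{DeltaCentralPower} and \ref{DeltaPowerGarside}) is legitimate, since $\mathrm{Div}(\Delta)$ being finite and generating forces $M$ to be finitely generated; the passage from $\Delta=st$ to $\Delta=ts$ via Lemma \ref{CenterDiv} correctly makes every simple a two-sided divisor of the central $\Delta$; and the induction showing that a product of $k$ simples left-divides $\Delta^k$ is sound, the commutation $c\Delta=\Delta c$ being exactly where centrality is needed (your symmetric right-divisibility induction, peeling the leftmost simple and writing $\Delta=t_1s_1$, works the same way). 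The final step, taking $\Delta^N$ with $N=\max(k_a,k_b)$ as a simultaneous common right- and left-multiple, is immediate from transitivity of divisibility. You correctly flag the centralisation as the only delicate point; the one hypothesis you use implicitly and could have made explicit is that after replacing $\Delta$ by a central power, its divisors still generate $M$ — but this is exactly what Lemma \ref{DeltaPowerGarside} guarantees, so the argument is complete as it stands.
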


%\begin{proof}
%By Remark \ref{RemarkDeltaCentral}, up to considering a non trivial power of $\Delta$, one can assume that $\Delta$ is central.\\
%By assumption, $\text{Div}(\Delta)$ generate $M$, so that $b$ can be written as a product of simples $u_1\cdots u_n$. We then have
%\begin{eqnarray*}
%a\Delta^n&=&\Delta^na \, \, \text{since} \, \Delta \, \text{is central}\\
%&=&u_1\tilde {u_1}\Delta^{n-1}a\\
%&=&u_1\Delta^{n-1}\tilde{u_1}a\, \, \text{since} \, \Delta \, \text{is central}\\
%&=&u_1u_2\tilde{u_2}\Delta^{n-2}\tilde{u_1}a\\
%&\vdots&\\
%&=&u_1u_2\cdots u_n\tilde{u_n}\tilde{u_{n-1}}\cdots \tilde{u_1}a\\
%&=&b\tilde{u_n}\tilde{u_{n-1}}\cdots \tilde{u_1}a,
%\end{eqnarray*}
%which proves that $a\Delta^n$ is a common right-multiple of $a$ and $b$. Now, since $u_1\cdots u_n\tilde{u_n}\cdots \tilde{u_1}=\Delta^n$ is central, by Lemma \ref{CenterDiv} we have $\Delta^n=\tilde{u_n}\cdots \tilde{u_1}u_1\cdots u_n$ so that $\Delta^na=a\Delta^n=a\tilde{u_n}\cdots \tilde{u_1}b$ is also a common left-multiple of $a$ and $b$. This concludes the proof.
%\end{proof}

\begin{definition}
Let $M$ be a monoid such that $(M,\leq_L)$ (respectively $(M,\leq_R)$) is a poset. We say that $M$ admits \textbf{conditional left-lcms} (respectively \textbf{right-lcms}) if any two elements that admit a common left-multiple (respectively right-multiple) admit a left- (respectively right-) lcm. The definition for conditional gcds is similar.
\end{definition}

\begin{lemma}\label{LcmGcd}
Let $M$ be a cancellative monoid admitting conditional right-lcms (respectively left-lcms). Then any pair of elements of $M$ admitting a common  left-multiple (respectively right-multiple) admits a right-gcd (respectively left-gcd). 
\end{lemma}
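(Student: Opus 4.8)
The plan is to prove the statement for right-lcms; the parenthetical version (left-lcms, common right-multiple, left-gcd) then follows verbatim by applying the same argument in the opposite monoid $M^{\mathrm{op}}$, where common right-multiples and common left-multiples are interchanged. So fix a pair $a,b\in M$ admitting a common left-multiple, say $m=\alpha a=\beta b$ for some $\alpha,\beta\in M$.

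The first step is to reinterpret this hypothesis in terms of \emph{right}-multiples. Since $m=\alpha a$ and $m=\beta b$, the element $m$ is a common right-multiple of $\alpha$ and $\beta$. Hence, by the assumption that $M$ admits conditional right-lcms, the pair $\alpha,\beta$ has a right-lcm $\ell$. Being a common right-multiple, $\ell$ can be written $\ell=\alpha a'=\beta b'$, and since the right-lcm left-divides every common right-multiple we have $\ell\leq_L m$, so that $m=\ell k$ for some $k\in M$. My candidate for the right-gcd of $a$ and $b$ is precisely this co-factor $k$.

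The second step checks that $k$ is a common right-divisor. Combining $m=\ell k=\alpha a'k$ with $m=\alpha a$ and left-cancelling $\alpha$ gives $a=a'k$; symmetrically, left-cancelling $\beta$ in $m=\beta b'k=\beta b$ gives $b=b'k$. Thus $k\leq_R a$ and $k\leq_R b$. The third step, which is the heart of the argument, is maximality: I must show that every common right-divisor $d$ of $a$ and $b$ satisfies $d\leq_R k$. Writing $a=a''d$ and $b=b''d$, the equalities $\alpha a''d=m=\beta b''d$ together with right-cancellativity yield $\alpha a''=\beta b''$, so $\alpha a''$ is itself a common right-multiple of $\alpha$ and $\beta$. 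The defining property of the right-lcm then gives $\ell\leq_L\alpha a''$, i.e. $\alpha a''=\ell j=\alpha a'j$ for some $j$; left-cancelling $\alpha$ gives $a''=a'j$. Finally, from $a=a''d=a'jd$ and $a=a'k$, left-cancelling $a'$ forces $k=jd$, hence $d\leq_R k$. This exhibits $k$ as a greatest common right-divisor, i.e. a right-gcd.

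I expect the maximality step to be the main obstacle: the non-obvious move is to feed an arbitrary common right-divisor $d$ back into the common left-multiple $m$ and use right-cancellativity to manufacture a fresh common right-multiple $\alpha a''=\beta b''$ of $\alpha$ and $\beta$, so that the \emph{least} property of the right-lcm $\ell$ can be invoked. Note that both directions of cancellativity are genuinely used — right-cancellativity to remove the trailing $d$, and left-cancellativity to remove the leading $\alpha$ and $a'$ — which is exactly why the hypothesis is full cancellativity and not a one-sided version.
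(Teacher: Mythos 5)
Your proof is correct. The paper states Lemma \ref{LcmGcd} without proof, deferring to the standard Garside-theory literature (\cite{GrosBouquinBleu}), and your argument is exactly the classical one found there: write the common left-multiple as $m=\alpha a=\beta b$, take the right-lcm $\ell$ of the cofactors $\alpha,\beta$, and show the complement $k$ with $m=\ell k$ is the right-gcd --- left-cancellativity of $\alpha$, $\beta$, $a'$ giving that $k$ right-divides $a$ and $b$ and that $k=jd$ in the maximality step, right-cancellativity of $d$ producing the fresh common right-multiple $\alpha a''=\beta b''$, and the passage to $M^{\mathrm{op}}$ handling the parenthetical case, all of which is valid.
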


\noindent
We have a criterion for presented monoids with Noetherian divisibility to be  cancellative and to admit conditional left-lcms. In order to state the result, we need to introduce some definitions.

\begin{definition}\label{Complemented}
A monoid presentation $\langle S\,|\, R\rangle$ is \textbf{right-complemented} (respectively \textbf{right-full}) if for all pair of distinct elements $\{s,t\}$ of $S$, there is at most one (respectively exactly one) relation of the form $su=tv$ with $u,v\in S^*$ and if there are no relations of the form $su=sv$ with $s\in S$, $u,v\in S^*$.
\end{definition}

\begin{definition}\label{Theta}
The \textbf{syntactic complement} of a right-complemented monoid presentation $\langle S\,|\, R\rangle$ is the (unique) partial map $\theta:S\times S\to S^*$ such that $(s,t)$ is in the domain of $\theta$ if and only if $R$ contains a relation of the form $su=tv$ with $u,v\in S^*$, in which case $\theta(s,t)=u$ and $\theta(t,s)=v$.
\end{definition}

\begin{lemma}\label{ThetaExtension}
Given a right-complemented monoid presentation $\langle S\,|\, R\rangle$, the map $\theta$ admits a unique minimal extension to a partial map (which we still denote $\theta$ by abuse of notation) $\theta:S^*\times S^*\to S^*$ satisfying the following equalities when both sides are well-defined:\\
(i) $\theta(s,s)=1 \, \forall s\in S$,\\
(ii) $\theta(ab,c)=\theta(b,\theta(a,c))\, \forall a,b,c\in S^*$,\\
(iii) $\theta(a,bc)=\theta(a,b)\theta(\theta(b,a),c)\, \forall a,b,c\in S^*$,\\
(iv) $\theta(1,a)=a$ and $\theta(a,1)=1$ $\forall a\in S^*$.
\end{lemma}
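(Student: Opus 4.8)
The plan is to read the four equations not as constraints to be checked but as \emph{recurrences} that compute $\theta$ on words out of its values on $S\times S$. First I would fix a direction for the recursion: use (ii) to peel letters off the left of the first argument and (iii) to peel letters off the left of the second. Explicitly, set $\theta(1,v)=v$ and $\theta(u,1)=1$ as base cases, define $\theta(s,tv')=\theta(s,t)\,\theta(\theta(t,s),v')$ for a single letter $s$ (which is the instance of (iii) with $a=s$, $b=t$, $c=v'$), and then $\theta(su',v)=\theta(u',\theta(s,v))$ for a general first argument $su'$ (the instance of (ii) with $a=s$, $b=u'$, $c=v$). Each right-hand side refers only to the syntactic complement of Definition \ref{Theta} and to ``earlier'' values, so this prescribes a partial map $\theta\colon S^*\times S^*\to S^*$. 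It is genuinely partial: the recursion need not reach a base case, since the word $\theta(s,v)$ may be longer than $v$ and so no length function forces termination, and moreover a required letter complement $\theta(s,t)$ may itself be undefined. The map so obtained is the minimal extension, in the sense that a value is assigned exactly when these recurrences force one, and none is assigned elsewhere.

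Next I would record that (i) and (iv) hold by the very choice of base cases, and that the two halves of the recursion are precisely the instances of (ii) with $a\in S$ and of (iii) with $b\in S$. The real work is to promote these to the full statements, that is, to verify (ii) and (iii) for \emph{all} words $a,b,c$ on the locus where both sides are defined. I would do this by a simultaneous induction built on the single-letter cases, where the crux is the step in which both $a$ and $b$ have length at least two: reducing $\theta(a,bc)$ by first splitting $a$ via (ii) and by first splitting $bc$ via (iii) must agree. This is a confluence (diamond) statement, asserting that reordering the elementary reductions does not change the outcome.

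I expect this confluence to be the main obstacle, and the natural way to organise it is through right-reversing. One rewrites a signed word by the elementary rule $\bar s\,t\rightsquigarrow \theta(s,t)\,\overline{\theta(t,s)}$ (together with $\bar s\,s\rightsquigarrow\varnothing$); because the presentation is right-complemented (Definition \ref{Complemented}), for distinct $s,t$ there is at most one such rule, so each elementary step is deterministic and the only freedom is which factor $\bar s\,t$ to reduce. Reversing $\overline{ab}\,c=\bar b\,\bar a\,c$ first through $\bar a\,c$ and then through the resulting factor $\bar b\,(\text{positive})$ reads off $\theta(ab,c)=\theta(b,\theta(a,c))$ as its positive part, which is (ii), and $\theta(c,ab)=\theta(c,a)\,\theta(\theta(a,c),b)$ from its negative part, which is a relabelling of (iii). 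Hence everything reduces to showing that whenever $\bar u\,v$ reverses to a word of the form $v'\,\overline{u'}$, that terminal word is independent of the order of reductions. I would prove this by the standard local-confluence analysis of two redexes (disjoint redexes commute trivially, while overlapping ones are settled by a short direct computation using the determinism of the rule); this is exactly the confluence of right-reversing for right-complemented presentations established in \cite{GrosBouquinBleu}. Uniqueness and minimality of the extension then follow, since the reversing value is forced wherever it exists.
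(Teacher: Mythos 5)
The paper does not actually prove this lemma: it is recalled verbatim from the literature, with the subsection's preamble directing the reader to \cite{GrosBouquinBleu} for all such results. Your argument --- defining $\theta$ as the least partial map generated by the recursion whose base cases are (iv), whose letter-level step is the single-letter instance of (iii), and whose word-level step is (ii), and then deriving the general identities by reading both sides off a right-reversing of $\bar b\,\bar a\,c$ and invoking confluence --- is precisely the standard subword-reversing proof given there, and it is sound. Two small refinements: since a redex is a negative letter immediately followed by a positive one, two distinct redexes can never overlap, so the ``overlapping redexes'' case in your local-confluence analysis is vacuous and only the trivial commutation of disjoint steps remains; and because disjoint steps commute in a single step (a strong diamond, no termination hypothesis needed), reversing enjoys Dehornoy's random-descent property, which is what ensures the two sides of (ii) and (iii) become defined simultaneously rather than merely agreeing when both happen to be defined.
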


\begin{definition}
Let $\langle S\, | \, R\rangle$ be a right-complemented presentation and let $a,b,c$ be pairwise distinct elements of $S$. The triple $(a,b,c)\in S^3$ is said to satisfy the  \textbf{$\theta$-cube condition} (respectively the \textbf{sharp $\theta$-cube condition}) if either $\cube abc$ and $\cube bac$ are both defined and equal in $M$ (respectively equal in $S^*$), or none is defined.\\
The presentation $\langle S\, | \, R \rangle$ is said to satisfy the $\theta$-cube condition (respectively the sharp $\theta$-cube condition) if all triples $(a,b,c)\in S^3$ of pairwise distinct elements satisfy the $\theta$-cube condition (respectively the sharp $\theta$-cube condition).
\end{definition}
\noindent
We are now ready to introduce the criterion:

\begin{proposition}\label{CubeCondition}
Let $M$ be a monoid with Noetherian divisibility admitting the right-complemented presentation $\langle S\, | \, R\rangle$. If $\langle S\, | \, R\rangle$ satisfies the $\theta$-cube condition, then $M$ is left-cancellative and admits conditional right-lcms. More precisely, the elements $u$ and $v$ admit a common right-multiple if and only if $\theta(u,v)$ exists, in which case $u\theta(u,v)=v\theta(v,u)$ represents the right-lcm of $u$ and $v$.
\end{proposition}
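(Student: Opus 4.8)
The plan is to treat $\langle S\,|\,R\rangle$ through the theory of right word-reversing, for which the $\theta$-cube condition is exactly the completeness criterion. On the free monoid over $S\cup S^{-1}$ I would consider the rewriting rules $s^{-1}s\to\varepsilon$ and $s^{-1}t\to\theta(s,t)\theta(t,s)^{-1}$, the latter being applicable precisely when $\theta(s,t)$ is defined. For $u,v\in S^*$, right-reversing the word $u^{-1}v$ pushes the positive letters to the left and, when it terminates successfully, produces a word of the form $p\,q^{-1}$ with $p,q\in S^*$; the extension of $\theta$ to $S^*\times S^*$ furnished by Lemma \ref{ThetaExtension} is designed so that $p=\theta(u,v)$ and $q=\theta(v,u)$, the recursions (ii) and (iii) encoding how reversing composes. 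Note that $u^{-1}v=pq^{-1}$ unravels to $up=vq$, matching the claimed shape of the right-lcm.

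First I would verify that $\theta$ is a genuine right-complement, that is, $a\theta(a,b)=b\theta(b,a)$ in $M$ whenever $\theta(a,b)$ is defined, for all $a,b\in S^*$. This goes by induction on $|a|+|b|$: the base case $a,b\in S$ is the defining relation $su=tv$ of $R$ read off from $\theta$, and the recursive identities (i)--(iv) of Lemma \ref{ThetaExtension} let one peel off a single letter and invoke the induction hypothesis. This already yields the ``if'' direction of the statement, namely that whenever $\theta(u,v)$ exists, $u\theta(u,v)=v\theta(v,u)$ is a common right-multiple of $u$ and $v$; it also yields at once that \emph{reversing to the empty word forces $u\equiv v$ in $M$}, which I will use below.

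The core of the argument is to promote the $\theta$-cube condition, assumed only for triples of pairwise distinct generators, to global confluence of reversing in $M$. The equality $\cube abc=\cube bac$ supplies the local confluence of the two ways of reversing a length-three obstruction, while the Noetherianity of divisibility provides the well-founded measure needed to run the induction; combining these in the manner of a Newman-type local-to-global confluence argument, I would show that the value of the extended $\theta$ is independent of the reversing order and computes a true least common multiple, i.e. if $w$ is any common right-multiple of $u$ and $v$ then $\theta(u,v)$ is defined and $u\theta(u,v)$ left-divides $w$. This is precisely the ``only if'' direction together with the assertion that $M$ admits conditional right-lcms, the lcm being $u\theta(u,v)=v\theta(v,u)$. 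Left-cancellativity then falls out of the same completeness: from $su\equiv sv$ in $M$ the completeness of reversing trivializes $(su)^{-1}(sv)$; the leading factor $s^{-1}s$ cancels to leave $u^{-1}v$, which therefore also reverses to the empty word, whence $u\equiv v$ by the complement identity of the previous paragraph. The general case follows by stripping the letters of a left factor one at a time.

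I expect the main obstacle to be exactly this propagation of the cube condition from generators to arbitrary words, i.e. deducing global confluence of reversing from local confluence at the level of single letters. The recursive identities of Lemma \ref{ThetaExtension} reduce the task to a careful bookkeeping of nested applications of the cube condition, but one must ensure that every instance invoked genuinely involves pairwise distinct generators, treating the coincidences governed by rule (i) separately, and that the Noetherian measure strictly decreases at each stage so that the induction is well-founded. This is the technical heart of the statement, and is in essence Dehornoy's completeness theorem for complemented presentations (see \cite{GrosBouquinBleu}).
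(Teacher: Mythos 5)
There is nothing in the paper to compare your argument against: Proposition \ref{CubeCondition} is stated without proof, in a subsection that explicitly ``closely follows \cite[Section 2]{Gobet Monoid 1}'' and refers the reader to \cite{GrosBouquinBleu}, ``where all results mentioned below can be found.'' The proposition is Dehornoy's completeness criterion for right-complemented presentations, imported as a black box. Your sketch is a faithful outline of the canonical proof from that source: the right-reversing rules $s^{-1}s\to\varepsilon$, $s^{-1}t\to\theta(s,t)\theta(t,s)^{-1}$, the identification of the extension of Lemma \ref{ThetaExtension} with iterated reversing, the induction on $|a|+|b|$ establishing $a\theta(a,b)=b\theta(b,a)$ in $M$ (which gives the ``if'' direction), and the derivation of left-cancellativity from completeness by cancelling the leading $s^{-1}s$ in $(su)^{-1}(sv)$ are all correct and are exactly how the cited proof runs; you also correctly locate the role of Noetherian divisibility, which is what makes the reversing process terminate and the local-to-global induction well-founded (the cube condition alone does not guarantee termination).

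The one point to flag is that, as written, your argument stops short of being self-contained precisely at the step you identify as the technical heart: the promotion of the $\theta$-cube condition on pairwise distinct generators to global confluence and completeness of reversing is ``in essence Dehornoy's completeness theorem,'' but that theorem \emph{is} the proposition being proved, so invoking it wholesale would be circular. To close the gap you would have to actually run the Newman-type induction: define a well-founded measure from the Noetherianity witness $\lambda$ (the $\lambda$-value of a common right-multiple bounds every word appearing in the reversing grid), verify confluence of the two reversings of a length-three obstruction via $\cube abc=\cube bac$, treat the degenerate cases where two letters coincide through rule (i) of Lemma \ref{ThetaExtension} separately, and deduce that if $up=vq$ in $M$ then $\theta(u,v)$ is defined with $u\theta(u,v)$ left-dividing $up$. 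Since the paper itself delegates exactly this work to \cite{GrosBouquinBleu}, your proposal is at the same level of rigor as the paper's treatment and consistent with the proof in the literature; it is an accurate road map rather than a complete independent proof.
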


\subsection{Practical framework for some homogeneous presentations}
\noindent
In this section, we prove some results that will simplify the proofs of later sections about Garside monoids.

\begin{definition}
A monoid presentation $\langle S\, | \, R\rangle$ is defined to be \textbf{homogeneous} if there exists a function $\lambda:S\to \N^*$ such that for all $(s_{i_1}\cdots s_{i_n},s_{j_1}\cdots s_{j_m})\in R$ with $\{s_{i_1},\dots,s_{i_n},s_{j_1},\dots,s_{j_m}\}\subset S$, we have $\sum_{k\in [n]}\lambda(s_{i_k})=\sum_{k\in [m]}\lambda(s_{j_k})$. The function $\lambda$ is called the \textbf{weight function}.
\end{definition}
\begin{definition}\label{Magic}
A monoid presentation $\langle S\,|\, R\rangle$ satisfies the property $(C1)$\label{Poulet} if it is right-full and for all $A\subset S$ of cardinal 3, there exists a labeling $\{a_1,a_2,a_3\}$ of $A$ and a word $u\in S^*$ (depending on $A$) such that the following word equalities hold: 
\begin{equation*}
    \theta(a_1,a_3)=\theta(a_1,a_2)u, \,  \theta(a_2,a_3)=\theta(a_2,a_1)u,\, \theta(a_3,a_1)=\theta(a_3,a_2).
\end{equation*}
\end{definition}

\begin{lemma}\label{MagicCube}
Assume that the presentation $\langle S\,|\, R\rangle$ satisfies $(\hyperref[Poulet]{C1})$. Then $\langle S\,|\, R \rangle$ satisfies the sharp $\theta-$cube condition.\\
In particular, if the monoid $M$ with presentation $\langle S\,|\, R \rangle$ has Noetherian divisibility, it is left-cancellative and admits conditional right-lcms.
\end{lemma}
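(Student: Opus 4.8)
The plan is to verify the sharp $\theta$-cube condition directly from property $(C1)$ and then read off the ``in particular'' part from Proposition \ref{CubeCondition}. Fix a three-element subset $A=\{a_1,a_2,a_3\}$ of $S$, labelled so that the word identities of Definition \ref{Magic} hold, and abbreviate $\theta_{ij}:=\theta(a_i,a_j)$; thus $\theta_{13}=\theta_{12}u$, $\theta_{23}=\theta_{21}u$ and $\theta_{31}=\theta_{32}$ as words, where $u=u(A)\in S^*$. Since the presentation is right-full, every $\theta_{ij}$ with $i\neq j$ is defined. The sharp $\theta$-cube condition must be checked for each ordered triple of pairwise distinct elements; because the condition for $(a,b,c)$ coincides with the one for $(b,a,c)$, it is enough to treat the three cases obtained by designating each of $a_1,a_2,a_3$ in turn as the ``third'' argument.

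First I would record the word identity $\theta(w,w)=1$ for every $w\in S^*$, which follows from the rules (i)--(iv) of Lemma \ref{ThetaExtension} by an easy induction on the length of $w$ (for $w=s_1s_2$ one computes $\theta(s_1s_2,s_1s_2)=\theta(s_2,\theta(s_1,s_1s_2))=\theta(s_2,s_2)=1$ using (ii), (iii), (iv); the general step is identical). This is the fact that makes all the complements below collapse.

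Then I would run the three computations. With $a_3$ as the third argument, rule (iii) gives $\theta(\theta_{12},\theta_{13})=\theta(\theta_{12},\theta_{12}u)=\theta(\theta_{12},\theta_{12})\,\theta(\theta(\theta_{12},\theta_{12}),u)=\theta(1,u)=u$, and likewise $\theta(\theta_{21},\theta_{23})=u$, so both cube words equal $u$. With $a_2$ as the third argument, rule (ii) gives $\theta(\theta_{13},\theta_{12})=\theta(\theta_{12}u,\theta_{12})=\theta(u,\theta(\theta_{12},\theta_{12}))=\theta(u,1)=1$, while $\theta(\theta_{31},\theta_{32})=\theta(\theta_{32},\theta_{32})=1$ because $\theta_{31}=\theta_{32}$; both equal $1$. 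The case of $a_1$ as third argument is symmetric: $\theta(\theta_{23},\theta_{21})=\theta(\theta_{21}u,\theta_{21})=1$ and $\theta(\theta_{32},\theta_{31})=\theta(\theta_{32},\theta_{32})=1$. In every case both cube words are defined and equal as elements of $S^*$, so the sharp $\theta$-cube condition holds for $A$, and since $A$ was arbitrary it holds for the whole presentation.

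For the final assertion, the sharp $\theta$-cube condition implies the ordinary $\theta$-cube condition (equality of words forces equality in $M$), and right-fullness implies right-complementedness; hence, assuming Noetherian divisibility, Proposition \ref{CubeCondition} applies and yields that $M$ is left-cancellative and admits conditional right-lcms. The only genuine care needed is bookkeeping: confirming that the rules of Lemma \ref{ThetaExtension} are invoked where their right-hand sides are defined (so that the left-hand sides are defined with the stated values), and that the three designated cases, together with the $(a,b,c)\leftrightarrow(b,a,c)$ symmetry, truly exhaust all pairwise-distinct triples. Neither is a real obstacle; the computation is short, and the labelling provided by $(C1)$ is exactly what aligns the two sides of each cube.
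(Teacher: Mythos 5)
Your proof is correct and takes essentially the same route as the paper: the identical three computations via rules (ii)--(iv) of Lemma \ref{ThetaExtension} collapse the paired cube words to $u$, $1$ and $1$ respectively, with the ``in particular'' part then read off from Proposition \ref{CubeCondition}. Your explicit induction establishing $\theta(w,w)=1$ for all $w\in S^*$ only makes precise a fact the paper uses implicitly.
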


\begin{proof}
Let $\{a,b,c\}\subset S$ be of cardinal 3 and assume without loss of generality that there exists $u\in S^*$ such that $$\theta(a,c)=\theta(a,b)u, \,  \theta(b,c)=\theta(b,a)u,\, \theta(c,a)=\theta(c,b).$$
We then have
$$\cube abc=\theta(\theta(a,b),\theta(a,b)u)=u=\theta(\theta(b,a),\theta(b,a)u)=\cube bac,$$
which shows that $(a,b,c)$ and $(b,a,c)$ satisfy the sharp $\theta$-cube condition. Moreover, we have
$$\cube bca=\theta(\theta(b,a)u,\theta(b,a))=1=\cube cba,$$
hence $(b,c,a)$ and $(c,b,a)$ satisfy the sharp $\theta$-cube condition. Finally, we have
$$\cube acb=\theta(\theta(a,b)u,\theta(a,b))=1=\cube cab,$$
so that $(a,c,b)$ and $(c,a,b)$ satisfy the sharp $\theta$-cube condition. The choice of the subset of cardinal 3 of $S$ being arbitrary, this concludes the proof.
\end{proof}

\noindent
In fact, if a monoid admits a presentation satisfying $(\hyperref[Poulet]{C1})$, under an additional assumption some of its quotients satisfy $(\hyperref[Poulet]{C1})$ as well. To see this, we introduce the following notation:
\begin{notation}
Let $S$ be a set, $w\in S^*$, $X\subset S$. Denote by $w/X$ the word obtained from $w$ after removing every instance of elements of $X$. With this notation, if a monoid $M$ admits the presentation $\langle S\,|\,R\rangle$, then for every $X\subset S$, the monoid $M/\langle x=1\, \forall x\in X\rangle$ admits the presentation $$\langle S\backslash X\,|\, r_1/X=r_2/X,\, \forall (r_1,r_2)\in R\rangle.$$
We denote this presentation by $\langle S\,|\,R\rangle/X$. For $m\in M$, we write $m/X$ for the image of $m$ by the morphism $M\to M/\langle x=1\, \forall x\in X\rangle$. If $u\in S^*$ represents $m$, then $u/X$ represents $m/X$.\\
Moreover, if $\langle S\,|\,R\rangle$ is right-full with syntactic right-complement $\theta$, for every $X\subset S$, we denote by $\langle S\,|\, R\rangle_X$ the presentation $$\langle S\backslash X\,|\, u(\theta(u,v)/X)=v(\theta(v,u)/X), \, \forall u,v\in S\backslash X, u\neq v\rangle.$$
Finally, if $X=\{x\}$, we simply denote $\langle S\, | \, R \rangle/X$ by $\langle S\, | \, R \rangle/x$ and $\langle S\, | \, R \rangle_X$ by $\langle S \, | \, R \rangle_x$.
\end{notation}

\begin{remark}
Let $M$ be the monoid with presentation 
$$\langle a,b \, | \, aba=b^2 \rangle.$$
Then $M/\langle a=1\rangle$ has presentation $\langle b\, | \, b=b^2\rangle$, which is a presentation for a monoid with two elements, $1$ and $b$, whereas $$\langle a,b\, | \, aba=b^2\rangle_a=\langle b\, | \, \emptyset\rangle,$$
which is a presentation for $\N$. In particular, in general $\langle S\, | \, R\rangle/X$ is not isomorphic to $\langle S\, | \, R \rangle_X$, but merely a quotient.
\end{remark}

\begin{lemma}\label{MagicRemark1}
Let $\langle S\,|\,R\rangle$ be a monoid presentation satisfying $(\hyperref[Poulet]{C1})$ and let $X\subset S$. Then $\langle S\,|\,R\rangle_X$ satisfies $(\hyperref[Poulet]{C1})$ as well.
\end{lemma}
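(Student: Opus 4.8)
The plan is to exploit the fact that the deletion operation $w\mapsto w/X$ is a monoid homomorphism $S^*\to (S\setminus X)^*$, so it preserves word equalities, together with the observation that the syntactic complement of $\langle S\,|\,R\rangle_X$ is obtained from that of $\langle S\,|\,R\rangle$ simply by applying $/X$. First I would record that $\langle S\,|\,R\rangle_X$ is right-full: by construction it has, for each unordered pair of distinct generators $s,t\in S\setminus X$, exactly one defining relation, namely
\[
s\bigl(\theta(s,t)/X\bigr)=t\bigl(\theta(t,s)/X\bigr),
\]
which is of the form $s\cdot(\cdots)=t\cdot(\cdots)$ with distinct leading letters $s\neq t$, and in particular there is no relation of the form $s\cdot(\cdots)=s\cdot(\cdots)$. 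Here $\theta(s,t)$ is defined for every such pair precisely because $\langle S\,|\,R\rangle$ is right-full. Reading off the syntactic complement $\theta_X$ of $\langle S\,|\,R\rangle_X$ directly from these relations then yields the key identity $\theta_X(s,t)=\theta(s,t)/X$ for all distinct $s,t\in S\setminus X$.

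Next I would verify the property $(\hyperref[Poulet]{C1})$ itself. Let $A'\subset S\setminus X$ have cardinality $3$. Since $A'\subset S$ and $\langle S\,|\,R\rangle$ satisfies $(\hyperref[Poulet]{C1})$, there is a labeling $\{a_1,a_2,a_3\}$ of $A'$ and a word $u\in S^*$ with
\[
\theta(a_1,a_3)=\theta(a_1,a_2)u,\quad \theta(a_2,a_3)=\theta(a_2,a_1)u,\quad \theta(a_3,a_1)=\theta(a_3,a_2).
\]
Applying the homomorphism $/X$ to each of these word equalities and using $(vw)/X=(v/X)(w/X)$ together with the identity $\theta_X(a_i,a_j)=\theta(a_i,a_j)/X$, I obtain
\[
\theta_X(a_1,a_3)=\theta_X(a_1,a_2)(u/X),\quad \theta_X(a_2,a_3)=\theta_X(a_2,a_1)(u/X),\quad \theta_X(a_3,a_1)=\theta_X(a_3,a_2).
\]
Thus the same labeling $\{a_1,a_2,a_3\}$ together with the word $u/X\in (S\setminus X)^*$ witnesses $(\hyperref[Poulet]{C1})$ for the triple $A'$. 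As $A'$ was arbitrary, $\langle S\,|\,R\rangle_X$ satisfies $(\hyperref[Poulet]{C1})$.

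The point requiring the most care, rather than a genuine obstacle, is the bookkeeping behind $\theta_X(s,t)=\theta(s,t)/X$: one must check that after deleting the letters of $X$ the relations of $\langle S\,|\,R\rangle_X$ remain genuinely right-full, that is, that no two relations collapse and that no relation degenerates into one with equal leading letters. Both follow at once from the fact that the leading letters $s,t\in S\setminus X$ are never deleted by $/X$, so the leading-letter structure of the relations of $\langle S\,|\,R\rangle$ is preserved verbatim and only the complements are truncated. Once this is in place the argument is purely formal, since deletion of the letters of $X$ is compatible with concatenation and hence transports each of the three defining word equalities of $(\hyperref[Poulet]{C1})$ without change.
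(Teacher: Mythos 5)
Your proposal is correct and follows essentially the same route as the paper: you first observe that $\langle S\,|\,R\rangle_X$ is right-full by construction, then use that $v\mapsto v/X$ is multiplicative to get $\theta_X(s,t)=\theta(s,t)/X$, and finally apply $/X$ to the three word equalities witnessing $(\hyperref[Poulet]{C1})$, with $u/X$ as the new witness. The only difference is presentational: you make explicit the bookkeeping that leading letters in $S\setminus X$ survive deletion, which the paper leaves implicit.
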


\begin{proof}
First, observe that $\langle S\, | \, R\rangle_X$ is right-full. Indeed, for all distinct $t,u\in S\backslash X$, there is exactly one relation of the form $t\cdots=u\cdots$ in $\langle S\, | \, R\rangle_X$, namely $t\theta(t,u)/X=u\theta(u,t)/X$.\\ 
Now, the operation sending $v\in S^*$ to $v/X$ is multiplicative and writing $\theta_X$ for the syntactic complement of $\langle S\,|\,R\rangle_X$, for any distinct $t,u\in S\backslash X$ we have $\theta_X(t,u)=\theta(t,u)/X$. Let then $\{a,b,c\}$ be a triple of distinct elements of $S\backslash X$ and assume without loss of generality that there exists $v\in S^*$ such that $$\theta(a,c)=\theta(a,b)v,\, \theta(b,c)=\theta(b,a)v,\, \theta(c,a)=\theta(c,b).$$
Then by the above discussion we have
$$\theta_X(a,c)=\theta_X(a,b)(v/X), \, \theta_X(b,c)=\theta_X(b,a)(v/X), \, \theta_X(c,a)=\theta_X(c,b),$$
which concludes the proof.
\end{proof}

\begin{remark}\label{MagicNoetherian}
If $M$ is monoid admitting a homogeneous presentation $\langle S\,|\,R\rangle$ with weight function $\lambda$ and if $X\subset S$ is such that $\sum_{x\in X}\lambda(x)|r_1|_x=\sum_{x\in X}\lambda(x)|r_2|_x$ for all $(r_1,r_2)\in R$, the presentation $\langle S\, |\, R\rangle/X$ is homogeneous as well, so that $M/\langle x=1 \forall x\in X\rangle$ has Noetherian divisibility.
\end{remark}

\begin{lemma}\label{MagicOpQuotient}
Let $M$ be a monoid with presentation $\langle S\, |\, R\rangle$. Then for all $X\subset S$, the monoids
$(M/\langle x=1\, \forall x\in X\rangle)^{\mathrm{op}}$ and $M^{\mathrm{op}}/\langle x=1 \, \forall x\in X\rangle$ are isomorphic.
\end{lemma}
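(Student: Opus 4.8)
The plan is to compute explicit monoid presentations for both sides and observe that they coincide, the isomorphism then being the identity on generators. The one structural input I would use is the following general principle: if a monoid $N$ admits the presentation $\langle T\mid R\rangle$, then its opposite $N^{\mathrm{op}}$ admits the presentation $\langle T\mid \overline{R}\rangle$, where $\overline{w}$ denotes the mirror (reversal) of a word $w\in T^*$ and $\overline{R}=\{(\overline{r_1},\overline{r_2}):(r_1,r_2)\in R\}$. This is immediate from the universal property of presented monoids: reversal is an anti-isomorphism of the free monoid $T^*$ that is the identity on $T$, so it carries the congruence generated by $R$ to the congruence generated by $\overline{R}$, identifying $N^{\mathrm{op}}$ with $\langle T\mid\overline{R}\rangle$.

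First I would write down a presentation of $(M/\langle x=1\,\forall x\in X\rangle)^{\mathrm{op}}$. By the notation recalled before the statement, $M/\langle x=1\,\forall x\in X\rangle$ has presentation $\langle S\backslash X\mid r_1/X=r_2/X,\ \forall(r_1,r_2)\in R\rangle$, and applying the opposite construction reverses every relation, yielding
\[
\big\langle\, S\backslash X \,\big|\, \overline{r_1/X}=\overline{r_2/X},\ \forall(r_1,r_2)\in R \,\big\rangle.
\]
Next I would write down a presentation of $M^{\mathrm{op}}/\langle x=1\,\forall x\in X\rangle$. Here $M^{\mathrm{op}}$ has presentation $\langle S\mid \overline{r_1}=\overline{r_2}\rangle$, and quotienting by $X$ gives
\[
\big\langle\, S\backslash X \,\big|\, (\overline{r_1})/X=(\overline{r_2})/X,\ \forall(r_1,r_2)\in R \,\big\rangle.
\]

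Both presentations have the same generating set $S\backslash X$, so everything reduces to checking that their relations agree, i.e. to the purely combinatorial identity $\overline{w/X}=\overline{w}/X$ for every $w\in S^*$. I would verify this by writing $w=s_1\cdots s_k$ and noting that both sides equal the word obtained by listing, in reverse order, exactly those letters $s_i$ with $s_i\notin X$: deleting all occurrences of letters of $X$ and reversing a word are operations that commute. Once this is established, the two presentations are literally identical, and the identity map on $S\backslash X$ extends to the sought isomorphism. I do not anticipate any genuine obstacle; the only point requiring care is the order in which one reverses words and deletes generators, which is precisely settled by the commutation identity above.
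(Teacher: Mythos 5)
Your proposal is correct and follows essentially the same route as the paper: the paper's entire proof is the commutation identity $(w/X)^{\mathrm{op}}=w^{\mathrm{op}}/X$ for $w\in S^*$, which is exactly your key observation that deleting letters of $X$ commutes with word reversal. You simply spell out the surrounding bookkeeping (the mirrored presentation of an opposite monoid and the quotient presentation) that the paper leaves implicit.
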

\begin{proof}
It follows from the fact that for all $w\in S^*$ we have $(w/X)^{\mathrm{op}}=w^{\mathrm{op}}/X$, where $(s_1\cdots s_n)^{\mathrm{op}}$ is defined as $s_n\cdots s_1$.
\end{proof}

\begin{definition}
Let $P=\langle S\,|\,R\rangle$ be a monoid presentation. A pair $(P,X)$ with $X\subset S$ is said to satisfy the property $(C2)$\label{Champagne} if\\
$\bullet$ The presentation $P$ is homogeneous and satisfies $(\hyperref[Poulet]{C1})$.\\
$\bullet$ For all $(r_1,r_2)\in R$, we have $\sum_{x\in X}\lambda(x)|r_1|_x=\sum_{x\in X}\lambda(x)|r_2|_x$ , where $\lambda$ is the weight function of $P$.\\
$\bullet$ The monoids with presentations $\langle S\,|\,R\rangle_X$ and $\langle S\,|\,R\rangle /X$ are isomorphic.\\
Finally, if $X=\{x\}$, we simply say that $(P,x)$ satisfies $(\hyperref[Champagne]{C2})$. In this case, checking the second condition reduces to verifying that $|r_1|_x=|r_2|_x$ for all $(r_1,r_2)\in R$.
\end{definition}

\begin{lemma}\label{MagicQuotient}
Let $M$ be a monoid with presentation $P=\langle S\, | \, R\rangle$, let $X\subset S$ and assume that $(P,X)$ satisfies $(\hyperref[Champagne]{C2})$.
Then the monoid $M/\langle x=1\, \forall x\in X\rangle$ is left-cancellative and admits conditional right-lcms.
\end{lemma}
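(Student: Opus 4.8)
The plan is to avoid working directly with the quotient presentation $\langle S\,|\,R\rangle/X$, to which Lemma \ref{MagicCube} need not apply since it is not guaranteed to satisfy $(\hyperref[Poulet]{C1})$, and instead to transport the entire problem to the presentation $\langle S\,|\,R\rangle_X$, which does satisfy $(\hyperref[Poulet]{C1})$. Each of the three bullets of $(\hyperref[Champagne]{C2})$ will be used exactly once.

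First I would record that, by the first bullet of $(\hyperref[Champagne]{C2})$, the presentation $P=\langle S\,|\,R\rangle$ satisfies $(\hyperref[Poulet]{C1})$, and then invoke Lemma \ref{MagicRemark1} with the subset $X$ to conclude that $\langle S\,|\,R\rangle_X$ satisfies $(\hyperref[Poulet]{C1})$ as well. Next I would establish Noetherian divisibility: the second bullet of $(\hyperref[Champagne]{C2})$ is precisely the hypothesis of Remark \ref{MagicNoetherian}, so $\langle S\,|\,R\rangle/X$ is homogeneous and the monoid $M/\langle x=1\, \forall x\in X\rangle$ it presents has Noetherian divisibility. The third bullet of $(\hyperref[Champagne]{C2})$ provides an isomorphism between the monoid presented by $\langle S\,|\,R\rangle_X$ and $M/\langle x=1\, \forall x\in X\rangle$; since Noetherian divisibility is an intrinsic property of a monoid (it only asserts the existence of a suitable function $\lambda$), the monoid presented by $\langle S\,|\,R\rangle_X$ also has Noetherian divisibility.

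With these two facts in hand I would apply the second assertion of Lemma \ref{MagicCube} to the presentation $\langle S\,|\,R\rangle_X$: it satisfies $(\hyperref[Poulet]{C1})$ and presents a monoid with Noetherian divisibility, hence that monoid is left-cancellative and admits conditional right-lcms. Transporting along the isomorphism of the third bullet then yields that $M/\langle x=1\, \forall x\in X\rangle$ is left-cancellative and admits conditional right-lcms, which is the claim. The one point requiring care, and the crux of why condition $(\hyperref[Champagne]{C2})$ is formulated the way it is, is that Lemma \ref{MagicCube} delivers its conclusion only for the presentation $\langle S\,|\,R\rangle_X$ satisfying $(\hyperref[Poulet]{C1})$, which by the remark preceding Lemma \ref{MagicRemark1} in general merely surjects onto, rather than coincides with, the genuine quotient $\langle S\,|\,R\rangle/X$ that presents $M/\langle x=1\, \forall x\in X\rangle$. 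The isomorphism hypothesis of the third bullet is exactly what bridges this gap, and one should note that both target properties, being expressed purely through equalities of elements, are invariant under monoid isomorphism and hence legitimately transported.
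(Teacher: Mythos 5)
Your proposal is correct and follows essentially the same route as the paper's proof: use Remark \ref{MagicNoetherian} (via the second bullet of $(\hyperref[Champagne]{C2})$) to get Noetherian divisibility of the quotient, transfer it through the isomorphism of the third bullet to the monoid presented by $\langle S\,|\,R\rangle_X$, apply Lemmas \ref{MagicRemark1} and \ref{MagicCube} to that presentation, and transport the conclusion back. Your write-up merely makes explicit the transfer steps and the role of the isomorphism hypothesis that the paper's two-line proof leaves implicit.
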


\begin{proof}
By Remark \ref{MagicNoetherian}, the monoid $M/\langle x=1 \, \forall x\in X\rangle$ has Noetherian divisibility, hence so does $\langle S\, |\, R\rangle_X$ by assumption. Combining Lemmas \ref{MagicCube} and \ref{MagicRemark1} now concludes the proof.
\end{proof}

\begin{lemma}\label{QuotientGarsideElement}
Let $M$ be a monoid with presentation $P=\langle S\, | \, R \rangle$, let $X\subset S$ and assume that $(P,X)$ satisfies $(\hyperref[Champagne]{C2})$. If $\Delta\in M$ is a central Garside element of $M$, then $\Delta/X$ is a central Garside element of $M/\langle x=1 \, \forall x\in X\rangle$.
\end{lemma}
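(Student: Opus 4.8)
The plan is to verify the three defining properties of a Garside element for $\bar\Delta:=\Delta/X$ in $\bar M:=M/\langle x=1\,\forall x\in X\rangle$, together with centrality, all through the quotient morphism $\pi\colon M\to\bar M$, $m\mapsto m/X$. First I would record the two structural facts that $(\hyperref[Champagne]{C2})$ provides: by Lemma \ref{MagicQuotient} the monoid $\bar M$ is left-cancellative (and admits conditional right-lcms), and by Remark \ref{MagicNoetherian} the presentation $\langle S\,|\,R\rangle/X$ is homogeneous, so $\bar M$ has Noetherian divisibility with a weight $\bar\lambda\colon\bar M\to\N$ that is additive and satisfies $\bar\lambda^{-1}(0)=\{1\}$. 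Centrality of $\bar\Delta$ is then immediate, since $\pi$ is a surjective homomorphism and $\Delta\in Z(M)$ forces $\bar\Delta\in Z(\bar M)$.

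For property (i), that the left- and right-divisors of $\bar\Delta$ coincide: if $\bar\Delta=ab$ then, as $\bar\Delta$ is central and $\bar M$ is left-cancellative, Lemma \ref{CenterDiv} gives $ab=ba$, so every left-divisor is a right-divisor; applying Lemma \ref{CenterDiv} symmetrically to a factorisation $\bar\Delta=ca$ yields the reverse inclusion, and hence $\text{Div}(\bar\Delta)$ is well defined. For property (ii), that $\text{Div}(\bar\Delta)$ generates $\bar M$: the homomorphism $\pi$ sends divisors of $\Delta$ to divisors of $\bar\Delta$ (from $\Delta=de$ one gets $\bar\Delta=\pi(d)\pi(e)$), and since $\Delta$ is a Garside element of $M$ we have $M=\langle\text{Div}(\Delta)\rangle$; applying $\pi$ gives $\bar M=\pi(M)=\langle\pi(\text{Div}(\Delta))\rangle\subseteq\langle\text{Div}(\bar\Delta)\rangle\subseteq\bar M$, whence equality.

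The remaining and most delicate point is property (iii), finiteness of $\text{Div}(\bar\Delta)$, and this is exactly where homogeneity does the work. Additivity of $\bar\lambda$ forces every divisor $a$ of $\bar\Delta$ to satisfy $\bar\lambda(a)\le\bar\lambda(\bar\Delta)=:N$. Because $M$ admits a Garside element it is finitely generated, hence so is $\bar M=\pi(M)$; discarding the trivial images we obtain a finite generating set of $\bar M$ consisting of elements of weight $\ge 1$, so every element of weight $\le N$ is a product of at most $N$ of these, and there are only finitely many such products. Thus $\text{Div}(\bar\Delta)$ is finite, completing the verification. The main obstacle is not any single computation but ensuring that left-cancellativity of $\bar M$ is genuinely in hand: this is precisely what $(\hyperref[Champagne]{C2})$ secures via the isomorphism $\langle S\,|\,R\rangle_X\cong\langle S\,|\,R\rangle/X$ feeding into Lemma \ref{MagicQuotient}, and without it neither the coincidence of the two divisor sets in (i) nor the well-definedness of $\text{Div}(\bar\Delta)$ could be concluded.
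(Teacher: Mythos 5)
Your proposal is correct and follows essentially the same route as the paper's proof: left-cancellativity of the quotient via Lemma \ref{MagicQuotient}, coincidence of left- and right-divisors of $\Delta/X$ via centrality and Lemma \ref{CenterDiv}, finiteness of $\mathrm{Div}(\Delta/X)$ from Noetherian divisibility (Remark \ref{MagicNoetherian}) together with finite generation, and generation by pushing the divisors of $\Delta$ through the quotient. Your only additions are to spell out explicitly the weight-function argument behind finiteness and the (trivial) centrality of $\Delta/X$, both of which the paper leaves implicit.
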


\begin{proof}
First, Lemma \ref{MagicQuotient} implies that $M/\langle x=1\, \forall x\in X\rangle$ is left-cancellative.
Therefore, Lemma \ref{CenterDiv} applies so that the left- and right- divisors of $\Delta/X$ coincide since $\Delta/X$ is central. Moreover, by Remark \ref{MagicNoetherian}, there are only finitely many divisors of $\Delta/X$ since $M/\langle x=1 \, \forall x\in X\rangle$ has Noetherian divisibility and is finitely generated. Finally, the divisors of $\Delta$ generate $M$ and if $u$ divides $\Delta$ then $u/X$ divides $\Delta/X$, thus the divisors of $\Delta/X$ generate $M/\langle x=1\, \forall x\in X\rangle$. This concludes the proof.
\end{proof}

\begin{remark}
In Lemma \ref{QuotientGarsideElement}, we make the assumption that the Garside element is central in order to simplify the proof. It may be true that the result still holds for non-central Garside elements, though we were not able to prove it. Note however that it is not important for our purpose, since if there exists a Garside element, we can always choose a central Garside element (see Remark \ref{RemarkDeltaCentral}).
\end{remark}

%\begin{proposition}\label{MagicGarside}
%Let $M$ be a Garside monoid. If $X\subset M$ is such that the pair $(M,X)$ satisfies $(\hyperref[Champagne]{C2})$ and $(\hyperref[Champagne]{C2})^{\mathrm{op}}$, then the monoid $M/\langle x=1\, \forall x\in X\rangle$ is Garside.
%\end{proposition}

%\begin{proof}
%First, by Remark \ref{MagicNoetherian}, the monoid $M/\langle x=1\, \forall x\in X\rangle$ has Noetherian divisibility.\\
%Moreover, applying Lemma \ref{MagicQuotient} to $M$ and $M^{\mathrm{op}}
%$ shows that $M/\langle x=1\, \forall x\in X\rangle$ is cancellative and admits conditional left- and right-lcms, since $M^{\mathrm{op}}/\langle x=1\, \forall x\in X\rangle$ is isomorphic to $(M/\langle x=1\, \forall x\in X\rangle)^{\mathrm{op}}$ by Lemma \ref{MagicOpQuotient}.

%Now, combining Remark \ref{RemarkDeltaCentral} and Lemma \ref{QuotientGarsideElement}, the monoid $M/\langle x=1\, \forall x\in X\rangle$ admits a Garside element. Finally, combining Lemmas \ref{Deltamultiples} and \ref{LcmGcd}, any pair of elements of $M$ admits left- and right- gcds and lcms. This concludes the proof.
%\end{proof}

\begin{proposition}\label{MagicGarsideOp}
Let $M$ be a Garside monoid with presentation $P=\langle S \, | \, R \rangle$, let $X\subset S$ and assume that the pair $(P,X)$ satisfies $(\hyperref[Champagne]{C2})$. If the monoid $M/\langle x=1\, \forall x\in X\rangle$ is isomorphic to $(M/\langle x=1\, \forall x\in X\rangle)^\mathrm{op}$, it is Garside.
\end{proposition}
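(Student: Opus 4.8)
The plan is to set $\bar M := M/\langle x=1\, \forall x\in X\rangle$ and to verify the four conditions of Definition \ref{Garside} in turn, using the hypothesis $(\hyperref[Champagne]{C2})$ to obtain the ``left-handed'' half of the structure and the isomorphism $\bar M\cong \bar M^{\mathrm{op}}$ to promote it to the full two-sided structure. First I would collect the direct inputs. By Remark \ref{MagicNoetherian} the presentation $\langle S\,|\,R\rangle/X$ is homogeneous, so $\bar M$ has Noetherian divisibility; this is condition (i). By Lemma \ref{MagicQuotient} the monoid $\bar M$ is left-cancellative and admits conditional right-lcms. For the Garside element (condition (iv)), since $M$ is Garside it is a finitely generated cancellative monoid with Noetherian divisibility admitting a Garside element, so Remark \ref{RemarkDeltaCentral} yields a \emph{central} Garside element $\Delta$ of $M$; Lemma \ref{QuotientGarsideElement} then shows that $\Delta/X$ is a central Garside element of $\bar M$.

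The isomorphism $\bar M\cong\bar M^{\mathrm{op}}$ amounts to an anti-automorphism $\phi$ of $\bar M$, which exchanges $\leq_L$ with $\leq_R$ and left-cancellativity with right-cancellativity. Applying $\phi$ to the left-cancellativity already in hand gives right-cancellativity, so $\bar M$ is cancellative, which is condition (ii). At this stage $\bar M$ is cancellative, has Noetherian divisibility, and has a Garside element, so Lemma \ref{Deltamultiples} applies and every pair of elements of $\bar M$ admits common left- and right-multiples.

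It remains to establish condition (iii), that $(\bar M,\leq_L)$ and $(\bar M,\leq_R)$ are lattices; both are already posets by Remark \ref{NoetPoset}. I would prove that $(\bar M,\leq_L)$ is a lattice and then transport this along $\phi$ to $(\bar M,\leq_R)$. For a pair $a,b$, their common upper bounds for $\leq_L$ are exactly their common right-multiples, which exist by Lemma \ref{Deltamultiples}, and the conditional right-lcms of Lemma \ref{MagicQuotient} supply the least such, namely the right-lcm; hence joins exist. For meets I would argue by Noetherian finiteness: the common left-divisors of $a$ and $b$ form a nonempty set (it contains $1$) which is finite, because the homogeneous weight function bounds the length of any left-divisor of $a$ and $\bar M$ is finitely generated. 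The join of this finite set exists, since all pairwise joins (right-lcms) exist, and it is itself a common left-divisor of $a$ and $b$ that dominates every other one; therefore it is the $\leq_L$-meet. Thus $(\bar M,\leq_L)$ is a lattice, and as $\phi$ is a poset isomorphism $(\bar M,\leq_L)\to(\bar M,\leq_R)$, the order $\leq_R$ yields a lattice as well. All four conditions hold, so $\bar M$ is Garside.

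The main obstacle, and the point where the hypothesis does genuine work, is the asymmetry of the available machinery: Lemmas \ref{MagicQuotient} and \ref{QuotientGarsideElement} only control left-cancellativity and right-lcms, so by themselves they deliver merely one side of the Garside axioms. The assumption $\bar M\cong\bar M^{\mathrm{op}}$ is exactly what repairs this, converting each left-sided conclusion into its right-sided mirror and thereby giving full cancellativity and the lattice for $\leq_R$. The only step that is not a formal transport is the construction of meets, where one must combine the existence of right-lcms with the Noetherian finiteness of divisor sets to manufacture greatest lower bounds rather than cite them directly.
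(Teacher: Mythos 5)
Your proof is correct and takes essentially the same route as the paper's: Noetherian divisibility from Remark \ref{MagicNoetherian}, left-cancellativity and conditional right-lcms from Lemma \ref{MagicQuotient}, a central Garside element from Remark \ref{RemarkDeltaCentral} together with Lemma \ref{QuotientGarsideElement}, the isomorphism with the opposite monoid to symmetrize, and Lemma \ref{Deltamultiples} for common multiples. The only deviation is that where the paper simply cites Lemma \ref{LcmGcd} to obtain the gcds/meets, you re-derive that lemma inline (join of the finite, nonempty set of common left-divisors), which is the same content unfolded.
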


\begin{proof}
First, by Remark \ref{MagicNoetherian}, the monoid $M/\langle x=1\, \forall x\in X\rangle$ has Noetherian divisibility.\\
Moreover, applying Lemma \ref{MagicQuotient} to $M$ shows that $M/\langle  x=1\, \forall x\in X\rangle$ is left-cancellative and admits conditional right-lcms. Since $M/\langle  x=1\, \forall x\in X\rangle$ is isomorphic to $(M/\langle x=1\, \forall x\in X\rangle)^\mathrm{op}$, we thus get that $M$ is cancellative and admits conditional left- and right-lcms.\\
Now, combining Remark \ref{RemarkDeltaCentral} and Lemma \ref{QuotientGarsideElement}, the monoid $M/\langle  x=1\, \forall x\in X\rangle$ admits a Garside element. Finally, combining Lemmas \ref{Deltamultiples} and \ref{LcmGcd}, any pair of elements of $M$ admits left- and right- gcds and lcms. This concludes the proof.
\end{proof}

%%%%%%%%%%%%%%%%%%%%%%%%%%%%%%%%%%%%%%%%%%%%%%%%%%%%%%%%%%%%%%%%%%%%%%%%%%%%%%%%%%%%%%%%%%%%%%%%%%%%%%%
%%%%%%%%%%%%%%%%%%%%%%%%%%%%%%%%%%%%%%%%%%%%%%%%%%%%%%%%%%%%%%%%%%%%%%%%%%%%%%%%%%%%%%%%%%%%%%%%%%%%%%%
%%%%%%%%%%%%%%%%%%%%%%%%%%%%%%%%%%%%%%%%%%%%%%%%%%%%%%%%%%%%%%%%%%%%%%%%%%%%%%%%%%%%%%%%%%%%%%%%%%%%%%%
%%%%%%%%%%%%%%%%%%%%%%%%%%%%%%%%%%%%%%%%%%%%%%%%%%%%%%%%%%%%%%%%%%%%%%%%%%%%%%%%%%%%%%%%%%%%%%%%%%%%%%%
%%%%%%%%%%%%%%%%%%%%%%%%%%%%%%%%%%%%%%%%%%%%%%%%%%%%%%%%%%%%%%%%%%%%%%%%%%%%%%%%%%%%%%%%%%%%%%%%%%%%%%%%%%%%%%%%%%%%%%%%%%%%%%%%
%%%%%%%%%%%%%%%%%%%%%%%%%%%%%%%%%%%%%%%%%%%%%%%%%%%%%%%%%%%%%%%%%%%%%%%%%%%%%%%%%%%%%%%%%%%%%%%%%%%%%%%

\section{\texorpdfstring{The Classical braid monoids are Garside}{}}
\noindent
In this section, when we say that two sets of relations are equivalent, we mean that they are equivalent as \textit{monoid relations} instead of merely group relations.\\
The goal of this section is prove the following theorem:

\begin{theorem}\label{ClassicalGarside}
Let $n,m\in\N^*$ and assume that $n\wedge m=1$ and $n\leq m$. Then the following hold:\\
(i) The monoid $\M_*^*(n,m)$ is Garside.\\
(ii) If $m\geq 2$, the monoid $\M_*(n,m)$ is Garside.\\
(iii) If $n\geq 2$, the monoid $\M^*(n,m)$ is Garside.
\end{theorem}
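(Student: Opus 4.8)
The plan is to prove (i) in full and then obtain (ii) and (iii) as quotients. Indeed, deleting $z$ from Presentation \eqref{classicalBraidPres1} turns relation \eqref{classicalBraidPres1:1} into a trivial identity and relations \eqref{classicalBraidPres1:2}--\eqref{classicalBraidPres1:3} into Presentation \eqref{BraidPresy1}, so that $\M_*(n,m)=\M_*^*(n,m)/\langle z\rangle$; symmetrically, deleting $y$ gives $\M^*(n,m)=\M_*^*(n,m)/\langle y\rangle$. Thus once $\M_*^*(n,m)$ is known to be Garside, (ii) and (iii) should follow from Proposition \ref{MagicGarsideOp} applied with $X=\{z\}$ and $X=\{y\}$ respectively.

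To prove (i), note first that Presentation \eqref{classicalBraidPres1} is homogeneous with all atoms of weight $1$, so $\M_*^*(n,m)$ has Noetherian divisibility. The heart of the argument is to produce an equivalent right-full presentation and to check that it satisfies $(\hyperref[Poulet]{C1})$: for each ordered pair of distinct atoms one records the unique complement $\theta(s,t)$ — these complements being read off from the ``rotation'' relations \eqref{classicalBraidPres1:2}--\eqref{classicalBraidPres1:3} together with their consequences, such as the relations $X_iz\delta^q=z\delta^qX_{i+r}$ derived in the proof of Lemma \ref{ThreeGenerate} — and then for every triple of atoms one exhibits the labelling and the word $u$ making the magic equalities of Definition \ref{Magic} hold. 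Granting $(\hyperref[Poulet]{C1})$, Lemma \ref{MagicCube} yields that $\M_*^*(n,m)$ is left-cancellative and admits conditional right-lcms.

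To upgrade this to the full Garside property I would exhibit a reflection-type anti-automorphism realising $\M_*^*(n,m)\cong\M_*^*(n,m)^{\mathrm{op}}$ — the analogue of the map $x_k\mapsto x_{1-k}$ that makes the toric presentation \eqref{BraidToric} invariant under word reversal — thereby transporting left-cancellativity and conditional right-lcms to right-cancellativity and conditional left-lcms; hence $\M_*^*(n,m)$ is cancellative with conditional lcms on both sides. Next, the central element $\Delta=\delta^mz^n$ of Remark \ref{PositiveDelta} is a Garside element: its divisor set is finite by Noetherianity and finite generation, its left- and right-divisors coincide because it is central (Lemma \ref{CenterDiv}), and every atom divides it (since $z$ and $\delta$ commute one has $\Delta=z^n\delta^m$, while each $x_i$ and $y$ left-divides one of the words representing $w$ or $W$, and $\Delta=w^{n-r}W^r=W^rw^{n-r}$). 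Finally, Lemma \ref{Deltamultiples} gives common left- and right-multiples for every pair and Lemma \ref{LcmGcd} gives conditional gcds, so $(\M_*^*(n,m),\leq_L)$ and $(\M_*^*(n,m),\leq_R)$ are lattices; all four clauses of Definition \ref{Garside} then hold.

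For (ii) and (iii) I would verify that the pairs $(P,\{z\})$ and $(P,\{y\})$, where $P$ is Presentation \eqref{classicalBraidPres1}, satisfy $(\hyperref[Champagne]{C2})$: homogeneity and $(\hyperref[Poulet]{C1})$ come from (i), the letters $z$ and $y$ each occur the same number of times on the two sides of every relation, and one checks that $\langle S\,|\,R\rangle_X$ and $\langle S\,|\,R\rangle/X$ present the same monoid. Since the reflection anti-automorphism descends to the quotients compatibly with Lemma \ref{MagicOpQuotient}, the monoids $\M_*(n,m)$ and $\M^*(n,m)$ are isomorphic to their opposites, and Proposition \ref{MagicGarsideOp} applies. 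The main obstacle throughout is the verification of $(\hyperref[Poulet]{C1})$ for $\M_*^*(n,m)$: one must determine every syntactic complement explicitly and then confirm the magic triangle relations for all triples of atoms, and the bookkeeping is delicate because the complements split into the two index regimes $i\leq n-r$ and $i>n-r$ and involve the cyclic index arithmetic governed by $m=qn+r$. Establishing the reflection anti-automorphism and the presentation isomorphism $\langle S\,|\,R\rangle_X\cong\langle S\,|\,R\rangle/X$ required by $(\hyperref[Champagne]{C2})$ is the secondary difficulty.
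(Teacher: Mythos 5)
Your overall architecture is the paper's: enlarge Presentation \eqref{classicalBraidPres1} to a right-full homogeneous presentation, check $(C1)$ and invoke Lemma \ref{MagicCube} for left-cancellativity and conditional right-lcms, show $\Delta=\delta^mz^n$ is a central Garside element, conclude via Lemmas \ref{Deltamultiples} and \ref{LcmGcd}, and handle (ii), (iii) through $(C2)$ and Proposition \ref{MagicGarsideOp}. The genuine gap is your treatment of the right-hand side: you propose a ``reflection-type anti-automorphism'' realising $\M_*^*(n,m)\cong\M_*^*(n,m)^{\mathrm{op}}$, but no such isomorphism is established in the paper, and the evidence points against the easy candidates. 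Reversing words and relabelling $x_k\mapsto x_{n+1-k}$ turns Presentation \eqref{classicalBraidPres1} into Presentation \eqref{OpClassicalPres1}, whose defining blocks are $x_i\cdots x_nz\delta^{q-1}yx_1\cdots x_{i+r-1}$ with $\delta=yx_1\cdots x_n$: the letters $y$ and $z$ sit on opposite sides of the $\delta$-power, whereas in \eqref{classicalBraidPres1} they are adjacent in the block $yz\delta^{q-1}$. Since $y$ and $z$ do not commute in $\M_*^*(n,m)$, no relabelling visibly identifies the two presentations, and the analogy with the toric case $x_k\mapsto x_{1-k}$ breaks precisely because of the two extra generators. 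The paper accordingly does \emph{not} transport cancellativity through an isomorphism: it derives an explicit presentation of $\M_*^*(n,m)^{\mathrm{op}}$ (Lemma \ref{OpClassical}), enlarges it to a right-full one (Corollary \ref{OpClassical2}), and reruns the whole $(C1)$ verification (Lemma \ref{cubeOpClassical}, Proposition \ref{OpLeftCanc}). Note that the self-opposite statements the paper does prove (Lemmas \ref{MagicPropertyyOp} and \ref{MagicPropertyzOp}) concern only the quotients $\M_*(n,m)$ and $\M^*(n,m)$, where killing $z$ (resp.\ $y$) collapses the asymmetry via $\delta^{\alpha}y=y(x_1\cdots x_ny)^{\alpha}$ (resp.\ $\delta z=z\delta$); they give no anti-automorphism upstairs. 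As written, your proof of (i) therefore does not close, and your derivation of self-oppositeness of the quotients ``by descent'' inherits the same gap (the paper proves it directly from the quotients of Presentation \eqref{OpClassicalPres1}).

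A secondary inaccuracy: in your last paragraph you check $(C2)$ for the pairs $(P,\{z\})$ and $(P,\{y\})$ with $P$ equal to Presentation \eqref{classicalBraidPres1}, but $(C2)$ presupposes $(C1)$, hence right-fullness, which \eqref{classicalBraidPres1} does not satisfy; the pairs must be taken with the enlarged Presentation \eqref{ClassicalMonoidPres2}, as in Lemmas \ref{MagicPropertyy} and \ref{MagicPropertyz}, where one also verifies nontrivially that $\langle S\,|\,R\rangle_X$ and $\langle S\,|\,R\rangle/X$ present the same monoid. With these repairs --- replacing the claimed anti-automorphism by the paper's second $(C1)$ verification for the opposite presentation, and running $(C2)$ on \eqref{ClassicalMonoidPres2} --- your outline coincides with the paper's proof.
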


The proof strategy goes as follows: we show that $\M_*^*(n,m)$ is cancellative and admits conditional left- and right-lcms (Propositions \ref{ClassicalLeftCanc} and \ref{OpLeftCanc}), then we exhibit a Garside element (Lemma \ref{DeltaGarsideM}) and conclude the proof using results of Section \ref{SecGarside}.
%%%%%%%%%%%%%%%%%%%%%%%%%%%%%%%%%%%%%%%%%%%%%%%%%%%%%%%%%%%%%%%%%%%%%%%%%%%%%%%%%%%%%%%%%%%%%%%%%%%%%%%
%%%%%%%%%%%%%%%%%%%%%%%%%%%%%%%%%%%%%%%%%%%%%%%%%%%%%%%%%%%%%%%%%%%%%%%%%%%%%%%%%%%%%%%%%%%%%%%%%%%%%%%
%%%%%%%%%%%%%%%%%%%%%%%%%%%%%%%%%%%%%%%%%%%%%%%%%%%%%%%%%%%%%%%%%%%%%%%%%%%%%%%%%%%%%%%%%%%%%%%%%%%%%%%
%%%%%%%%%%%%%%%%%%%%%%%%%%%%%%%%%%%%%%%%%%%%%%%%%%%%%%%%%%%%%%%%%%%%%%%%%%%%%%%%%%%%%%%%%%%%%%%%%%%%%%%

\subsection{\texorpdfstring{The monoid $\M_*^*(n,m)$ is left-cancellative}{}}

\begin{proposition}\label{ClassicalLeftCanc}
Let $n,m\in\N^*$ with $n\wedge m=1$ and $n\leq m$. The monoid $\M_*^*(n,m)$ is left-cancellative and admits conditional right-lcms.
\end{proposition}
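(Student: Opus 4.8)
The plan is to deduce the statement from Lemma \ref{MagicCube}, whose conclusion is exactly left-cancellativity together with conditional right-lcms. Concretely, I would exhibit a homogeneous, right-full presentation of $\M_*^*(n,m)$ satisfying property $(\hyperref[Poulet]{C1})$, and then invoke that lemma.

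First I would settle Noetherianity of divisibility. Assigning weight $1$ to each generator $x_1,\dots,x_n,y,z$ and using $|\delta|=n+1$, the length bookkeeping already carried out in the proof of Lemma \ref{Delta=Delta} shows that the two sides of each relation \eqref{classicalBraidPres1:1}, \eqref{classicalBraidPres1:2}, \eqref{classicalBraidPres1:3} have equal length. Thus word length is a weight function, the presentation is homogeneous, and $\M_*^*(n,m)$ has Noetherian divisibility, which is the standing hypothesis of Lemma \ref{MagicCube}.

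The genuine difficulty is that Presentation \eqref{classicalBraidPres1} is only right-complemented, not right-full: its relations couple only the consecutive generators of the chain $z,x_1,\dots,x_n,y$, so the syntactic complement $\theta(s,t)$ of Definition \ref{Theta} is undefined for non-adjacent pairs, and for such a bare presentation the $\theta$-cube condition already fails (e.g. on the triple $(x_1,z,x_2)$ when $n\geq 2$). I would therefore pass to a Tietze-equivalent right-full presentation $P'$: for each pair of generators I would derive, from the listed relations together with the commutation $\delta z=z\delta$ of \eqref{classicalBraidPres1:1} and the integration identities $x_iw=wx_{i+r}$, $x_{n-r+k}W=Wx_k$, $yw=wy$, $yW=Wy$ of Lemmas \ref{xvw} and \ref{xvW} (valid as positive relations by Remark \ref{PositiveDelta}), a single relation $s\,\theta(s,t)=t\,\theta(t,s)$ with distinct leading letters. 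Each such relation being a consequence of the original ones, $P'$ presents the same monoid; and since exactly one relation is attached to each pair, $P'$ is right-full with an explicit syntactic complement.

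It then remains to verify $(\hyperref[Poulet]{C1})$ for $P'$, and this I expect to be the main obstacle: for every $3$-element subset of $\{x_1,\dots,x_n,y,z\}$ one must produce a labeling $\{a_1,a_2,a_3\}$ and a word $u$ with $\theta(a_1,a_3)=\theta(a_1,a_2)u$, $\theta(a_2,a_3)=\theta(a_2,a_1)u$ and $\theta(a_3,a_1)=\theta(a_3,a_2)$. I would organize the check by the shape of the triple — three of the $x_i$, two $x_i$'s with one of $y,z$, or one $x_i$ together with both $y$ and $z$ — and exploit the cyclic integration of the indices modulo $n$ (Remark \ref{remarkMod}), the centrality of $\Delta=\delta^m z^n$ (Remark \ref{PositiveDelta}), and the commutations $\delta z=z\delta$, $yw=wy$ to pin down the common word $u$ in each case. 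Once $(\hyperref[Poulet]{C1})$ is established, Lemma \ref{MagicCube} applies (divisibility being Noetherian) and yields directly that $\M_*^*(n,m)$ is left-cancellative and admits conditional right-lcms, which is the assertion of the proposition.
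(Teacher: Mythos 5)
Your proposal follows essentially the same route as the paper: the paper likewise enlarges Presentation \eqref{classicalBraidPres1} to a Tietze-equivalent right-full homogeneous presentation (Corollary \ref{ClassicalMonoid2}, whose added relations \eqref{EnlargeClassicaL1}--\eqref{EnlargeClassical4} are derived exactly as you describe, as positive consequences of the original relations via $\delta z=z\delta$), then verifies $(\hyperref[Poulet]{C1})$ for it (Lemma \ref{CubeClassical}) and concludes by Lemma \ref{MagicCube}, with Noetherianity coming from homogeneity just as in your first step. The one thing you defer --- the verification of $(\hyperref[Poulet]{C1})$ itself --- is where the paper invests nearly all its effort, via an explicit table of syntactic complements and a twenty-case analysis organized, much as you anticipate, by how many ``small'' ($i\in\llbracket 2,n-r+1\rrbracket$) and ``large'' ($i\in\llbracket n-r+2,n\rrbracket$) generators $x_i$ the triple contains; your sketch correctly identifies the organizing structure, so the proposal is a faithful outline of the paper's proof rather than a divergent argument.
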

In order to be able to use results of Section \ref{SecGarside}, we enlarge the monoid presentation \eqref{classicalBraidPres1} to get a right-full homogeneous presentation of $\M_*^*(n,m)$. \\

The set of relations \eqref{classicalBraidPres1:2} is equivalent to 
\begin{equation}\label{L2Classical}x_i\cdots x_{n}yz\delta^{q-1}x_1\cdots x_{i+r-1}=x_j\cdots x_{n}yz\delta^{q-1}x_1\cdots x_{j+r-1}, \, \forall 1\leq i<j\leq n-r+1,
\end{equation}
and the set of relations \eqref{classicalBraidPres1:3} is equivalent to 
\begin{equation}\label{L3Classical}x_i\cdots x_nyz\delta^qx_1\cdots x_{i+r-n-1}=x_j\cdots x_ny\delta^qx_1\cdots x_{j+r-n-1},\, \forall n-r+1\leq i<j\leq n+1.
\end{equation}
In addition to Relations \eqref{classicalBraidPres1:1},\eqref{L2Classical} and \eqref{L3Classical}, observe the following:\\
$\bullet$ For all $i\in [n-r+1]$, $j\in\llbracket n-r+2,n+1\rrbracket$, we have
\begin{equation}\label{EnlargeClassicaL1}
\begin{aligned}
x_i\cdots x_nyz\delta^{q-1}x_1\cdots x_{i+r-1}y&\underset{\eqref{L2Classical}}{=}x_{n-r+1}\cdots x_nyz\delta^{q-1}x_1\cdots x_ny \,\\
&=x_{n-r+1}\cdots x_nyz\delta^q
\underset{\eqref{L3Classical}}{=}x_j\cdots x_nyz\delta^q x_1\cdots x_{j+r-n-1}.
\end{aligned}
\end{equation}
Note that for $j=n+1$, Relation \eqref{EnlargeClassicaL1} is $x_i\cdots x_nyz\delta^{q-1}x_1\cdots x_{i+r-1}y=yz\delta^qx_1\cdots x_r$.\\
\noindent
$\bullet$ For all $i\in \llbracket 2,n-r+1\rrbracket$, we have
\begin{equation}\label{EnlargeClassical2}
\begin{aligned}
x_i\cdots x_nyz\delta^{q-1}x_1\cdots x_{i+r-1}&\underset{\eqref{L2Classical}}{=}x_{1}\cdots x_nyz\delta^{q-1}x_1\cdots x_r \underset{\eqref{classicalBraidPres1:1}}{=}z\delta^qx_1\cdots x_r.
\end{aligned}
\end{equation}
$\bullet$ For all $i\in \llbracket n-r+2,n+1\rrbracket$, we have
\begin{equation}\label{EnlargeClassical3}
\begin{aligned}
x_i\cdots x_nyz\delta^{q}x_1\cdots x_{i+r-n-1}&\underset{\eqref{L3Classical}}{=}x_{n-r+1}\cdots x_nyz\delta^q \\
=(x_{n-r+1}\cdots x_nyz\delta^{q-1}&x_1\cdots x_n)y
\underset{\eqref{classicalBraidPres1:1}+\eqref{L2Classical}}{=}z\delta^qx_1\cdots x_ry.
\end{aligned}
\end{equation}
Note that for $i=n+1$, Relation \eqref{EnlargeClassical3} is $yz\delta^qx_1\cdots x_r=z\delta^qx_1\cdots x_ry$.\\
Finally, we have $x_{n-r+1}\cdots x_nyz\delta^q=(x_{n-r+1}\cdots x_nyz\delta^{q-1}x_1\cdots x_n)y$, hence the set of relations given by \eqref{L3Classical} with $i=n-r+1$, $j=n-r+2,\dots,n+1$ is equivalent to the set of relations given by \eqref{EnlargeClassicaL1} with $i=n-r+1$, $j=n-r+2,\dots,n+1$. Therefore, if we add relations \eqref{EnlargeClassicaL1}, we can replace \eqref{L3Classical} by the set of relations
\begin{equation}\label{EnlargeClassical4}
x_i\cdots x_nyz\delta^qx_1\cdots x_{i+r-n-1}=x_j\cdots x_nyz\delta^qx_1\cdots x_{j+r-n-1},  \, \forall n-r+2\leq i<j\leq n+1.
\end{equation}
Using equations \eqref{EnlargeClassicaL1}-\eqref{EnlargeClassical4}, we get the following corollary:
\begin{corollary}\label{ClassicalMonoid2}
Let $n,m\in \N^*$ and assume that $n\leq m$ and $n\wedge m=1$. The monoid $\mathcal M_*^*(n,m)$ admits the following presentation:

\begin{subequations}\label{ClassicalMonoidPres2}
\begin{align}
    &(1) \,\, \mathrm{Generators}\!:\,  \{x_1,\dots,x_n,y,z\};\notag\\
&(2) \,\, \mathrm{Relations}\!: \,\notag\\
  &zx_1\cdots x_ny=x_1\cdots x_nyz,  \label{ClassicalMonoidPres2:1} \\
  &x_{i}\cdots x_{n}yz\delta^{q-1}x_1\cdots x_{i+r-1}=x_j\cdots x_{n}yz\delta^{q-1}x_1\cdots x_{j+r-1}, \, \forall \,1\leq i<j\leq n-r+1, \label{ClassicalMonoidPres2:2}  \\
       &x_{i}\cdots x_{n}yz\delta^{q}x_1\cdots x_{i+r-n-1}=x_j\cdots x_{n}yz\delta^{q}x_1\cdots x_{j+r-n-1}, \, (*),\label{ClassicalMonoidPres2:3} \\
      &x_i\cdots x_nyz\delta^{q-1}x_1\cdots x_{i+r-1}y=x_j\cdots x_{n}yz\delta^{q}x_1\cdots x_{j+r-n-1},\, (**), \label{ClassicalMonoidPres2:4}\\
      &x_{i}\cdots x_{n}yz\delta^{q-1}x_1\cdots x_{i+r-1}=z\delta^qx_1\cdots x_r, \,\forall \, 2\leq i\leq n-r+1,\label{ClassicalMonoidPres2:5}\\
      &x_{i}\cdots x_{n}yz\delta^{q}x_1\cdots x_{i+r-n-1}=z\delta^qx_1\cdots x_ry, \,\forall n-r+2\leq i \leq n+1.\label{ClassicalMonoidPres2:6}\\
      &(*)\, \forall \, n-r+2\leq i<j\leq n+1, \, (**) \,\forall \,i\in[n-r+1],\, j\in \llbracket n-r+2,n+1\rrbracket\notag
\end{align}
\end{subequations}
where $\delta$ denotes $x_1\cdots x_ny$. 
\end{corollary}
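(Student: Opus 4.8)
The plan is to show that the defining relations of Presentation \eqref{classicalBraidPres1} and those of Presentation \eqref{ClassicalMonoidPres2} generate the same congruence on the free monoid $\{x_1,\dots,x_n,y,z\}^*$; as the two presentations have the same generators, this is precisely what it means for them to present the same monoid. Throughout I abbreviate $A_i:=x_i\cdots x_nyz\delta^{q-1}x_1\cdots x_{i+r-1}$ for $i\in[n-r+1]$ and $B_i:=x_i\cdots x_nyz\delta^qx_1\cdots x_{i+r-n-1}$ for $i\in\llbracket n-r+1,n+1\rrbracket$, so that \eqref{classicalBraidPres1:2} reads $A_{i+1}=A_i$, relation \eqref{classicalBraidPres1:3} reads $B_{i+1}=B_i$, and the target presentation is phrased entirely in terms of the $A_i$, the $B_i$ and the word $z\delta^qx_1\cdots x_r$.

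First I would record the two transitive-closure equivalences already used above: the congruence generated by \eqref{classicalBraidPres1:2} equals the one generated by \eqref{L2Classical}, since each equality $A_i=A_j$ with $i<j$ is the chain $A_i=A_{i+1}=\cdots=A_j$ of consecutive relations and, conversely, the consecutive relations are the instances $j=i+1$; the same argument gives the equivalence of \eqref{classicalBraidPres1:3} with \eqref{L3Classical}. This replaces Presentation \eqref{classicalBraidPres1} by the relation set $\{\eqref{classicalBraidPres1:1},\eqref{L2Classical},\eqref{L3Classical}\}$. I would then observe that every relation of \eqref{ClassicalMonoidPres2} is a consequence of this set: \eqref{ClassicalMonoidPres2:1} and \eqref{ClassicalMonoidPres2:2} are \eqref{classicalBraidPres1:1} and \eqref{L2Classical} verbatim, \eqref{ClassicalMonoidPres2:3} is \eqref{EnlargeClassical4}, and \eqref{ClassicalMonoidPres2:4}, \eqref{ClassicalMonoidPres2:5}, \eqref{ClassicalMonoidPres2:6} are exactly the identities \eqref{EnlargeClassicaL1}, \eqref{EnlargeClassical2}, \eqref{EnlargeClassical3} derived above. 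It is worth stressing that these derivations use only the central relation $\delta z=z\delta$ coming from \eqref{classicalBraidPres1:1} together with the transitive-closure relations, so they hold as monoid consequences with no hidden use of inverses.

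For the reverse inclusion I would recover $\{\eqref{classicalBraidPres1:1},\eqref{L2Classical},\eqref{L3Classical}\}$ from \eqref{ClassicalMonoidPres2}. Relations \eqref{classicalBraidPres1:1} and \eqref{L2Classical} are literally \eqref{ClassicalMonoidPres2:1} and \eqref{ClassicalMonoidPres2:2}, so the only point requiring work is \eqref{L3Classical}, that is the full chain $B_{n-r+1}=B_{n-r+2}=\cdots=B_{n+1}$. The equalities among $B_{n-r+2},\dots,B_{n+1}$ are exactly \eqref{ClassicalMonoidPres2:3}, and the single missing link $B_{n-r+1}=B_{n-r+2}$ is obtained from the free-monoid identity $B_{n-r+1}=A_{n-r+1}y$ --- both words equal $x_{n-r+1}\cdots x_nyz\delta^q$ once one remembers that $\delta$ abbreviates $x_1\cdots x_ny$ --- together with the instance $i=n-r+1$, $j=n-r+2$ of \eqref{ClassicalMonoidPres2:4}, which gives $A_{n-r+1}y=B_{n-r+2}$. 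The step that needs the most care is precisely this bridging: one must notice that the extreme case $i=n-r+1$ of \eqref{ClassicalMonoidPres2:4} is what closes the $B$-chain, and that $B_{n-r+1}=A_{n-r+1}y$ is an equality of words (so no extra relation is invoked there); the remainder is index bookkeeping, valid uniformly including the degenerate case $r=0$, i.e.\ $n=1$.
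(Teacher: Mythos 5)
Your proof is correct and follows essentially the same route as the paper: the paper's argument is exactly the derivation of the redundant relations \eqref{EnlargeClassicaL1}--\eqref{EnlargeClassical4} from \eqref{classicalBraidPres1:1}, \eqref{L2Classical} and \eqref{L3Classical}, with the replaceability of \eqref{L3Classical} justified by the same free-monoid identity $x_{n-r+1}\cdots x_nyz\delta^{q}=(x_{n-r+1}\cdots x_nyz\delta^{q-1}x_1\cdots x_n)y$ that you use to close the $B$-chain via the $i=n-r+1$ instance of \eqref{ClassicalMonoidPres2:4}. Your only (welcome) addition is to spell out the reverse inclusion as an equality of monoid congruences, which the paper treats implicitly through its ``equivalent sets of relations'' phrasing.
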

\noindent
Now, one sees that Presentation \eqref{ClassicalMonoidPres2} is right-full and homogeneous, hence by Lemma \ref{MagicCube} it is enough to verify that is satisfies $(\hyperref[Poulet]{C1})$ to conclude that $\mathcal M_*^*(n,m)$ is left-cancellative and admits conditional right-lcms.

\begin{remark}\label{EmptyConditions}
In Presentation \eqref{ClassicalMonoidPres2}, it is possible that some conditions on the indices are empty. In this case, there is simply no corresponding relations. For instance, if $n=m=1$, the set of relations \eqref{ClassicalMonoidPres2:6} is empty. 
\end{remark}

\begin{lemma}\label{CubeClassical}
Presentation \eqref{ClassicalMonoidPres2} satisfies $(\hyperref[Poulet]{C1})$.
\end{lemma}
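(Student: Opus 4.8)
The plan is to verify the two parts of $(\hyperref[Poulet]{C1})$ in turn. Right-fullness has essentially been observed already: running through the relations \eqref{ClassicalMonoidPres2:1}--\eqref{ClassicalMonoidPres2:6} and sorting the indices of the involved $x$'s according to whether they are $\le n-r+1$ or $\ge n-r+2$ (recalling that the value $j=n+1$ produces a word with leading letter $y$), one checks that each unordered pair of distinct generators is the pair of leading letters of exactly one relation, and that no relation has the same leading letter on both sides. The heart of the matter is therefore the cube-facilitating equalities, for which I would first tabulate the syntactic complement $\theta$ and then select, for each triple, a distinguished element $a_3$ and a word $u$.

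For the dictionary, write $w$ and $W=wy$ as in Definition \ref{defDelta}, and for an ``early'' index $i\in[n-r+1]$ let $\rho(x_i):=x_{i+1}\cdots x_nyz\delta^{q-1}x_1\cdots x_{i+r-1}$ be the tail of the $w$-word starting at $x_i$ (so $x_i\rho(x_i)=w$). Reading off \eqref{ClassicalMonoidPres2:1}--\eqref{ClassicalMonoidPres2:6} gives: (i) the ``late'' generators $x_{n-r+2},\dots,x_n$ and $y$ are \emph{uniform}, meaning $\theta(g,\cdot)$ is the constant tail of the $W$-word starting at $g$; in particular $\theta(y,\cdot)=z\delta^qx_1\cdots x_r$; (ii) for an early $x_i$ one has $\theta(x_i,g)=\rho(x_i)$ when $g\in\{x_1,\dots,x_{n-r+1},z\}$ and $\theta(x_i,g)=\rho(x_i)y$ when $g\in\{x_{n-r+2},\dots,x_n,y\}$, the sole exception being $\theta(x_1,z)=x_2\cdots x_nyz$, which comes from the commutation relation \eqref{ClassicalMonoidPres2:1}; and (iii) $\theta(z,x_1)=\delta$, while $\theta(z,g)=\delta^qx_1\cdots x_r$ for $g\in\{x_2,\dots,x_{n-r+1}\}$ and $\theta(z,g)=\delta^qx_1\cdots x_ry$ for $g\in\{x_{n-r+2},\dots,x_n,y\}$. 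The underlying geometry is that the common right-multiple of any two leading letters is one of $\delta z\le_L w\le_L W$.

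With this dictionary the selection is mechanical. If the triple $A$ contains a uniform generator, I take $a_3$ to be one of them; the third equality $\theta(a_3,a_1)=\theta(a_3,a_2)$ is then immediate from (i), and by (ii)--(iii) the other two equalities hold with $u\in\{1,\,y,\,\delta^{q-1}x_1\cdots x_r,\,\delta^{q-1}x_1\cdots x_ry\}$ (the value $u=y$ arising when exactly one trailing $y$ must be absorbed, and the two longer words occurring precisely when both $x_1$ and $z$ lie in $A$, because of the exceptional value $\theta(x_1,z)$). If $A$ contains no uniform generator, then $A\subseteq\{x_1,\dots,x_{n-r+1},z\}$; when $z\notin A$ all three complements $\theta(a_i,a_j)$ equal $\rho(a_i)$ and the equalities hold with $u=1$; when $z\in A$ but $x_1\notin A$, I take $a_3=z$ and again $u=1$; and when $\{x_1,z\}\subseteq A$, I take $a_3$ to be the third (larger-index) generator and obtain $u=\delta^{q-1}x_1\cdots x_r$. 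In every case the three identities are checked as equalities of words in $S^*$, which is what $(\hyperref[Poulet]{C1})$ demands and what Lemma \ref{MagicCube} then converts into the sharp $\theta$-cube condition.

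The main obstacle is not any individual computation but the bookkeeping at the places where the clean ``$w$-/$W$-rotation'' pattern is disrupted: the exceptional pair $\{x_1,z\}$, whose relation is the commutation \eqref{ClassicalMonoidPres2:1} rather than a $w$-relation and which forces the nontrivial connecting words $\delta^{q-1}x_1\cdots x_r$ and $\delta^{q-1}x_1\cdots x_ry$ (one must verify these still give honest word equalities, using $z\delta=\delta z$ to rewrite $\delta\cdot\delta^{q-1}x_1\cdots x_r=\delta^qx_1\cdots x_r$); the boundary index $n-r+1$, where the same letter can head both a $w$- and a $W$-word; and the wrap-around $j=n+1$, where a $W$-word begins with $y$. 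Finally the degenerate case $r=0$, equivalently $n=1$, must be handled: there the $W$-side is empty and several index ranges collapse (cf. Remark \ref{EmptyConditions}), the only triple being $\{x_1,y,z\}$, for which the equalities reduce via $\delta=x_1y$ to identities already recorded.
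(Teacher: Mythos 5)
Your proposal is correct and follows essentially the same route as the paper: your dictionary for $\theta$ coincides with the paper's tabulated syntactic complement (your $\rho(x_i)$ is its $w_{x_i}$ for small indices, your uniform constants are its $w_{x_i}$ for large indices and $w_y$, with the same exceptional values $\theta(x_1,z)=x_2\cdots x_nyz$ and $\theta(z,x_1)=\delta$), and your uniform/early selection rule with $u\in\{1,\,y,\,\delta^{q-1}x_1\cdots x_r,\,\delta^{q-1}x_1\cdots x_ry\}$ reproduces exactly the labelings and connecting words of the paper's explicit subcases (I.1)--(V.1). Your closing caveat about $r=0$ is well taken and is the one point where you are more careful than the paper: there \eqref{ClassicalMonoidPres2:4} and \eqref{ClassicalMonoidPres2:6} are empty, the pairs $(x_1,y)$ and $(y,z)$ are instead headed by \eqref{ClassicalMonoidPres2:2} and \eqref{ClassicalMonoidPres2:5} (so the generic table entries involving $y$ change shape), and the unique triple $\{x_1,y,z\}$ still satisfies $(\hyperref[Poulet]{C1})$ with the ordering $(x_1,z,y)$ and $u=\delta^{q-1}$.
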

\begin{proof}
First, define $x_i$ to be \textit{small} if $i=2,\dots,n-r+1$ and $x_i$ to be \textit{large} if $i=n-r+2,\dots,n$. In order to increase the clarity of the proof we define the following words, where $\delta$ denotes $x_1\dots x_ny$:
\begin{equation*}
\begin{aligned}
w_{x_1}&:=\delta^{q-1}x_1\cdots x_r\\
w_{x_i}&:=x_{i+1}\cdots x_nyz\delta^{q-1}x_1\cdots x_{i+r-1} \, \, \text{for} \, x_i \, \text{small} \\
w_{x_i}&:=x_{i+1}\cdots x_nyz\delta^qx_1\cdots x_{i+r-n-1}\, \, \text{for}\, x_i \, \text{large}\\
w_y&:=z\delta^qx_1\cdots x_r\\
w_z&:=\delta^{q-1}x_1\cdots x_r=w_{x_1}.
\end{aligned}
\end{equation*}
In the following table, for $k,l\geq 2$ the entry in the $k-$th line and $l-$th column is $\theta(a_k,a_l)$, where $a_k$ is the $k-$th value of the first column and $a_l$ is the $l-$th value of the first line. Moreover, if $k=l$ and $a_k=x_i$ with $x_i$ small (respectively large), the entry is $\theta(x_i,x_j)$ with $x_i,x_j$ small (respectively large) and $i\neq j$. Finally, if $k=l$ and $a_k\in \{x_1,y,z\}$, we simply put a backslash to acknowledge that the entry is not relevant.
\begin{table}[hbt]
\centering
\begin{tabular}{|c|l|c|c|c|c|}
\hline
& $x_1$ &$x_j$ small & $x_j$ large & $y$ & $z$\\
\hline
$x_1$&$\backslash$&$x_2\cdots x_nyzw_{x_1} $&$x_2\cdots x_nyzw_{x_1}y$  &$x_2\cdots x_nyzw_{x_1}y$  &$x_2\cdots x_nyz$
\\
\hline
$x_i$ small&$w_{x_i}$&$w_{x_i}$ &$w_{x_i}y$ &$w_{x_i}y$  &$w_{x_i}$\\
\hline
$x_i$ large&$w_{x_i}$&$w_{x_i}$ &$w_{x_i}$ & $w_{x_i}$ &$w_{x_i}$ 
\\
\hline
$y$&$w_y$& $w_y$ & $w_y$ &$\backslash$& $w_y$ 
\\
\hline
$z$ &$\delta$ & $\delta w_z$ & $ \delta w_zy$ & $\delta w_zy$ & $\backslash$  \\
\hline

\end{tabular}
\caption{Syntactic complement of Presentation \eqref{ClassicalMonoidPres2}}
\end{table}\\
\noindent
In order to prove that every subset $A$ of $\{x_1,\dots,x_n,y,z\}$ containing 3 elements satisfies the assumption of Definition \ref{Magic}, we separate cases depending on the number of small and large $x_k$'s $A$ contains.\\\\
\noindent
\fbox{$(I)$ $A$ contains at most $1$ small and $1$ large $x_k$:} In this case, there are $\binom53=10$ subcases to verify.\\\\
\noindent
\underline{$(I.1)$ $A=\{x_1,x_i,x_j\}$ with $x_i$ small and $x_j$ large:} In this case, a possible ordering to apply Definition \ref{Magic} is
$(a_1,a_2,a_3)=(x_1,x_i,x_j)$ and we have
$$\theta(x_1,x_j)=\theta(x_1,x_i)y,\, \theta(x_i,x_j)=\theta(x_i,x_1)y,\, \theta(x_j,x_1)=\theta(x_j,x_i).$$
\underline{$(I.2)$ $A=\{x_1,x_i,y\}$ with $x_i$ small:} In this case, a possible ordering to apply Definition \ref{Magic} is $(a_1,a_2,a_3)=(x_1,x_i,y)$ and we have
$$\theta(x_1,y)=\theta(x_1,x_i)y,\, \theta(x_i,y)=\theta(x_i,x_1)y,\, \theta(y,x_1)=\theta(y,x_i).$$
\underline{$(I.3)$ $A=\{x_1,x_i,z\}$ with $x_i$ small:}  In this case, a possible ordering to apply Definition \ref{Magic} is $(a_1,a_2,a_3)=(x_1,z,x_i)$ and we have
$$\theta(x_1,x_i)=\theta(x_1,z)w_{x_1},\, \theta(z,x_i)=\theta(z,x_1)w_z=\theta(z,x_1)w_{x_1},\, \theta(x_i,x_1)=\theta(x_i,z).$$
\underline{$(I.4)$ $A=\{x_1,x_j,y\}$ with $x_j$ large:}  In this case, a possible ordering to apply Definition \ref{Magic} is $(a_1,a_2,a_3)=(x_1,x_j,y)$ and we have
$$\theta(x_1,y)=\theta(x_1,x_j),\, \theta(x_j,y)=\theta(x_j,x_1),\, \theta(y,x_1)=\theta(y,x_j).$$
\underline{$(I.5)$ $A=\{x_1,x_j,z\}$ with $x_j$ large:} In this case, a possible ordering to apply Definition \ref{Magic} is $(a_1,a_2,a_3)=(x_1,z,x_j)$ and we have
$$\theta(x_1,x_j)=\theta(x_1,z)w_{x_1}y,\, \theta(z,x_j)=\theta(z,x_1)w_zy=\theta(z,x_1)w_{x_1}y,\, \theta(x_j,x_1)=\theta(x_j,z).$$
\underline{$(I.6)$ $A=\{x_1,y,z\}$:} In this case, a possible ordering to apply Definition \ref{Magic} is $(a_1,a_2,a_3)=(x_1,z,y)$ and we have
$$\theta(x_1,y)=\theta(x_1,z)w_{x_1}y,\, \theta(z,y)=\theta(z,x_1)w_zy=\theta(z,x_1)w_{x_1}y,\, \theta(y,x_1)=\theta(y,z).$$
\underline{$(I.7)$ $A=\{x_i,x_j,y\}$ with $x_i$ small and $x_j$ large:} In this case, a possible ordering to apply Definition \ref{Magic} is\\
$(a_1,a_2,a_3)=(x_i,x_j,y)$ and we have
$$\theta(x_i,y)=\theta(x_i,x_j),\, \theta(x_j,y)=\theta(x_j,x_i),\, \theta(y,x_i)=\theta(y,x_j).$$
\underline{$(I.8)$ $A=\{x_i,x_j,z\}$ with $x_i$ small and $x_j$ large:} In this case, a possible ordering to apply Definition \ref{Magic} is\\
$(a_1,a_2,a_3)=(x_i,z,x_j)$ and we have
$$\theta(x_i,x_j)=\theta(x_i,z)y,\, \theta(z,x_j)=\theta(z,x_i)y,\, \theta(x_j,x_i)=\theta(x_j,z).$$
\underline{$(I.9)$ $A=\{x_i,y,z\}$ with $x_i$ small:} In this case, a possible ordering to apply Definition \ref{Magic} is
$(a_1,a_2,a_3)=(x_i,z,y)$ and we have
$$\theta(x_i,y)=\theta(x_i,z)y,\, \theta(z,y)=\theta(z,x_i)y,\, \theta(y,x_i)=\theta(y,z).$$
\underline{$(I.10)$ $A=\{x_j,y,z\}$ with $x_j$ large:} In this case, a possible ordering to apply Definition \ref{Magic} is
$(a_1,a_2,a_3)=(z,x_j,y)$ and we have
$$\theta(z,y)=\theta(z,x_j),\, \theta(x_j,y)=\theta(x_j,z),\, \theta(y,z)=\theta(y,x_j).$$
\noindent
\fbox{$(II)$ $A$ contains exactly two small $x_k$'s:} In this case, there are $\binom 41=4$ subcases to verify.\\\\
\noindent
\underline{$(II.1)$ $A=\{x_1,x_i,x_j\}$ with $x_i,x_j$ small:} In this case, a possible ordering to apply Definition \ref{Magic} is $(a_1,a_2,a_3)=(x_1,x_i,x_j)$ and we have
$$\theta(x_1,x_j)=\theta(x_1,x_i),\, \theta(x_i,x_j)=\theta(x_i,x_1),\, \theta(x_j,x_1)=\theta(x_j,x_i).$$
\underline{$(II.2)$ $A=\{x_i,x_j,x_k\}$ with $x_i,x_j$ small and $x_k$ large:} In this case, a possible ordering to apply Definition \ref{Magic} is $(a_1,a_2,a_3)=(x_i,x_j,x_k)$ and we have
$$\theta(x_i,x_k)=\theta(x_i,x_j)y,\, \theta(x_j,x_k)=\theta(x_j,x_i)y,\, \theta(x_k,x_i)=\theta(x_k,x_j).$$
\underline{$(II.3)$ $A=\{x_i,x_j,y\}$ with $x_i,x_j$ small:} In this case, a possible ordering to apply Definition \ref{Magic} is $(a_1,a_2,a_3)=(x_i,x_j,y)$ and we have
$$\theta(x_i,y)=\theta(x_i,x_j)y,\, \theta(x_j,y)=\theta(x_j,x_i)y,\, \theta(y,x_i)=\theta(y,x_j).$$
\underline{$(II.4)$ $A=\{x_i,x_j,z\}$ with $x_i,x_j$ small:} In this case, a possible ordering to apply Definition \ref{Magic} is $(a_1,a_2,a_3)=(x_i,x_j,z)$ and we have
$$\theta(x_i,z)=\theta(x_i,x_j),\, \theta(x_j,z)=\theta(x_j,x_i),\, \theta(z,x_i)=\theta(z,x_j).$$
\noindent
\fbox{$(III)$ $A$ contains exactly two large $x_k$'s:} In this case, there are $\binom 41=4$ subcases to verify.\\\\
\noindent
\underline{$(III.1)$ $A=\{x_1,x_i,x_j\}$ with $x_i,x_j$ large:} In this case, a possible ordering to apply Definition \ref{Magic} is $(a_1,a_2,a_3)=(x_1,x_i,x_j)$ and we have
$$\theta(x_1,x_j)=\theta(x_1,x_i),\, \theta(x_i,x_j)=\theta(x_i,x_1),\, \theta(x_j,x_1)=\theta(x_j,x_i).$$
\underline{$(III.2)$ $A=\{x_k,x_i,x_j\}$ with $x_k$ small and $x_i,x_j$ large:} In this case, a possible ordering to apply Definition \ref{Magic} is $(a_1,a_2,a_3)=(x_k,x_i,x_j)$ and we have
$$\theta(x_k,x_j)=\theta(x_k,x_i),\, \theta(x_i,x_j)=\theta(x_i,x_k),\, \theta(x_j,x_k)=\theta(x_j,x_i).$$
\underline{$(III.3)$ $A=\{x_i,x_j,y\}$ with $x_i,x_j$ large:} In this case, a possible ordering to apply Definition \ref{Magic} is $(a_1,a_2,a_3)=(x_i,x_j,y)$ and we have
$$\theta(x_i,y)=\theta(x_i,x_j),\, \theta(x_j,y)=\theta(x_j,x_i),\, \theta(y,x_i)=\theta(y,x_j).$$
\underline{$(III.4)$ $A=\{x_i,x_j,z\}$ with $x_i,x_j$ large:} In this case, a possible ordering to apply Definition \ref{Magic} is $(a_1,a_2,a_3)=(z,x_i,x_j)$ and we have
$$\theta(z,x_j)=\theta(z,x_i),\, \theta(x_i,x_j)=\theta(x_i,z),\, \theta(x_j,z)=\theta(x_j,x_i).$$
\noindent
\fbox{$(IV)$ A contains exactly three small $x_k$'s.} In this case, there is $\binom 40=1$ subcase to verify.\\\\
\noindent
\underline{$(IV.1)$ $A=\{x_i,x_j,x_k\}$ with $x_i,x_j$ and $x_k$ small:} In this case, a possible ordering to apply Definition \ref{Magic} is \\ $(a_1,a_2,a_3)=(x_i,x_j,x_k)$ and we have
$$\theta(x_i,x_k)=\theta(x_i,x_j),\, \theta(x_j,x_k)=\theta(x_j,x_i),\, \theta(x_k,x_i)=\theta(x_k,x_j).$$
\noindent
\fbox{$(V)$ A contains exactly three large $x_k$'s.} In this case, there is $\binom 40=1$ subcase to verify.\\\\
\noindent
\underline{$(V.1)$ $A=\{x_i,x_j,x_k\}$ with $x_i,x_j$ and $x_k$ large:} In this case, a possible ordering to apply Definition \ref{Magic} is \\ $(a_1,a_2,a_3)=(x_i,x_j,x_k)$ and we have
$$\theta(x_i,x_k)=\theta(x_i,x_j),\, \theta(x_j,x_k)=\theta(x_j,x_i),\, \theta(x_k,x_i)=\theta(x_k,x_j).$$
This concludes the proof.
\end{proof}
\begin{proof}[Proof of Proposition \ref{ClassicalLeftCanc}]
Since Presentation \eqref{ClassicalMonoidPres2} is homogeneous, the divisibility of $\M_*^*(n,m)$ is Noetherian. The result thus follows by combining Lemma \ref{CubeClassical} with Lemma \ref{MagicCube}.
\end{proof}

\subsection{\texorpdfstring{The monoid $\M_*^*(n,m)$ is right-cancellative}{}}

\begin{proposition}\label{OpLeftCanc}
Let $n,m\in\N^*$ with $n\wedge m=1$ and $n\leq m$. The monoid $\M_*^*(n,m)$ is right-cancellative and admits conditional left-lcms.
\end{proposition}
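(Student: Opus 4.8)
The plan is to derive right-cancellativity and the existence of conditional left-lcms from their already-established left-handed analogues (Proposition \ref{ClassicalLeftCanc}) by passing to the opposite monoid. Word reversal $w\mapsto w^{\mathrm{op}}$ is an anti-isomorphism of the free monoid on $S=\{x_1,\dots,x_n,y,z\}$ which descends to an isomorphism $\M_*^*(n,m)^{\mathrm{op}}\xrightarrow{\ \sim\ }\langle S\mid R^{\mathrm{op}}\rangle$ and interchanges left- with right-multiples and left- with right-divisors. Consequently $\M_*^*(n,m)$ is right-cancellative and admits conditional left-lcms if and only if $\M_*^*(n,m)^{\mathrm{op}}$ is left-cancellative and admits conditional right-lcms, and for the latter it suffices---exactly as in the proof of Proposition \ref{ClassicalLeftCanc}---to produce a right-full homogeneous presentation of $\M_*^*(n,m)^{\mathrm{op}}$ satisfying $(\hyperref[Poulet]{C1})$ and then to apply Lemma \ref{MagicCube}.

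First I would observe that Presentation \eqref{ClassicalMonoidPres2} is not only right-full but also left-full: a direct count shows that it has exactly $\binom{n+2}{2}$ relations, i.e.\ one for each pair of generators, and one checks that the two terminal letters of every relation are distinct and that these terminal pairs exhaust all pairs of generators. Hence reversing every defining word of \eqref{ClassicalMonoidPres2} yields a presentation $P^{\mathrm{op}}$ of $\M_*^*(n,m)^{\mathrm{op}}$ in which each relation carries its two distinguished generators at the front, so that $P^{\mathrm{op}}$ is right-full; it is homogeneous because reversal preserves the weight of any word. The syntactic complement $\theta^{\mathrm{op}}$ of $P^{\mathrm{op}}$ is then read off by reversing, entry by entry, the complement table of Lemma \ref{CubeClassical}, with the roles of the ``small'' and ``large'' generators exchanged accordingly.

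It then remains to verify $(\hyperref[Poulet]{C1})$ for $P^{\mathrm{op}}$, which I would do by rerunning the partition of the three-element subsets $A\subset\{x_1,\dots,x_n,y,z\}$ into the cases $(I)$--$(V)$ of Lemma \ref{CubeClassical}. For each such $A$, a suitable ordering and witnessing word $u$ as in Definition \ref{Magic} are obtained by reversing those exhibited in Lemma \ref{CubeClassical}, and the three required word-equalities hold precisely because they are the reversals of the equalities proved there. Lemma \ref{MagicCube} then shows that $\M_*^*(n,m)^{\mathrm{op}}$ is left-cancellative and admits conditional right-lcms, and the equivalence of the first paragraph yields the proposition. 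I expect the main obstacle to be purely bookkeeping: confirming left-fullness of \eqref{ClassicalMonoidPres2} across its various index ranges (including the degenerate ranges flagged in Remark \ref{EmptyConditions}) and transcribing the reversed complement table correctly. Once this is in place, the verification of $(\hyperref[Poulet]{C1})$ is the mirror image of Lemma \ref{CubeClassical} and introduces no new idea.
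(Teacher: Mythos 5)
Your reduction to the opposite monoid is exactly the paper's first move, but the pivot of your argument---that Presentation \eqref{ClassicalMonoidPres2} is left-full, so that reversing its relations immediately yields a right-full presentation of $\M_*^*(n,m)^{\mathrm{op}}$---is false whenever $n\geq 2$. The count of $\binom{n+2}{2}$ relations is correct, but the terminal pairs do not exhaust all pairs of generators: every relation in the family \eqref{ClassicalMonoidPres2:4} ends with the pair $\{y,x_{j+r-n-1}\}$ independently of $i$, so for each admissible $j$ there are $n-r+1\geq 2$ relations with the same terminal pair, and \eqref{ClassicalMonoidPres2:5} repeats terminal pairs already produced by \eqref{ClassicalMonoidPres2:2}. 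Concretely, for $n=2$, $m=3$ (so $q=r=1$) the six relations of \eqref{ClassicalMonoidPres2} are $zx_1x_2y=x_1x_2yz$, $x_1x_2yzx_1=x_2yzx_1x_2$, $x_1x_2yzx_1y=yzx_1x_2yx_1$, $x_2yzx_1x_2y=yzx_1x_2yx_1$, $x_2yzx_1x_2=zx_1x_2yx_1$ and $yzx_1x_2yx_1=zx_1x_2yx_1y$; their terminal pairs are $\{y,z\}$ once, $\{x_1,x_2\}$ twice and $\{x_1,y\}$ three times, while $\{x_1,z\}$, $\{x_2,z\}$ and $\{x_2,y\}$ never occur. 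Hence the reversed presentation is not even right-complemented in the sense of Definition \ref{Complemented} (two distinct relations start with the same pair of letters), its syntactic complement is undefined, and the plan of ``reversing the complement table of Lemma \ref{CubeClassical}'' cannot be carried out. Even where reversing a single relation $ua=vb$ makes sense, it gives $\theta^{\mathrm{op}}(a,b)=u^{\mathrm{op}}$, a \emph{left}-complement of the original presentation rather than the reversal of $\theta(a,b)$, so the $(\hyperref[Poulet]{C1})$ equalities of Lemma \ref{CubeClassical} do not transport by symmetry in any case.

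The paper's proof does real work precisely at this point: it reverses the smaller Presentation \eqref{classicalBraidPres1} (not the enlarged one), relabels $x_i\mapsto x_{n-i+1}$, and uses the commutation $\delta z=z\delta$ to rewrite the reversed relations into Presentation \eqref{OpClassicalPres1} of $\M_*^*(n,m)^{\mathrm{op}}$ (Lemma \ref{OpClassical}); note that this presentation is \emph{not} of the same shape as the original one---there $\delta$ becomes $yx_1\cdots x_n$ and the letters $y$, $z$ occupy different positions inside the relation words. It then performs a fresh enlargement to the right-full homogeneous Presentation \eqref{OpClassicalPres4} and verifies $(\hyperref[Poulet]{C1})$ anew, with a new complement table (Lemma \ref{cubeOpClassical}). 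So the opposite-monoid verification is a computation parallel to, but not a mirror image of, Lemma \ref{CubeClassical}; to repair your proof you would need to carry out these steps (or an equivalent derivation of a right-full presentation of the opposite monoid) rather than appeal to reversal symmetry.
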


To prove proposition \ref{OpLeftCanc}, we show the equivalent statement that $\mathcal M_*^*(n,m)^{\mathrm{op}}$ is left-cancellative and admits conditional right-lcms. In order to do so, we use the following convenient presentation of $\M_*^*(n,m)^{\mathrm{op}}$:

\begin{lemma}\label{OpClassical}
Let $n,m\in \N^*$ with $n\wedge m=1$ and $n\leq m$. The monoid $\mathcal M_*^*(n,m)^{\mathrm{op}}$ admits the following presentation:

\begin{subequations}\label{OpClassicalPres1}
\begin{align}
    &(1) \,\, \mathrm{Generators}\!:\,  \{x_1,\dots,x_n,y,z\}; &&  \notag \\
    &(2) \,\, \mathrm{Relations}\!: \, &&  \notag \\
    &zyx_1\cdots x_n=yx_1\cdots x_nz, && \label{OpClassicalPres1:relation1} \\ % Non numérotée grâce à \tag{}
    &x_i\cdots x_nz\delta^{q-1}yx_1\cdots x_{i+r-1}=x_j\cdots x_nz\delta^{q-1}yx_1\cdots x_{j+r-1}, \forall \, 1\leq i<j\leq n-r+1, && \label{OpClassicalPres1:relation2} \\
    &x_i\cdots x_n\delta z\delta^{q-1}yx_1\cdots x_{i+r-n-1}=x_j\cdots x_n\delta z\delta^{q-1}yx_1\cdots x_{j+r-n-1}, \, (*), && \label{OpClassicalPres1:relation3}  \\
    &(*)\,\forall n-r+1\leq i<j\leq n+1 && \notag
\end{align}
\end{subequations}
where $\delta$ denotes $yx_1\cdots x_n$.
\end{lemma}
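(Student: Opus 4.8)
The plan is to obtain Presentation \eqref{OpClassicalPres1} directly from a presentation of $\M_*^*(n,m)$ by reversing all defining relations — which produces a presentation of the opposite monoid — and then relabelling the generators. Recall the standard fact that if $M=\langle S\mid u_k=v_k\rangle$, then $M^{\mathrm{op}}=\langle S\mid u_k^{\mathrm{op}}=v_k^{\mathrm{op}}\rangle$, where $w^{\mathrm{op}}$ denotes the reversal of the word $w$. I would start not from \eqref{classicalBraidPres1} itself but from the equivalent presentation of $\M_*^*(n,m)$ with generators $\{x_1,\dots,x_n,y,z\}$ and relations \eqref{classicalBraidPres1:1}, \eqref{L2Classical} and \eqref{L3Classical}, which is available by the equivalences established just before Corollary \ref{ClassicalMonoid2}.

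Next I would introduce the anti-automorphism $\rho$ of the free monoid $\{x_1,\dots,x_n,y,z\}^*$ determined on generators by $\rho(x_i)=x_{n+1-i}$, $\rho(y)=y$, $\rho(z)=z$ and extended by $\rho(ab)=\rho(b)\rho(a)$; concretely, $\rho$ is word reversal followed by the relabelling $x_i\mapsto x_{n+1-i}$. Since relabelling is an isomorphism of presentations, $\M_*^*(n,m)^{\mathrm{op}}$ is presented by the $\rho$-images of \eqref{classicalBraidPres1:1}, \eqref{L2Classical} and \eqref{L3Classical}. The key elementary computations are $\rho(x_1\cdots x_n y)=yx_1\cdots x_n$ (so $\rho$ sends $\delta$ to the element again written $\delta$ in the statement of the lemma), $\rho(x_i\cdots x_n)=x_1\cdots x_{n+1-i}$, $\rho(x_1\cdots x_\ell)=x_{n+1-\ell}\cdots x_n$, and $\rho(\delta^p)=\delta^p$. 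From these, the relation \eqref{classicalBraidPres1:1} maps to $z\delta=\delta z$, that is, exactly \eqref{OpClassicalPres1:relation1}.

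For the two families I would apply $\rho$ termwise. The relation of \eqref{L2Classical} indexed by $i$ maps to a relation whose word is $x_{i'}\cdots x_n\,\delta^{q-1}zy\,x_1\cdots x_{i'+r-1}$ with $i'=n-i-r+2$, and the relation of \eqref{L3Classical} indexed by $i$ maps to $x_{i'}\cdots x_n\,\delta^{q}zy\,x_1\cdots x_{i'+r-n-1}$ with $i'=2n-i-r+2$. In each case $i\mapsto i'$ is an order-reversing involution of the relevant index range ($[1,n-r+1]$ and $[n-r+1,n+1]$ respectively), so each family is carried onto itself as a set of relations. It then remains to use the already-available relation \eqref{OpClassicalPres1:relation1} to commute $z$ past the powers of $\delta$, namely $\delta^{q-1}z=z\delta^{q-1}$ and $\delta^{q}z=\delta z\delta^{q-1}$; after these substitutions the two families read precisely \eqref{OpClassicalPres1:relation2} and \eqref{OpClassicalPres1:relation3}. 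As this final rewriting is a Tietze transformation performed with a relation present in both presentations, it does not alter the monoid, and Presentation \eqref{OpClassicalPres1} follows.

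The main work is purely bookkeeping: tracking how $\rho$ transforms each block of consecutive generators, and verifying that the index substitutions $i\mapsto n-i-r+2$ and $i\mapsto 2n-i-r+2$ are order-reversing bijections of the correct ranges, so that no relation is duplicated, lost, or created. The one conceptual point to handle cleanly, rather than glossing over, is the repositioning of the lone letter $z$ relative to the powers of $\delta$: this must be justified by \eqref{OpClassicalPres1:relation1}, and it is precisely because that commutation relation is itself the $\rho$-image of \eqref{classicalBraidPres1:1} that it is legitimately available for the rewriting. Once this is in place, all remaining identifications are immediate.
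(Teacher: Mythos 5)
Your proposal is correct and follows essentially the same route as the paper's proof: the paper likewise passes to the opposite presentation by word reversal, relabels $x_i\mapsto x_{n+1-i}$ (written there as $x_i\mapsto t_{n-i+1}$), tracks the resulting index bijections on the two families, and finally repositions $z$ using the commutation $z\delta=\delta z$, which is exactly the image of \eqref{classicalBraidPres1:1}. The only difference is cosmetic: you saturate the chained families into \eqref{L2Classical} and \eqref{L3Classical} \emph{before} reversing, so the order-reversing involutions $i\mapsto n-r+2-i$ and $i\mapsto 2n-r+2-i$ carry each family onto itself, whereas the paper saturates after reversal via its $a_k$/$b_k$ bookkeeping — a mild streamlining of the same argument.
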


\begin{proof}
Using Presentation \eqref{classicalBraidPres1}, we see that $\mathcal M_*^*(n,m)^{\mathrm{op}}$ admits the presentation 
\begin{equation}\label{OpClassicalPres2}
\begin{aligned}
    &(1) \,\, \mathrm{Generators}\!:\,  \{x_1,\dots,x_n,y,z\};\\
&(2) \,\, \mathrm{Relations}\!: \,\\
  &zyx_n\cdots x_1=yx_n\cdots x_1z,   \\
  & x_{i+r}\cdots x_{1}\delta^{q-1}zyx_n\cdots x_{i+1}=x_{i+r-1}\cdots x_{1}\delta^{q-1}zyx_n\cdots x_{i}, \,\forall \,  1\leq i\leq n-r,   \\
       & x_{i+r-n}\cdots x_{1}\delta^{q}zyx_n\cdots x_{i+1}=x_{i+r-n-1}\cdots x_{1}\delta^{q}zyx_n\cdots x_{i}, \,\forall \,  n-r+1\leq i\leq n-1,\\
       & x_r\cdots x_1\delta^qzy=x_{r-1}\cdots x_1\delta^qzyx_n,
\end{aligned}
\end{equation}
where $\delta$ denotes $yx_n\cdots x_1$. Relabelling $x_i$ by $t_{n-i+1}$ for all $i=1,\dots,n$ in Presentation \eqref{OpClassicalPres2} yields the presentation 
\begin{subequations}\label{OpClassicalPres3}
\begin{align}
    &(1) \,\, \mathrm{Generators}\!:\,  \{t_1,\dots,t_n,y,z\};\notag\\
&(2) \,\, \mathrm{Relations}\!: \,\notag\\
  &zyt_1\cdots t_n=yt_1\cdots t_nz, \label{OpClassicalPres3:1}  \\
  &t_{n-i-r+1}\cdots t_{n}\delta^{q-1}zyx_1\cdots t_{n-i}=t_{n-i-r+2}\cdots t_{n}\delta^{q-1}zyt_1\cdots t_{n-i+1}, \,(*),   \label{OpClassicalPres3:2} \\
       & t_{2n-i-r+1}\cdots t_{n}\delta^{q}zyt_1\cdots t_{n-i}=t_{2n-i-r+2}\cdots t_{n}\delta^{q}zyt_1\cdots t_{n-i+1}, \, (**),\label{OpClassicalPres3:3}\\
       &t_{n-r+1}\cdots t_n\delta^qzy=t_{n-r+2}\cdots t_n\delta^qzyx_1,\label{OpClassicalPres3:4}\\
       &(*) \, \forall \, 1\leq i\leq n-r,\,\, (**)\, \forall \,n-r+1\leq i\leq n-1\notag
\end{align}
\end{subequations}
where $\delta$ denotes $yt_1\dots t_n$. Now, the set of relations \eqref{OpClassicalPres3:2} is equivalent to the set of relations
\begin{equation}\label{PresentationRange1}t_{n-i-r+1}\cdots t_{n}\delta^{q-1}zyt_1\cdots t_{n-i}=t_{n-j-r+1}\cdots t_{n}\delta^{q-1}zyt_1\cdots t_{n-j},\, \forall\, 0\leq i<j\leq n-r.
\end{equation}
Writing $a_k=t_{n-k-r+1}\cdots t_n \delta^{q-1}zyt_1\cdots t_{n-k}$ for $k=0,\dots,n-r$, the set of relations given by Equation \eqref{PresentationRange1} is equivalent in saying that all $a_k$'s represent the same element in $\M_*^*(n,m)^{\mathrm{op}}$.\\
We have 
\begin{equation*}
\begin{aligned}
a_0&=t_{n-r+1}\cdots t_n\delta^{q-1}zyt_1\cdots t_n,\, 
a_1=t_{n-r}\cdots t_n \delta^{q-1}zyt_1\cdots t_{n-1},\\
&\cdots,\,
a_{n-r}=t_1\cdots t_n \delta^{q-1}zyt_1\cdots t_{r},
\end{aligned}
\end{equation*}
so that $\{a_k\}_{k=0,\dots,n-r}=\{t_l\cdots t_n\delta^{q-1}zyt_1\cdots t_{l+r-1}\}_{l=1,\dots,n-r+1}$. Thus, the set of relations given by Equation \eqref{PresentationRange1} is the set of relations
$$t_i\cdots t_n\delta^{q-1}zyt_1\cdots t_{i+r-1}=t_j\cdots t_n\delta^{q-1}zyt_1\cdots t_{j+r-1},\,  \forall\, 1\leq i<j\leq n-r+1.$$
Similarly, the set of relations \eqref{OpClassicalPres3:3} is equivalent to the set of relations
\begin{equation}\label{PresentationRange2}t_{2n-i-r+1}\cdots t_n\delta^qzyt_1\cdots t_{n-i}=t_{2n-j-r+1}\cdots t_n\delta^qzyt_1\cdots t_{n-j} , \, \forall\, n-r\leq i<j\leq n-1.
\end{equation}
Writing $b_k=t_{2n-k-r+1}\cdots t_n \delta^{q}zyt_1\cdots t_{n-k}$ for $k=n-r,\dots,n-1$, the set of relations given by Equation \eqref{PresentationRange2} is equivalent in saying that all $b_k$'s represent the same element in $\M_*^*(n,m)^{\mathrm{op}}$.\\
We have 
\begin{equation*}
\begin{aligned}
b_{n-r}&=\delta^{q}zyt_1\cdots t_r,\,
b_{n-r+1}=t_n \delta^{q}zyt_1\cdots t_{r-1},\\
&\cdots,\, 
b_{n-1}=t_{n-r+2}\cdots t_n \delta^{q}zyt_1,
\end{aligned}
\end{equation*}
so that $\{b_k\}_{k=n-r,\dots,n-1}=\{t_l\cdots t_n\delta^{q}zyt_1\cdots t_{l+r-n-1}\}_{l=n-r+2,\dots,n+1}$. Thus, the set of relations given by Equation \eqref{PresentationRange2} is the set of relations

\begin{equation}\label{PresentationRange3}
  t_i\cdots t_n\delta^qzyt_1\cdots t_{i+r-n-1}=t_j\cdots t_n\delta^qzyt_1\cdots t_{j+r-n-1} ,\, \forall \, n-r+2\leq i<j\leq n+1.  
\end{equation}
Finally, combining the relation $$t_{n-r+1}\cdots t_n\delta^qzy=t_{n-r+2}\cdots t_n\delta^qzyx_1$$
with the set of relations \eqref{PresentationRange3} gives the following set of relations: 
\begin{equation*}
  t_i\cdots t_n\delta^qzyt_1\cdots t_{i+r-n-1}=t_j\cdots t_n\delta^qzyt_1\cdots t_{j+r-n-1} ,\, \forall \, n-r+1\leq i<j\leq n+1.  
\end{equation*}
We thus get that $\mathcal M_*^*(n,m)^{\mathrm{op}}$ admits the following presentation:
\begin{equation*}
\begin{aligned}
    &(1) \,\, \mathrm{Generators}\!:\,  \{t_1,\dots,t_n,y,z\};\\
&(2) \,\, \mathrm{Relations}\!: \,\\
  &zyt_1\cdots t_n=yt_1\cdots t_nz,   \\
  &t_i\cdots t_n\delta^{q-1}zyt_1\cdots t_{i+r-1}=t_j\cdots t_n\delta^{q-1}zyt_1\cdots t_{j+r-1},\,  \forall\, 1\leq i<j\leq n-r+1,  \\
       &  t_i\cdots t_n\delta^qzyt_1\cdots t_{i+r-n-1}=t_j\cdots t_n\delta^qzyt_1\cdots t_{j+r-n-1},\, \forall \, n-r+1\leq i<j\leq n+1,
\end{aligned}
\end{equation*}
where $\delta$ denotes $yt_1\cdots t_n$. Using that $\delta z=z\delta$, for all $k\in [n-r+1]$ we have $$t_k\cdots t_n\delta^{q-1}zyt_1\cdots t_{k+r-1}=t_k\cdots t_nz\delta^{q-1}yt_1\cdots t_{k+r-1}$$
and for all $k\in\llbracket n-r+1,n+1\rrbracket $ we have
$$t_k\cdots t_n\delta^qzyt_1\cdots t_{k+r-n-1}=t_k\cdots t_n\delta z\delta^{q-1}yt_1\cdots t_{k+r-n-1}.$$
This concludes the proof, relabeling $t_k$ by $x_k$ for each $k\in [n]$.
\end{proof} 
Now, we will enlarge Presentation \eqref{OpClassicalPres1} to get a right-full homogeneous presentation satisfying $(\hyperref[Poulet]{C1})$.\\
In addition to the relations of Presentation \eqref{OpClassicalPres1}, observe the following:\\
\noindent
$\bullet$ For all $i\in [n-r+1], j\in \llbracket n-r+1,n+1\rrbracket$, we have
\begin{equation}\label{EnlargeClassicalOp1}
\begin{aligned}
x_i\cdots x_nz\delta^{q-1}yx_1\cdots x_{i+r-1}y&\underset{\eqref{OpClassicalPres1:relation2}}{=}x_{n-r+1}\cdots x_nz\delta^{q-1}yx_1\cdots x_ny  \, \\
\underset{\eqref{OpClassicalPres1:relation1}}{=}x_{n-r+1}\cdots x_n\delta z&\delta^{q-1}y
\underset{\eqref{OpClassicalPres1:relation3}}{=}x_j\cdots x_n\delta z\delta^{q-1}yx_1\cdots x_{j+r-n-1}.
\end{aligned}
\end{equation}
Note that for $j=n+1$, Equation \eqref{EnlargeClassicalOp1} reads 
\begin{equation*}
    x_i\cdots x_n \delta z\delta^{q-1}yx_1\cdots x_{i+r-n-1}y=yx_1\cdots x_nz\delta^{q-1}yx_1\cdots x_r.
\end{equation*}

\noindent
$\bullet$ For all $i\in [n-r+1]$ we have
\begin{equation}\label{EnlargeClassicalOp2}
\begin{aligned}
x_i\cdots x_nz\delta^{q-1}yx_1\cdots x_{i+r-1}y&\underset{\eqref{OpClassicalPres1:relation1}+\eqref{OpClassicalPres1:relation2}}{=}x_{n-r+1}\cdots x_n\delta z\delta^{q-1}y\\
&\underset{\eqref{OpClassicalPres1:relation3}}{=}\delta z\delta^{q-1}yx_1\cdots x_r\underset{\eqref{OpClassicalPres1:relation1}}{=}z\delta^{q}yx_1\cdots x_{r}.
\end{aligned}
\end{equation}

\noindent
$\bullet$ For all $i\in \llbracket n-r+2,n\rrbracket$ we have
\begin{equation}\label{EnlargeClassicalOp3}
\begin{aligned}
x_i\cdots x_n\delta z\delta^{q-1}yx_1\cdots x_{i+r-n-1}&\underset{\eqref{OpClassicalPres1:relation3}}{=}\delta z\delta^{q-1}yx_1\cdots x_r\, \underset{\eqref{OpClassicalPres1:relation1}}{=}z\delta^qyx_1\cdots x_r.
\end{aligned}
\end{equation}
Finally, since $\delta z=z\delta$, we have 
\begin{equation*}
    x_{n-r+1}\cdots x_n\delta z\delta^{q-1}y=(x_{n-r+1}\cdots x_nz\delta^{q-1}yx_1\cdots x_n)y.
\end{equation*}
Thus, the set of relations given by \eqref{OpClassicalPres1:relation3} with $i=n-r+1$, $j=n-r+2,\dots,n+1$ is the same set of relations as the one given by \eqref{EnlargeClassicalOp1} with $i=n-r+1$, $j=n-r+2,\dots,n+1$. Therefore, if we add relations \eqref{EnlargeClassicalOp1}, we can replace \eqref{OpClassicalPres1:relation3} by the set of relations
\begin{equation}\label{EnlargeClassicalOp4}
\begin{aligned}
x_i\cdots x_n\delta z\delta^{q-1}yx_1\cdots x_{i+r-n-1}=x_j\cdots x_n\delta z&\delta^{q-1}yx_1\cdots x_{j+r-n-1},\\&\forall n-r+2\leq i<j\leq n+1.
 \end{aligned}
\end{equation}

\noindent
Using equations \eqref{EnlargeClassicalOp1}-\eqref{EnlargeClassicalOp4}, we get the following corollary:
\begin{corollary}\label{OpClassical2}
Let $n,m\in \N^*$ and assume that $n\leq m$ and $n\wedge m=1$. The monoid $\mathcal M_*^*(n,m)^{\mathrm{op}}$ admits the following right-full presentation:

\begin{subequations}\label{OpClassicalPres4}
\begin{align}
    &(1) \,\, \mathrm{Generators}\!:\,  \{x_1,\dots,x_n,y,z\};\notag\\
&(2) \,\, \mathrm{Relations}\!: \,\notag\\
  &zyx_1\cdots x_n=yx_1\cdots x_nz,  \label{OpClassicalPres4:1} \\
  &x_i\cdots x_nz\delta^{q-1}yx_1\cdots x_{i+r-1}=x_j\cdots x_nz\delta^{q-1}yx_1\cdots x_{j+r-1},\, \forall \,1\leq i<j\leq n-r+1, \label{OpClassicalPres4:2}   \\
       &x_i\cdots x_n\delta z\delta^{q-1}yx_1\cdots x_{i+r-n-1}=x_j\cdots x_n\delta z\delta^{q-1}yx_1\cdots x_{j+r-n-1}, \,(*),\label{OpClassicalPres4:3} \\
&  x_i\cdots x_nz\delta^{q-1}yx_1\cdots x_{i+r-1}y=x_j\cdots x_n\delta z\delta^{q-1}yx_1\cdots x_{j+r-n-1},\, (**),\label{OpClassicalPres4:4}  \\
&x_i\cdots x_nz\delta^{q-1}yx_1\cdots x_{i+r-1}y=z\delta^qyx_1\cdots x_r, \,\forall \,1\leq  i\leq n-r+1,\label{OpClassicalPres4:5} \\
&  x_{i}\cdots x_{n}\delta z\delta^{q-1}yx_1\cdots x_{i+r-n-1}=z\delta^qyx_1\cdots x_r, \,\forall \, n-r+2\leq i\leq n,\label{OpClassicalPres4:6} \\
&(*)\,\forall \,  n-r+2\leq i<j\leq n+1,\, (**)\,\forall \, i\in[n-r+1],\, j\in \llbracket n-r+2,n+1\rrbracket\notag
\end{align}
\end{subequations}
where $\delta$ denotes $yx_1\cdots x_n$.
\end{corollary}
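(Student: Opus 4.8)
The plan is to mirror the derivation that produced Corollary \ref{ClassicalMonoid2}, now applied to the opposite monoid starting from the presentation of Lemma \ref{OpClassical}. First I would record that the three families of relations \eqref{EnlargeClassicalOp1}, \eqref{EnlargeClassicalOp2} and \eqref{EnlargeClassicalOp3} derived just above are, by the displayed computations, formal consequences of the relations \eqref{OpClassicalPres1:relation1}, \eqref{OpClassicalPres1:relation2} and \eqref{OpClassicalPres1:relation3} of Presentation \eqref{OpClassicalPres1} (each step invokes only one of these relations or the centrality $\delta z = z\delta$). Since adjoining consequences of existing relations leaves the presented monoid unchanged, adding \eqref{EnlargeClassicalOp1}--\eqref{EnlargeClassicalOp3} to \eqref{OpClassicalPres1} still presents $\M_*^*(n,m)^{\mathrm{op}}$.

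Second, I would justify trading \eqref{OpClassicalPres1:relation3} for \eqref{EnlargeClassicalOp4}. As noted just before the corollary, the identity $x_{n-r+1}\cdots x_n\delta z\delta^{q-1}y=(x_{n-r+1}\cdots x_nz\delta^{q-1}yx_1\cdots x_n)y$, valid because $\delta z=z\delta$, shows that the instances of \eqref{OpClassicalPres1:relation3} with $i=n-r+1$ coincide, as word relations, with the instances of \eqref{EnlargeClassicalOp1} having $i=n-r+1$ (the right-hand sides of both families are literally equal for each $j$). Hence once \eqref{EnlargeClassicalOp1} is present, those instances of \eqref{OpClassicalPres1:relation3} are redundant, and \eqref{OpClassicalPres1:relation3} may be kept only for $n-r+2\leq i<j\leq n+1$, which is precisely \eqref{EnlargeClassicalOp4}. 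Collecting \eqref{OpClassicalPres1:relation1}, \eqref{OpClassicalPres1:relation2}, \eqref{EnlargeClassicalOp4}, \eqref{EnlargeClassicalOp1}, \eqref{EnlargeClassicalOp2} and \eqref{EnlargeClassicalOp3} reproduces exactly the relations \eqref{OpClassicalPres4:1}--\eqref{OpClassicalPres4:6}, so Presentation \eqref{OpClassicalPres4} presents $\M_*^*(n,m)^{\mathrm{op}}$.

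The remaining, and genuinely substantive, task is to verify that \eqref{OpClassicalPres4} is right-full: for every unordered pair of distinct generators from $\{x_1,\dots,x_n,y,z\}$ there must be exactly one relation whose two sides begin with those two generators. I would carry this out using the partition already present in Lemma \ref{CubeClassical}, distinguishing $x_1$, the \emph{small} generators $x_i$ with $2\leq i\leq n-r+1$, the \emph{large} generators $x_i$ with $n-r+2\leq i\leq n$, and $y,z$, and reading off the initial letter of each side of each relation — noting that the value $j=n+1$ on a right-hand side of \eqref{OpClassicalPres4:3} or \eqref{OpClassicalPres4:4} contributes a side beginning with $y$, since $\delta=yx_1\cdots x_n$. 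One then checks that \eqref{OpClassicalPres4:2} accounts for the pairs inside $\{x_1,\dots,x_{n-r+1}\}$, that \eqref{OpClassicalPres4:3} accounts for the large--large pairs and, via $j=n+1$, the large--$y$ pairs, that \eqref{OpClassicalPres4:4} accounts for the (first or small)--large pairs and, via $j=n+1$, the (first or small)--$y$ pairs, that \eqref{OpClassicalPres4:5} and \eqref{OpClassicalPres4:6} account for the $x_i$--$z$ pairs, and that \eqref{OpClassicalPres4:1} accounts for the single $y$--$z$ pair. The main obstacle is precisely this bookkeeping at the boundary indices $i=n-r+1$ and $j\in\{n-r+1,\,n+1\}$: I would verify that the index ranges of the six families are pairwise disjoint at the level of initial-letter pairs, so that no pair of generators is defined twice and none is omitted. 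This is exactly the property that the restriction of \eqref{OpClassicalPres4:3} to $i\geq n-r+2$, secured in the replacement step of the second paragraph, is designed to guarantee.
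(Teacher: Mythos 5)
Your proposal follows the paper's own derivation essentially verbatim: you adjoin the derived relations \eqref{EnlargeClassicalOp1}--\eqref{EnlargeClassicalOp3} as consequences of Presentation \eqref{OpClassicalPres1}, and you justify replacing \eqref{OpClassicalPres1:relation3} by \eqref{EnlargeClassicalOp4} through the same identity $x_{n-r+1}\cdots x_n\delta z\delta^{q-1}y=(x_{n-r+1}\cdots x_nz\delta^{q-1}yx_1\cdots x_n)y$ that the paper invokes. Your explicit initial-letter bookkeeping for right-fullness (small/large/$x_1$/$y$/$z$, with $j=n+1$ contributing sides beginning with $y$ via $\delta=yx_1\cdots x_n$) is correct and merely spells out what the paper asserts without detail, so the argument is sound and matches the paper's route.
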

\begin{remark}
In Presentation \eqref{OpClassicalPres4}, it is possible that some conditions on the indices are empty. In this case, there are simply no corresponding relations. For instance, if $n=m=1$, the set of relations \eqref{OpClassicalPres4:6} is empty. 
\end{remark}
\begin{lemma}\label{cubeOpClassical}
Presentation \eqref{OpClassicalPres4} satisfies $(\hyperref[Poulet]{C1})$.
\end{lemma}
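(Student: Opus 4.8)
The plan is to replay the proof of Lemma~\ref{CubeClassical} essentially verbatim, exploiting the fact that Presentation~\eqref{OpClassicalPres4} has the same combinatorial skeleton as Presentation~\eqref{ClassicalMonoidPres2}: the generating set is unchanged, the defining relations again fall into a ``small'' family (initial index in $\llbracket 1,n-r+1\rrbracket$) and a ``large'' family (initial index in $\llbracket n-r+2,n+1\rrbracket$), and the cyclic shift of indices by $r$ modulo $n$ is the same. Right-fullness is already provided by Corollary~\ref{OpClassical2}, so by Definition~\ref{Magic} it only remains to exhibit, for every three-element subset $A\subset\{x_1,\dots,x_n,y,z\}$, a labeling $(a_1,a_2,a_3)$ of $A$ and a word $u\in S^*$ such that
\[
\theta(a_1,a_3)=\theta(a_1,a_2)u,\quad \theta(a_2,a_3)=\theta(a_2,a_1)u,\quad \theta(a_3,a_1)=\theta(a_3,a_2).
\]

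First I would call $x_i$ \emph{small} when $i\in\llbracket 2,n-r+1\rrbracket$ and \emph{large} when $i\in\llbracket n-r+2,n\rrbracket$, and record the auxiliary words (with $\delta=yx_1\cdots x_n$)
\[
w_{x_i}=x_{i+1}\cdots x_n\,z\,\delta^{q-1}yx_1\cdots x_{i+r-1}\qquad(1\leq i\leq n-r+1),
\]
\[
w_{x_i}=x_{i+1}\cdots x_n\,\delta z\,\delta^{q-1}yx_1\cdots x_{i+r-n-1}\qquad(n-r+2\leq i\leq n),
\]
these being exactly the complements $\theta(x_i,x_j)$ read off from Relations~\eqref{OpClassicalPres4:2} and \eqref{OpClassicalPres4:3}. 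The complements of $z$ come from \eqref{OpClassicalPres4:1}, \eqref{OpClassicalPres4:5} and \eqref{OpClassicalPres4:6}, and those of $y$ from the endpoint cases $j=n+1$ of \eqref{OpClassicalPres4:3} and \eqref{OpClassicalPres4:4}; assembling them gives the full syntactic-complement table, the exact analogue of the table in the proof of Lemma~\ref{CubeClassical}.

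With the table in hand, the verification is mechanical and splits, just as before, into five families according to how many small and large generators $A$ contains: at most one of each $(I)$, two small $(II)$, two large $(III)$, three small $(IV)$, three large $(V)$. In each subcase the connecting word $u$ is empty, equals $y$, or is a short explicit product, and the three displayed equalities hold \emph{as words} in $S^*$ because the two relevant complements differ only by a trailing $y$, or not at all; this is the sharp form demanded by $(\hyperref[Poulet]{C1})$. The one step that needs genuine care is building the table correctly rather than the case-check itself: in Presentation~\eqref{OpClassicalPres4} the letter $z$ sits at the front of each tail (in the patterns $z\delta^{q-1}y$ and $\delta z\delta^{q-1}y$) instead of after $y$ as in the classical presentation, so the placement of $y$, $z$ and the exponent of $\delta$ in every entry must be tracked scrupulously — a single off-by-one would break the word-level equalities and produce a spurious failure of the cube condition. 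Once the table is checked against the enlarged relations of Corollary~\ref{OpClassical2}, the five families close the argument.
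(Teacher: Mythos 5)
Your proposal follows exactly the same route as the paper: the paper's own proof of this lemma is a one-line deferral stating that it is "done in a similar way as the proof of Lemma \ref{CubeClassical}" with complete details relegated to the author's thesis, and your sketch — reading the syntactic complements off the enlarged presentation of Corollary \ref{OpClassical2}, splitting generators into small and large, and running the same five-family case analysis with connecting words that are empty, equal to $y$, or short explicit products — is precisely that adaptation, spelled out in somewhat more detail than the paper itself provides. The only omission, the mechanical verification of the complement table and the case-by-case word equalities, is an omission the paper shares.
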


\begin{proof}
The proof is done in a similar way as the proof of Lemma \ref{CubeClassical}. The complete details of this proof will appear in the author's thesis \cite{These}.
\end{proof}

\begin{proof}[Proof of Proposition \ref{OpLeftCanc}]
It follows by combining Lemma \ref{cubeOpClassical} with Lemma \ref{MagicCube}. 
\end{proof}

\subsection{\texorpdfstring{The monoid $\M_*^*(n,m)$ is Garside}{}}
Recall from Definition \ref{defDelta} that given two coprime integers $n\leq m$, writing $m=qn+r$ with $q\geq 1$ and $0\leq r\leq n-1$ we defined $w$ to be the element of $\mathcal M_*^*(n,m)$ represented by each of the words in the following subset of $\{x_1,\dots,x_n,y,z\}^*$:
\begin{equation}\label{Divisorsw}\{z(x_1\cdots x_ny)^qx_1\cdots x_r\}\cup\{x_i\cdots x_nyz(x_1\cdots x_ny)^{q-1}x_1\cdots x_{i+r-1}\}_{i=1,\dots,n-r+1}.
\end{equation}
Similarly, we defined $W$ to be the element of $\mathcal M_*^*(n,m)$ represented by each of the words in the set 
\begin{equation}\label{DivisorsW}\{x_i\cdots x_nyz(x_1\cdots x_ny)^qx_1\cdots x_{i+r-n-1}\}_{i=n-r+1,\dots,n+1}.
\end{equation}
With this definition, we defined the element $\Delta=w^{n-r}W^r\in  \mathcal M_*^*(n,m)$, and showed in Proposition \ref{DeltaCentral} and Remark \ref{PositiveDelta} that $\Delta$ is central in $\mathcal M_*^*(n,m)$. We now show that it is a Garside element:

\begin{lemma}\label{DeltaGarsideM}
The element $\Delta$ as defined in Remark \ref{PositiveDelta} is a Garside element of $\M_*^*(n,m)$.
\end{lemma}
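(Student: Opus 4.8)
The plan is to verify directly the three defining conditions of a Garside element for $\Delta$, drawing on the structural facts already in place: the monoid $\M_*^*(n,m)$ is cancellative (Propositions \ref{ClassicalLeftCanc} and \ref{OpLeftCanc}), its presentation \eqref{ClassicalMonoidPres2} is homogeneous so that divisibility is Noetherian, and $\Delta$ is central (Proposition \ref{DeltaCentral} together with Remark \ref{PositiveDelta}). The first condition to secure is $\text{Div}_L(\Delta)=\text{Div}_R(\Delta)$, and this is exactly where centrality does the work. Indeed, if $u\in\text{Div}_L(\Delta)$, write $\Delta=uv$; since $\Delta\in Z(\M_*^*(n,m))$ and the monoid is left-cancellative, Lemma \ref{CenterDiv} yields $uv=vu$, hence $vu=\Delta$ and $u\in\text{Div}_R(\Delta)$. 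The reverse inclusion is symmetric, applying the same lemma to a factorization $vu=\Delta$. Thus the two divisor sets agree and I may speak of $\text{Div}(\Delta)$.

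For the finiteness condition, I would invoke homogeneity: the common length of all words representing a fixed element gives an additive map $\lambda:\M_*^*(n,m)\to\N$ with $\lambda^{-1}(0)=\{1\}$. Any left-divisor $u$ of $\Delta$ satisfies $\lambda(u)\le\lambda(\Delta)$, and since the generating set is finite, there are only finitely many elements of bounded length; hence $\text{Div}(\Delta)$ is finite.

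The remaining condition, $\langle\text{Div}(\Delta)\rangle=\M_*^*(n,m)$, I would obtain by showing every generator divides $\Delta$. Since $W=wy$ and $y$ commutes with $w$ (Lemma \ref{xvw}), the elements $w$ and $W$ commute, so from $\Delta=w^{n-r}W^r=W^rw^{n-r}$ one reads off that $w$ is a left-divisor (as $n-r\ge 1$) and, when $r\ge 1$, so is $W$. It then suffices to note, from the explicit word sets \eqref{Divisorsw} and \eqref{DivisorsW} representing $w$ and $W$, that each generator occurs as an initial letter of some such word, since an initial letter of a word representing $w$ (resp.\ $W$) is a left-divisor of $w$ (resp.\ $W$), hence of $\Delta$. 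Concretely, the word $z\delta^qx_1\cdots x_r$ supplies $z$, the indices $i=1,\dots,n-r+1$ in \eqref{Divisorsw} supply $x_1,\dots,x_{n-r+1}$, and the indices $i=n-r+1,\dots,n+1$ in \eqref{DivisorsW} supply $x_{n-r+1},\dots,x_n$ together with $y$ (from $i=n+1$). The union exhausts $\{x_1,\dots,x_n,y,z\}$, so all generators lie in $\text{Div}(\Delta)$ and condition (ii) follows.

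I expect the only delicate point to be this last bookkeeping in condition (ii): one must confirm that the initial-letter analysis genuinely covers every generator, and in particular track the degenerate case $r=0$ (forced by $n\wedge m=1$ to mean $n=1$), where $\Delta=w$ and $W$ plays no role, yet the two relevant words of \eqref{Divisorsw} still produce $x_1$ and $y$ while $z\delta^q$ produces $z$. Once that is checked, the statement follows, since conditions (i) and (iii) reduce immediately to the centrality, cancellativity and Noetherianity recalled above.
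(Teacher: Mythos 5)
Your proof is correct and takes essentially the same route as the paper's: centrality together with left-cancellativity and Lemma \ref{CenterDiv} gives $\text{Div}_L(\Delta)=\text{Div}_R(\Delta)$, homogeneity (hence Noetherian divisibility) plus finite generation gives finiteness of $\text{Div}(\Delta)$, and the initial letters of the word sets \eqref{Divisorsw} and \eqref{DivisorsW}, via $\Delta=w^{n-r}W^r=W^rw^{n-r}$, show that every generator divides $\Delta$. Your explicit check of the degenerate case $r=0$ is careful bookkeeping the paper leaves implicit, but it is the same argument.
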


\begin{proof}
\underline{The sets $\text{Div}_L(\Delta)$ and $\text{Div}_R(\Delta)$ are equal:} By Remark \ref{PositiveDelta}, the $\Delta$ is central. Now, Lemma \ref{ClassicalLeftCanc} implies that $\M_*^*(n,m)$ is left-cancellative, hence $\text{Div}_L(\Delta)=\text{Div}_R(\Delta)$ by Lemma \ref{CenterDiv}.\\
\underline{The set $\text{Div}(\Delta)$ is finite:}
The divisibility of $\M_*^*(n,m)$ is Noetherian since Presentation \eqref{classicalBraidPres1} is homogeneous. Since $\M_*^*(n,m)$ is finitely generated, it follows that every element of $\M_*^*(n,m)$ (in particular, $\Delta$) admits finitely many left- and right-divisors.\\
\underline{The set $\text{Div}(\Delta)$ generates $\M_*^*(n,m)$:} 
By Definition of $w$, $W$ and $\Delta$, we have $\Delta=w^{n-r}W^r=W^rw^{n-r}$.\\
Now, recall with \eqref{Divisorsw} that $\{x_1,\dots,x_{n-r+1},z\}\subset \text{Div}_L(w)\subset \text{Div}(\Delta)$ and with \eqref{DivisorsW} that $\{x_{n-r+2},\dots,x_n,y\}\subset\text{Div}_L(W)\subset\text{Div}(\Delta),$ thus the divisors of $\Delta$ generate $\M_*^*(n,m)$, which concludes the proof.
\end{proof}
\noindent
We can now prove the first statement of Theorem \ref{ClassicalGarside}:
\begin{proof}[Proof of Theorem \ref{ClassicalGarside} (i)]
First, divisibility of $\mathcal M_*^*(n,m)$ is Noetherian since it admits a homogeneous presentation. Moreover, combining Propositions \ref{ClassicalLeftCanc} and \ref{OpLeftCanc} we get that $\mathcal M_*^*(n,m)$ is cancellative and admits conditional left- and right-lcms. Now Lemma \ref{DeltaGarsideM} exhibits a Garside element, which concludes the proof by combining Lemmas \ref{Deltamultiples} and \ref{LcmGcd}.
\end{proof}

\subsection{\texorpdfstring{The monoid $\M_*(n,m)$ is Garside}{}}
The goal of this subsection is to prove Theorem \ref{ClassicalGarside} (ii).

Recall from Definition \ref{ClassicalMonoid} that given two coprime integers $n\leq m$ with $m\geq 2$, writing $m=qn+r$ with $q\geq 1$ and $0\leq r\leq n-1$, the \textbf{classical monoid} associated to $\mathcal B_*(n,m)$ is the monoid $\mathcal M_*(n,m)$ which admits the following presentation:
\begin{subequations}\label{ClassicalMonoidPresy1}
\begin{align}
    &(1) \,\, \mathrm{Generators}\!:\,  \{x_1,\dots,x_n,y\};\notag\\
&(2) \,\, \mathrm{Relations}\!: \,\notag\\
  &x_{i+1}\cdots x_{n}y\delta^{q-1}x_1\cdots x_{i+r}=x_i\cdots x_{n}y\delta^{q-1}x_1\cdots x_{i+r-1}, \,\forall \, 1\leq i\leq n-r \label{ClassicalMonoidPresy1:1}  \\
  &  x_{i+1}\cdots x_ny\delta^qx_1\cdots x_{i+r-n}=x_i\cdots x_ny\delta^qx_1\cdots x_{i+r-n-1},\, \forall n-r+1\leq i \leq n,\label{ClassicalMonoidPresy1:2}
\end{align}
\end{subequations}
where $\delta$ denotes $x_1\cdots x_ny$.
\begin{example}
    For $n=2,m=3$, Presentation \eqref{ClassicalMonoidPresy1} reads
\begin{equation*}\left\langle\begin{array}{l|cl}
   		x_1,x_2\,	 & \, x_1x_2yx_1=x_2yx_1x_2   \\
  \,\,\,\,\,y\,  &\, x_2yx_1x_2y=yx_1x_2yx_1
				
                          \end{array}\right\rangle,\end{equation*} 
thus we retrieve the Garside monoid associated to the BMR presentation of $G_{13}$ (see \cite[Theorem 2.27 and Tables 1-3]{BMR} for the presentation and \cite[Example 13]{ThesePicantin} for the fact that the monoid is Garside).
\end{example}

\begin{remark}\label{remarkproof2}
Remark \ref{CommutativeBraidQuotients} can be seen as a remark about presentations and the mentioned quotients are also valid at the level of monoids. Therefore, the monoids $\mathcal M_*(n,m)$ and $\mathcal M_*^*(n,m)/\langle z=1\rangle$ are isomorphic. Thus, in order to prove Theorem \ref{ClassicalGarside} (ii), it is enough to show that the pair $(\eqref{ClassicalMonoidPres2},z)$ satisfies $(\hyperref[Champagne]{C2})$ and that $\mathcal M_*(n,m)$ is isomorphic to its opposite monoid to apply Proposition \ref{MagicGarsideOp}.
\end{remark}

\begin{lemma}\label{MagicPropertyy}
Writing $P$ for Presentation \eqref{ClassicalMonoidPres2}, the pair $(P,z)$ satisfies $(\hyperref[Champagne]{C2})$.
\end{lemma}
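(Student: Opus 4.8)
The plan is to verify, one by one, the three defining conditions of $(\hyperref[Champagne]{C2})$ for the pair $(P,z)$, where $P=\langle S\,|\,R\rangle$ is Presentation \eqref{ClassicalMonoidPres2} with $S=\{x_1,\dots,x_n,y,z\}$. The first two conditions are immediate. The presentation is homogeneous for the weight function $\lambda\equiv 1$ and it satisfies $(\hyperref[Poulet]{C1})$ by Lemma \ref{CubeClassical}. Moreover, since $\delta=x_1\cdots x_ny$ contains no $z$, a glance at Relations \eqref{ClassicalMonoidPres2:1}--\eqref{ClassicalMonoidPres2:6} shows that each side of each relation contains exactly one occurrence of $z$, so that $|r_1|_z=|r_2|_z$ for all $(r_1,r_2)\in R$ — which is all the second condition requires when $X=\{z\}$.

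The substance of the lemma is the third condition, namely that the monoids presented by $\langle S\,|\,R\rangle_z$ and $\langle S\,|\,R\rangle/z$ are isomorphic. Both have generating set $\{x_1,\dots,x_n,y\}$. Since $P$ is right-full, its relations are exactly the relations $u\theta(u,v)=v\theta(v,u)$ indexed by the unordered pairs of distinct generators, where $\theta$ is the syntactic complement tabulated in the proof of Lemma \ref{CubeClassical}. For a pair $\{u,v\}$ avoiding $z$, deleting $z$ turns $u\theta(u,v)=v\theta(v,u)$ into $u(\theta(u,v)/z)=v(\theta(v,u)/z)$, which is precisely the corresponding defining relation of $\langle S\,|\,R\rangle_z$. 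Hence the relations of $\langle S\,|\,R\rangle/z$ are those of $\langle S\,|\,R\rangle_z$ together with the ``extra'' relations $\theta(z,v)/z=v\,(\theta(v,z)/z)$ coming from the pairs $\{z,v\}$. In particular $\langle S\,|\,R\rangle/z$ is a quotient of $\langle S\,|\,R\rangle_z$ (as observed in general in the excerpt), so it remains only to check that each extra relation is already a consequence of the relations of $\langle S\,|\,R\rangle_z$.

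This last step is the one I expect to require the most bookkeeping, but the $\theta$-table makes it collapse to a single uniform observation. Reading off the columns labelled $x_1$ and $z$, I would verify the two identities $\theta(z,v)/z=x_1\bigl(\theta(x_1,v)/z\bigr)$ and $\theta(v,z)/z=\theta(v,x_1)/z$ for every $v\in\{x_2,\dots,x_n,y\}$: the second holds because those two columns agree in every relevant row, and the first because $x_1\cdot x_2\cdots x_ny=\delta$ while $w_{x_1}=w_z$. Substituting these identities into the extra relation $\theta(z,v)/z=v\,(\theta(v,z)/z)$ converts it verbatim into the defining relation of $\langle S\,|\,R\rangle_z$ attached to the pair $\{x_1,v\}$, while the one remaining extra relation, from $\{z,x_1\}$, degenerates to the tautology $\delta=\delta$. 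Thus every extra relation already holds in $\langle S\,|\,R\rangle_z$, the canonical surjection $\langle S\,|\,R\rangle_z\onto\langle S\,|\,R\rangle/z$ is an isomorphism, and the third condition follows, completing the verification that $(P,z)$ satisfies $(\hyperref[Champagne]{C2})$.
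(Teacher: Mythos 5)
Your verification of the first two conditions matches the paper's, but your treatment of the third condition is correct and takes a genuinely different route. The paper writes out $\langle S\,|\,R\rangle_z$ in full as Presentation \eqref{Classical/z}, then manipulates the relation families by hand: it amalgamates \eqref{Classical/z:2} with the $i=n-r+1$ slice of \eqref{Classical/z:3} into the family \eqref{/z1}, shows the remaining relations \eqref{Classical/z:3} are consequences of \eqref{Classical/z:1} and \eqref{/z1}, and concludes that $\langle S\,|\,R\rangle_z$ and $\mathcal M_*(n,m)$ share the common intermediate presentation \eqref{Classical/z2} --- that is, it compares $\langle S\,|\,R\rangle_z$ against the \emph{standard} presentation \eqref{ClassicalMonoidPresy1} of $\mathcal M_*(n,m)$. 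You instead exploit right-fullness structurally: since $R$ consists of exactly one relation $u\theta(u,v)=v\theta(v,u)$ per unordered pair of generators, $\langle S\,|\,R\rangle/z$ is $\langle S\,|\,R\rangle_z$ together with the extra relations $\theta(z,v)/z=v\,(\theta(v,z)/z)$, and it suffices to show the canonical surjection kills nothing. Your two word identities check out against the $\theta$-table in the proof of Lemma \ref{CubeClassical}: the $x_1$- and $z$-columns agree in every row $v\notin\{x_1,z\}$ (all relevant entries being $w_{x_i}$ or $w_y$), and $\theta(z,v)/z=x_1\bigl(\theta(x_1,v)/z\bigr)$ holds because $w_{x_1}$ is $z$-free, $w_z=w_{x_1}$, and $x_1\cdot x_2\cdots x_ny=\delta$; I verified that in each of the three cases ($v$ small, $v$ large, $v=y$) the extra relation becomes \emph{verbatim} the $\{x_1,v\}$-relation of $\langle S\,|\,R\rangle_z$, and that the pair $\{z,x_1\}$ degenerates to $\delta=\delta$. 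What your route buys is uniformity and brevity: one observation about two columns of the table disposes of all the extra relations at once, with no re-derivation of equivalences between index families, and it isolates the real reason the lemma holds (in the quotient, $z$ behaves like the redundant complement attached to $x_1$). What the paper's route buys is an explicit simplified presentation \eqref{Classical/z2} that matches the defining presentation of $\mathcal M_*(n,m)$ on the nose, without trusting the precise entries of the syntactic-complement table; note that both arguments inherit the same generic-case conventions for that table (cf.\ Remark \ref{EmptyConditions}), so your proof is valid exactly where the paper's is.
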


\begin{proof}
Write $P=\langle S\,|\, R\rangle$.\\
\underline{The presentation $P$ is homogeneous and satisfies $(\hyperref[Poulet]{C1})$:} This is the content of Lemma \ref{CubeClassical}.\\
\underline{For all $(r_1,r_2)\in R$, we have $|r_1|_z=|r_2|_z$:} For all $(r_1,r_2)\in R$, we have $|r_1|_z=1=|r_2|_z$.\\
\underline{The monoids with presentations $\langle S\,|\,R\rangle_z$ and $\langle S\, |\, R\rangle/z$ are isomorphic:} In this context, the monoid with presentation $\langle S\, | \, R\rangle/z$ is isomorphic to $\mathcal M_*^*(n,m)/\langle z=1\rangle=\mathcal M_*(n,m)$ and $\langle S\, | \, R\rangle_z$ is 
\begin{subequations}\label{Classical/z}
\begin{align}
    &(1) \,\, \mathrm{Generators}\!:\,  \{x_1,\dots,x_n,y\};\notag\\
&(2) \,\, \mathrm{Relations}\!: \,\notag\\
  & x_{i}\cdots x_{n}y\delta^{q-1}x_1\cdots x_{i+r-1}=x_j\cdots x_{n}y\delta^{q-1}x_1\cdots x_{j+r-1}, \, \forall \,1\leq i<j\leq n-r+1, \label{Classical/z:1}  \\
  &x_{i}\cdots x_{n}y\delta^{q}x_1\cdots x_{i+r-n-1}=x_j\cdots x_{n}y\delta^{q}x_1\cdots x_{j+r-n-1}, \,(*),\label{Classical/z:2}\\
  & x_i\cdots x_ny\delta^{q-1}x_1\cdots x_{i+r-1}y=x_j\cdots x_{n}y\delta^{q}x_1\cdots x_{j+r-n-1},\, (**),\label{Classical/z:3}\\
  &(*)\, \forall \, n-r+2\leq i<j\leq n+1\, (**)\,\forall \, i\in[n-r+1],\, j\in \llbracket n-r+2,n+1\rrbracket\notag
\end{align}
\end{subequations}
where $\delta$ denotes $x_1\cdots x_ny$. 

First, the set of relations \eqref{ClassicalMonoidPresy1:1} and \eqref{Classical/z:1} are equivalent. Moreover, the subset of relations of \eqref{Classical/z:3} obtained by setting $i=n-r+1$ $$x_{n-r+1}\cdots x_ny\delta^{q-1}\underbrace{x_1\cdots x_{n}y}_{=\delta}=x_j\cdots x_ny\delta^qx_1\cdots x_{j+r-n-1},\,\forall \,  j\in \llbracket n-r+2,n+1\rrbracket$$
combined with the set of relations \eqref{Classical/z:2} is equivalent to the set of relations
\begin{equation}\label{/z1}
    x_i\cdots x_ny\delta^qx_1\cdots x_{i+r-n-1}=x_j\cdots x_ny\delta^qx_1\cdots x_{j+r-n-1},\,\forall \,  n-r+1\leq i<j\leq n+1.
\end{equation}
Finally, we show that the set of relations \eqref{Classical/z:3} follows from \eqref{Classical/z:1} and \eqref{/z1}:\\
For all $k\in [n-r+1]$, $l\in \llbracket n-r+2,n+1\rrbracket$, we have
\begin{equation*}
\begin{aligned}
x_k\cdots x_ny\delta^{q-1}x_1\cdots x_{k+r-1}y&\underset{\eqref{Classical/z:1}}{=}x_{n-r+1}\cdots x_ny\delta^{q-1}\underbrace{x_1\cdots x_ny}_{=\delta}\\& \underset{\eqref{/z1}}{=}x_l\cdots x_ny\delta^qx_1\cdots x_{l+r-n-1}.
\end{aligned}
\end{equation*}
In particular, an alternative presentation for the monoid with presentation $\langle S\, |R\, \rangle_z$ is given by
\begin{subequations}\label{Classical/z2}
\begin{align}
    &(1) \,\, \mathrm{Generators}\!:\,  \{x_1,\dots,x_n,y\};\notag\\
&(2) \,\, \mathrm{Relations}\!: \,\notag\\
  & x_{i}\cdots x_{n}y\delta^{q-1}x_1\cdots x_{i+r-1}=x_j\cdots x_{n}y\delta^{q-1}x_1\cdots x_{j+r-1}, \,\forall \,  1\leq i<j\leq n-r+1, \label{Classical/z2:1}\\
  & x_{i}\cdots x_{n}y\delta^{q}x_1\cdots x_{i+r-n-1}=x_j\cdots x_{n}y\delta^{q}x_1\cdots x_{j+r-n-1}, \,(*),\label{Classical/z2:2}\\
  &(*)\,\forall \,  n-r+1\leq i<j\leq n+1\notag
\end{align}
\end{subequations}
where $\delta$ denotes $x_1\cdots x_ny$. Since the set of relations
\eqref{ClassicalMonoidPresy1:1} (respectively \eqref{ClassicalMonoidPresy1:2}) and \eqref{Classical/z2:1} (respectively \eqref{Classical/z2:2}) are equivalent,  Presentation \eqref{Classical/z2} is a presentation for both $\langle S\,|\, R\rangle_z$ and $\mathcal M_*(n,m)=\langle S\, | \, R\rangle/z$. This concludes the proof. 
\end{proof}

\begin{lemma}\label{MagicPropertyyOp}
Let $n\in \N^*$, $m\in \N_{\geq 2}$ and assume that $n\leq m$ and $n\wedge m=1$. The monoid $\mathcal M_*(n,m)$ is isomorphic to $\mathcal M_*(n,m)^{\mathrm{op}}$.
\end{lemma}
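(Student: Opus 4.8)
The plan is to produce an explicit anti-automorphism of $\mathcal M_*(n,m)$, since giving such a map is the same as giving an isomorphism $\mathcal M_*(n,m)\to\mathcal M_*(n,m)^{\mathrm{op}}$. I would define $\phi$ on the free monoid $\{x_1,\dots,x_n,y\}^*$ by $\phi(x_i)=x_{n+1-i}$ for all $i\in[n]$, $\phi(y)=y$, and $\phi(ab)=\phi(b)\phi(a)$ for all words $a,b$. This $\phi$ is an involutive anti-automorphism of the free monoid, so to conclude it is enough to check that $\phi$ carries every defining relation of Presentation \eqref{ClassicalMonoidPresy1} to a valid equality in $\mathcal M_*(n,m)$: once $\phi(r_1)=\phi(r_2)$ holds in $\mathcal M_*(n,m)$ for each relation $(r_1,r_2)$, the map descends to an anti-homomorphism $\mathcal M_*(n,m)\to\mathcal M_*(n,m)$, and since $\phi^2=\mathrm{id}$ on the free monoid this anti-homomorphism is an involution, hence bijective.

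The computational core is a single associativity identity in the free monoid. Writing $A=x_1\cdots x_n$, one has $\phi(\delta)=\phi(x_1\cdots x_ny)=yx_1\cdots x_n=yA$, and $(yA)^ky=y(Ay)^k=y\delta^k$ for all $k\geq 0$. Using this, applying $\phi$ to the first family \eqref{ClassicalMonoidPresy1:1} turns both sides of each relation into words of the form $x_a\cdots x_ny\delta^{q-1}x_1\cdots x_{a+r-1}$, which are exactly the words listed in Definition \ref{defDelta} (read with $z=1$, as in Remark \ref{Remark/Notation}) that all represent $w$; applying $\phi$ to the second family \eqref{ClassicalMonoidPresy1:2} turns both sides into words of the form $x_a\cdots x_ny\delta^qx_1\cdots x_{a+r-n-1}$, which all represent $W$. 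In other words, the structural point is that $\phi$ permutes the $w$-representing words among themselves and the $W$-representing words among themselves (and indeed fixes $w$ and $W$), so each defining relation, being an equality of two such words, is sent to another equality of the same kind, which holds in $\mathcal M_*(n,m)$ by the very definition of $w$ and $W$ together with Remark \ref{PositiveDelta}.

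The one delicate point I expect is the index bookkeeping: after applying $\phi$ one must verify that the starting index $a$ of the resulting word lands inside the range $[1,n-r+1]$ for the $w$-words and inside $[n-r+1,n+1]$ for the $W$-words, so that the required equalities of Definition \ref{defDelta} are actually available, paying particular attention to the boundary values $i=1$, $i=n-r$, $i=n-r+1$ and $i=n$ where a first-family word meets a second-family word. This is routine but must be carried out carefully, and the hypotheses $n\wedge m=1$, $n\leq m$, $m=qn+r$ with $q\geq 1$ ensure that all the relevant expressions are genuine positive words. Once these range checks are complete, $\phi$ descends to an involutive anti-automorphism of $\mathcal M_*(n,m)$, giving $\mathcal M_*(n,m)\cong\mathcal M_*(n,m)^{\mathrm{op}}$ as required.
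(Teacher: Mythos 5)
Your proof is correct, and the computation it rests on closes exactly as you predict: writing $\mathrm{word}_j=x_j\cdots x_ny\delta^{q-1}x_1\cdots x_{j+r-1}$ for the $w$-words ($j\in[n-r+1]$) and $\mathrm{Word}_j=x_j\cdots x_ny\delta^{q}x_1\cdots x_{j+r-n-1}$ for the $W$-words ($j\in\llbracket n-r+1,n+1\rrbracket$), your map satisfies $\phi(\mathrm{word}_j)=\mathrm{word}_{n+2-r-j}$ and $\phi(\mathrm{Word}_j)=\mathrm{Word}_{2n+2-r-j}$, so the relation \eqref{ClassicalMonoidPresy1:1} of index $i$ is carried to the relation of index $n+1-r-i$ with sides swapped, and \eqref{ClassicalMonoidPresy1:2} of index $i$ to the one of index $2n+1-r-i$, both ranges being preserved (including the boundary and $n=1$, $r=0$ cases); thus $\phi$ actually permutes the defining relations rather than merely producing derivable consequences. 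Your route differs from the paper's in packaging, though not in its combinatorial core. The paper proves the lemma by identifying $\mathcal M_*(n,m)^{\mathrm{op}}$ with $\mathcal M_*^*(n,m)^{\mathrm{op}}/\langle z=1\rangle$ (Lemma \ref{MagicOpQuotient}), quoting the presentation of $\mathcal M_*^*(n,m)^{\mathrm{op}}$ from Lemma \ref{OpClassical}, and matching the resulting presentation with \eqref{BraidPresy1} via $\delta^{\alpha}y=y(x_1\cdots x_ny)^{\alpha}$ --- and the proof of Lemma \ref{OpClassical} is precisely your ``reverse the word and relabel $x_i\mapsto x_{n+1-i}$'' step, performed once at the level of the larger monoid with $z$ present. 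So the paper's proof is nearly free given machinery it needs anyway for $\mathcal M_*^*(n,m)$, whereas your argument is self-contained: it bypasses Lemma \ref{OpClassical} and the quotient-compatibility lemma entirely, and it yields something slightly stronger than abstract isomorphism, namely an explicit involutive anti-automorphism of $\mathcal M_*(n,m)$ fixing $y$ and the elements $w$, $W$, hence $\Delta$.
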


\begin{proof}
To see this, observe that $\mathcal M_*(n,m)^{\mathrm{op}}$ is isomorphic to $(\mathcal M_*^*(n,m)/\langle z=1\rangle)^{\mathrm{op}}$, which in turn is isomorphic to $\mathcal M_*^*(n,m)^{\mathrm{op}}/\langle z=1\rangle$. Thus, by Lemma \ref{OpClassical} a presentation for $\mathcal M_*(n,m)^{\mathrm{op}}$ is 
\begin{equation}\label{Classical/z3}
\begin{aligned}
    &(1) \,\, \mathrm{Generators}\!:\,  \{x_1,\dots,x_n,y\};\\
&(2) \,\, \mathrm{Relations}\!: \,\\
  & x_i\cdots x_n\delta^{q-1}yx_1\cdots x_{i+r-1}=x_j\cdots x_n\delta^{q-1}yx_1\cdots x_{j+r-1},\,\forall \,  1\leq i<j\leq n-r+1,   \\
  &  x_i\cdots x_n\delta^{q}yx_1\cdots x_{i+r-n-1}=x_j\cdots x_n\delta^{q}yx_1\cdots x_{j+r-n-1},(*),\\
  &(*)\,\forall \,  n-r+1\leq i<j\leq n+1
\end{aligned}
\end{equation}
where $\delta$ is $yx_1\dots x_n$. For all $\alpha\in \N$, we have $\delta^{\alpha}y=(yx_1\dots x_n)^{\alpha}y=y(x_1\cdots x_ny)^{\alpha}$ so that Presentation \eqref{Classical/z3} corresponds to Presentation \eqref{BraidPresy1} of $\mathcal M_*(n,m)$, which concludes the proof.
\end{proof}

\begin{proof}[Proof of Theorem \ref{ClassicalGarside} (ii)]
By Remark \ref{remarkproof2}, the result follows by combining Lemmas \ref{MagicPropertyy} and \ref{MagicPropertyyOp} with Proposition \ref{MagicGarsideOp}.
\end{proof}

%%%%%%%%%%%%%%%%%%%%%%%%%%%%%%%%%%%%%%%%%%%%%%%%%%%%%%%%%%%%%%%%%%%%%%%%%%%%%%%%%%%%%%%%%%%%%%%%%%%%%%%
%%%%%%%%%%%%%%%%%%%%%%%%%%%%%%%%%%%%%%%%%%%%%%%%%%%%%%%%%%%%%%%%%%%%%%%%%%%%%%%%%%%%%%%%%%%%%%%%%%%%%%%
%%%%%%%%%%%%%%%%%%%%%%%%%%%%%%%%%%%%%%%%%%%%%%%%%%%%%%%%%%%%%%%%%%%%%%%%%%%%%%%%%%%%%%%%%%%%%%%%%%%%%%%
%%%%%%%%%%%%%%%%%%%%%%%%%%%%%%%%%%%%%%%%%%%%%%%%%%%%%%%%%%%%%%%%%%%%%%%%%%%%%%%%%%%%%%%%%%%%%%%%%%%%%%%

\subsection{\texorpdfstring{The monoid $\M^*(n,m)$ is Garside}{}}
The goal of this subsection is prove Theorem \ref{ClassicalGarside} (iii).

Recall from Definition \ref{ClassicalMonoid} that given two coprime integers $2\leq n\leq m$, writing $m=qn+r$ with $q\geq 1$ and $0\leq r\leq n-1$, the \textbf{classical monoid} for $\B^*(n,m)$ is the monoid $\mathcal M^*(n,m)$ which admits the following presentation:
\begin{subequations}\label{ClassicalMonoidPresz1}
\begin{align}
    &(1) \,\, \mathrm{Generators}\!:\,  \{x_1,\dots,x_n,z\};\notag\\
&(2) \,\, \mathrm{Relations}\!: \,\notag\\
  &zx_1\cdots x_n=x_1\cdots x_nz,\label{ClassicalMonoidPresz1:1}\\
  & x_{i+1}\cdots x_{n}z\delta^{q-1}x_1\cdots x_{i+r}=x_i\cdots x_{n}z\delta^{q-1}x_1\cdots x_{i+r-1}, \, \forall \, 1\leq i\leq n-r , \label{ClassicalMonoidPresz1:2} \\
  & x_{i+1}\cdots x_{n}z\delta^{q}x_1\cdots x_{i+r-n}=x_i\cdots x_{n}z\delta^{q}x_1\cdots x_{i+r-n-1}, \,\forall \,  n-r+1\leq i\leq n,\label{ClassicalMonoidPresz1:3}
\end{align}
\end{subequations}
where $\delta$ denotes $x_1\cdots x_n$.
\begin{example}
For $n=2,m=3$, Presentation \eqref{ClassicalMonoidPresz1} reads
\begin{equation*}\left\langle\begin{array}{l|cl}
   		x_1,x_2\,	 & \, zx_1x_2=x_1x_2z   \\
  \,\,\,\,\,z\,  &\, x_1x_2zx_1=x_2zx_1x_2=zx_1x_2x_1
                          \end{array}\right\rangle,\end{equation*} 
thus we retrieve the Garside monoid associated to the BMR presentation of $G(3c,3,2)$ (see \cite[Theorem 2.27 and Tables 1-3]{BMR} for the presentation and \cite[Example 3.25]{GrosBouquinBleu} for the fact that the monoid is Garside).
\end{example}
\noindent

\begin{remark}\label{Remarkproof3}
By remark \ref{CommutativeBraidQuotients} the monoids $\mathcal M^*(n,m)$ and $\mathcal M_*^*(n,m)/\langle y=1\rangle$ are isomorphic. Thus, in order to prove Theorem \ref{ClassicalGarside} (ii), it is enough to show that the pair $(\eqref{ClassicalMonoidPres2},y)$ satisfies $(\hyperref[Champagne]{C2})$ and that $\mathcal M^*(n,m)$ is isomorphic to its opposite monoid to apply Proposition \ref{MagicGarsideOp}.
\end{remark}

\begin{lemma}\label{MagicPropertyz}
Writing $P$ for Presentation \eqref{ClassicalMonoidPres2}, the pair $(P,y)$ satisfies $(\hyperref[Champagne]{C2})$.
\end{lemma}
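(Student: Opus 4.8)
The plan is to verify the three requirements of property $(\hyperref[Champagne]{C2})$ for the pair $(P,y)$ one at a time, mirroring the proof of Lemma \ref{MagicPropertyy} but with $y$ in place of $z$. The first requirement, that $P$ be homogeneous and satisfy $(\hyperref[Poulet]{C1})$, is already established in Lemma \ref{CubeClassical} and needs no further work. For the second, since $X=\{y\}$ is a singleton I only need $|r_1|_y=|r_2|_y$ for every relation of Presentation \eqref{ClassicalMonoidPres2}; recalling that $\delta=x_1\cdots x_ny$ carries exactly one letter $y$, a direct count of the explicit $y$'s together with those hidden inside the powers of $\delta$ shows that each side of \eqref{ClassicalMonoidPres2:1}--\eqref{ClassicalMonoidPres2:6} has the same number of $y$'s. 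This is the short bookkeeping part.

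The substance is the third requirement, that $\langle S\,|\,R\rangle_y$ and $\langle S\,|\,R\rangle/y$ present isomorphic monoids. For $\langle S\,|\,R\rangle/y$ I would invoke Remark \ref{Remarkproof3}: since $P$ presents $\M_*^*(n,m)$, the presentation $\langle S\,|\,R\rangle/y$ presents $\M_*^*(n,m)/\langle y=1\rangle\cong\M^*(n,m)$, which carries Presentation \eqref{ClassicalMonoidPresz1}. For $\langle S\,|\,R\rangle_y$ I would read off the syntactic-complement table in the proof of Lemma \ref{CubeClassical} and delete every instance of $y$ from each entry $\theta(u,v)$; the key observation is that $\delta/y=x_1\cdots x_n$ is precisely the word denoted $\delta$ in Presentation \eqref{ClassicalMonoidPresz1}, so the deleted complements retain their shape. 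Forming the relations $u(\theta(u,v)/y)=v(\theta(v,u)/y)$ over distinct $u,v\in\{x_1,\dots,x_n,z\}$ then yields: the pair $(x_1,z)$ gives the commutation relation \eqref{ClassicalMonoidPresz1:1}; the pairs among the \emph{small} generators, together with $(x_1,x_i)$ and $(x_i,z)$, force all words $x_i\cdots x_nz\delta^{q-1}x_1\cdots x_{i+r-1}$ ($1\le i\le n-r+1$) to coincide, giving \eqref{ClassicalMonoidPresz1:2}; and the pairs among the \emph{large} generators together with $(x_i,z)$ force all words $x_i\cdots x_nz\delta^{q}x_1\cdots x_{i+r-n-1}$ ($n-r+1\le i\le n+1$) to coincide, giving \eqref{ClassicalMonoidPresz1:3}.

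To finish I would note that the two families meet: at $i=n-r+1$ the word $x_{n-r+1}\cdots x_nz\delta^q$ is simultaneously the $i=n-r+1$ member of the first family (since $\delta^{q-1}x_1\cdots x_n=\delta^q$) and of the second (since the trailing block $x_1\cdots x_{i+r-n-1}$ is then empty), so the two chains of equalities glue into a single relation set equivalent to \eqref{ClassicalMonoidPresz1}. The remaining mixed pairs $(x_1,x_j\text{ large})$ and $(x_i\text{ small},x_j\text{ large})$ only produce relations already implied by this gluing, hence are redundant. Consequently $\langle S\,|\,R\rangle_y$ and $\langle S\,|\,R\rangle/y$ both present $\M^*(n,m)$, which gives the isomorphism and completes $(\hyperref[Champagne]{C2})$.

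I expect the main obstacle to be precisely this third step, and more specifically the fact that---unlike the letter $z$ in Lemma \ref{MagicPropertyy}, which occurs rigidly once per relation---here $y$ sits inside $\delta$. Deleting it therefore alters the internal structure of every complement rather than stripping a single isolated occurrence, so I must track carefully how the small/large dichotomy and the leading and trailing $y$'s recombine after deletion. The computation is routine but case-heavy; the payoff is that the families collapse and merge exactly as in the compact Presentation \eqref{ClassicalMonoidPresz1}, just as they did for the $z$-quotient.
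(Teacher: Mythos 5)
Your proposal is correct and follows essentially the same route as the paper: cite Lemma \ref{CubeClassical} for homogeneity and $(\hyperref[Poulet]{C1})$, count the occurrences of $y$ (including those inside powers of $\delta$) in each relation of \eqref{ClassicalMonoidPres2}, and identify $\langle S\,|\,R\rangle_y$ with Presentation \eqref{ClassicalMonoidPresz1} of $\M^*(n,m)=\langle S\,|\,R\rangle/y$ via the collapse $\delta^{q-1}x_1\cdots x_n=\delta^q$ after deleting $y$. The only cosmetic difference is organizational: the paper pivots on the $i=n-r+1$ instance of the mixed relations \eqref{Classical/y:4} to merge the two families into \eqref{/y1} and then derives the remaining relations as redundant, whereas you glue the chains through the shared word $x_{n-r+1}\cdots x_nz\delta^q$ and the common value $z\delta^qx_1\cdots x_r$ coming from the $(x_i,z)$ relations and declare the mixed pairs redundant afterwards --- the same computation in a slightly different order.
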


\begin{proof}
Write $P=\langle  S\, |\, R\rangle$.\\
\underline{The presentation $P$ is homogeneous and satisfies $(\hyperref[Poulet]{C1})$:} This is the content of Lemma \ref{CubeClassical}.\\
\underline{For all $(r_1,r_2)\in R$, we have $|r_1|_y=|r_2|_y$:} One easily verifies that the equality holds true for the lines of relations \eqref{ClassicalMonoidPres2:1}-\eqref{ClassicalMonoidPres2:6}.\\
\underline{The monoids with presentations $\langle S\,|\,R\rangle_y$ and $\langle S\, |\, R\rangle/y$ are isomorphic:} In this context, the monoid with presentation $\langle S\, | \, R\rangle/y$ is isomorphic to $\mathcal M_*^*(n,m)/\langle y=1\rangle=\mathcal M^*(n,m)$ and $\langle S\, | \, R\rangle_y$ is 
\begin{subequations}\label{Classical/y}
\begin{align}
    &(1) \,\, \mathrm{Generators}\!:\,  \{x_1,\dots,x_n,z\};\notag\\
&(2) \,\, \mathrm{Relations}\!: \,\notag\\
  &zx_1\cdots x_n=x_1\cdots x_nz,\label{Classical/y:1}\\
  &  x_{i}\cdots x_{n}z\delta^{q-1}x_1\cdots x_{i+r-1}=x_j\cdots x_{n}z\delta^{q-1}x_1\cdots x_{j+r-1}, \,  \forall \, 1\leq i<j\leq n-r+1  , \label{Classical/y:2}\\
  &x_{i}\cdots x_{n}z\delta^{q}x_1\cdots x_{i+r-n-1}=x_j\cdots x_{n}z\delta^{q}x_1\cdots x_{j+r-n-1}, \,\forall \,  n-r+2\leq i<j\leq n,\label{Classical/y:3}\\
  &x_i\cdots x_nz\delta^{q-1}x_1\cdots x_{i+r-1}=x_j\cdots x_{n}z\delta^{q}x_1\cdots x_{j+r-n-1},\, (*),\label{Classical/y:4}\\
  &x_{i}\cdots x_{n}z\delta^{q-1}x_1\cdots x_{i+r-1}=z\delta^qx_1\cdots x_r, \,\forall \,   2\leq i\leq n-r+1,\label{Classical/y:5}\\
  &x_{i}\cdots x_{n}z\delta^{q}x_1\cdots x_{i+r-n-1}=z\delta^qx_1\cdots x_r, \,\forall \, n-r+2\leq i\leq n,\label{Classical/y:6}\\
  & (*) \,\forall \,  i\in[n-r+1],\, j\in \llbracket n-r+2,n\rrbracket\notag
\end{align}
\end{subequations}
where $\delta$ denotes $x_1\cdots x_n$. First, the set of relations \eqref{ClassicalMonoidPresz1:1} and \eqref{Classical/y:1} are the same.
Moreover, the subset of relations of \eqref{Classical/y:4} obtained by setting $i=n-r+1$ 
$$x_{n-r+1}\cdots x_nz\delta^{q-1}x_1\cdots x_n=x_j\cdots x_nz\delta^qx_1\cdots x_{j+r-n-1}, \,\forall \,  j\in \llbracket n-r+2,n\rrbracket$$
combined with \eqref{Classical/y:3} and \eqref{Classical/y:6} is equivalent to the set of relations 
\begin{equation}\label{/y1}
x_i\cdots x_nz\delta^qx_1\cdots x_{i+r-n-1}=x_j\cdots x_nz\delta^qx_1\cdots x_{j+r-n-1}, \,\forall \,  n-r+1\leq i<j\leq n+1.
\end{equation}
We now show that the sets of relations \eqref{Classical/y:1} and \eqref{Classical/y:2} along with the set of relations \eqref{/y1} imply \eqref{Classical/y:4} and \eqref{Classical/y:5}:\\
\underline{\eqref{Classical/y:4}:} For all $k\in [n-r+1], l\in \llbracket n-r+2,n\rrbracket$, we have
\begin{equation*}
\begin{aligned}
x_k\cdots x_nz\delta^{q-1}x_1\cdots x_{k+r-1}&\underset{\eqref{Classical/y:2}}{=}x_{n-r+1}\cdots x_nz\delta^{q-1}x_1\cdots x_n \\&\underset{\eqref{/y1}}{=}x_l\cdots x_n\delta^qx_1\cdots x_{l+r-n-1}.
\end{aligned}
\end{equation*}
\underline{\eqref{Classical/y:5}:}
For all $k\in \llbracket 2,n-r+1\rrbracket$, we have
\begin{equation*}
    \begin{aligned}
        x_k\dots x_nz\delta^{q-1}x_1\cdots x_{k+r-1}&\underset{\eqref{Classical/y:2}}{=}x_1\cdots x_nz\delta^{q-1}x_1\cdots x_r
        \underset{\eqref{Classical/y:1}}{=}z\delta^qx_1\cdots x_r.
    \end{aligned}
\end{equation*}
In particular, an alternative presentation for the monoid with Presentation $\langle S\,|\, R\rangle_y$ is 
\begin{subequations}\label{Classical/y2}
\begin{align}
    &(1) \,\, \mathrm{Generators}\!:\,  \{x_1,\dots,x_n,z\};\notag\\
&(2) \,\, \mathrm{Relations}\!: \,\notag\\
  &zx_1\cdots x_n=x_1\cdots x_nz,\label{Classical/y2:1}\\
  & x_{i}\cdots x_{n}z\delta^{q-1}x_1\cdots x_{i+r-1}=x_j\cdots x_{n}z\delta^{q-1}x_1\cdots x_{j+r-1}, \,\forall \,  1\leq i<j\leq n-r+1 ,\label{Classical/y2:2}\\
  &x_{i}\cdots x_{n}z\delta^{q}x_1\cdots x_{i+r-n-1}=x_j\cdots x_{n}z\delta^{q}x_1\cdots x_{j+r-n-1}, \,(*),\label{Classical/y2:3}\\
  &(*)\, \forall \,  n-r+1\leq i<j\leq n+1\notag
\end{align}
\end{subequations}
where $\delta$ denotes $x_1\dots x_n$. Since the set of relations \eqref{ClassicalMonoidPresz1:1} (respectively \eqref{ClassicalMonoidPresz1:2}, \eqref{ClassicalMonoidPresz1:3}) and \eqref{Classical/y2:1} (respectively \eqref{Classical/y2:2}, \eqref{Classical/y2:3}) are equivalent, Presentation \eqref{Classical/y2} is a presentation for both $\langle S\, | \, R\rangle_y$ and $\mathcal M^*(n,m)=\langle S\, | \, R \rangle/y$. This concludes the proof.
\end{proof}

\begin{lemma}\label{MagicPropertyzOp}
Let $n,m\in \N_{\geq 2}$ and assume that $n\leq m$ and $n\wedge m=1$. The monoid $\mathcal M^*(n,m)$ is isomorphic to $\mathcal M^*(n,m)^{\mathrm{op}}$.
\end{lemma}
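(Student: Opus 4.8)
The plan is to mirror the proof of Lemma \ref{MagicPropertyyOp}, simply interchanging the roles of $z$ and $y$. First I would observe, exactly as in Remark \ref{remarkproof2} and Remark \ref{Remarkproof3}, that $\mathcal M^*(n,m)^{\mathrm{op}}$ is isomorphic to $(\mathcal M_*^*(n,m)/\langle y=1\rangle)^{\mathrm{op}}$, and then apply Lemma \ref{MagicOpQuotient} to rewrite this as $\mathcal M_*^*(n,m)^{\mathrm{op}}/\langle y=1\rangle$. This reduces the problem to computing a presentation of $\mathcal M_*^*(n,m)^{\mathrm{op}}$ and setting $y=1$ in it.

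Next I would invoke Lemma \ref{OpClassical}, which provides Presentation \eqref{OpClassicalPres1} for $\mathcal M_*^*(n,m)^{\mathrm{op}}$, and set $y=1$. Under this quotient the word $\delta = yx_1\cdots x_n$ specialises to $x_1\cdots x_n$, which is precisely the element denoted $\delta$ in Presentation \eqref{ClassicalMonoidPresz1} of $\mathcal M^*(n,m)$. The relation \eqref{OpClassicalPres1:relation1} becomes $zx_1\cdots x_n = x_1\cdots x_nz$, matching \eqref{ClassicalMonoidPresz1:1}; in particular $z$ commutes with $\delta$, so in the relation \eqref{OpClassicalPres1:relation3} the factor $\delta z\delta^{q-1}$ can be rewritten as $z\delta^q$.

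After these substitutions, the two families \eqref{OpClassicalPres1:relation2} and \eqref{OpClassicalPres1:relation3} become, respectively, the statements that all words $x_i\cdots x_nz\delta^{q-1}x_1\cdots x_{i+r-1}$ (for $1\leq i\leq n-r+1$) are equal, and that all words $x_i\cdots x_nz\delta^qx_1\cdots x_{i+r-n-1}$ (for $n-r+1\leq i\leq n+1$) are equal. These are exactly the ``all-equal'' reformulations of the consecutive relations \eqref{ClassicalMonoidPresz1:2} and \eqref{ClassicalMonoidPresz1:3}; the equivalence between the chained consecutive form and the all-equal form is the same elementary observation already used inside Lemma \ref{OpClassical}. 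I would then conclude that the resulting presentation is precisely Presentation \eqref{ClassicalMonoidPresz1}, whence $\mathcal M^*(n,m)^{\mathrm{op}}\cong \mathcal M^*(n,m)$.

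The computation is entirely routine once the reduction to $\mathcal M_*^*(n,m)^{\mathrm{op}}$ is in place; the only point requiring attention is the bookkeeping of the index ranges and the use of $z\delta=\delta z$ to convert $\delta z\delta^{q-1}$ into $z\delta^q$, so that the specialised relation \eqref{OpClassicalPres1:relation3} lands in the exact form of \eqref{ClassicalMonoidPresz1:3}. I do not anticipate any genuine obstacle here, since the structure is identical to the already-established Lemma \ref{MagicPropertyyOp}; the work consists only in verifying that the substitution $y=1$ carries Presentation \eqref{OpClassicalPres1} onto Presentation \eqref{ClassicalMonoidPresz1} up to these harmless rewritings.
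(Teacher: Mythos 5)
Your proposal is correct and follows the paper's own proof essentially verbatim: the paper likewise reduces via Lemma \ref{MagicOpQuotient} to $\mathcal M_*^*(n,m)^{\mathrm{op}}/\langle y=1\rangle$, applies Lemma \ref{OpClassical} with $y=1$ so that $\delta$ specialises to $x_1\cdots x_n$, and uses $z\delta=\delta z$ to rewrite $\delta z\delta^{q-1}$ as $z\delta^q$, identifying the result with Presentation \eqref{BraidPresz1} (equivalently \eqref{ClassicalMonoidPresz1}). The only cosmetic difference is that you make explicit the elementary equivalence between the all-equal and consecutive-chain forms of the relation families, which the paper leaves implicit.
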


\begin{proof}
To see this, observe that $\mathcal M^*(n,m)^{\mathrm{op}}$ is isomorphic to $(\mathcal M_*^*(n,m)/\langle y=1\rangle)^{\mathrm{op}}$, which in turn is isomorphic to $\mathcal M_*^*(n,m)^{\mathrm{op}}/\langle y=1\rangle$. Thus, by Lemma \ref{OpClassical} a presentation for $\mathcal M^*(n,m)^{\mathrm{op}}$ is
\begin{equation}\label{Classical/y3}
\begin{aligned}
    &(1) \,\, \mathrm{Generators}\!:\,  \{x_1,\dots,x_n,z\};\\
&(2) \,\, \mathrm{Relations}\!: \,\\
  &zx_1\cdots x_n=x_1\cdots x_nz,\\
  & x_i\cdots x_nz\delta^{q-1}x_1\cdots x_{i+r-1}=x_j\cdots x_nz\delta^{q-1}x_1\cdots x_{j+r-1},\, \forall \, 1\leq i<j\leq n-r+1,   \\
  & x_i\cdots x_n\delta z\delta^{q-1}x_1\cdots x_{i+r-n-1}=x_j\cdots x_n\delta z\delta^{q-1}x_1\cdots x_{j+r-n-1} \, (*),\\
  &(*)\,\forall \,  n-r+1\leq i<j\leq n+1,
\end{aligned}
\end{equation}
where $\delta$ denotes $x_1\cdots x_n$. Since $z\delta=\delta z$, we have $\delta z \delta^{q-1}=z\delta^q$ so that Presentation \eqref{Classical/y3} corresponds to Presentation \eqref{BraidPresz1} of $\mathcal M^*(n,m)$, which concludes the proof.
\end{proof}

\begin{proof}[Proof of Theorem \ref{ClassicalGarside} (iii)]
By Remark \ref{Remarkproof3}, the result follows by combining Lemmas \ref{MagicPropertyz} and \ref{MagicPropertyzOp} with Proposition \ref{MagicGarsideOp}.
\end{proof}

%%%%%%%%%%%%%%%%%%%%%%%%%%%%%%%%%%%%%%%%%%%%%%%%%%%%%%%%%%%%%%%%%%%%%%%%%%%%%%%%%%%%%%%%%%%%%%%%%%%%%%%
%%%%%%%%%%%%%%%%%%%%%%%%%%%%%%%%%%%%%%%%%%%%%%%%%%%%%%%%%%%%%%%%%%%%%%%%%%%%%%%%%%%%%%%%%%%%%%%%%%%%%%%
%%%%%%%%%%%%%%%%%%%%%%%%%%%%%%%%%%%%%%%%%%%%%%%%%%%%%%%%%%%%%%%%%%%%%%%%%%%%%%%%%%%%%%%%%%%%%%%%%%%%%%%
%%%%%%%%%%%%%%%%%%%%%%%%%%%%%%%%%%%%%%%%%%%%%%%%%%%%%%%%%%%%%%%%%%%%%%%%%%%%%%%%%%%%%%%%%%%%%%%%%%%%%%%%%%%%%%%%%%%%%%%%%%%%%%%%%%%%%%%%%%%%%%%%%%%%%%%%%%%%%%%%%%%%%%%%%%%%%%%%%%%%%%%%%%%%%%%%%%%%%%%%%%%%%
%%%%%%%%%%%%%%%%%%%%%%%%%%%%%%%%%%%%%%%%%%%%%%%%%%%%%%%%%%%%%%%%%%%%%%%%%%%%%%%%%%%%%%%%%%%%%%%%%%%%%%%
%%%%%%%%%%%%%%%%%%%%%%%%%%%%%%%%%%%%%%%%%%%%%%%%%%%%%%%%%%%%%%%%%%%%%%%%%%%%%%%%%%%%%%%%%%%%%%%%%%%%%%%
%%%%%%%%%%%%%%%%%%%%%%%%%%%%%%%%%%%%%%%%%%%%%%%%%%%%%%%%%%%%%%%%%%%%%%%%%%%%%%%%%%%%%%%%%%%%%%%%%%%%%%%%%%%%%%%%%%%%%%%%%%%%%%%%%%%%%%%%%%%%%%%%%%%%%%%%%%%%%%%%%%%%%%%%%%%%%%%%%%%%%%%%%%%%%%%%%%%%%%%%%%%%%%
%%%%%%%%%%%%%%%%%%%%%%%%%%%%%%%%%%%%%%%%%%%%%%%%%%%%%%%%%%%%%%%%%%%%%%%%%%%%%%%%%%%%%%%%%%%%%%%%%%%%%%%
%%%%%%%%%%%%%%%%%%%%%%%%%%%%%%%%%%%%%%%%%%%%%%%%%%%%%%%%%%%%%%%%%%%%%%%%%%%%%%%%%%%%%%%%%%%%%%%%%%%%%%%
%%%%%%%%%%%%%%%%%%%%%%%%%%%%%%%%%%%%%%%%%%%%%%%%%%%%%%%%%%%%%%%%%%%%%%%%%%%%%%%%%%%%%%%%%%%%%%%%%%%%%%%

\section{\texorpdfstring{The Dual braid monoids are Garside}{}}
In this section, when we say that two sets of relations are equivalent, we mean that they are equivalent as \textit{monoid relations} instead of merely group relations.\\
The goal of this section is to prove the following theorem:
\begin{theorem}\label{GarsideDual}
Let $n,m\in\N^*$ and assume that $n\wedge m=1$ and $n\leq m$. Then the following hold:\\
(i) The monoid $\mathcal D_*^*(n,m)$ is Garside.\\
(ii) If $m\geq 2$, the monoid $\D_*(n,m)$ is Garside.\\
(iii) If $n\geq 2$, the monoid $\D^*(n,m)$ is Garside.
\end{theorem}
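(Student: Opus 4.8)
The plan is to follow verbatim the three-step strategy that established Theorem \ref{ClassicalGarside}: prove part (i) directly, and obtain parts (ii) and (iii) as quotients of $\D_*^*(n,m)$ through the machinery of Section \ref{SecGarside}. For part (i) I would first enlarge Presentation \eqref{dual3} to a right-full homogeneous presentation $P$ of $\D_*^*(n,m)$, precisely as Corollary \ref{ClassicalMonoid2} enlarges the classical presentation: for each ordered pair of distinct generators $u,v$ in $\{x_1,\dots,x_m,y,z_1,\dots,z_m\}$ one adjoins the relation $u\,\theta(u,v)=v\,\theta(v,u)$ obtained by chaining the defining relations \eqref{dual3:1}--\eqref{dual3:5}. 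One then checks that $P$ satisfies property $(\hyperref[Poulet]{C1})$ by tabulating the syntactic complement $\theta$ and verifying, for every three-element subset of the generating set, that some ordering $(a_1,a_2,a_3)$ and word $u$ give $\theta(a_1,a_3)=\theta(a_1,a_2)u$, $\theta(a_2,a_3)=\theta(a_2,a_1)u$ and $\theta(a_3,a_1)=\theta(a_3,a_2)$; Lemma \ref{MagicCube} then yields that $\D_*^*(n,m)$ is left-cancellative and admits conditional right-lcms. Running the same computation on a convenient presentation of $\D_*^*(n,m)^{\mathrm{op}}$, produced exactly as in Lemma \ref{OpClassical}, gives right-cancellativity and conditional left-lcms, so $\D_*^*(n,m)$ is cancellative with conditional lcms on both sides.

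To close part (i) I would exhibit a central Garside element $\Delta\in\D_*^*(n,m)$, namely the positive word in the dual generators representing, under the embedding $\D_*^*(n,m)\into\B_*^*(n,m)$, the central element of $\B_*^*(n,m)$ described in Lemma \ref{Delta=Delta}. Its centrality is verified either by commutation lemmas analogous to Lemmas \ref{xvw} and \ref{xvW} or directly from the relations, following Proposition \ref{DeltaCentral}; since $P$ is homogeneous and finitely generated, $\mathrm{Div}(\Delta)$ is finite and equals $\mathrm{Div}_L(\Delta)=\mathrm{Div}_R(\Delta)$ by Lemma \ref{CenterDiv}, while each generator divides $\Delta$ on the left, so $\mathrm{Div}(\Delta)$ generates the monoid. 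Combining Lemmas \ref{Deltamultiples} and \ref{LcmGcd} then makes $(\D_*^*(n,m),\leq_L)$ and $(\D_*^*(n,m),\leq_R)$ lattices, proving that $\D_*^*(n,m)$ is Garside.

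For parts (ii) and (iii) I would use that $\D_*(n,m)\cong\D_*^*(n,m)/\langle y=1\rangle$ and $\D^*(n,m)\cong\D_*^*(n,m)/\langle z_1=\dots=z_m=1\rangle$, then invoke Proposition \ref{MagicGarsideOp}. One checks that the pairs $(P,\{y\})$ and $(P,\{z_1,\dots,z_m\})$ satisfy $(\hyperref[Champagne]{C2})$: homogeneity and $(\hyperref[Poulet]{C1})$ are inherited from part (i), the weight condition is a direct count (the generator $y$, respectively the collective family $z_1,\dots,z_m$, occurs with equal multiplicity on both sides of every relation of $P$), and the isomorphism $\langle S\,|\,R\rangle_X\cong\langle S\,|\,R\rangle/X$ is a short Tietze manipulation modelled on Lemmas \ref{MagicPropertyy} and \ref{MagicPropertyz}. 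The self-opposite isomorphisms $\D_*(n,m)\cong\D_*(n,m)^{\mathrm{op}}$ and $\D^*(n,m)\cong\D^*(n,m)^{\mathrm{op}}$ then follow from Lemma \ref{MagicOpQuotient} together with the opposite presentation computed in part (i), exactly as in Lemmas \ref{MagicPropertyyOp} and \ref{MagicPropertyzOp}.

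The main obstacle is the $(\hyperref[Poulet]{C1})$ verification for the enlarged dual presentation and its opposite. With $2m+1$ generators the case analysis is substantially longer than in Lemma \ref{CubeClassical}, and the genuine difficulty is twofold: sorting the generators $x_i$, $z_i$ and $y$ into the correct families, and, more delicately, finding the right enlargement making $P$ genuinely right-full, since, unlike the classical case, the commutation relations $x_iz_{i+1}=z_ix_i$ interleave the two families of indexed generators. Once the syntactic-complement table is correctly assembled the individual cube identities are routine; assembling that table cleanly is where the work concentrates.
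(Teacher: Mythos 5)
Your proposal reproduces the paper's proof essentially step for step: the paper establishes (i) by enlarging Presentation \eqref{dual3} to the right-full homogeneous Presentation \eqref{DualMonoidPres2}, checking $(\hyperref[Poulet]{C1})$ there and on the opposite presentation obtained exactly as you describe (Lemma \ref{OpDual}, Corollary \ref{OpDual2}, Lemmas \ref{CubeDual} and \ref{CubeOpDual}), exhibiting a central Garside element, and then deduces (ii) and (iii) precisely through $(\hyperref[Champagne]{C2})$ for the pairs $(\eqref{DualMonoidPres2},y)$ and $(\eqref{DualMonoidPres2},Z)$ with $Z=\{z_1,\dots,z_m\}$, the self-opposite isomorphisms, and Proposition \ref{MagicGarsideOp} (Lemmas \ref{MagicPropertyDualz}, \ref{MagicPropertyDualzOp}, \ref{MagicDualy}, \ref{MagicPropertyDualyOp}). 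Your identification of the relevant quotients ($\D_*(n,m)\cong\D_*^*(n,m)/\langle y=1\rangle$ and $\D^*(n,m)\cong\D_*^*(n,m)/\langle z_i=1\ \forall i\rangle$) is the one the paper uses.

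Two caveats, neither fatal. First, the uniform enlarged presentation \eqref{DualMonoidPres2} only makes sense for $n\geq 2$ (its derivation uses $m-n+1<m$), so the paper splits off $n=1$: part (i) gets the separate enlarged Presentation \eqref{dualN=12} with its own $(\hyperref[Poulet]{C1})$ check (Lemma \ref{CubedualN=1}), and for part (ii) at $n=1$ the quotient machinery is bypassed entirely, since $\D_*(1,m)$ is the known dual monoid of $\B(I_2(2m))$, Garside by \cite{Origines}; your generic recipe silently assumes one presentation covers all cases. Second, your definition of the Garside element via the embedding $\D_*^*(n,m)\into\B_*^*(n,m)$ is circular at that point of the argument: the embedding through Theorem \ref{Ore} requires common left-multiples, which are only available after a Garside element is in hand (Lemma \ref{Deltamultiples}). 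The paper avoids this by defining $\Delta=w^{m-n}W^{n}$ directly as a positive word, with $w$ represented by $z_1x_1\cdots x_n$ and $W$ by $yz_1x_1\cdots x_n$, and proving centrality inside the monoid via the sliding rules $x_iw=wx_{i+n}$, $z_iw=wz_{i+n}$, $x_iW=Wx_{i+n}$, $z_iW=Wz_{i+n}$ (Lemmas \ref{GarsideDualLemma1}--\ref{GarsideDualLemma3}); since you name this commutation route as your fallback, the issue is one of presentation order rather than a missing idea. Note also that the correct dual exponents are $m-n$ and $n$, not the classical $n-r$ and $r$.
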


The proof strategy goes as follows: we show that $\mathcal D_*^*(n,m)$ is cancellative and admits conditional left- and right-lcms (Propositions \ref{LeftCancDual} and \ref{LeftCancDualOp}), then we exhibit a Garside element (Lemma \ref{GarsideDualLemma4}) and conclude the proof using results of Section \ref{SecGarside}.

\subsection{\texorpdfstring{The monoid $\D_*^*(n,m)$ is left-cancellative}{}}
\begin{proposition}\label{LeftCancDual}
Let $n,m\in\N^*$ with $n\wedge m=1$ and $n\leq m$. The monoid $\D_*^*(n,m)$ is left cancellative and admits conditional right-lcms.
\end{proposition}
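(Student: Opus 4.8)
The plan is to follow the same strategy as in the proof of Proposition \ref{ClassicalLeftCanc}: I will exhibit a right-full homogeneous presentation of $\D_*^*(n,m)$ and verify that it satisfies property $(\hyperref[Poulet]{C1})$, so that left-cancellativity and the existence of conditional right-lcms follow at once from Lemma \ref{MagicCube}. First, observe that Presentation \eqref{dual3} is homogeneous: assigning weight $1$ to each of the $2m+1$ generators $x_1,\dots,x_m,y,z_1,\dots,z_m$ works, since every defining relation equates two words of equal length. Hence the divisibility of $\D_*^*(n,m)$ is Noetherian, and by Lemma \ref{MagicCube} it will suffice to produce a right-full presentation satisfying $(\hyperref[Poulet]{C1})$.

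Next, I would enlarge Presentation \eqref{dual3} to a right-full one, exactly as Corollary \ref{ClassicalMonoid2} enlarges Presentation \eqref{classicalBraidPres1}. Concretely, for each ordered pair of distinct generators $(a,b)$ I must record the unique relation of the form $a\,\theta(a,b)=b\,\theta(b,a)$; several of these are not literally among \eqref{dual3:1}--\eqref{dual3:5} but are derived consequences obtained by left/right-multiplying the stated relations and combining the commutation-type relations \eqref{dual3:1}, \eqref{dual3:2} with the length-$n$ relations \eqref{dual3:3}, \eqref{dual3:4} and the wrap-around relation \eqref{dual3:5}. The outcome is a presentation of $\D_*^*(n,m)$ in which the syntactic complement $\theta(a,b)$ is a well-defined word for every ordered pair $(a,b)$ of distinct generators, which I would record in a table of syntactic complements as in the proof of Lemma \ref{CubeClassical}.

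The core of the argument is then verifying $(\hyperref[Poulet]{C1})$ for every $3$-element subset $A$ of the generating set. As in Lemma \ref{CubeClassical}, I would organize the verification by the types of the three chosen generators --- whether each is one of the $x_i$, the element $y$, or one of the $z_i$ --- and, within the $x$'s and $z$'s, by the range into which the index falls relative to the Euclidean division $m=qn+r$. This range is what controls whether a relation ``wraps around'' through $y$, i.e.\ whether one invokes a relation of type \eqref{dual3:3} or one of type \eqref{dual3:4}--\eqref{dual3:5}. For each type I would exhibit an ordering $(a_1,a_2,a_3)$ of $A$ together with a word $u$ satisfying $\theta(a_1,a_3)=\theta(a_1,a_2)u$, $\theta(a_2,a_3)=\theta(a_2,a_1)u$, and $\theta(a_3,a_1)=\theta(a_3,a_2)$, as required by Definition \ref{Magic}.

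The main obstacle is precisely this last step: the bookkeeping of indices modulo the wrap-around structure. Because the dual presentation mixes short commutation relations with long length-$n$ relations that may pass through $y$, computing the syntactic complements correctly and checking that the single magic word $u$ is simultaneously valid across all three equalities demands careful case distinctions. I expect the number of genuinely distinct cases to be modest once the generators are grouped by type and index range, but the verification itself is lengthy and would be presented case by case after the table of complements. Once $(\hyperref[Poulet]{C1})$ is established for the enlarged presentation, Lemma \ref{MagicCube} yields directly that $\D_*^*(n,m)$ is left-cancellative and admits conditional right-lcms, completing the proof; the right-cancellative counterpart (Proposition \ref{LeftCancDualOp}) would then be handled analogously by passing to the opposite monoid, as in Lemma \ref{OpClassical}.
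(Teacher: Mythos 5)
Your proposal follows essentially the same route as the paper: the paper enlarges Presentation \eqref{dual3} to the right-full homogeneous Presentation \eqref{DualMonoidPres2} (Corollary \ref{DualMonoid2}), asserts $(\hyperref[Poulet]{C1})$ in Lemma \ref{CubeDual} --- deferring the case-by-case verification to the author's thesis, exactly the lengthy bookkeeping you anticipate --- and concludes via Lemma \ref{MagicCube}. Two small corrections: the enlargement you describe only works for $n\geq 2$ (it relies on $m-n+1<m$, so that $x_iz_{i+1}=z_ix_i$ holds for every $i\in[m-n+1]$; for $n=1$ the wrap-around relation passes through $y$ and the paper treats this case separately via Presentation \eqref{dualN=12} and Lemma \ref{CubedualN=1}), and the relevant index ranges in the dual presentation are governed by the cut at $m-n+1$ rather than by the Euclidean division $m=qn+r$, which controls the classical presentation instead.
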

We will separate the proof of Proposition \ref{LeftCancDual} in two cases: $n=1$ and $n\geq 2$. In order to be able to use results from Section \ref{SecGarside}, we enlarge the monoid presentation \eqref{dual3} to get a right-full homogeneous presentation of $\D_*^*(n,m)$.\\

\noindent
\fbox{The case $n\geq 2$:}\\
The fact that $n\geq 2$ gives us $m-n+1<m$ so that for all $i\in [m-n+1]$, the equality $x_iz_{i+1}=z_ix_i$ holds. Moreover, the set of relations \eqref{dual3:3} is equivalent to 
\begin{equation}\label{L2Dual}
z_ix_i\cdots x_{i+n-1}=z_jx_j\cdots x_{j+n-1}, \, \forall 1\leq i<j\leq m-n+1,
\end{equation}
and the set of relations \eqref{dual3:4} is equivalent to 
\begin{equation}\label{L3Dual}
z_ix_i\cdots x_myx_1\cdots x_{i+n-m-1}=z_jx_j\cdots x_myx_1\cdots x_{j+n-m-1},\, \forall m-n+1\leq i<j\leq m.
\end{equation}
In addition to \eqref{dual3:1},\eqref{dual3:2} \eqref{dual3:5}, \eqref{L2Dual} and \eqref{L3Dual}, observe the following:\\
\noindent
$\bullet$ For all $i,j\in [m-n+1]$ with $i\neq j$, we have
\begin{equation}\label{EnlargeDuaL1}
\begin{aligned}
x_iz_{i+1}x_{i+1}\cdots x_{i+n-1}&\underset{\eqref{dual3:1}}{=}z_ix_i\cdots x_{i+n-1}\underset{\eqref{L2Dual}}{=}z_{j}x_{j}\cdots x_{j+n-1}.\\
\end{aligned}
\end{equation}

\noindent
$\bullet$ For all $1\leq i <j\leq m-n+1$, we have
\begin{equation}\label{EnlargeDual2}
\begin{aligned}
x_iz_{i+1}x_{i+1}\cdots x_{i+n-1}\underset{\eqref{EnlargeDuaL1}}{=}z_{j}x_{j}\cdots x_{j+n-1}\underset{\eqref{dual3:1}}{=}x_{j}z_{j+1}x_{j+1}\cdots x_{j+n-1}.
\end{aligned}
\end{equation}

\noindent
$\bullet$ For all $i\in [m-n+1]$, $j\in \llbracket m-n+2,m\rrbracket$, we have
\begin{equation}\label{EnlargeDual3}
\begin{aligned}
x_iz_{i+1}x_{i+1}\cdots x_{i+n-1}y&\underset{\eqref{dual3:1}}{=}z_ix_i\cdots x_{i+n-1}y \\
\underset{\eqref{L2Dual}}{=}z_{m-n+1}x_{m-n+1}&\cdots x_my
\underset{\eqref{L3Dual}}{=}z_{j}x_{j}\cdots x_{m}yx_1\cdots x_{j+n-m-1}.
\end{aligned}
\end{equation}

\noindent
$\bullet$ For all $i\in [m-n+1]$, $j\in \llbracket m-n+2,m-1\rrbracket$, we have
\begin{equation}\label{EnlargeDual4}
\begin{aligned}
x_iz_{i+1}x_{i+1}\cdots x_{i+n-1}y&\underset{\eqref{dual3:1}}{=}z_ix_i\cdots x_{i+n-1}y \underset{\eqref{L2Dual}}{=}z_{m-n+1}x_{m-n+1}\cdots x_my\\
\underset{\eqref{L3Dual}}{=}z_{j}x_{j}\cdots x_{m}yx_1\cdots &x_{j+n-m-1}
\underset{\eqref{dual3:1}}{=}x_{j}z_{j+1}x_{j+1}\cdots x_{m}yx_1\cdots x_{j+n-m-1}.
\end{aligned}
\end{equation}

\noindent
$\bullet$ For all $i\in [m-n+1]$, we have
\begin{equation}\label{EnlargeDual5}
\begin{aligned}
x_iz_{i+1}x_{i+1}\cdots x_{i+n-1}y&\underset{\eqref{dual3:1}}{=}z_ix_i\cdots x_{i+n-1}y \underset{\eqref{L2Dual}}{=}z_{m-n+1}x_{m-n+1}\cdots x_my\, \,\\
&\underset{\eqref{L3Dual}}{=}z_{m}x_{m}yx_1\cdots x_{n-1} \underset{\eqref{dual3:2}}{=}x_myz_1x_1\cdots x_{n-1}.
\end{aligned}
\end{equation}

\noindent
$\bullet$ For all $i\in [m-n+1]$, we have
\begin{equation}\label{EnlargeDual6}
\begin{aligned}
x_iz_{i+1}x_{i+1}\cdots x_{i+n-1}y&\underset{\eqref{EnlargeDual5}}{=}z_{m}x_{m}yx_1\cdots x_{n-1} \underset{\eqref{dual3:5}}{=}yz_1x_1\cdots x_n.
\end{aligned}
\end{equation}

\noindent
$\bullet$ For all $i\in \llbracket m-n+2,m-1\rrbracket$, $j\in [m-n+1]$, we have
\begin{equation}\label{EnlargeDual7}
\begin{aligned}
x_iz_{i+1}x_{i+1}\cdots x_{m}yx_1\cdots x_{i+n-m-1}&\underset{\eqref{dual3:1}}{=}z_ix_i\cdots x_{m}yx_1\cdots x_{i+n-m-1} \\
\underset{\eqref{L3Dual}}{=}z_{m-n+1}x_{m-n+1}&\cdots x_{m}y 
\underset{\eqref{L2Dual}}{=}z_jx_j\cdots x_{j+n-1}y.
\end{aligned}
\end{equation}

\noindent
$\bullet$ For all $i\in \llbracket m-n+2,m-1\rrbracket$, $j\in \llbracket m-n+2,m\rrbracket$ with $i\neq j$, we have
\begin{equation}\label{EnlargeDual8}
\begin{aligned}
x_iz_{i+1}x_{i+1}\cdots x_{m}yx_1\cdots x_{i+n-m-1}&\underset{\eqref{dual3:1}}{=}z_ix_i\cdots x_{m}yx_1\cdots x_{i+n-m-1}\\
&\underset{\eqref{L3Dual}}{=}z_jx_j\cdots x_{m}yx_1\cdots x_{j+n-m-1}.
\end{aligned}
\end{equation}

\noindent
$\bullet$ For all $m-n+2\leq i<j\leq m-1$, we have
\begin{equation}\label{EnlargeDual9}
\begin{aligned}
x_iz_{i+1}x_{i+1}\cdots x_{m}yx_1\cdots x_{i+n-m-1}&\underset{\eqref{EnlargeDual8}}{=}z_jx_j\cdots x_{m}yx_1\cdots x_{j+n-m-1}\\
&\underset{\eqref{dual3:1}}{=}x_jz_{j+1}x_{j+1}\cdots x_myx_1\cdots x_{j+n-m-1}.
\end{aligned}
\end{equation}

\noindent
$\bullet$ For all $i\in \llbracket m-n+2,m-1\rrbracket$, we have
\begin{equation}\label{EnlargeDuaL10}
\begin{aligned}
x_iz_{i+1}x_{i+1}\cdots x_{m}yx_1\cdots x_{i+n-m-1}&\underset{\eqref{dual3:1}}{=}z_ix_i\cdots x_{m}yx_1\cdots x_{i+n-m-1} \\
&\underset{\eqref{L3Dual}}{=}z_mx_{m}yx_1\cdots x_{n-1} \underset{\eqref{dual3:2}}{=}x_myz_1x_1\cdots x_{n-1}.
\end{aligned}
\end{equation}

\noindent
$\bullet$ For all $i\in \llbracket m-n+2,m-1\rrbracket$, we have
\begin{equation}\label{EnlargeDuaL11}
\begin{aligned}
x_iz_{i+1}x_{i+1}\cdots x_{m}yx_1\cdots x_{i+n-m-1}&\underset{\eqref{EnlargeDuaL10}}{=}z_mx_{m}yx_1\cdots x_{n-1} \\
&\underset{\eqref{dual3:5}}{=}yz_1x_1\cdots x_{n}.
\end{aligned}
\end{equation}

\noindent
$\bullet$ For all $i\in [m-n+1]$, $j\in \llbracket m-n+2,m\rrbracket$, we have
\begin{equation}\label{EnlargeDuaL12}
\begin{aligned}
z_ix_i\cdots x_{i+n-1}y&\underset{\eqref{L2Dual}}{=}z_{m-n+1}x_{m-n+1}\cdots x_{m}y\underset{\eqref{L3Dual}}{=}z_{j}x_{j}\cdots x_{m}yx_1\cdots x_{j+n-m-1}.
\end{aligned}
\end{equation}

\noindent
$\bullet$ For all $i\in [m-n+1]$, we have
\begin{equation}\label{EnlargeDuaL13}
\begin{aligned}
z_ix_i\cdots x_{i+n-1}y&\underset{\eqref{EnlargeDuaL12}}{=}z_{m}x_{m}yx_1\cdots x_{n-1}
\underset{\eqref{dual3:2}}{=}x_myz_1x_1\cdots x_{n-1}.
\end{aligned}
\end{equation}

\noindent
$\bullet$ For all $i\in [m-n+1]$, we have
\begin{equation}\label{EnlargeDuaL14}
\begin{aligned}
z_ix_i\cdots x_{i+n-1}y&\underset{\eqref{EnlargeDuaL12}}{=}z_{m}x_{m}yx_1\cdots x_{n-1}
\underset{\eqref{dual3:5}}{=}yz_1x_1\cdots x_{n}.
\end{aligned}
\end{equation}

\noindent
$\bullet$ For all $i\in \llbracket m-n+2,m-1\rrbracket$, we have
\begin{equation}\label{EnlargeDuaL15}
\begin{aligned}
z_ix_i\cdots x_myx_1\cdots x_{i+n-m-1}&\underset{\eqref{L3Dual}}{=}z_mx_myx_1\cdots x_{n-1}\underset{\eqref{dual3:2}}{=}x_myz_1x_1\cdots x_{n-1}.
\end{aligned}
\end{equation}

\noindent
$\bullet$ For all $i\in \llbracket m-n+2,m-1\rrbracket$, we have
\begin{equation}\label{EnlargeDuaL16}
\begin{aligned}
z_ix_i\cdots x_myx_1\cdots x_{i+n-m-1}&\underset{\eqref{L3Dual}}{=}z_mx_myx_1\cdots x_{n-1}\underset{\eqref{dual3:5}}{=}yz_1x_1\cdots x_n.
\end{aligned}
\end{equation}

\noindent
$\bullet$ We have
\begin{equation}\label{EnlargeDuaL17}
\begin{aligned}
yz_1x_1\cdots x_n&\underset{\eqref{dual3:5}}{=}z_mx_myx_1\cdots x_{n-1}\underset{\eqref{dual3:2}}{=}x_myz_1x_1\cdots x_{n-1}.
\end{aligned}
\end{equation}

\noindent
Finally, the set of relations given by \eqref{L3Dual} with $i=m-n+1$, $j=m-n+2,\dots,m$ is the same set of relations as the one given by \eqref{EnlargeDuaL12} with $i=m-n+1$, $j=m-n+2,\dots,m$. Therefore, if we add relations \eqref{EnlargeDuaL12}, we can replace \eqref{L3Dual} by the set of relations
\begin{equation}\label{EnlargeDuaL18}
z_ix_i\cdots x_myx_1\cdots x_{i+n-m-1}=z_jx_j\cdots x_myx_1\cdots x_{j+n-m-1}, \, \, \forall m-n+2\leq i<j\leq m.
\end{equation}
\noindent
Combining equations \eqref{dual3:1},\eqref{dual3:2},\eqref{dual3:5} and \eqref{L2Dual} with equations \eqref{EnlargeDuaL1}-\eqref{EnlargeDuaL18}, we get the following corollary:

\begin{corollary}\label{DualMonoid2}
Let $n,m\in \N_{\geq 2}$ and assume that $n\leq m$ and $n\wedge m=1$. The monoid $\mathcal D_*^*(n,m)$ admits the following right-full presentation:

\begin{subequations}\label{DualMonoidPres2}
\begin{align}
    &(1) \,\, \mathrm{Generators}\!:\,  \{x_1,\dots,x_m,y,z_1\dots,z_m\};\notag\\
&(2) \,\, \mathrm{Relations}\!: \,\notag\\
  &x_iz_{i+1}=z_ix_i \, \forall i=1,\dots,m-1,\, \,\label{DualMonoidPres2:1}\\
  &x_myz_1=z_mx_my,\label{DualMonoidPres2:2}\\
  &z_ix_{i}\cdots x_{i+n-1}=z_jx_j\cdots x_{j+n-1} \,\forall \,  1\leq i<j\leq m-n+1,  \label{DualMonoidPres2:3} \\
  & z_ix_{i}\cdots x_{m}yx_1\cdots x_{i+n-m-1}=z_jx_j\cdots x_{m}yx_1\cdots x_{j+n-m-1}, \,(*_1),\label{DualMonoidPres2:4}\\
                    &yz_1x_1\cdots x_{n}=z_mx_myx_1\cdots x_{n-1},\label{DualMonoidPres2:5}\\
                     & x_iz_{i+1}x_{i+1}\cdots x_{i+n-1}=z_jx_j\cdots x_{j+n-1},\,\forall \,  1\leq i\neq j\leq m-n+1,\label{DualMonoidPres2:6}\\
                     &  x_iz_{i+1}x_{i+1}\cdots x_{i+n-1}=x_jz_{j+1}x_{j+1}\cdots x_{j+n-1}, \,\forall \,   1\leq i<j\leq m-n+1,\label{DualMonoidPres2:7}\\
                     &x_iz_{i+1}x_{i+1}\cdots x_{i+n-1}y=z_jx_j\cdots x_myx_1\cdots x_{j+n-m-1},\, (*_2),\label{DualMonoidPres2:8}\\
                     & x_iz_{i+1}x_{i+1}\cdots x_{i+n-1}y=x_jz_{j+1}x_{j+1}\cdots x_{m}yx_1\cdots x_{j+n-m-1}, \,(*_3) ,\label{DualMonoidPres2:9}\\
            & x_iz_{i+1}x_{i+1}\cdots x_{i+n-1}y=x_myz_1x_1\cdots x_{n-1} ,\,\forall \,  1\leq i\leq m-n+1,\label{DualMonoidPres2:10}\\
&x_iz_{i+1}x_{i+1}\cdots x_{i+n-1}y=yz_1x_1\cdots x_{n}, \,\forall \,   1\leq i\leq m-n+1,\label{DualMonoidPres2:11}\\
&x_iz_{i+1}x_{i+1}\cdots x_myx_1\cdots x_{i+n-m-1}=z_jx_j\cdots x_{j+n-1}y,\, (*_4),\label{DualMonoidPres2:12}\\
   & x_iz_{i+1}x_{i+1}\cdots x_myx_1\cdots x_{i+n-m-1}=z_jx_j\cdots x_myx_1\cdots x_{j+n-m-1}, \, (*_5),\label{DualMonoidPres2:13}\\
                & x_iz_{i+1}x_{i+1}\cdots x_{m}yx_1\cdots x_{i+n-m-1}=x_jz_{j+1}x_{j+1}\cdots x_myx_1\cdots x_{j+n-m-1},\, (*_6),\label{DualMonoidPres2:14}\\
                     &x_iz_{i+1}x_{i+1}\cdots x_{m}yx_1\cdots x_{i+n-m-1}=x_myz_1x_1\cdots x_{n-1}, \,\forall \,  m-n+2\leq i\leq m-1,\label{DualMonoidPres2:15}\\
                    &x_iz_{i+1}x_{i+1}\cdots x_{m}yx_1\cdots x_{i+n-m-1}=yz_1x_1\cdots x_n, \, \forall \,  m-n+2\leq i\leq m-1,\label{DualMonoidPres2:16}\\
                    & z_ix_i\cdots x_{i+n-1}y=z_jx_j\cdots x_myx_1\cdots x_{j+n-m-1},\,(*_7),\label{DualMonoidPres2:17}\\
                    &z_ix_i\cdots x_{i+n-1}y=x_myz_1x_1\cdots x_{n-1},\,\forall \,  1\leq i\leq m-n+1,\label{DualMonoidPres2:18}\\
                    &z_ix_i\cdots x_{i+n-1}y=yz_1x_1\cdots x_n, \,\forall \,  1\leq i\leq m-n+1,\label{DualMonoidPres2:19}\\
                    & z_ix_i\cdots x_myx_1\cdots x_{i+n-m-1}=x_myz_1x_1\cdots x_{n-1},\, \forall \, m-n+2\leq i\leq m-1,\label{DualMonoidPres2:20}\\
                    &z_ix_i\cdots x_myx_1\cdots x_{i+n-m-1}=yz_1x_1\cdots x_{n},\,\forall \,  m-n+2\leq i\leq m-1\label{DualMonoidPres2:21} \\
                    &yz_1x_1\cdots x_{n}=x_myz_1x_1\cdots x_{n-1}.\label{DualMonoidPres2:22}\\
                    &(*_1)\,\forall \,  m-n+2\leq i<j\leq m\notag\\
                    &(*_2)\,\forall \, i\in[m-n+1], j\in\llbracket m-n+2,m\rrbracket\notag\\
                    &(*_3)\,\forall \,  i\in[m-n+1], j\in \llbracket m-n+2,m-1\rrbracket\notag\\
                    &(*_4)\,\forall \,  i\in \llbracket {m-n+2},{m-1}\rrbracket,j\in [ m-n+1]\notag\\
                    &(*_5)\,\forall \, i\in \llbracket{m-n+2},{m-1}\rrbracket, j\in \llbracket {m-n+2},m\rrbracket,\, i\neq j \notag\\
                    &(*_6)\,\forall \, m-n+2\leq i<j\leq m-1 \notag\\
                    &(*_7)\,\forall \,   i\in [ m-n+1],j\in \llbracket {m-n+2},m\rrbracket\notag
				\end{align}\end{subequations}
    
\end{corollary}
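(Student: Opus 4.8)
The plan is to establish the two assertions packaged in Corollary \ref{DualMonoid2}: first, that Presentation \eqref{DualMonoidPres2} presents the same monoid as the defining Presentation \eqref{dual3} of $\mathcal D_*^*(n,m)$ (Definition \ref{DefDualMonoid}); second, that \eqref{DualMonoidPres2} is right-full. Since both presentations use the same generating set $\{x_1,\dots,x_m,y,z_1,\dots,z_m\}$, they present the same monoid precisely when the congruences on the free monoid generated by their two relation sets coincide, so it suffices to show that each relation family of one is a consequence, \emph{as a monoid relation}, of the other.

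For the inclusion ``the relations of \eqref{DualMonoidPres2} hold in $\mathcal D_*^*(n,m)$'' I would invoke the derivations carried out just above the statement. Indeed \eqref{DualMonoidPres2:1}, \eqref{DualMonoidPres2:2} and \eqref{DualMonoidPres2:5} are literally \eqref{dual3:1}, \eqref{dual3:2} and \eqref{dual3:5}; the family \eqref{DualMonoidPres2:3} is \eqref{L2Dual}, equivalent to \eqref{dual3:3}; \eqref{DualMonoidPres2:4} is \eqref{EnlargeDuaL18}; and each of \eqref{DualMonoidPres2:6}--\eqref{DualMonoidPres2:22} is exactly one of the derived identities \eqref{EnlargeDuaL1}--\eqref{EnlargeDuaL17} (with \eqref{EnlargeDuaL12} among them). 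The essential observation legitimising this step is that every computation in \eqref{EnlargeDuaL1}--\eqref{EnlargeDuaL18} proceeds by substituting one side of a defining relation for the other inside a \emph{positive} word, so all these equalities are valid as monoid relations, in keeping with the convention recalled at the start of the section.

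For the reverse inclusion I would recover the five defining families from \eqref{DualMonoidPres2}. The relations \eqref{dual3:1}, \eqref{dual3:2}, \eqref{dual3:5} appear verbatim, and \eqref{dual3:3} is equivalent to \eqref{DualMonoidPres2:3} through \eqref{L2Dual}. The only delicate family is \eqref{dual3:4}, equivalent to \eqref{L3Dual} over the range $m-n+1\le i<j\le m$: its subrange $m-n+2\le i<j\le m$ is \eqref{DualMonoidPres2:4}, while the remaining cases $i=m-n+1$ coincide with \eqref{DualMonoidPres2:17} ($=$ \eqref{EnlargeDuaL12}) specialised to $i=m-n+1$, exactly as observed immediately before the corollary. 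This shows that \eqref{DualMonoidPres2} and \eqref{dual3} present $\mathcal D_*^*(n,m)$.

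The remaining, and main, task is right-fullness in the sense of Definition \ref{Complemented}, which is where the bookkeeping concentrates. One half is immediate: in every relation of \eqref{DualMonoidPres2} the two sides begin with distinct generators, so there is no relation of the form $su=sv$. For the other half I would verify that each unordered pair of distinct generators occurs as the pair of leading letters of exactly one relation, organising the check by type of pair: the pairs $\{x_i,z_i\}$ come from \eqref{DualMonoidPres2:1}--\eqref{DualMonoidPres2:2}; the pairs $\{z_i,z_j\}$ from \eqref{DualMonoidPres2:3}, \eqref{DualMonoidPres2:4} and \eqref{DualMonoidPres2:17}; the $2m$ pairs containing $y$ from \eqref{DualMonoidPres2:5} together with the relations carrying a leading $y$ or a side $yz_1x_1\cdots$; and the mixed pairs $\{x_i,x_j\}$ and $\{x_i,z_j\}$ with $j\ne i$ from the enlarged families \eqref{DualMonoidPres2:6}--\eqref{DualMonoidPres2:16}. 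The whole purpose of producing this long enlarged list was precisely to make each such pair occur once and only once, so the only substantive work left is the careful count confirming that no pair is missing and none doubled; this is tedious but mechanical, and once it is done Presentation \eqref{DualMonoidPres2} is right-full, completing the proof.
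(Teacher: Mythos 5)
Your proposal follows the paper's own treatment exactly: in the paper the corollary carries no separate proof, being precisely the combination of the positive derivations \eqref{L2Dual}--\eqref{EnlargeDuaL18} preceding it (with \eqref{dual3:4} recovered from \eqref{DualMonoidPres2:4} together with \eqref{DualMonoidPres2:17} specialised to $i=m-n+1$, as you say), and right-fullness is left as the routine leading-letter inspection you outline. One minor correction to your bookkeeping sketch, which does not affect the plan: the pairs $\{x_m,z_j\}$ with $j\neq m$ are headed by \eqref{DualMonoidPres2:18} and \eqref{DualMonoidPres2:20} rather than by the families \eqref{DualMonoidPres2:6}--\eqref{DualMonoidPres2:16}.
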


\begin{remark}
In Presentation \eqref{DualMonoidPres2}, it is possible that some conditions on the indices are empty. In this case, there is simply no corresponding relations. For instance, if $n=2$ and $m=3$, the set of relations \eqref{DualMonoidPres2:21} is empty.
\end{remark}

\begin{lemma}\label{CubeDual}
Presentation \eqref{DualMonoidPres2} satisfies $(\hyperref[Poulet]{C1})$.
\end{lemma}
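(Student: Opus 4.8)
The plan is to verify the two requirements of $(\hyperref[Poulet]{C1})$ for Presentation \eqref{DualMonoidPres2} directly, following the template of the proof of Lemma \ref{CubeClassical}. Right-fullness has essentially been recorded already in Corollary \ref{DualMonoid2}; what remains is to confirm, pair by pair, that the index ranges attached to the relations \eqref{DualMonoidPres2:1}--\eqref{DualMonoidPres2:22} partition the set of ordered pairs $(s,t)$ of distinct generators of $\{x_1,\dots,x_m,y,z_1,\dots,z_m\}$, and that no relation has the form $s\,u=s\,v$. For example the pair $(x_i,z_j)$ is governed by \eqref{DualMonoidPres2:1} when $j=i$, by \eqref{DualMonoidPres2:6} or \eqref{DualMonoidPres2:8} when $i\leq m-n+1$, and by \eqref{DualMonoidPres2:12} or \eqref{DualMonoidPres2:13} when $m-n+2\leq i\leq m-1$; checking that these (and the analogous splittings for the pairs involving $y$ and $z_m$) are mutually exclusive and exhaustive establishes right-fullness.

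Next I would make the syntactic complement $\theta$ completely explicit. Introducing shorthand for the suffix words that appear after the leading generator on each side of \eqref{DualMonoidPres2:1}--\eqref{DualMonoidPres2:22} (the short tails $z_{i+1}x_{i+1}\cdots x_{i+n-1}$, the long tails $z_j x_j\cdots x_m y x_1\cdots x_{j+n-m-1}$, and the two "full" words $yz_1x_1\cdots x_n$ and $x_m y z_1 x_1\cdots x_{n-1}$), I would assemble a table giving $\theta(s,t)$ for every ordered pair of distinct generators, exactly as is done in Table~3 in the proof of Lemma \ref{CubeClassical}. This table is the bookkeeping device on which the rest of the argument rests.

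The cube-witness verification then proceeds by cases. I would partition the generating set according to the two thresholds $i=m-n+1$ and $i=m$ at which the shape of the relations changes, giving the types $x_m$, $y$, the \emph{small} generators $x_i,z_i$ with $i\in[m-n+1]$, the \emph{large} generators $x_i,z_i$ with $i\in\llbracket m-n+2,m-1\rrbracket$, and the boundary generator $z_m$. For each unordered triple $A\subset S$ of distinct generators I would exhibit a labeling $(a_1,a_2,a_3)$ and a suffix word $u$---which in each case turns out to be either empty or the single letter $y$, just as in Lemma \ref{CubeClassical}---for which the three word equalities $\theta(a_1,a_3)=\theta(a_1,a_2)u$, $\theta(a_2,a_3)=\theta(a_2,a_1)u$, $\theta(a_3,a_1)=\theta(a_3,a_2)$ of Definition \ref{Magic} hold. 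Triples split into a finite list of subcases indexed by how many small and large $x$'s and $z$'s (and whether $y$, $x_m$ or $z_m$) they contain, and within each subcase the witness is read straight off the $\theta$-table.

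The main obstacle is the sheer volume of this bookkeeping rather than any conceptual difficulty: the generating set has $2m+1$ elements, so there are many more type-combinations to inspect than in the classical proof (which had only five generator types), and the relations near the boundaries $i=m-n+1$ and $i=m$ wrap around through the $\cdots x_m y x_1\cdots$ block, so the corresponding $\theta$-entries must be computed carefully to keep the common suffix $u$ consistent across the three complement equations. Because the exhaustive type-by-type check is long, I would either display the full $\theta$-table followed by the complete case list (as in Lemma \ref{CubeClassical}), or, as was done for the opposite monoid in Lemma \ref{cubeOpClassical}, indicate the uniform pattern and defer the full enumeration to \cite{These}.
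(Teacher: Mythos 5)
Your proposal matches the paper exactly: the paper's own proof of Lemma \ref{CubeDual} consists of the single remark that the verification is done in the same way as for Lemma \ref{CubeClassical}, with the complete case-by-case details deferred to the thesis \cite{These}, which is precisely the strategy (and the deferral) you describe. Your additional observations---that the generators split into small/large types with boundary cases $x_m$, $y$, $z_m$, and that the common suffix $u$ is always empty or the single letter $y$---are consistent with the structure of Presentation \eqref{DualMonoidPres2} and with the pattern of Table~3 in the classical case, so the plan is sound.
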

\begin{proof}
The proof is done in a similar way as the proof of Lemma \ref{CubeClassical}. The complete details of this proof will appear in the author's thesis \cite{These}.
\end{proof}

\noindent
\fbox{The case $n=1$:}\\
In this case, the monoid $\mathcal D_*^*(n,m)$ admits the presentation 
\begin{equation}\label{dualN=1}\left\langle
       \begin{array}{l|cl}
           x_1,\dots,x_{m},y     \,        & x_iz_{i+1}=z_ix_i=z_{i+1}x_{i+1} \, \, \forall i=1,\dots,m-1,\\
      z_1,\dots,z_{m}     \,   &\,    x_myz_1=yz_1x_1=z_{m}x_{m}y

                          \end{array}
     \right\rangle.\end{equation}
\noindent 
Now, we will enlarge Presentation \eqref{dualN=1} to get a right-full homogeneous presentation satisfying $(\hyperref[Poulet]{C1})$.\\
First, observe that the first line of Presentation \eqref{dualN=1} is equivalent to the sets of relations 
\begin{equation}\label{EnlargeDualN1}
x_iz_{i+1}=z_jx_j, \, \forall i\in [m-1], j\in [m]
\end{equation}
and 
\begin{equation}\label{EnlargeDualN2}
z_ix_i=z_jx_j,\, \forall 1\leq i<j\leq m.
\end{equation}
We also refer to the second line of Presentation \eqref{dualN=1} as
\begin{equation}\label{L3N1}
    x_myz_1=yz_1x_1=z_mx_my.
\end{equation}
Additionally to the relations \eqref{EnlargeDualN1}-\eqref{L3N1}, observe the following:\\
\noindent
$\bullet$ For all $1\leq i<j\leq m-1$, we have
\begin{equation}\label{EnlargeDualN3}
\begin{aligned}
    x_iz_{i+1}&\underset{\eqref{EnlargeDualN1}}{=}z_1x_1\underset{\eqref{EnlargeDualN1}}{=}x_jz_{j+1}.
    \end{aligned}
\end{equation}

\noindent
$\bullet$ For all $i\in [m]$, we have
\begin{equation}\label{EnlargeDualN4}
    \begin{aligned}
        z_ix_iy&\underset{\eqref{EnlargeDualN2}}{=}z_mx_my\underset{\eqref{L3N1}}{=}yz_1x_1.
    \end{aligned}
\end{equation}

\noindent
$\bullet$ For all $i\in [m]$, we have
\begin{equation}\label{EnlargeDualN5}
    \begin{aligned}
        z_ix_iy&\underset{\eqref{EnlargeDualN2}}{=}z_mx_my\underset{\eqref{L3N1}}{=}x_myz_1.
    \end{aligned}
\end{equation}

\noindent
$\bullet$ For all $i\in [m-1]$, we have
\begin{equation}\label{EnlargeDualN6}
    \begin{aligned}
        x_iz_{i+1}y&\underset{\eqref{EnlargeDualN1}}{=}z_mx_my\underset{\eqref{L3N1}}{=}yz_1x_1.
    \end{aligned}
\end{equation}

\noindent
$\bullet$ For all $i\in [m-1]$, we have
\begin{equation}\label{EnlargeDualN7}
    \begin{aligned}
        x_iz_{i+1}y&\underset{\eqref{EnlargeDualN1}}{=}z_mx_my\underset{\eqref{L3N1}}{=}x_myz_1.
    \end{aligned}
\end{equation}

\noindent
Finally, observe that the relation $x_myz_1=z_mx_my$ is covered by both sets of relations \eqref{L3N1} and \eqref{EnlargeDualN5}. We thus remove it from \eqref{L3N1}. Using equations \eqref{EnlargeDualN1}-\eqref{EnlargeDualN7}, we get the following corollary:

\begin{corollary}\label{DualMonoidN}
Let $m\in \N^*$. The monoid $\mathcal D_*^*(1,m)$ admits the following presentation:

\begin{equation}\label{dualN=12}\left\langle
       \begin{array}{l|cl}
        \,        & z_ix_i=z_jx_j \, \forall 1\leq i<j\leq m\\
    \,   &\, x_iz_{i+1}=z_jx_j\, \forall i\in [m-1],j\in [m] \\ 
      x_1,\dots,x_{m},y        \, & \, x_{i}z_{i+1}=x_jz_{j+1}\, \forall 1\leq i<j\leq m-1 \\
       z_1,\dots,z_{m}      \, & \, z_ix_iy=yz_1x_1,\, z_ix_iy=x_myz_1 \, \forall i\in [m]\\
      \, & \, x_{i}z_{i+1}y=yz_1x_1, \, x_iz_{i+1}y=x_myz_1 \, \forall i\in  [m-1]\\
      \,& \,x_myz_1=yz_1x_1, \,

                          \end{array}
     \right\rangle.\end{equation}

\end{corollary}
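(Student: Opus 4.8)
The plan is to establish Corollary~\ref{DualMonoidN} by a chain of Tietze transformations starting from Presentation~\eqref{dualN=1}, which is the presentation of $\mathcal D_*^*(1,m)$ obtained by setting $n=1$ in Presentation~\eqref{dual3}. Since the object at hand is a monoid and not a group, I would carry out every step using only \emph{monoid} moves, that is, replacing a subword matching one side of a defining relation by the other side, with no appeal to cancellation or to inverses; keeping track of this is what makes the argument slightly delicate rather than purely formal.

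First I would verify the equivalence, recorded in the discussion preceding the corollary, between the first line $x_iz_{i+1}=z_ix_i=z_{i+1}x_{i+1}$ $(1\le i\le m-1)$ of \eqref{dualN=1} and the union of the relation sets \eqref{EnlargeDualN1} and \eqref{EnlargeDualN2}. In one direction, chaining the equalities $z_ix_i=z_{i+1}x_{i+1}$ shows that all the words $z_ix_i$ coincide, which is \eqref{EnlargeDualN2}, and then $x_iz_{i+1}=z_ix_i$ together with this common value yields \eqref{EnlargeDualN1}; in the other direction, \eqref{EnlargeDualN2} gives back $z_ix_i=z_{i+1}x_{i+1}$ and the case $j=i$ of \eqref{EnlargeDualN1} gives back $x_iz_{i+1}=z_ix_i$. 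This lets me replace the first line of \eqref{dualN=1} by \eqref{EnlargeDualN1}$+$\eqref{EnlargeDualN2} without altering the monoid.

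Next I would adjoin the relations \eqref{EnlargeDualN3}--\eqref{EnlargeDualN7}. Each of these is already derived in the preceding computations purely by substitution from \eqref{EnlargeDualN1}, \eqref{EnlargeDualN2} and \eqref{L3N1}, hence lies in the congruence generated by the defining relations; adjoining consequences does not change the presented monoid. I would then delete the relation $x_myz_1=z_mx_my$ contained in \eqref{L3N1}, since it is recovered as the case $i=m$ of \eqref{EnlargeDualN5} (namely $z_mx_my=x_myz_1$), leaving only $x_myz_1=yz_1x_1$. Collecting the surviving relations---\eqref{EnlargeDualN2}, \eqref{EnlargeDualN1}, \eqref{EnlargeDualN3}, \eqref{EnlargeDualN4}, \eqref{EnlargeDualN5}, \eqref{EnlargeDualN6}, \eqref{EnlargeDualN7} and $x_myz_1=yz_1x_1$---reproduces exactly Presentation~\eqref{dualN=12}.

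The step requiring the most care is the bookkeeping of completeness: I must check that the passage is reversible, i.e. that all the original relations of \eqref{dualN=1} remain consequences of \eqref{dualN=12}, so that no relation has been silently lost. The first line is recovered by the equivalence above, the relation $x_myz_1=yz_1x_1$ is present verbatim, and $x_myz_1=z_mx_my$ comes from \eqref{EnlargeDualN5}. The only genuine subtlety is to confirm that all the derivations \eqref{EnlargeDualN3}--\eqref{EnlargeDualN7} use one-sided rewriting valid in the monoid, which the justifications already displayed in the text guarantee; no estimate or structural result is needed beyond this verification.
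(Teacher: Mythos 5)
Your proposal is correct and follows essentially the same route as the paper: the corollary is obtained there precisely by replacing the first line of Presentation~\eqref{dualN=1} with the equivalent sets \eqref{EnlargeDualN1}$+$\eqref{EnlargeDualN2}, adjoining the derived relations \eqref{EnlargeDualN3}--\eqref{EnlargeDualN7} (each obtained by one-sided substitution valid in the monoid), and deleting $x_myz_1=z_mx_my$ as redundant via the case $i=m$ of \eqref{EnlargeDualN5}. Your explicit reversibility check is left implicit in the paper but is the same verification.
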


\begin{remark}
In Presentation \eqref{dualN=12}, it is possible that some conditions on the indices are empty. In this case, there is simply no corresponding relations.
\end{remark}

\begin{lemma}\label{CubedualN=1}
Presentation \eqref{dualN=12} satisfies $(\hyperref[Poulet]{C1})$.
\end{lemma}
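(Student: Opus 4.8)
The plan is to follow the scheme of the proof of Lemma \ref{CubeClassical}: first record the syntactic complement $\theta$ of Presentation \eqref{dualN=12} explicitly, checking along the way that the presentation is right-full, and then verify the condition of Definition \ref{Magic} on every three-element subset of the generating set $S=\{x_1,\dots,x_m,y,z_1,\dots,z_m\}$. Right-fullness amounts to confirming that each of the $\binom{2m+1}{2}$ unordered pairs of generators occurs in exactly one defining relation and that no relation has the same leading letter on its two sides. This is a direct inspection of the relation families of \eqref{dualN=12}: the relations $z_ix_i=z_jx_j$, $x_az_{a+1}=z_bx_b$, $x_az_{a+1}=x_bz_{b+1}$, $z_ix_iy=yz_1x_1$, $z_ix_iy=x_myz_1$, $x_az_{a+1}y=yz_1x_1$, $x_az_{a+1}y=x_myz_1$ and $x_myz_1=yz_1x_1$ cover respectively the $z$--$z$, the $x_a$--$z$ (with $a<m$), the $x_a$--$x_b$ (with $a,b<m$), the $z$--$y$, the $z$--$x_m$, the $x_a$--$y$, the $x_a$--$x_m$ and the $x_m$--$y$ pairs, and one verifies that empty index ranges for small $m$ simply drop the corresponding generators and relations without affecting the argument.

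From these relations I read off $\theta$. The computation becomes transparent once the generators are split into the ``small'' set $S_0=\{z_1,\dots,z_m\}\cup\{x_1,\dots,x_{m-1}\}$ and the two ``large'' generators $x_m,y$. One finds the dichotomy: for every $g\in S_0$ one has $\theta(z_i,g)=x_i$ and $\theta(x_a,g)=z_{a+1}$ (for $a<m$), while for $g\in\{x_m,y\}$ the same complements acquire a trailing $y$, namely $\theta(z_i,g)=x_iy$ and $\theta(x_a,g)=z_{a+1}y$; meanwhile the two large generators have constant complements, $\theta(x_m,\cdot)=yz_1$ and $\theta(y,\cdot)=z_1x_1$ against every other generator. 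The one delicate point is to read $\theta(z_i,x_a)=x_i$ (not $x_a$), which comes from orienting the relation $x_az_{a+1}=z_ix_i$ correctly; a single transposed complement here would invalidate every subsequent case.

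With this table in hand I would verify Definition \ref{Magic} by classifying a triple $A$ according to how many of its elements lie in $\{x_m,y\}$. The third required equality $\theta(a_3,a_1)=\theta(a_3,a_2)$ dictates the choice of $a_3$: if $A$ contains exactly one of $x_m,y$, that generator is taken as $a_3$, and the equality is automatic by the constancy of its complement; if instead $A\subset S_0$ or $A\supset\{x_m,y\}$, then $a_3$ is chosen in $S_0$, the equality holding because the complement of an $S_0$-generator is constant separately on $S_0$ and on $\{x_m,y\}$, hence constant on the two remaining elements of $A$ (which then lie on a common side). The balancing word $u$ is thereby forced: comparing $\theta(a_1,a_3)$ with $\theta(a_1,a_2)$ through the dichotomy gives $u=y$ exactly in the six configurations where $A$ meets $\{x_m,y\}$ in a single generator (so that $a_3$ is large while $a_1,a_2\in S_0$), and $u=1$ in all remaining configurations; the same $u$ then satisfies the two equalities (I) and (II) of Definition \ref{Magic} by the displayed formulas. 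Running through the profiles --- all three elements in $S_0$, two in $S_0$ with one large, and one in $S_0$ with both large --- exhausts the twelve essentially distinct cases.

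I expect the genuine work to lie entirely in the first step: computing $\theta$ correctly and confirming right-fullness. Once the complement table and the partition $S=S_0\sqcup\{x_m,y\}$ are established, the cube-condition checks reduce to the mechanical bookkeeping sketched above, with $u\in\{1,y\}$ throughout; conclude by invoking Lemma \ref{MagicCube} exactly as in the proof of Lemma \ref{CubeClassical}. The main obstacle is thus not any conceptual difficulty but the careful orientation of the $z$--$x$ complements and the confirmation that the enlarged presentation \eqref{dualN=12} is genuinely right-full.
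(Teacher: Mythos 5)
Your proposal is correct and takes exactly the route the paper intends: the paper's own proof of this lemma is a one-line deferral to the author's thesis, citing the same method as Lemma \ref{CubeClassical}, which is precisely the check you carry out. Your complement table verifies against Presentation \eqref{dualN=12} (in particular the orientation $\theta(z_i,x_a)=x_i$, the trailing $y$ in $\theta(z_i,x_m)=\theta(z_i,y)=x_iy$ and $\theta(x_a,x_m)=\theta(x_a,y)=z_{a+1}y$, and the constant complements $\theta(x_m,\cdot)=yz_1$, $\theta(y,\cdot)=z_1x_1$), your pair count $\binom{m}{2}+m(m-1)+\binom{m-1}{2}+4m-1=\binom{2m+1}{2}$ confirms right-fullness, and the case analysis via the partition $S=S_0\sqcup\{x_m,y\}$ with $u\in\{1,y\}$ establishes $(\hyperref[Poulet]{C1})$ as sharp word equalities in all twelve configurations.
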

\begin{proof}
The proof is a straightforward check, done in a similar way as in the proof of Lemma \ref{ClassicalLeftCanc}. The complete details of this check will appear in the author's thesis.
\end{proof}

\begin{proof}[Proof of Proposition \ref{LeftCancDual}]
It follows by combining Lemmas \ref{CubeDual} and \ref{CubedualN=1} with Lemma \ref{MagicCube}. 
\end{proof}

\subsection{\texorpdfstring{The monoid $\D_*^*(n,m)$ is right-cancellative}{}}
\begin{proposition}\label{LeftCancDualOp}
Let $n,m\in\N^*$ with $n\wedge m=1$ and $n\leq m$. The monoid $\D_*^*(n,m)$ is right-cancellative and admits conditional left-lcms.
\end{proposition}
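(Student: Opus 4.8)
The plan is to prove the equivalent statement that $\D_*^*(n,m)^{\mathrm{op}}$ is left-cancellative and admits conditional right-lcms, exactly as was done for the classical monoid in Proposition \ref{OpLeftCanc}. Since left-multiples in a monoid are precisely the right-multiples in its opposite, and since $M^{\mathrm{op}}$ is left-cancellative if and only if $M$ is right-cancellative, this reformulation is equivalent to the assertion of the proposition. I would treat the cases $n=1$ and $n\geq 2$ separately, mirroring the split already used in the proof of Proposition \ref{LeftCancDual}.

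For the case $n=1$ there is a shortcut. The anti-homomorphism $\phi$ determined by $\phi(x_i)=z_{m+1-i}$, $\phi(z_i)=x_{m+1-i}$ and $\phi(y)=y$ (indices read in $\{1,\dots,m\}$) sends each defining relation of Presentation \eqref{dualN=1} to a relation valid in $\D_*^*(1,m)$: on the chain $x_m y z_1=yz_1x_1=z_m x_m y$ it permutes the three words, and on $x_iz_{i+1}=z_ix_i=z_{i+1}x_{i+1}$ it lands among the equal products $z_ax_a$. As $\phi$ restricts to an involution of the generating set, it is an anti-automorphism, whence $\D_*^*(1,m)\cong \D_*^*(1,m)^{\mathrm{op}}$ and the proposition for $n=1$ follows immediately from Proposition \ref{LeftCancDual}. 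I would note that no such generator relabelling can work for $n\geq 2$: the asymmetry between the single $z_i$ and the $n$ consecutive $x_j$'s in relation \eqref{dual3:3} obstructs any anti-automorphism permuting the generators, which is why a direct argument is required there.

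For $n\geq 2$ I would follow the template of Lemma \ref{OpClassical} and Corollary \ref{DualMonoid2}. First, reverse every relation of Presentation \eqref{dual3} to obtain a presentation of $\D_*^*(n,m)^{\mathrm{op}}$, relabelling the generators if convenient to put the relations back into a standard left-to-right form. Then I would enlarge this presentation to a right-full homogeneous one, deriving for each unordered pair of generators exactly one relation of the form $su=tv$ by the same elementary manipulations that produced equations \eqref{EnlargeDuaL1}--\eqref{EnlargeDuaL18} in the left-cancellative case. The outcome is the opposite analogue of Corollary \ref{DualMonoid2}, listing the syntactic right-complements $\theta(s,t)$ for all pairs.

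With such a presentation in hand, it remains to verify property $(\hyperref[Poulet]{C1})$: for every $3$-element subset $A$ of the generators one must exhibit an ordering $(a_1,a_2,a_3)$ and a word $u$ with $\theta(a_1,a_3)=\theta(a_1,a_2)u$, $\theta(a_2,a_3)=\theta(a_2,a_1)u$ and $\theta(a_3,a_1)=\theta(a_3,a_2)$; Lemma \ref{MagicCube} then yields left-cancellativity and conditional right-lcms of $\D_*^*(n,m)^{\mathrm{op}}$, and hence the proposition. This case analysis — organised, as in Lemma \ref{CubeClassical}, by how many generators of each type ($x_i$ in the various index ranges, together with $y$ and the $z_j$) the subset $A$ contains — is the main obstacle: it is entirely routine but long, and the profusion of index ranges in the opposite presentation makes it the bulkiest step. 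As with Lemmas \ref{CubeDual} and \ref{CubedualN=1}, I expect the full verification to be deferred, with only the structure of the argument recorded here.
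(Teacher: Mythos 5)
Your proposal is correct and, for $n\geq 2$, follows essentially the same route as the paper: pass to $\D_*^*(n,m)^{\mathrm{op}}$, derive a reversed presentation (the paper's Lemma \ref{OpDual}, which relabels $x_i\mapsto x_{m-i+1}$, $z_i\mapsto z_{m-i+1}$ and then uses the sliding identities of Lemma \ref{OpDualzSlide} to reach Corollaries \ref{OpDualInter} and \ref{OpDual2}), enlarge it to a right-full homogeneous presentation, verify $(\hyperref[Poulet]{C1})$, and conclude with Lemma \ref{MagicCube}. Your deferral of the bulky $(\hyperref[Poulet]{C1})$ table also matches the paper, which defers the proof of Lemma \ref{CubeOpDual} to the author's thesis.

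The one genuine divergence is your treatment of $n=1$. The paper does \emph{not} case-split in this proposition: unlike the proof of Proposition \ref{LeftCancDual}, Lemma \ref{OpDual} and Corollary \ref{OpDual2} are stated and used uniformly for all $n\in\N^*$, the wrap-around conventions absorbing the degenerate index ranges. Your shortcut is nevertheless valid, and arguably cleaner for that case: the map $\phi(x_i)=z_{m+1-i}$, $\phi(z_i)=x_{m+1-i}$, $\phi(y)=y$ does define an involutive anti-automorphism of $\D_*^*(1,m)$ --- the reversed image of the chain $x_iz_{i+1}=z_ix_i=z_{i+1}x_{i+1}$ of Presentation \eqref{dualN=1} is precisely the chain at index $m-i$ (namely $\{x_{m-i}z_{m-i+1},\,z_{m-i+1}x_{m-i+1},\,z_{m-i}x_{m-i}\}$, which is slightly sharper than your phrasing), and the chain $x_myz_1=yz_1x_1=z_mx_my$ is permuted --- so $\D_*^*(1,m)\cong\D_*^*(1,m)^{\mathrm{op}}$ and Proposition \ref{LeftCancDual} transfers. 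One caveat: your justification for \emph{why} a direct argument is required when $n\geq 2$ is heuristic, not a proof. The asymmetry of relation \eqref{dual3:3} only rules out anti-automorphisms of that specific $x\leftrightarrow z$ shape; excluding all generator-permuting anti-automorphisms (for instance, maps sending $x_i\mapsto x_{c-i}$, $z_i\mapsto z_{c-i+1}$ interact well with \eqref{dual3:1} and \eqref{dual3:3} and are only blocked by the relations involving $y$), let alone arbitrary anti-isomorphisms $\D_*^*(n,m)\cong\D_*^*(n,m)^{\mathrm{op}}$, would need an actual argument, e.g.\ analysing which positive words of a given content can be equal given that every defining relation contains exactly one $z$-letter. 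Since you carry out the direct route for $n\geq 2$ independently, this aside costs you nothing, but it should not be presented as established.
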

To prove Proposition \ref{LeftCancDualOp}, we show the equivalent statement that $\D_*^*(n,m)^{\mathrm{op}}$ is left-cancellative and admits conditional right-lcms.  In order to do so, we use the following convenient presentation of $\D_*^*(n,m)^{\mathrm{op}}$:

\begin{lemma}\label{OpDual}
Let $n,m\in \N^*$ and assume that $n\leq m$ and $n\wedge m=1$. The monoid $\mathcal D_*^*(n,m)^{\mathrm{op}}$ admits the following presentation:
\begin{subequations}\label{OpDualPres1}
\begin{align}
    &(1) \,\, \mathrm{Generators}\!:\,  \{x_1,\dots,x_m,y,z_1,\dots,z_m\};\notag\\
&(2) \,\, \mathrm{Relations}\!: \,\notag\\
  & z_ix_{i+1}=x_{i+1}z_{i+1}, \, \forall \, i=1,\dots,m-1, \, \label{OpDualPres1:1}\\ &z_{m}yx_1=yx_{1}z_{1},\label{OpDualPres1:2}\\
  &  x_i\cdots x_{n+i-1}z_{n+i-1}=x_{j}\cdots x_{n+j-1}z_{n+j-1} ,\, \forall \,1\leq i<j\leq m-n+1,\label{OpDualPres1:3}\\
  &x_i\cdots x_myx_1\cdots x_{i+n-m-1}z_{i+n-m-1}=x_{j}\cdots x_myx_1\cdots x_{j+n-m-1}z_{j+n-m-1},\, (*),\label{OpDualPres1:4}\\
 & \, x_{m-n+1}\cdots x_mz_my=x_{m-n+2}\cdots x_myx_1z_1.\label{OpDualPres1:5}\\
        &(*)\,\forall \, m-n+2\leq i<j\leq m+1\notag
\end{align}
\end{subequations}
\end{lemma}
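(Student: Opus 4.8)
The plan is to mirror the proof of Lemma \ref{OpClassical}: first write down the tautological presentation of the opposite monoid obtained by reversing every defining relation of \eqref{dual3}, then apply a bijective relabeling of the generators to bring it into the desired form, and finally observe that the resulting relations agree with \eqref{OpDualPres1} up to the standard equivalence between consecutive and pairwise relations.

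First I would record that $\mathcal{D}_*^*(n,m)^{\mathrm{op}}$ is presented by the generators $\{x_1,\dots,x_m,y,z_1,\dots,z_m\}$ together with the reversals $u^{\mathrm{op}}=v^{\mathrm{op}}$ of each relation $u=v$ of \eqref{dual3}, where $(s_1\cdots s_k)^{\mathrm{op}}=s_k\cdots s_1$; this is exactly the device used to obtain Presentation \eqref{OpClassicalPres2} in the classical case. Reversing \eqref{dual3:1}--\eqref{dual3:5} turns, for instance, $x_iz_{i+1}=z_ix_i$ into $z_{i+1}x_i=x_iz_i$ and $yz_1x_1\cdots x_n=z_mx_myx_1\cdots x_{n-1}$ into $x_n\cdots x_1z_1y=x_{n-1}\cdots x_1yx_mz_m$, the $x$-blocks now being read in decreasing order of index.

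The key step is to apply the generator relabeling $x_i\mapsto x_{m+1-i}$, $z_i\mapsto z_{m+1-i}$ (and $y\mapsto y$), which is a bijection of the generating set and hence yields an isomorphic presented monoid. This simultaneously reverses the $x$-blocks back into increasing order and shifts the $z$-indices correctly: a direct check shows the relabeled reversal of \eqref{dual3:1} is $z_jx_{j+1}=x_{j+1}z_{j+1}$ for $1\le j\le m-1$, that of \eqref{dual3:2} is $z_myx_1=yx_1z_1$, and that of \eqref{dual3:5} is $x_{m-n+1}\cdots x_mz_my=x_{m-n+2}\cdots x_myx_1z_1$; these are precisely \eqref{OpDualPres1:1}, \eqref{OpDualPres1:2} and \eqref{OpDualPres1:5}. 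For \eqref{dual3:3} the relabeled reversal of the relation indexed by $i$ becomes $x_a\cdots x_{a+n-1}z_{a+n-1}=x_{a+1}\cdots x_{a+n}z_{a+n}$ with $a=m+1-i-n$, and for \eqref{dual3:4} it becomes the analogous wrap-around relation involving the block $x_\alpha\cdots x_myx_1\cdots x_{\alpha+n-m-1}z_{\alpha+n-m-1}$ with $\alpha=2m+1-i-n$; as $i$ runs over its range these are the consecutive instances of \eqref{OpDualPres1:3} and \eqref{OpDualPres1:4}, and the chain of consecutive equalities is equivalent as monoid relations to the full pairwise families stated there.

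The conceptual content is light; the only real work, and the main source of potential error, is the index bookkeeping for \eqref{dual3:4} and \eqref{dual3:5}, where the words traverse the cyclic pattern $x_{i+1}\cdots x_m\,y\,x_1\cdots$ and one must verify that reversing and then applying $x_i\mapsto x_{m+1-i}$ lands exactly on the pattern $x_\alpha\cdots x_m\,y\,x_1\cdots x_\beta z_\beta$ with $\beta=\alpha+n-m-1$ and the correct endpoint indices. I would also note that the argument is uniform in $n$: when $n=1$ several index ranges in both \eqref{dual3} and \eqref{OpDualPres1} become empty and the corresponding relations simply drop out, so no separate treatment of that case is needed.
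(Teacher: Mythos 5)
Your proposal is correct and takes essentially the same route as the paper: the paper likewise writes down the reversed presentation of \eqref{dual3}, applies the relabeling $x_i\mapsto f_{m-i+1}$, $z_i\mapsto g_{m-i+1}$ (your $x_i\mapsto x_{m+1-i}$, $z_i\mapsto z_{m+1-i}$), and then passes from the chains of consecutive relations to the pairwise families \eqref{OpDualPres1:3}--\eqref{OpDualPres1:4}. Your index bookkeeping, including the wrap-around endpoint $\alpha=m+1$ corresponding to the word $yx_1\cdots x_nz_n$ and the observation that $n=1$ needs no separate treatment, agrees with the paper's computations.
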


\begin{proof}
Using Presentation \eqref{dual3}, we see that $\mathcal D_*^*(n,m)^{\mathrm{op}}$ admits the presentation 
\begin{equation}\label{OpDualPres2}
\begin{aligned}
    &(1) \,\, \mathrm{Generators}\!:\,  \{x_1,\dots,x_m,y,z_1,\dots,z_m\};\\
&(2) \,\, \mathrm{Relations}\!: \\
  & z_{i+1}x_i=x_iz_i ,\,  \forall\, i=1,\dots,m-1, \, \, z_1yx_m=yx_mz_m,\\
  &  x_{n+i}\cdots x_{i+1}z_{i+1}=x_{i+n-1}\cdots x_iz_i,\, \forall \,1\leq i\leq m-n,\\
  &x_{i+n-m}\cdots x_1yx_m\cdots x_{i+1}z_{i+1}=x_{i+n-m-1}\cdots x_1yx_m \cdots x_iz_i ,\, (*),  \\
 &   x_n\cdots x_1 z_1 y=x_{n-1}\cdots x_1yx_mz_m,\\
 & (*)\,\forall \, m-n+1\leq i\leq m-1
\end{aligned}
\end{equation}
with the convention that $x_0\dots x_1yx_m\cdots x_{m-n+1}z_{m-n+1}=yx_m\cdots x_{m-n+1}z_{m-n+1}$.\\
Relabelling $x_i$ by $f_{m-i+1}$ and $z_i$ by $g_{m-i+1}$ for all $i=1,\dots,m$ in Presentation \eqref{OpDualPres2} yields the presentation
\begin{equation}\label{OpDualPres3}
\begin{aligned}
    &(1) \,\, \mathrm{Generators}\!:\,  \{f_1,\dots,f_m,y,g_1,\dots,g_m\};\\
&(2) \,\, \mathrm{Relations}\!: \,\\
  &g_{m-i}f_{m-i+1}=f_{m-i+1}g_{m-i+1}, \, \, \forall i=1,\dots,m-1, \, \, g_myf_1=yf_1g_1,\\
  &   f_{m-n-i+1}\cdots f_{m-i}g_{m-i}=f_{m-n-i+2}\cdots f_{m-i+1}g_{m-i+1},\,\forall \,1\leq i\leq m-n,\\
  & f_{2m-n-i+1}\cdots f_myf_1\cdots f_{m-i}g_{m-i}=f_{2m-i-n+2}\cdots f_myf_1 \cdots f_{m-i+1}g_{m-i+1},\,(*),\\
 & \,  f_{m-n+1}\cdots f_m g_m y=f_{m-n+2}\cdots f_myf_1g_1.\\
 & (*)\,\forall \, m-n+1\leq  i\leq m-1
\end{aligned}
\end{equation}
Now, writing $u_k=g_{m-k}f_{m-k+1}$ and $v_k=f_{m-k+1}g_{m-k+1}$ for all $k=1,\dots,m-1$, the set of relations
$$g_{m-i}f_{m-i+1}=f_{m-i+1}g_{m-i+1}, \, \forall i=1,\dots,m-1$$ 
is equivalent to saying that $u_k=v_k$ for all $k=1,\dots,m-1$.\\
We have
\begin{equation*}
    \begin{aligned}
        u_1=g_{m-1}f_m&, \, v_1=f_mg_m\\
        u_2=g_{m-2}f_{m-1}&,\, v_2=f_{m-1}g_{m-1}\\
        & \vdots\\
        u_{m-1}=g_1f_2&,\, v_{m-1}=f_2g_2.
    \end{aligned}
\end{equation*}
Thus, the set of relations $\{u_i=v_i\}_{i=1,\dots,m-1}$ is the set of relations 
$$g_if_{i+1}=f_{i+1}g_{i+1},\, \forall i=1,\dots,m-1.$$
With similar computations for the second and third lines of Presentation \eqref{OpDualPres3}, we obtain that $\mathcal D_*^*(n,m)^{\mathrm{op}}$ admits the following presentation: 
\begin{equation*}
\begin{aligned}
    &(1) \,\, \mathrm{Generators}\!:\,  \{f_1,\dots,f_m,y,g_1,\dots,g_m\};\\
&(2) \,\, \mathrm{Relations}\!: \,\\
  &g_if_{i+1}=f_{i+1}g_{i+1}, \, \, \forall i=1,\dots,m-1, \, \, g_myf_1=yf_1g_1,\\
  &   f_i\cdots f_{i+n-1}g_{i+n-1}=f_i\cdots f_{j+n-1}g_{j+n-1}, \,\forall\, 1\leq i<j\leq m-n+1, \\
  & f_{i}\cdots f_myf_1\cdots f_{i+n-m-1}g_{i+n-m-1}=f_{j}\cdots f_myf_1\cdots f_{j+n-m-1}g_{j+n-m-1},\,(*), \\
 & \, f_{m-n+1}\cdots f_m g_m y=f_{m-n+2}\cdots f_myf_1g_1.\\
 &(*)\,\forall \,  m-n+2\leq i<j\leq m+1
\end{aligned}
\end{equation*}
Relabeling $f_k$ by $x_k$ and $g_k$ by $z_k$ for each $k\in [m]$ concludes the proof.
\end{proof}
\noindent
Now, we will enlarge Presentation \eqref{OpDualPres1} to get a right-full homogeneous presentation satisfying $(\hyperref[Poulet]{C1})$. In order to facilitate the computations, we prove the following lemma:
\begin{lemma}\label{OpDualzSlide}
In $\mathcal D_*^*(n,m)^{\mathrm{op}}$, the following relations hold:\\
$(i)$ For all $i\in [m-n+1]$ we have
\begin{equation}\label{DualOpModify1}
x_i\cdots x_{i+n-1}z_{i+n-1}=x_iz_ix_{i+1}\cdots x_{i+n-1}. 
\end{equation}
$(ii)$ For all $i\in \llbracket m-n+1,m\rrbracket$ we have
\begin{equation}\label{DualOpModify2}
x_i\cdots x_myx_1\cdots x_{i+n-m-1}z_{i+n-m-1}=x_iz_ix_{i+1}\cdots x_myx_1\cdots x_{i+n-m-1},
\end{equation}
where $x_mz_mx_{m+1}\cdots x_myx_1\cdots x_{n-1}=x_mz_myx_1\cdots x_{n-1}$.\\
$(iii)$ We have
\begin{equation}\label{DualOpModify3}
yx_1\cdots x_nz_n=yx_1z_1x_2\cdots x_n.
\end{equation}
\end{lemma}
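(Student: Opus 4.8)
The plan is to prove all three identities by a single mechanism: repeatedly sliding a trailing $z$-generator leftward through the word, using only the two commutation relations of Presentation \eqref{OpDualPres1}, namely \eqref{OpDualPres1:1} read as $x_kz_k=z_{k-1}x_k$ (valid for $2\le k\le m$) and \eqref{OpDualPres1:2} read as $yx_1z_1=z_myx_1$. No other relation of the presentation is needed for the slides.

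First I would isolate the elementary telescoping identity $x_ax_{a+1}\cdots x_bz_b=x_az_ax_{a+1}\cdots x_b$ for all $1\le a\le b\le m$, proved by a short induction on $b-a$. The case $b=a$ is trivial, and in the inductive step one applies \eqref{OpDualPres1:1} to the rightmost pair, $x_bz_b=z_{b-1}x_b$, and then the induction hypothesis to $x_a\cdots x_{b-1}z_{b-1}$. Along the way the $z$-indices that occur range over $\{a,\dots,b-1\}\subseteq\{1,\dots,m-1\}$, so every instance of \eqref{OpDualPres1:1} is legitimate. Part $(i)$ is then exactly the case $a=i$, $b=i+n-1$ (with $i+n-1\le m$ since $i\le m-n+1$), and part $(iii)$ is the case $a=1$, $b=n$ with the letter $y$ prepended.

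For part $(ii)$ I would treat the wraparound word $x_i\cdots x_myx_1\cdots x_pz_p$, where $p:=i+n-m-1$, in three moves. In the generic situation $1\le p$ I first apply the telescoping identity to the block $x_1\cdots x_pz_p=x_1z_1x_2\cdots x_p$, converting the suffix $yx_1\cdots x_pz_p$ into $yx_1z_1x_2\cdots x_p$; then I apply \eqref{OpDualPres1:2} to the factor $yx_1z_1=z_myx_1$, which jumps the $z$ across $y$ and relabels it as $z_m$, producing $x_i\cdots x_mz_myx_1\cdots x_p$; finally I apply the telescoping identity once more to $x_i\cdots x_mz_m=x_iz_ix_{i+1}\cdots x_m$, which is precisely the desired right-hand side. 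The two boundary cases use the same tools: when $i=m$ the final block $x_{i+1}\cdots x_m$ is empty and, under the convention $x_mz_mx_{m+1}\cdots x_myx_1\cdots x_{n-1}=x_mz_myx_1\cdots x_{n-1}$, the identity reads $x_myx_1\cdots x_{n-1}z_{n-1}=x_mz_myx_1\cdots x_{n-1}$, obtained by the first two moves alone; when $i=m-n+1$ (so $p=0$) the $z$ sits directly before $y$ as $x_{m-n+1}\cdots x_mz_my$, and a single application of the telescoping identity slides $z_m$ back to give $x_{m-n+1}z_{m-n+1}x_{m-n+2}\cdots x_my$.

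The mathematics here is light, since each step is one use of a commutation relation, so the hard part will not be any individual computation but the index bookkeeping: verifying that every slide invokes \eqref{OpDualPres1:1} with an index in the admissible range $\{1,\dots,m-1\}$, and checking that the degenerate extremes (the empty blocks at $i=m$ and the placement $p=0$) match the stated conventions rather than producing a spurious $z_0$. Once the telescoping identity is established cleanly and the two boundary conventions are pinned down, the three displayed relations follow immediately.
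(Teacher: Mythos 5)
Your proposal is correct and matches the paper's proof essentially verbatim: the paper likewise obtains $(i)$ and $(iii)$ by iterated application of \eqref{OpDualPres1:1} (your telescoping identity), and $(ii)$ by the same three moves, namely sliding the trailing $z$ leftward through $x_1\cdots x_{i+n-m-1}$, jumping it across $y$ via \eqref{OpDualPres1:2}, and telescoping once more through $x_i\cdots x_m$. If anything you are slightly more careful than the paper, which leaves the degenerate placements at $i=m-n+1$ (where $i+n-m-1=0$) and $i=m$ implicit, whereas you pin down these conventions explicitly and consistently with \eqref{OpDualPres1:5}.
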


\begin{proof}
For $i\in [m-n+1]$, we have
\begin{equation*}
    \begin{aligned}
        x_{i}\cdots x_{i+n-1} z_{i+n-1}&\underset{\eqref{OpDualPres1:1}}{=}x_{i}\cdots x_{i+n-2}z_{i+n-1}x_{i+n-1} \\
        &\underset{\eqref{OpDualPres1:1}}{=}\cdots
        \underset{\eqref{OpDualPres1:1}}{=}x_{i}z_{i+1}x_{i+1}\cdots x_{i+n-1}.
    \end{aligned}
\end{equation*}
This establishes \eqref{DualOpModify1}.\\
For $i\in \llbracket m-n+1,m\rrbracket$, we have

\begin{equation*}
    \begin{aligned}
        x_{i}\cdots x_myx_1\cdots x_{i+n-m-1}z_{i+n-m-1}&\underset{\eqref{OpDualPres1:1}}{=}x_{i}\cdots x_myx_1z_1x_2\cdots x_{i+n-m-1}\\
        \underset{\eqref{OpDualPres1:2}}{=}x_{i}\cdots x_{m} z_myx_1\cdots x_{i+n-m-1}&\underset{\eqref{OpDualPres1:1}}{=} x_iz_{i+1}x_{i+1}\cdots x_myx_1\cdots x_{i+n-m}.
    \end{aligned}
\end{equation*}
This establishes \eqref{DualOpModify2}. Finally, we have 
\begin{equation*}
\begin{aligned}
    yx_1\cdots x_nz_n&\underset{\eqref{OpDualPres1:1}}{=}yx_1z_1x_2\cdots x_{n}.
\end{aligned}    
\end{equation*} This establishes \eqref{DualOpModify3} and concludes the proof.
\end{proof}

\begin{corollary}\label{OpDualInter}
The monoid $\mathcal D_*^*(n,m)^\mathrm{op}$ admits the following presentation:

\begin{subequations}\label{OpDualInter1}
\begin{align}
    &(1) \,\, \mathrm{Generators}\!:\,  \{x_1,\dots,x_m,y,z_1,\dots,z_m\};\notag\\
&(2) \,\, \mathrm{Relations}\!: \,\notag\\
  & z_ix_{i+1}=x_{i+1}z_{i+1}, \, \forall \, i=1,\dots,m-1, \, \label{OpDualInter1:1}\\ &z_{m}yx_1=yx_{1}z_{1},\label{OpDualInter1:2}\\
  &  x_iz_ix_{i+1}\cdots x_{i+n-1}=x_{j}z_jx_{j+1}\cdots x_{j+n-1} ,\, \forall \,1\leq i<j\leq m-n+1,\label{OpDualInter1:3}\\
  &x_iz_ix_{i+1}\cdots x_myx_1\cdots x_{i+n-m-1}=x_{j}z_jx_{j+1}\cdots x_myx_1\cdots x_{j+n-m-1},\, (*),\label{OpDualInter1:4}\\
 & \, x_iz_ix_{i+1}\cdots x_{i+n-1}y=x_{j}z_jx_{j+1}\cdots x_myx_1\cdots x_{j+n-m-1}, \, (**),\label{OpDualInter1:5}\\
 & yx_1z_1x_2\cdots x_n=x_{j}z_jx_{j+1}\cdots x_myx_1\cdots x_{j+n-m-1}, \, \forall m-n+2\leq j \leq, m\label{OpDualInter1:6}\\
&(*)\,\forall \, m-n+2\leq i<j\leq m\notag\\
&(**)\, \forall i\in [m-n+1], \, j\in \llbracket m-n+2,m\rrbracket \notag
\end{align}
\end{subequations}
where $x_mz_mx_{m+1}\cdots x_myx_1\cdots x_{n-1}=x_mz_myx_1\cdots x_{n-1}$.
\end{corollary}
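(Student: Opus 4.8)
The plan is to read off Presentation \eqref{OpDualInter1} from Presentation \eqref{OpDualPres1} of Lemma \ref{OpDual} by pushing every relation through the three sliding identities of Lemma \ref{OpDualzSlide}, and then to account for the boundary values of the indices. First I would leave the two commutation relations \eqref{OpDualPres1:1} and \eqref{OpDualPres1:2} untouched, as they reappear verbatim as \eqref{OpDualInter1:1} and \eqref{OpDualInter1:2}. Next, applying \eqref{DualOpModify1} to each side of \eqref{OpDualPres1:3} rewrites $x_i\cdots x_{i+n-1}z_{i+n-1}$ as $x_iz_ix_{i+1}\cdots x_{i+n-1}$, which turns \eqref{OpDualPres1:3} into \eqref{OpDualInter1:3}. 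Similarly, for the relations \eqref{OpDualPres1:4} with both indices in $\llbracket m-n+2,m\rrbracket$, the identity \eqref{DualOpModify2} converts each side into the corresponding word of \eqref{OpDualInter1:4}; here the convention $x_mz_mx_{m+1}\cdots x_myx_1\cdots x_{n-1}=x_mz_myx_1\cdots x_{n-1}$ recorded at the end of the statement is exactly the $i=m$ instance of \eqref{DualOpModify2}.

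The first genuinely boundary case is the value $j=m+1$ in \eqref{OpDualPres1:4}: the word $x_{m+1}\cdots x_myx_1\cdots x_{n}z_{n}$ collapses (the initial block being empty) to $yx_1\cdots x_nz_n$, and here I would invoke \eqref{DualOpModify3} rather than \eqref{DualOpModify2} to obtain $yx_1z_1x_2\cdots x_n$; this is what produces the relation \eqref{OpDualInter1:6}. The remaining new relation \eqref{OpDualInter1:5} is a \emph{cross} relation linking a first-type word (ending in $x_{i+n-1}y$, for $i\in[m-n+1]$) to a second-type word (which wraps around, for $j\in\llbracket m-n+2,m\rrbracket$). To derive it I would multiply the common first-type element of \eqref{OpDualInter1:3} on the right by $y$, slide with \eqref{DualOpModify1}, reach the index $i=m-n+1$ where \eqref{OpDualPres1:5} applies, and then feed the result into \eqref{OpDualPres1:4}/\eqref{DualOpModify2}; the decisive observation is that at $j=m-n+2$ the second-type word is literally $x_{m-n+2}\cdots x_myx_1z_1$, so \eqref{OpDualPres1:5} already bridges the two families. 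This is precisely the opposite-monoid mirror of the enlargement carried out in Corollary \ref{DualMonoid2}.

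Throughout, I would argue at the level of \textbf{monoid} relations, checking in each direction that the added relations are consequences of those in \eqref{OpDualPres1} (so that the two presentations define the same monoid rather than a quotient), and conversely that \eqref{OpDualPres1:3}--\eqref{OpDualPres1:5} are recovered from \eqref{OpDualInter1:3}--\eqref{OpDualInter1:6} by undoing the slides via \eqref{OpDualInter1:1}--\eqref{OpDualInter1:2}. The empty-index conventions (as in the remarks following \eqref{DualMonoidPres2} and \eqref{dualN=12}) handle the degenerate ranges when $n$ is small. I expect the main obstacle to be purely bookkeeping: tracking the wrap-around indexing in the second-type words and making sure that the $j=m+1$ endpoint and the first-type/second-type junction at $i=m-n+1,\ j=m-n+2$ are handled consistently, so that the resulting presentation is genuinely right-full (every ordered pair of distinct generators admitting a common right-multiple begins exactly one relation), which is the property needed to apply Lemma \ref{MagicCube} in the sequel.
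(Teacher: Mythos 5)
Your proposal is correct and follows essentially the same route as the paper: retain \eqref{OpDualPres1:1}--\eqref{OpDualPres1:2} verbatim, convert \eqref{OpDualPres1:3} and the interior part of \eqref{OpDualPres1:4} via the sliding identities \eqref{DualOpModify1}--\eqref{DualOpModify2} of Lemma \ref{OpDualzSlide} (with the $j=m+1$ endpoint handled by \eqref{DualOpModify3} to produce \eqref{OpDualInter1:6}, and the stated convention being the $i=m$ instance of \eqref{DualOpModify2}), and obtain the cross relations \eqref{OpDualInter1:5} by bridging the two families through \eqref{OpDualPres1:5} at the junction $i=m-n+1$, $j=m-n+2$, which is exactly the paper's derivation of the intermediate relation \eqref{DualInterModify1} and its translation by the slides. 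The only minor imprecision is your closing remark, since Presentation \eqref{OpDualInter1} is not itself right-full (right-fullness is only achieved in the subsequent enlargement of Corollary \ref{OpDual2}), but this does not affect the proof of the present statement.
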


\begin{proof}
The sets of relations \eqref{OpDualInter1:1} and \eqref{OpDualInter1:2} are the same as \eqref{OpDualPres1:1} and \eqref{OpDualPres1:2}. Moreover, by \eqref{DualOpModify1} the sets of relations \eqref{OpDualInter1:3} and \eqref{OpDualPres1:3} are equivalent. Similarly, by \eqref{DualOpModify2} the set of relations \eqref{OpDualInter1:4} is equivalent to the subset of relations of \eqref{OpDualPres1:4} with $i\in [m-n+1]$ and $j\in \llbracket m-n+2,m\rrbracket$, and by \eqref{DualOpModify3} the subset of \eqref{OpDualPres1:4} given by $i\in [m-n+1]$ and $j=m+1$ is equivalent to \eqref{OpDualInter1:6}.\\
Finally, we see that the sets of relations \eqref{OpDualPres1:3} and \eqref{OpDualPres1:4} combined with the relation \eqref{OpDualPres1:5} imply that for all $i\in [m-n+1]$ and $j\in \llbracket m-n+2,m\rrbracket$, we have
\begin{equation}\label{DualInterModify1}
    x_i\cdots x_{i+n-1}z_{i+n-1}y=x_j\cdots x_myx_1\cdots x_{j+n-m-1}z_{j+n-m-1}.
\end{equation}
Using \eqref{OpDualInter1:3} and \eqref{OpDualInter1:4}, the set of relations given by \eqref{DualInterModify1} is equivalent to \eqref{OpDualInter1:5}. This concludes the proof.
\end{proof}

In addition to the relations of Presentation \eqref{OpDualInter1}, observe the following:\\

\noindent
$\bullet$ For all $i\in [m-n+1]$, $j\in [m-n]$ with $i\neq j+1$, we have
\begin{equation}\label{EnlargeDualOp1}
\begin{aligned}
x_iz_ix_{i+1}\cdots x_{i+n-1}&\underset{\eqref{OpDualInter1:3}}{=}x_{j+1}z_{j+1}x_{j+2}\cdots x_{j+n}z_{j+n}\underset{\eqref{OpDualInter1:1}}{=}z_{j}x_{j+1}\cdots x_{j+n}.
\end{aligned}
\end{equation}

\noindent
$\bullet$ For all $i\in [m-n+1]$, $j\in \llbracket m-n+1,m-1\rrbracket$, we have
\begin{equation}\label{EnlargeDualOp2}
\begin{aligned}
x_iz_i x_{i+1}\cdots x_{i+n-1}y&
\underset{\eqref{OpDualInter1:4}}{=}x_{j+1}z_{j+1}x_{j+2}\cdots x_myx_1\cdots x_{j+n-m}\\& \underset{\eqref{OpDualInter1:1}}{=}z_{j}x_{j+1}\cdots x_myx_1\cdots x_{j+n-m}.
\end{aligned}
\end{equation}

\noindent
$\bullet$ For all $i\in [m-n+1]$, we have
\begin{equation}\label{EnlargeDualOp22}
\begin{aligned}
x_iz_i x_{i+1}\cdots x_{i+n-1}y&
\underset{\eqref{OpDualInter1:5}}{=}x_{m}z_{m}yx_1\cdots x_{n-1}\\ \underset{\eqref{OpDualInter1:6}}{=}yx_1z_1x_2&\cdots x_n\underset{\eqref{OpDualInter1:2}}{=}z_myx_1x_2\cdots x_n.
\end{aligned}
\end{equation}

\noindent
$\bullet$ For all $i\in \llbracket m-n+2,m\rrbracket$, $j\in [m-n]$, we have
\begin{equation}\label{EnlargeDualOp4}
\begin{aligned}
x_iz_ix_{i+1}\cdots x_{m}yx_1\cdots x_{i+n-m-1} &\underset{\eqref{OpDualInter1:5}}{=}x_1z_1x_2\cdots x_ny
\underset{\eqref{EnlargeDualOp1}}{=}z_{j}x_{j+1}\cdots x_{j+n}y.
\end{aligned}
\end{equation}

\noindent
$\bullet$ For all $i\in \llbracket m-n+2,m\rrbracket$, $j\in \llbracket m-n+1,m\rrbracket$ with $i\neq j+1$, we have
\begin{equation}\label{EnlargeDualOp5}
\begin{aligned}
x_iz_ix_{i+1}\cdots x_{m}yx_1\cdots x_{i+n-m-1}&\underset{\eqref{OpDualInter1:4}}{=}x_1z_1x_2\cdots x_ny\\
\underset{\eqref{EnlargeDualOp2}/\eqref{EnlargeDualOp22}}{=}&z_{j}x_{j+1}\cdots x_{m}yx_1\cdots x_{j+n-m},
\end{aligned}
\end{equation}
where $z_mx_{m+1}\cdots x_myx_1\cdots x_n=z_myx_1\cdots x_n$ and the use of \eqref{EnlargeDualOp2} or \eqref{EnlargeDualOp22} depends on whether or not $j=m$.

\noindent
$\bullet$ For all $1\leq i<j\leq m-n$, we have
\begin{equation}\label{EnlargeDualOp6}
\begin{aligned}
z_ix_{i+1}\cdots x_{i+n}&\underset{\eqref{EnlargeDualOp1}}{=}x_{1}z_{1}x_{2}\cdots x_{n}
\underset{\eqref{EnlargeDualOp1}}{=}z_jx_{j+1}\cdots x_{j+n}.
\end{aligned}
\end{equation}

\noindent
$\bullet$ For all $i\in [m-n]$, $j\in \llbracket m-n+1,m\rrbracket$, we have
\begin{equation}\label{EnlargeDualOp7}
\begin{aligned}
z_ix_{i+1}\cdots x_{i+n}y&\underset{\eqref{EnlargeDualOp4}}{=}x_{m}z_myx_1\cdots x_{n}\underset{\eqref{EnlargeDualOp5}}{=}z_jx_{j+1}\cdots x_myx_1\cdots x_{j+n-m}.
\end{aligned}
\end{equation}

\noindent
$\bullet$ For all $m-n+1\leq i<j\leq m$, we have
\begin{equation}\label{EnlargeDualOp8}
\begin{aligned}
z_ix_{i+1}\cdots x_myx_1\cdots x_{i+n-m}&\underset{\eqref{EnlargeDualOp7}}{=}z_1x_{2}\cdots x_{n+1}y\underset{\eqref{EnlargeDualOp7}}{=}z_jx_{j+1}\cdots x_myx_1\cdots x_{j+n-m}.
\end{aligned}
\end{equation}

\noindent
$\bullet$ For all $i\in[m-n+1]$, we have
\begin{equation}\label{EnlargeDualOp9}
\begin{aligned}
x_{i}z_{i+1}x_{i+1}\cdots x_{i+n)1}y&\underset{\eqref{OpDualInter1:5}}{=}x_mz_myx_1\cdots x_{n-1}\underset{\eqref{OpDualInter1:6}}{=}yx_1z_1x_2\cdots x_{n}
\end{aligned}
\end{equation}

\noindent
$\bullet$ For all $i\in [m-n]$, we have
\begin{equation}\label{EnlargeDualOp10}
\begin{aligned}
z_ix_{i+1}\cdots x_{i+n}y&\underset{\eqref{EnlargeDualOp4}}{=}x_mz_myx_1\cdots x_{n-1}\underset{\eqref{OpDualInter1:6}}{=}yx_1z_1x_2\cdots x_{n}
\end{aligned}
\end{equation}

\noindent
$\bullet$ For all $m-n+1\leq i\leq m$, we have
\begin{equation}\label{EnlargeDualOp11}
\begin{aligned}
z_ix_{i+1}\cdots x_myx_1\cdots x_{i+n-m}&\underset{\eqref{EnlargeDualOp7}}{=}z_1x_{2}\cdots x_{n+1}y\underset{\eqref{EnlargeDualOp10}}{=}yx_1z_1x_2\cdots x_n
\end{aligned}
\end{equation}

Using equations \eqref{EnlargeDualOp1}-\eqref{EnlargeDualOp11}, we get the following corollary:

\begin{corollary}\label{OpDual2}
Let $n,m\in \N^*$ and assume $n\leq m$, $n\wedge m=1$. The monoid $\mathcal D_*^*(n,m)^{\mathrm{op}}$ admits the following presentation:
\begin{subequations}\label{OpDualPres4}
\begin{align}
    &(1) \,\, \mathrm{Generators}\!:\,  \{x_1,\dots,x_m,y,z_1,\dots,z_m\};\notag\\
&(2) \,\, \mathrm{Relations}\!: \,\notag\\
  &z_ix_{i+1}=x_{i+1}z_{i+1} \, \, \forall i=1,\dots,m-1,\label{OpDualPres4:1} \\
  &z_{m}yx_1=yx_{1}z_{1},\label{OpDualPres4:2}\\
  &  x_iz_ix_{i+1}\cdots x_{n+i-1}=x_{j}z_jx_{j+1}\cdots x_{n+j-1}, \, \forall \, 1\leq i<j\leq m-n+1, \label{OpDualPres4:3}\\
  &x_iz_ix_{i+1}\cdots x_myx_1\cdots x_{i+m-n-1}=x_{j}z_jx_{j+1}\cdots x_myx_1\cdots x_{j+m-n-1},\, (*_1),\label{OpDualPres4:4}\\
 & \, x_{i}z_ix_{i+1}\cdots x_{i+n-1}=z_jx_{j+1}\cdots x_{j+n} ,\, (*_2),\label{OpDualPres4:5}\\
 & x_{i}z_ix_{i+1}\cdots x_{i+n-1}y=z_jx_{j+1}\cdots x_myx_1\cdots x_{j+n-m}, \, (*_3),\label{OpDualPres4:6}\\
 & x_{i}z_ix_{i+1}\cdots x_{i+n-1}y=x_jz_jx_{j+1}\cdots x_myx_1\cdots x_{j+n-m-1},\,(*_4),\label{OpDualPres4:7}\\
 &\, x_iz_ix_{i+1}\cdots x_myx_1\cdots x_{i+n-m-1}=z_jx_{j+1}\cdots x_{j+n}y,\,(*_5),\label{OpDualPres4:8}\\
 & x_iz_ix_{i+1}\cdots x_myx_1\cdots x_{i+n-m-1}=z_jx_{j+1}\cdots x_myx_1\cdots x_{j+n-m}, \,(*_6),\label{OpDualPres4:9}\\
 & z_ix_{i+1}\cdots x_{i+n}=z_jx_{j+1}\cdots x_{j+n},\, \forall 1\leq i<j\leq m-n,\label{OpDualPres4:10}\\
 &z_ix_{i+1}\cdots x_{i+n}y=z_jx_{j+1}\cdots x_myx_1\cdots x_{j+n-m},\,(*_7),\label{OpDualPres4:11}\\
 & z_ix_{i+1}\cdots x_myx_1\cdots x_{i+n-m}=z_jx_{j+1}\cdots x_myx_1\cdots x_{j+n-m},\, (*_8),\label{OpDualPres4:12}\\
 & yx_1z_1x_2\cdots x_n=x_iz_ix_{i+1}\cdots x_{n+i-1}y ,\,\forall \, 1\leq i\leq m-n+1,\label{OpDualPres4:13}\\
 & yx_1z_1x_2\cdots x_n=x_iz_ix_{i+1}\cdots x_myx_1\cdots x_{i+n-m-1},\,\forall \, m-n+2\leq i\leq m,\label{OpDualPres4:14}\\
 & yx_1z_1x_2\cdots x_n=z_ix_{i+1}\cdots x_{i+n}y, \,\forall \, 1\leq i\leq m-n,\label{OpDualPres4:15}\\
 & yx_1z_1x_2\cdots x_n=z_ix_{i+1}\cdots x_myx_1\cdots x_{i+n-m}, \,\forall \, m-n+1\leq i\leq m,\label{OpDualPres4:16}\\
 &(*_1) \,\forall \, m-n+2\leq i<j\leq m\notag\\
 &(*_2)\,\forall \, i\in [m-n+1] ,j\in [m-n] , i\neq j+1 \notag\\
 &(*_3)\,\forall \,i\in [m-n+1],j\in \llbracket m-n+1,m\rrbracket\notag\\
 &(*_4)\, \forall \, i\in [m-n+1],j\in \llbracket m-n+2,m\rrbracket\notag\\
 &(*_5)\,\forall \,i\in \llbracket m-n+2,m\rrbracket ,j\in [m-n] \notag\\
 &(*_6)\,\forall \, i\in \llbracket m-n+2,m\rrbracket, j\in \llbracket m-n+1,m\rrbracket , i\neq j+1 \notag\\
 &(*_7)\, \forall \, i\in [m-n],j\in \llbracket m-n+1,m\rrbracket\notag\\
 &(*_8)\, \forall \,m-n+1\leq i<j\leq m\notag
\end{align}
\end{subequations}
where $z_mx_{m+1}\cdots x_myx_1\cdots x_n=z_myx_1\cdots x_n$ and  $x_mz_mx_{m+1}\cdots x_myx_1\cdots x_{n-1}=x_mz_myx_1\cdots x_{n-1}$.
\end{corollary}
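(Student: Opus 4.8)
The plan is to read Presentation \eqref{OpDualPres4} off as Presentation \eqref{OpDualInter1} enlarged by the family of relations \eqref{EnlargeDualOp1}--\eqref{EnlargeDualOp11}. By Corollary \ref{OpDualInter}, Presentation \eqref{OpDualInter1} presents $\mathcal D_*^*(n,m)^{\mathrm{op}}$, and each displayed computation \eqref{EnlargeDualOp1}--\eqref{EnlargeDualOp11} is a chain of rewritings justified exclusively by the relations \eqref{OpDualInter1:1}--\eqref{OpDualInter1:6}; hence every one of these relations already holds in $\mathcal D_*^*(n,m)^{\mathrm{op}}$. Adjoining relations that hold in a monoid to one of its presentations yields again a presentation of the same monoid, so it suffices to check that the relation list of \eqref{OpDualPres4} is precisely the union of the relations of \eqref{OpDualInter1} with \eqref{EnlargeDualOp1}--\eqref{EnlargeDualOp11}.

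This verification is the bulk of the work, and I would carry it out by matching each line of \eqref{OpDualPres4} to its source. The relations of \eqref{OpDualInter1} reappear verbatim: \eqref{OpDualPres4:1}--\eqref{OpDualPres4:4} are \eqref{OpDualInter1:1}--\eqref{OpDualInter1:4}, the relation \eqref{OpDualPres4:7} is \eqref{OpDualInter1:5}, and \eqref{OpDualPres4:14} is \eqref{OpDualInter1:6} after renaming the running index. The remaining lines are the enlarging relations: \eqref{OpDualPres4:5} is \eqref{EnlargeDualOp1}, \eqref{OpDualPres4:6} merges \eqref{EnlargeDualOp2} with its boundary case \eqref{EnlargeDualOp22}, and \eqref{OpDualPres4:8}--\eqref{OpDualPres4:13},\eqref{OpDualPres4:15},\eqref{OpDualPres4:16} correspond in order to \eqref{EnlargeDualOp4},\eqref{EnlargeDualOp5},\eqref{EnlargeDualOp6},\eqref{EnlargeDualOp7},\eqref{EnlargeDualOp8},\eqref{EnlargeDualOp9},\eqref{EnlargeDualOp10},\eqref{EnlargeDualOp11}. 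At each matching I would confirm that the index range attached to the line of \eqref{OpDualPres4} (one of the side conditions $(*_1)$--$(*_8)$ or an explicit range) coincides with the hypothesis of the corresponding derivation, and that the two degenerate conventions $z_mx_{m+1}\cdots x_myx_1\cdots x_n=z_myx_1\cdots x_n$ and $x_mz_mx_{m+1}\cdots x_myx_1\cdots x_{n-1}=x_mz_myx_1\cdots x_{n-1}$ are applied consistently on both sides.

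The main obstacle is thus not conceptual but purely the combinatorial bookkeeping of indices: one must be sure that no relation of \eqref{OpDualInter1} has been dropped (so that the new presentation is not strictly weaker) and that every line of \eqref{OpDualPres4} genuinely arises as a consequence already established (so that it is not strictly stronger). Since \eqref{OpDualInter1:5} and \eqref{OpDualInter1:6} survive as \eqref{OpDualPres4:7} and \eqref{OpDualPres4:14}, and all other lines of \eqref{OpDualPres4} are among \eqref{EnlargeDualOp1}--\eqref{EnlargeDualOp11}, both inclusions hold and the two presentations define the same monoid, which completes the proof.
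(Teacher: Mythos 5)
Your proposal is correct and follows the paper's own proof essentially verbatim: the paper likewise observes that relations \eqref{OpDualPres4:1}--\eqref{OpDualPres4:4}, \eqref{OpDualPres4:7} and \eqref{OpDualPres4:14} constitute Presentation \eqref{OpDualInter1}, and that the remaining lines are exactly the redundant relations \eqref{EnlargeDualOp1}--\eqref{EnlargeDualOp11} derived beforehand, with \eqref{EnlargeDualOp2} and \eqref{EnlargeDualOp22} merged into \eqref{OpDualPres4:6}. Your line-by-line matching of the index ranges $(*_1)$--$(*_8)$ and the degenerate conventions is precisely the bookkeeping the paper leaves implicit, so nothing further is needed.
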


\begin{proof}
Observe that the sets of relations \eqref{OpDualPres4:1}-\eqref{OpDualPres4:4}, \eqref{OpDualPres4:7} and \eqref{OpDualPres4:14} constitute the relations of Presentation \eqref{OpDualInter}. The remaining relations are the redundant sets of relations \eqref{EnlargeDualOp1}-\eqref{EnlargeDualOp11}, where \eqref{EnlargeDualOp2} and \eqref{EnlargeDualOp22} are combined to get \eqref{OpDualPres4:6}.
\end{proof}
\begin{remark}
In Presentation \eqref{ClassicalMonoidPres2}, it is possible that some conditions on the indices are empty. In this case, there is simply no corresponding relations.
\end{remark}
\begin{lemma}\label{CubeOpDual}
Presentation \eqref{OpDualPres4} satisfies $(\hyperref[Poulet]{C1})$.
\end{lemma}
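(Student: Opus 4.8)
The plan is to follow the template already used for Lemma~\ref{CubeClassical}, now carried out over the much larger generating set $S=\{x_1,\dots,x_m,y,z_1,\dots,z_m\}$. First I would record that Presentation~\eqref{OpDualPres4} is right-full: this is precisely what the enlargement \eqref{EnlargeDualOp1}--\eqref{EnlargeDualOp11} was engineered to guarantee, so the work reduces to checking that every unordered pair of distinct generators occurs as the pair of leading letters of exactly one relation, and that no relation has the form $su=sv$. Since the presentation is moreover homogeneous (every generator has weight $1$ and all relations are length-preserving), Lemma~\ref{MagicCube} reduces the entire statement to verifying property $(\hyperref[Poulet]{C1})$, i.e. producing, for each $3$-element subset $A\subset S$, a labeling $\{a_1,a_2,a_3\}$ and a word $u$ with $\theta(a_1,a_3)=\theta(a_1,a_2)u$, $\theta(a_2,a_3)=\theta(a_2,a_1)u$ and $\theta(a_3,a_1)=\theta(a_3,a_2)$.

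Second, I would assemble the syntactic complement $\theta$ into a table, exactly as in the classical case. To keep it legible I would introduce shorthand for the recurring suffixes attached to each leading letter (the analogues of the words $w_{x_i},w_y,w_z$ of Lemma~\ref{CubeClassical}): reading off \eqref{OpDualPres4:1}--\eqref{OpDualPres4:16} together with the two boundary conventions $z_mx_{m+1}\cdots x_myx_1\cdots x_n=z_myx_1\cdots x_n$ and $x_mz_mx_{m+1}\cdots x_myx_1\cdots x_{n-1}=x_mz_myx_1\cdots x_{n-1}$ fixes every entry, and one sees that for a fixed leading letter $s$ the complements $\theta(s,t)$ differ between targets $t$ only by a trailing $y$, a trailing single generator, or nothing at all.

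Third comes the casework over triples. Here I would partition the generators into the natural types dictated by the index ranges of \eqref{OpDualPres4}: the $x_i$ with $i\in[m-n+1]$, the $x_i$ with $i\in\llbracket m-n+2,m\rrbracket$, the letter $y$, and the two analogous blocks of $z_i$'s (small and large index). Running through every combination of three types — mirroring the blocks $(I)$--$(V)$ of Lemma~\ref{CubeClassical} — the correct common word $u$ is in each case one of $y$, a single generator, or the empty word, and the three required equalities hold at the level of words in $S^*$. As with the analogous Lemmas~\ref{cubeOpClassical}, \ref{CubeDual} and \ref{CubedualN=1}, given its length I would relegate the routine part of this verification to \cite{These}.

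The main obstacle is one of volume and bookkeeping rather than of idea: with roughly half a dozen generator types the number of triples to inspect is considerably larger than in the classical case, and one must track carefully the side conditions $(*_1)$--$(*_8)$, several of which become empty for small $n$ or $m$ and so spawn degenerate subcases. The only genuinely delicate triples are those straddling the $y$-boundary — containing $y$ together with a first-block and a second-block generator, or involving the wrap-around relations \eqref{OpDualPres4:13}--\eqref{OpDualPres4:16} — where one must check that the two boundary conventions make the candidate word equalities hold \emph{on the nose} in $S^*$, and not merely in the monoid.
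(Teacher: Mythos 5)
Your plan is exactly the paper's proof: the paper disposes of Lemma~\ref{CubeOpDual} by noting it is a straightforward check done in the same way as Lemma~\ref{CubeClassical} (via the right-full homogeneous enlargement of Corollary~\ref{OpDual2}, the syntactic-complement table, and type-by-type casework over triples), with the full details deferred to the thesis \cite{These}. Your blueprint — verifying right-fullness from \eqref{EnlargeDualOp1}--\eqref{EnlargeDualOp11}, tabulating $\theta$, partitioning generators into index-range types, and checking the $(\hyperref[Poulet]{C1})$ word equalities with $u$ equal to $y$, a single letter, or the empty word — matches that template, so no substantive difference to report.
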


\begin{proof}
The proof is a straightforward check, done in a similar way as in the proof of Lemma \ref{ClassicalLeftCanc}. The complete details of this check will appear in the author's thesis.
\end{proof}

\begin{proof}[Proof of Proposition \ref{LeftCancDualOp}]
It follows by combining Lemma \ref{CubeOpDual} with Lemma \ref{MagicCube}.
\end{proof}

\subsection{\texorpdfstring{The monoid $\D_*^*(n,m)$ is Garside}{}}
\noindent
Lemmas \ref{CubeDual}, \ref{CubedualN=1} and \ref{CubeOpDual} show that $\mathcal D_*^*(n,m)$ is cancellative and admits conditional left- and right-lcms. In order to conclude that $\mathcal D_*^*(n,m)$ is a Garside monoid, we exhibit a Garside element.

\begin{definition}
Define $w\in \mathcal D_*^*(n,m)$ to be the element represented by the word $z_1x_1\cdots x_n$ in Presentation \eqref{dual3}. Moreover, define $W\in \mathcal D_*^*(n,m)$ to be the element represented by the word $z_{m-n+1}x_{m-n+1}\cdots x_my$ in Presentation \eqref{dual3}. Finally, define $\Delta\in \mathcal D_*^*(n,m)$ to be $w^{m-n}W^n$.
\end{definition}

\begin{remark}\label{wWwords}
By \eqref{dual3:3}, for every $i\in [m-n+1]$ the word $z_ix_i\cdots x_{i+n-1}$ represents $w$. Moreover, for every $i\in [m-n]$ we have $x_iz_{i+1}=z_ix_i$, hence the word $x_iz_{i+1}x_{i+1}\cdots x_{i+n-1}$ represents $w$ as well.\\
Similarly, by \eqref{dual3:4}, for every $i\in \llbracket m-n+1,m\rrbracket$ the word $z_ix_i\cdots x_myx_1\cdots x_{i+n-m-1}$ represents $W$. Moreover, for every $i\in \llbracket m-n+1,m-1\rrbracket$ we have $x_iz_{i+1}=z_ix_i$, hence the word $x_iz_{i+1}x_{i+1}\cdots x_myx_1\cdots x_{i+n-m-1}$ represents $W$. In addition, since $x_myz_1=z_mx_my$ the word $x_myz_1x_1\cdots x_{n-1}$ represents $W$ as well. Finally, by \eqref{dual3:5} the word $yz_1x_1\cdots x_n$ represents $W$.
\end{remark}

\begin{lemma}\label{GarsideDualLemma1}
For all $i\in [m-n]$, we have $x_iw=wx_{i+n}$ and $z_iw=wz_{i+n}$. Moreover, we have $yw=wy$.
\end{lemma}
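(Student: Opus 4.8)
The plan is to derive the three commutation relations directly from the defining relations \eqref{dual3}, leaning on the multiple representatives of $w$ and $W$ recorded in Remark \ref{wWwords}. The only ingredients needed are the sliding relations $x_i z_{i+1} = z_i x_i$ of \eqref{dual3:1}, the identification of $w$ with each word $z_j x_j \cdots x_{j+n-1}$ for $j\in[m-n+1]$, and the identification of $W$ with both $z_{m-n+1} x_{m-n+1} \cdots x_m y$ and $y z_1 x_1 \cdots x_n$.

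First I would prove $x_i w = w x_{i+n}$. Since $i\leq m-n$, the word $z_{i+1} x_{i+1} \cdots x_{i+n}$ represents $w$, so that $x_i w = x_i z_{i+1} x_{i+1} \cdots x_{i+n}$; applying \eqref{dual3:1} to the prefix $x_i z_{i+1} = z_i x_i$ rewrites this as $z_i x_i x_{i+1} \cdots x_{i+n}$, and since $z_i x_i \cdots x_{i+n-1}$ again represents $w$, this reads $w x_{i+n}$. Next, for $z_i w = w z_{i+n}$, I would start from the representative $w = z_i x_i \cdots x_{i+n-1}$ and write $w z_{i+n} = z_i\,(x_i x_{i+1} \cdots x_{i+n-1}\, z_{i+n})$. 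I then slide $z_{i+n}$ leftward through the block $x_i \cdots x_{i+n-1}$ by iterating \eqref{dual3:1}: first $x_{i+n-1} z_{i+n} = z_{i+n-1} x_{i+n-1}$, then $x_{i+n-2} z_{i+n-1} = z_{i+n-2} x_{i+n-2}$, and so on down to $x_i z_{i+1} = z_i x_i$. Each of these is legitimate because every index involved is $\leq m-1$, thanks to $i\leq m-n$. The upshot is $x_i x_{i+1} \cdots x_{i+n-1}\, z_{i+n} = z_i x_i x_{i+1} \cdots x_{i+n-1}$, whence $w z_{i+n} = z_i\,(z_i x_i \cdots x_{i+n-1}) = z_i w$.

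Finally, $yw = wy$ I would obtain by showing that both sides equal $W$. Choosing the representative $w = z_{m-n+1} x_{m-n+1} \cdots x_m$ (the case $j=m-n+1$) gives $wy = z_{m-n+1} x_{m-n+1} \cdots x_m y$, which is exactly the defining word for $W$. On the other hand $yw = y z_1 x_1 \cdots x_n$, which represents $W$ by \eqref{dual3:5}, as recorded in Remark \ref{wWwords}. Hence $wy = W = yw$, completing the lemma.

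I expect no genuine obstacle: every identity collapses to a short chain of sliding relations together with a well-chosen representative of $w$. The only points demanding care are the index bookkeeping (each invocation of \eqref{dual3:1} must stay within $1\leq k\leq m-1$, which the hypothesis $i\leq m-n$ ensures) and the degenerate case $n=1$, where the interior blocks $x_{i+1}\cdots x_{i+n-1}$ are empty; one checks that all three computations remain valid verbatim, with $w=z_m x_m$ serving as the representative used for $wy=W$.
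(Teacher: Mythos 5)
Your proposal is correct and follows essentially the same route as the paper: both proofs combine the multiple representatives of $w$ and $W$ from Remark \ref{wWwords} with the sliding relations \eqref{dual3:1} (your computation of $z_iw=wz_{i+n}$ is the paper's chain read in the opposite direction, and your observation $yw=W=wy$ repackages the paper's direct chain through \eqref{dual3:5} and \eqref{dual3:4}). Your index bookkeeping and the check of the degenerate case $n=1$ are also sound.
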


\begin{proof}
Let $i\in [m-n]$. Then we have 
\begin{eqnarray*}
x_iw&=&x_iz_{i+1}x_{i+1}\cdots x_{i+n} \, \, \text{by Remark \ref{wWwords}}\\
&\underset{\eqref{dual3:1}}{=}&(z_ix_i\cdots x_{i+n-1})x_{i+n}
=wx_{i+n} \, \, \text{by Remark \ref{wWwords}.}
\end{eqnarray*}
Similarly, we have
\begin{eqnarray*}
z_iw&=&z_iz_ix_i\cdots x_{i+n-1} \, \, \text{by Remark \ref{wWwords}}\\
&\underset{\eqref{dual3:1}}{=}&z_ix_iz_{i+1}x_{i+1}\cdots x_{i+n-1}
=\cdots
\underset{\eqref{dual3:1}}{=}(z_ix_i\dots x_{i+n-1})z_{i+n}\\
&=&wz_{i+n} \, \, \text{by Remark \ref{wWwords}.}
\end{eqnarray*}
Finally, we have
\begin{eqnarray*}
yw&=&yz_1x_1\cdots x_n 
\underset{\eqref{dual3:5}}{=}z_mx_myx_1\cdots x_{n-1}\\
&\underset{\eqref{dual3:4}}{=}& z_{m-n+1}x_{m-n+1}\cdots x_my
= wy, \, \text{by Remark \ref{wWwords}.}
\end{eqnarray*}
This concludes the proof.
\end{proof}

\begin{lemma}\label{GarsideDualLemma2}
For all $i\in \llbracket m-n+1,m\rrbracket$, we have $x_iW=Wx_{i+n}$ and $z_iW=Wz_{i+n}$, where indices are taken modulo $m$.
\end{lemma}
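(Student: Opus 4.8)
The plan is to mirror the proof of Lemma \ref{GarsideDualLemma1}, exploiting the list of words representing $W$ gathered in Remark \ref{wWwords} together with relations \eqref{dual3:1}, \eqref{dual3:2} and \eqref{dual3:5}. A preliminary observation organizes the bookkeeping: for $i\in\llbracket m-n+1,m\rrbracket$ one has $i+n\in\llbracket m+1,m+n\rrbracket$, so modulo $m$ the targets $x_{i+n}$ and $z_{i+n}$ are $x_{i+n-m}$ and $z_{i+n-m}$ with $i+n-m\in[1,n]$.

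For the relations $x_iW=Wx_{i+n}$ I would treat the boundary index $i=m$ separately. For $i\in\llbracket m-n+1,m-1\rrbracket$ I represent $W$ by the $j=i+1$ instance of \eqref{dual3:4}, namely $z_{i+1}x_{i+1}\cdots x_myx_1\cdots x_{i+n-m}$, and then slide $x_i$ across $z_{i+1}$ via \eqref{dual3:1}:
\[
x_iW = x_iz_{i+1}x_{i+1}\cdots x_myx_1\cdots x_{i+n-m} = z_ix_i\cdots x_myx_1\cdots x_{i+n-m} = Wx_{i+n-m},
\]
the last equality again by Remark \ref{wWwords}. For $i=m$, where no relation of the form \eqref{dual3:1} lets me move $x_m$ past a $z$, I instead combine the two representations $W=x_myz_1x_1\cdots x_{n-1}$ and $W=yz_1x_1\cdots x_n$ (the latter from \eqref{dual3:5}) to compute $Wx_n = x_myz_1x_1\cdots x_{n-1}x_n = x_m(yz_1x_1\cdots x_n)=x_mW$, which is exactly $x_mW=Wx_{m+n}$ modulo $m$.

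For the relations $z_iW=Wz_{i+n}$ the argument is uniform over $i\in\llbracket m-n+1,m\rrbracket$. I represent $W$ by $z_ix_i\cdots x_myx_1\cdots x_{i+n-m-1}$ and write $z_iW=z_i\cdot z_ix_i\cdots x_myx_1\cdots x_{i+n-m-1}$, then push the second $z_i$ rightward past each generator. Relation \eqref{dual3:1} carries it across $x_i,\dots,x_{m-1}$ (incrementing the index at each step), relation \eqref{dual3:2} carries it across the block $x_my$ (sending $z_m$ to $z_1$), and \eqref{dual3:1} carries it across $x_1,\dots,x_{i+n-m-1}$. Exactly $n$ of the $x$-generators are traversed, so the index advances by $n$ modulo $m$ and lands on $z_{i+n-m}$, while what remains to its left is precisely the chosen word for $W$; hence $z_iW=Wz_{i+n-m}=Wz_{i+n}$. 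The case $i=m$ (and in particular the whole statement when $n=1$) is the degenerate instance in which the first block of $x$'s is empty and \eqref{dual3:2} applies immediately.

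The computations are routine index manipulations; the only delicate point, and the closest thing to an obstacle, is the wrap-around at the generator $x_m$, where the sliding relation switches from \eqref{dual3:1} to \eqref{dual3:2} and the index jumps from $m$ back to $1$. This wrap-around is also exactly what forces the index $i=m$ to be handled apart for the $x$-relations, since there is no analogue of \eqref{dual3:1} available to move $x_m$ across a $z$, and one must route the argument through \eqref{dual3:5} instead.
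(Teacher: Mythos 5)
Your proposal is correct and follows essentially the same route as the paper: for $i\in\llbracket m-n+1,m-1\rrbracket$ you slide $x_i$ across $z_{i+1}$ via \eqref{dual3:1} using the two representations of $W$ from Remark \ref{wWwords}, and for the $z$-relations you push the extra $z_i$ rightward through the word $z_ix_i\cdots x_myx_1\cdots x_{i+n-m-1}$ using \eqref{dual3:1} and \eqref{dual3:2}, exactly as in the paper. Your treatment of the boundary case $i=m$ (computing $Wx_n=x_mW$ from the representations $x_myz_1x_1\cdots x_{n-1}$ and $yz_1x_1\cdots x_n$) is just a mild reorganization of the paper's chain $x_mW\overset{\eqref{dual3:5}}{=}x_myz_1x_1\cdots x_n\overset{\eqref{dual3:2}}{=}Wx_n$, using the same ingredients.
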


\begin{proof}
Let $i\in \llbracket m-n+1,m-1\rrbracket$. Then we have
\begin{eqnarray*}
x_iW&=&x_iz_{i+1}x_{i+1}\cdots x_myx_1\cdots x_{i+n-m}\, \, \text{by Remark \ref{wWwords}}\\
&\underset{\eqref{dual3:1}}{=}&(z_ix_ix_{i+1}\cdots x_myx_1\cdots x_{i+n-m-1})x_{i+n-m}
=Wx_{i+n-m} \, \, \text{by Remark \ref{wWwords}}\\
&=&Wx_{i+n} \, \, \text{since indices are taken modulo} \, m.
\end{eqnarray*}
Similarly, we have
\begin{eqnarray*}
x_mW&=&x_mz_mx_myx_1\cdots x_{n-1} \, \, \text{by Remark \ref{wWwords}}\\
&\underset{\eqref{dual3:5}}{=}&x_myz_1x_1\cdots x_n 
\underset{\eqref{dual3:2}}{=}(z_mx_myx_1\cdots x_{n-1})x_n
=Wx_n \, \, \text{by Remark \ref{wWwords}}\\
&=&Wx_{m+n} \, \, \text{since indices are taken modulo} \, m.
\end{eqnarray*}
Finally, for all $i\in \llbracket m-n+1,m\rrbracket$, we have
\begin{equation*}
\begin{aligned}
 &z_iW=z_iz_ix_i\cdots x_myx_1\cdots x_{i+n-m-1} \, \, \text{by Remark \ref{wWwords}}\\
 &\underset{\eqref{dual3:1}}{=}z_ix_i\cdots x_{m-1}z_mx_myx_1\cdots x_{i+n-m-1}
 \underset{\eqref{dual3:2}}{=}z_ix_i\cdots x_{m-1}x_myz_1x_1\cdots x_{i+n-m-1}\\
 &\underset{\eqref{dual3:1}}{=}(z_ix_i\cdots x_m yx_1\cdots x_{i+n-m-1})z_{i+n-m}
=Wz_{i+n-m} \, \, \text{by Remark \ref{wWwords}}\\
 &=Wz_{i+n} \, \, \text{since indices are taken modulo} \, m.
 \end{aligned}
\end{equation*}
This concludes the proof.
\end{proof}
\begin{remark}\label{CommuteDual}
Since $W=yw$ and $yw=wy$ by Lemma \ref{GarsideDualLemma1}, we have $wW=Ww$.
\end{remark}

\begin{lemma}\label{GarsideDualLemma3}
The element $\Delta\in\mathcal D_*^*(n,m)$ is central. 
\end{lemma}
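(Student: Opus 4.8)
The plan is to show that $\Delta=w^{m-n}W^n$ commutes with every generator in $\{x_1,\dots,x_m,y,z_1,\dots,z_m\}$, which suffices since these generate $\mathcal D_*^*(n,m)$. The key tool is that $w$ and $W$ commute (Remark \ref{CommuteDual}), together with the ``conjugation'' relations established in Lemmas \ref{GarsideDualLemma1} and \ref{GarsideDualLemma2}: namely $x_iw=wx_{i+n}$ and $z_iw=wz_{i+n}$ for $i\in[m-n]$, while $x_iW=Wx_{i+n}$ and $z_iW=Wz_{i+n}$ for $i\in\llbracket m-n+1,m\rrbracket$, all indices taken modulo $m$. The unifying observation (analogous to Remark \ref{remarkMod} in the classical case) is that whether a given $x_i$ or $z_i$ must be slid past $w$ (when $i\le m-n$) or past $W$ (when $i\ge m-n+1$), its index is always shifted by $+n$ modulo $m$.

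First I would handle $y$: by Lemma \ref{GarsideDualLemma1} we have $yw=wy$, and since $W=yw=wy$, the element $y$ commutes with both $w$ and $W$, hence with any product of their powers, in particular with $\Delta$. Next I would treat a fixed $x_i$ (the argument for $z_i$ is identical, using the $z$-versions of the sliding relations). The idea is to slide $x_i$ through all $m$ factors making up $\Delta$. Each time we pass one factor of $w$ or $W$, the index increases by $n$ modulo $m$; after passing all $m$ factors the index has increased by $mn\equiv 0\pmod m$, returning to $i$. Concretely, since $w$ and $W$ commute I can write $\Delta=w^{m-n}W^n$ and push $x_i$ from the left to the right: among the $m$ factors, exactly $m-n$ require the relation $x_jw=wx_{j+n}$ and exactly $n$ require $x_jW=Wx_{j+n}$, depending at each stage on whether the current index lies in $[m-n]$ or in $\llbracket m-n+1,m\rrbracket$. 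The net effect is $x_i\Delta=\Delta x_{i+mn}=\Delta x_i$, giving commutation.

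The one subtlety to check is that the sliding procedure is well-defined at each step, i.e.\ that the residue of the shifting index always falls into the correct range to invoke the appropriate lemma. Since $n\wedge m=1$, the successive indices $i,i+n,i+2n,\dots$ traverse all residues modulo $m$ exactly once before returning to $i$, so each of the $m$ factors of $\Delta$ is consumed exactly once and no index is repeated prematurely; this guarantees that precisely $m-n$ slides use Lemma \ref{GarsideDualLemma1} and $n$ slides use Lemma \ref{GarsideDualLemma2}. Because $w$ and $W$ commute, the order in which the factors of $\Delta$ are arranged is irrelevant, so I may always present the next factor to be crossed in whichever form ($w$ or $W$) matches the current index's range. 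I expect this bookkeeping — confirming that the index residues cycle correctly and that the coprimality $n\wedge m=1$ ensures each factor is used once — to be the only real point requiring care; the individual sliding identities are already supplied by the two preceding lemmas.

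Thus, once commutation with $x_i$ and $z_i$ is established for all $i\in[m]$ and commutation with $y$ is noted, every generator commutes with $\Delta$, so $\Delta$ is central. This mirrors the argument of Proposition \ref{DeltaCentral} and Remark \ref{PositiveDelta} in the classical setting, with $w,W$ playing the same roles.
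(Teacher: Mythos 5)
Your proof is correct and follows essentially the same route as the paper: the paper's proof invokes Lemmas \ref{GarsideDualLemma1} and \ref{GarsideDualLemma2} together with Remark \ref{CommuteDual} and then appeals to ``the same argument as in the proof of Proposition \ref{DeltaCentral}'', which is exactly the sliding/coprimality bookkeeping you spell out, and it handles $y$ the same way via $\Delta=w^{m-n}(yw)^n$ with $yw=wy$. Your version merely makes explicit the orbit argument (that $i,i+n,i+2n,\dots$ traverses all residues modulo $m$, consuming exactly $m-n$ copies of $w$ and $n$ copies of $W$) that the paper leaves implicit by reference.
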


\begin{proof}
Using Lemmas \ref{GarsideDualLemma1} and \ref{GarsideDualLemma2} and Remark \ref{CommuteDual}, the same argument as in the proof of Proposition \ref{DeltaCentral} shows that for all $i\in [m]$ the elements $x_i$ and $z_i$ commute with $\Delta$. Moreover, using Remark \ref{wWwords} we see that $W=yw$. Since $y$ and $w$ commute by Lemma \ref{GarsideDualLemma1}, we get that $\Delta=w^{m-n}(yw)^n$ commutes with $y$, which concludes the proof.
\end{proof}

\begin{lemma}\label{GarsideDualLemma4}
The element $\Delta\in \mathcal D_*^*(n,m)$ is a Garside element.
\end{lemma}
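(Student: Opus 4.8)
The plan is to follow closely the strategy used for the classical monoid in Lemma~\ref{DeltaGarsideM}, verifying the three defining properties of a Garside element for $\Delta = w^{m-n}W^n \in \mathcal D_*^*(n,m)$ in turn. First I would establish that $\text{Div}_L(\Delta) = \text{Div}_R(\Delta)$. By Lemma~\ref{GarsideDualLemma3} the element $\Delta$ is central, and by Proposition~\ref{LeftCancDual} the monoid $\mathcal D_*^*(n,m)$ is left-cancellative; Lemma~\ref{CenterDiv} then forces each left-divisor of $\Delta$ to be a right-divisor and conversely, so the two divisor sets coincide and may be written $\text{Div}(\Delta)$.

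Next I would treat finiteness of $\text{Div}(\Delta)$. Presentation~\eqref{dual3} is homogeneous, assigning weight $1$ to every generator (a direct check shows both sides of each relation \eqref{dual3:1}--\eqref{dual3:5} have equal length), so the divisibility of $\mathcal D_*^*(n,m)$ is Noetherian; since the monoid is finitely generated, every element, and in particular $\Delta$, has only finitely many left- and right-divisors. For the generation property $\langle \text{Div}(\Delta)\rangle = \mathcal D_*^*(n,m)$, the key input is Remark~\ref{wWwords}, which lists several positive words representing $w$ and $W$. Reading off their first letters gives $z_i \leq_L w$ for $i\in[m-n+1]$ and $x_i\leq_L w$ for $i\in[m-n]$, together with $z_i\leq_L W$ for $i\in\llbracket m-n+1,m\rrbracket$, $x_i\leq_L W$ for $i\in\llbracket m-n+1,m-1\rrbracket$, as well as $x_m\leq_L W$ (from $x_myz_1x_1\cdots x_{n-1}$) and $y\leq_L W$ (from $yz_1x_1\cdots x_n$). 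Taking unions, all of $x_1,\dots,x_m,y,z_1,\dots,z_m$ are left-divisors of $w$ or of $W$. Finally, since $W = wy = yw$ by Lemma~\ref{GarsideDualLemma1} and Remark~\ref{wWwords}, and $wW = Ww$ by Remark~\ref{CommuteDual}, both $w$ and $W$ are themselves left-divisors of $\Delta = w^{m-n}W^n = W^n w^{m-n}$; hence every generator lies in $\text{Div}(\Delta)$, and the divisors of $\Delta$ generate the monoid.

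I do not expect a serious obstacle here: once Lemmas~\ref{GarsideDualLemma1}--\ref{GarsideDualLemma3} and Proposition~\ref{LeftCancDual} are in hand, each of the three conditions reduces to a short formal argument, exactly as in the classical case. The only point requiring care is the bookkeeping in the generation step, namely confirming that the index ranges $[m-n]\cup\llbracket m-n+1,m\rrbracket = [m]$ and $[m-n+1]\cup\llbracket m-n+1,m\rrbracket = [m]$ genuinely account for \emph{every} $x_i$ and $z_i$, and checking the degenerate boundary $m=n$ (which coprimality forces to be $m=n=1$), where $\Delta = W$ and $w\leq_L W$ still holds via $W=wy$. With that verification complete, the proof closes by invoking the definition of a Garside element.
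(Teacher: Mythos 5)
Your proposal is correct and follows essentially the same route as the paper's proof: centrality (Lemma \ref{GarsideDualLemma3}) plus left-cancellativity (Proposition \ref{LeftCancDual}) and Lemma \ref{CenterDiv} give $\text{Div}_L(\Delta)=\text{Div}_R(\Delta)$, homogeneity of the presentation plus finite generation give finiteness, and Remark \ref{wWwords} gives that every generator divides $w$ or $W$, hence $\Delta=w^{m-n}W^n=W^nw^{m-n}$. Your extra bookkeeping (checking the index ranges cover all of $[m]$ and the boundary case $m=n=1$, where $\Delta=W$ and $w\leq_L W$ via $W=wy$) is sound and only makes explicit what the paper leaves implicit.
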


\begin{proof}
\underline{The sets $\text{Div}_L(\Delta)$and $\text{Div}_R(\Delta)$ are equal:} By Lemma \ref{GarsideDualLemma3}, the element $\Delta$ is central. Now, Proposition \ref{LeftCancDual} implies that $\mathcal D_*^*(n,m)$ is left-cancellative, hence $\text{Div}_L(\Delta)=\text{Div}_R(\Delta)$ by Lemma \ref{CenterDiv}.\\
\underline{The set $\text{Div}(\Delta)$ is finite:} The divisibility of $\mathcal D_*^*(n,m)$ is Noetherian since it admits a homogeneous presentation. Since $\mathcal D_*^*(n,m)$ is finitely generated, every element of $\mathcal D_*^*(n,m)$ (in particular, $\Delta$) admits finitely many left- and right-divisors.\\
\underline{The set $\text{Div}(\Delta)$ generates $\mathcal D_*^*(n,m):$} By definition of $w$, $W$ and $\Delta$ we have $\Delta=w^{m-n}W^n=W^nw^{m-n}$, thus by Remark \ref{wWwords} we have the inclusions $$\{x_1,\dots,x_{m-n+1},z_1,\dots,z_{m-n+1}\}\subset \text{Div}(w)\subset \text{Div}(\Delta)$$ and $$\{x_{m-n+2},\dots,x_m,z_{m-n+2},\dots,z_m,y\}\subset\text{Div}(W)\subset\text{Div}(\Delta).$$
This shows that the divisors of $\Delta$ generate $\mathcal D_*^*(n,m)$, which concludes the proof.
\end{proof}
\noindent
We can now prove Theorem \ref{GarsideDual} (i). 

\begin{proof}[Proof of Theorem \ref{GarsideDual} (i)]
First, divisibility of $\mathcal D_*^*(n,m)$ is Noetherian since it admits a homogeneous presentation. Moreover, combining Propositions \ref{LeftCancDual} and \ref{LeftCancDualOp}, the monoid $\mathcal D_*^*(n,m)$ is cancellative and admits conditional left- and right-lcms. Now, Lemma \ref{GarsideDualLemma4} exhibits a Garside element, which concludes the proof by combining Lemmas \ref{Deltamultiples} and \ref{LcmGcd}.
\end{proof}

\subsection{\texorpdfstring{The monoid $\D_*(n,m)$ is Garside}{}}
Recall from Definition \ref{DefDualMonoid} that given two coprime integers $n\leq m$ with $m\geq 2$, writing $m=qn+r$ with $q\geq 1$ and $0\leq r\leq n-1$, the \textbf{dual monoid} associated to $\B_*(n,m)$ is the monoid $\mathcal D_*(n,m)$ which admits the following presentation:
\begin{equation}\label{DualMonoidz2}\left\langle
       \begin{array}{l|cl}
           x_1,\dots,x_{m}     \,        & \,     x_{i}z_{i+1}=z_ix_i \, \, \forall i=1,\dots,m, \\
     z_1,\dots,z_{m} \,  &\, z_{i+1}x_{i+1}\cdots x_{i+n}=z_ix_i\cdots x_{i+n-1}\, \forall i=1,\dots,m 
                          \end{array}
     \right\rangle,\end{equation}
where indices are taken modulo $m$.\\
The goal of this subsection is to prove Theorem \ref{GarsideDual} (ii).\\
By Remark \ref{dualexists}, setting $n=1$ in Presentation \eqref{DualMonoidz2} yields the dual monoid structure of $\mathcal B(I_2(2m))$ defined in \cite{DualBessis}, which is Garside thanks to \cite{Origines}. Hence, we only need to prove Theorem \ref{GarsideDual} (ii) for $n\geq 2$. For the rest of the section, we assume that $n\geq 2$.\\
Using Presentations \eqref{dual3} and \eqref{DualMonoidz2}, one can see that $\mathcal D_*(n,m)=\mathcal D_*^*(n,m)/\langle y=1 \rangle$, thus it is enough to show that the pair $(\eqref{DualMonoidPres2},y)$ satisfies $(\hyperref[Champagne]{C2})$ and that $\mathcal D_*(n,m)$ is isomorphic to its opposite monoid to apply Proposition \ref{MagicGarsideOp} and deduce Theorem \ref{GarsideDual} (ii).

\begin{lemma}\label{MagicPropertyDualz}
Writing $P$ for Presentation \eqref{DualMonoidPres2}, the pair $(P,y)$ satisfies $(\hyperref[Champagne]{C2})$.
\end{lemma}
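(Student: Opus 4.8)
The plan is to verify the three conditions in the definition of $(\hyperref[Champagne]{C2})$ directly for the pair $(P,y)$, where $P$ is Presentation \eqref{DualMonoidPres2}. This mirrors exactly the structure of the proofs of Lemmas \ref{MagicPropertyy} and \ref{MagicPropertyz} for the classical monoids, so I would set up the argument the same way and carry the bookkeeping through.

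First I would recall that Presentation \eqref{DualMonoidPres2} is homogeneous and satisfies $(\hyperref[Poulet]{C1})$: this is precisely the content of Lemma \ref{CubeDual}, so the first bullet of $(\hyperref[Champagne]{C2})$ is immediate. For the second bullet I would check that $|r_1|_y=|r_2|_y$ for every relation $(r_1,r_2)$ in \eqref{DualMonoidPres2}. Here one simply inspects each of the relation lines \eqref{DualMonoidPres2:1}--\eqref{DualMonoidPres2:22}: the relations \eqref{DualMonoidPres2:1}, \eqref{DualMonoidPres2:3}, \eqref{DualMonoidPres2:6} and \eqref{DualMonoidPres2:7} contain no $y$ on either side, and every remaining relation has exactly one $y$ on each side (by construction, since each word in the $w$/$W$ enlargement involves $\delta=x_1\cdots x_ny$-type blocks contributing a single $y$). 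So $|r_1|_y=|r_2|_y$ throughout.

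The substance of the lemma is the third bullet: that the monoids presented by $\langle S\,|\,R\rangle_y$ and $\langle S\,|\,R\rangle/y$ are isomorphic. As in Lemmas \ref{MagicPropertyy} and \ref{MagicPropertyz}, the quotient $\langle S\,|\,R\rangle/y$ is by definition $\mathcal D_*^*(n,m)/\langle y=1\rangle=\mathcal D_*(n,m)$, whose presentation is \eqref{DualMonoidz2}. I would then write out $\langle S\,|\,R\rangle_y$ explicitly, obtained from \eqref{DualMonoidPres2} by deleting every occurrence of $y$ (recall $\langle S\,|\,R\rangle_Y$ keeps the words $u(\theta(u,v)/Y)=v(\theta(v,u)/Y)$), and show this presentation is equivalent to \eqref{DualMonoidz2}. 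The key point is that $\langle S\,|\,R\rangle_y$ carries more relations than $\langle S\,|\,R\rangle/y$ a priori, but the extra relations are redundant: I would show that after deleting $y$, the many enlarged relations \eqref{DualMonoidPres2:8}--\eqref{DualMonoidPres2:22} collapse back onto consequences of the surviving core relations, exactly as happened in \eqref{Classical/z}--\eqref{Classical/z2} and \eqref{Classical/y}--\eqref{Classical/y2}. Concretely I expect the relations with $y$ deleted to reduce, via the commutations $x_iz_{i+1}=z_ix_i$ (now read cyclically modulo $m$ once $y$ is gone), to the single-index cyclic family $z_{i+1}x_{i+1}\cdots x_{i+n}=z_ix_i\cdots x_{i+n-1}$ of \eqref{DualMonoidz2}.

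The main obstacle is the last step: carefully tracking which of the enlarged relations become redundant once $y$ is set to $1$ and verifying that no genuinely new identification survives, so that $\langle S\,|\,R\rangle_y$ and \eqref{DualMonoidz2} present the same monoid (not merely a quotient of one by the other). This requires matching index ranges across the block boundary $m-n+1$ and checking that relations previously split between the ``small'' part \eqref{DualMonoidPres2:3} and the ``large'' part \eqref{DualMonoidPres2:4} glue into the uniform cyclic relations once the distinguished letter $y$ disappears; the bookkeeping is of the same flavour as Lemma \ref{MagicPropertyz} but with the dual generators $z_i$ present. Since the full index-chasing is routine, I would record the equivalence of the two relation sets and conclude that $(P,y)$ satisfies $(\hyperref[Champagne]{C2})$, which combined with the isomorphism $\mathcal D_*(n,m)\cong\mathcal D_*(n,m)^{\mathrm{op}}$ (to be established separately, analogously to Lemma \ref{MagicPropertyyOp}) and Proposition \ref{MagicGarsideOp} will yield Theorem \ref{GarsideDual} (ii).
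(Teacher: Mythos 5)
Your proposal is correct and follows essentially the same route as the paper: citing Lemma \ref{CubeDual} for the first bullet, checking $|r_1|_y=|r_2|_y$ line by line (with exactly the same split of relations), and then writing out $\langle S\,|\,R\rangle_y$ and collapsing the enlarged relations via the commutations $x_iz_{i+1}=z_ix_i$ onto the cyclic family of \eqref{DualMonoidz2}, which is precisely what the paper does through Presentations \eqref{BigDualz} and \eqref{Dual/z}. The only difference is that you defer the index bookkeeping that the paper carries out explicitly, but your description of which relations become redundant and why matches the paper's argument.
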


\begin{proof}
Write $P=\langle S\,|\, R\rangle$.\\
\underline{The presentation $P$ is homogeneous and satisfies $(\hyperref[Poulet]{C1})$:} This is the content of Lemma \ref{CubeDual}.\\
\underline{For all $(r_1,r_2)\in R$, we have $|r_1|_y=|r_2|_y$ :} In Lines \eqref{DualMonoidPres2:1}, \eqref{DualMonoidPres2:3}, \eqref{DualMonoidPres2:6} and \eqref{DualMonoidPres2:7}, both quantities are equal to $0$. In \eqref{DualMonoidPres2:2} and in Lines \eqref{DualMonoidPres2:4},\eqref{DualMonoidPres2:5} and \eqref{DualMonoidPres2:8}-\eqref{DualMonoidPres2:22}, both quantities are equal to 1.\\
\underline{The monoids with Presentations $\langle S\, | \, R\rangle_y$ and $\langle S\, | \, R\rangle/y$ are isomorphic:} In this context, the monoid with presentation $\langle S\, | \, R\rangle/y$ is isomorphic to $\mathcal D_*^*(n,m)/\langle y=1\rangle=\mathcal D_*(n,m)$ and $\langle S\, | \, R\rangle_y$ is
\begin{subequations}\label{BigDualz}
\begin{align}
    &(1) \,\, \mathrm{Generators}\!:\,  \{x_1,\dots,x_m,z_1,\dots,z_m\};\notag\\
&(2) \,\, \mathrm{Relations}\!: \,\notag\\
  & x_iz_{i+1}=z_ix_i \, \forall i=1,\dots,m-1,\label{BigDualz:1} \\& x_mz_1=z_mx_m,\label{BigDualz:2}\\
  & z_ix_{i}\cdots x_{i+n-1}=z_jx_j\cdots x_{j+n-1}, \, \forall \,1\leq i<j\leq m-n+1, \label{BigDualz:3}  \\
  &z_ix_{i}\cdots x_{m}x_1\cdots x_{i+n-m-1}=z_jx_j\cdots x_{m}x_1\cdots x_{j+n-m-1}, \,\forall \, m-n+2\leq i<j\leq m,\label{BigDualz:4}\\
 &  x_iz_{i+1}x_{i+1}\cdots x_{i+n-1}=z_jx_j\cdots x_{j+n-1},\,\forall \, 1\leq i\neq j\leq m-n+1,\label{BigDualz:5}\\
 & x_iz_{i+1}x_{i+1}\cdots x_{i+n-1}=x_jz_{j+1}x_{j+1}\cdots x_{j+n-1}, \, \forall \, 1\leq i<j\leq m-n+1 ,\label{BigDualz:6}\\
 &x_iz_{i+1}x_{i+1}\cdots x_{i+n-1}=z_jx_j\cdots x_mx_1\cdots x_{j+n-m-1},\, (*_1) ,\label{BigDualz:7}\\
 &x_iz_{i+1}x_{i+1}\cdots x_{i+n-1}=x_jz_{j+1}x_{j+1}\cdots x_{m}x_1\cdots x_{j+n-m-1}, \, (*_2),\label{BigDualz:8}\\
 &x_iz_{i+1}x_{i+1}\cdots x_{i+n-1}=x_mz_1x_1\cdots x_{n-1}, \,\forall \, 1\leq i\leq m-n+1,\label{BigDualz:9}\\
 &x_iz_{i+1}x_{i+1}\cdots x_mx_1\cdots x_{i+n-m-1}=z_jx_j\cdots x_{j+n-1},\, (*_3),\label{BigDualz:10}\\
 &x_iz_{i+1}x_{i+1}\cdots x_mx_1\cdots x_{i+n-m-1}=z_jx_j\cdots x_mx_1\cdots x_{j+n-m-1}, \, (*_4),\label{BigDualz:11}\\
 & x_iz_{i+1}x_{i+1}\cdots x_{m}x_1\cdots x_{i+n-m-1}=x_jz_{j+1}x_{j+1}\cdots x_mx_1\cdots x_{j+n-m-1},\, (*_5),\label{BigDualz:12}\\
 &x_iz_{i+1}x_{i+1}\cdots x_{m}x_1\cdots x_{i+n-m-1}=x_mz_1x_1\cdots x_{n-1}, \, \forall \, m-n+1\leq i\leq m-1,\label{BigDualz:13}\\
  &z_ix_i\cdots x_{i+n-1}=z_jx_j\cdots x_mx_1\cdots x_{j+n-m-1},\,(*_6),\label{BigDualz:14}\\
  &z_ix_i\cdots x_{i+n-1}=x_mz_1x_1\cdots x_{n-1},\,\forall \,  1\leq i\leq m-n+1,\label{BigDualz:15}\\
  &z_ix_i\cdots x_mx_1\cdots x_{i+n-m-1}=x_mz_1x_1\cdots x_{n-1},\,\forall \, m-n+2\leq i\leq m-1.\label{BigDualz:16}\\
   &(*_1)\,\forall \, i\in[1, m-n+1], j\in\llbracket m-n+2,m\rrbracket\notag\\
                    &(*_2)\,\forall \,  i\in[1, m-n+1], j\in \llbracket m-n+2,m-1\rrbracket\notag\\
                    &(*_3)\,\forall \,  i\in \llbracket {m-n+2},{m-1}\rrbracket,j\in [1, m-n+1]\notag\\
                    &(*_4)\,\forall \, i\in \llbracket{m-n+2},{m-1}\rrbracket, j\in \llbracket {m-n+2},m\rrbracket,\, i\neq j \notag\\
                    &(*_5)\,\forall \, m-n+2\leq i<j\leq m-1\notag \\
                    &(*_6)\,\forall \,   i\in [1, m-n+1],j\in \llbracket {m-n+2},m\rrbracket\notag
\end{align}
\end{subequations}
First, observe that the first line of Presentation  \eqref{DualMonoidz2} is equivalent to \eqref{BigDualz:1}+\eqref{BigDualz:2}. Moreover, with redundancies, set of relations \eqref{BigDualz:3}+\eqref{BigDualz:4}+\eqref{BigDualz:14} is equivalent to the set of relations
\begin{equation}\label{D}
    z_ix_i\cdots x_{i+n-1}=z_jx_j\cdots x_{j+n-1}, \,\forall \, 1\leq i<j\leq m,
\end{equation}
where indices are taken modulo $m$.\\
Finally, since $x_iz_{i+1}=z_ix_i$ for all $i=1,\dots,m$, we get that the set of relations $(\eqref{BigDualz:5}-\eqref{BigDualz:13})+\eqref{BigDualz:15}+ \eqref{BigDualz:16}$ is a consequence of Lines \eqref{BigDualz:1} and \eqref{BigDualz:2} combined with the set of relations \eqref{D}.\\
In particular, an alternative presentation for the monoid with Presentation $\langle S\, | \, R \rangle_y$ is
\begin{equation}\label{Dual/z}\left\langle
       \begin{array}{l|cl}
           x_1,\dots,x_{m}     \,        & \,     x_{i}z_{i+1}=z_ix_i \, \, \forall i=1,\dots,m\\
     z_1,\dots,z_{m} \,  &\, z_{i}x_{i}\cdots x_{i+n-1}=z_jx_j\cdots x_{j+n-1}\, \forall i=1,\dots,m 
                          \end{array}
     \right\rangle,\end{equation}
where indices are taken modulo $m$. Since the first lines of Presentations \eqref{DualMonoidz2} and \eqref{Dual/z} are the same and the second lines of Presentations \eqref{DualMonoidz2} and \eqref{Dual/z} are equivalent, Presentation \eqref{Dual/z} is a presentation for both $\langle S\, | \, R \rangle_y$ and $\mathcal D_*(n,m)=\langle S\, | \, R \rangle/y$. This concludes the proof.
\end{proof}

\begin{lemma}\label{MagicPropertyDualzOp}
Let $n,m\in \N_{\geq 2}$ and assume that $n\leq m$ and $n\wedge m=1$. The monoid $\mathcal D_*(n,m)$ is isomorphic to $\mathcal D_*(n,m)^{\mathrm{op}}$.    
\end{lemma}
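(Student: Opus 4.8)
The plan is to prove the statement $\mathcal D_*(n,m)\cong \mathcal D_*(n,m)^{\mathrm{op}}$ by the same strategy used for the classical monoids in Lemmas \ref{MagicPropertyyOp} and \ref{MagicPropertyzOp}: namely, obtain a presentation of $\mathcal D_*(n,m)^{\mathrm{op}}$ from the opposite of $\mathcal D_*^*(n,m)$ and match it, up to relabeling, with Presentation \eqref{DualMonoidz2} of $\mathcal D_*(n,m)$. The key observation to exploit is that $\mathcal D_*(n,m)=\mathcal D_*^*(n,m)/\langle y=1\rangle$, so by Lemma \ref{MagicOpQuotient} we have $\mathcal D_*(n,m)^{\mathrm{op}}\cong \mathcal D_*^*(n,m)^{\mathrm{op}}/\langle y=1\rangle$. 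Thus I would start from the presentation of $\mathcal D_*^*(n,m)^{\mathrm{op}}$ given in Lemma \ref{OpDual}, namely Presentation \eqref{OpDualPres1}, and simply set $y=1$ in it.

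First I would write out Presentation \eqref{OpDualPres1} with $y$ removed. This gives generators $\{x_1,\dots,x_m,z_1,\dots,z_m\}$ and relations: $z_ix_{i+1}=x_{i+1}z_{i+1}$ for $1\leq i\leq m-1$; the cyclic relation $z_mx_1=x_1z_1$ coming from \eqref{OpDualPres1:2}; the relations $x_i\cdots x_{n+i-1}z_{n+i-1}=x_j\cdots x_{n+j-1}z_{n+j-1}$ from \eqref{OpDualPres1:3}; the relations from \eqref{OpDualPres1:4} with $y$ deleted; and the boundary relation from \eqref{OpDualPres1:5}. I would then check that, after the relabeling $x_k\mapsto x_k$ and $z_k\mapsto z_k$ (possibly with a cyclic reindexing, as was done via the $f_k,g_k$ substitution in the proof of Lemma \ref{OpDual}), these relations coincide with Presentation \eqref{DualMonoidz2}. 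The cleanest route is to observe that with $y=1$ the two families \eqref{OpDualPres1:3} and \eqref{OpDualPres1:4} merge into a single cyclic family of relations $x_i\cdots x_{i+n-1}z_{i+n-1}=x_{i+1}\cdots x_{i+n}z_{i+n}$ with indices modulo $m$, exactly as the $(*_1)$--$(*)$ ranges in \eqref{DualMonoidPres2} collapse when $y=1$; and that the commutation relations $z_ix_{i+1}=x_{i+1}z_{i+1}$ together with \eqref{OpDualPres1:2} become the first line $x_iz_{i+1}=z_ix_i$ of \eqref{DualMonoidz2} after reading them in the opposite monoid.

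The main technical point, which I expect to be the only real obstacle, is bookkeeping the index shifts correctly so that the second line $z_ix_i\cdots x_{i+n-1}=z_jx_j\cdots x_{j+n-1}$ of \eqref{DualMonoidz2} (with indices mod $m$) matches the deleted-$y$ version of \eqref{OpDualPres1:3}--\eqref{OpDualPres1:4}. Here I would use that in the opposite monoid the word $x_i\cdots x_{i+n-1}z_{i+n-1}$ equals $x_iz_ix_{i+1}\cdots x_{i+n-1}$ (this is exactly Lemma \ref{OpDualzSlide}, whose proof only used the commutation relations \eqref{OpDualPres1:1}--\eqref{OpDualPres1:2} and so survives setting $y=1$), which lets me slide the $z$ to the left and recover the shape of \eqref{DualMonoidz2}. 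Since all relations in both presentations are homogeneous and the same length, and the quotient by $\langle y=1\rangle$ is compatible with taking opposites, no Garside machinery is needed for this lemma itself — it is purely a presentation-matching argument. I would conclude with a one-line verification that the relabeling is a bijection on generators carrying one relation set onto the other, proving the isomorphism.

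\begin{proof}
By Lemma \ref{MagicOpQuotient}, the monoid $\mathcal D_*(n,m)^{\mathrm{op}}=(\mathcal D_*^*(n,m)/\langle y=1\rangle)^{\mathrm{op}}$ is isomorphic to $\mathcal D_*^*(n,m)^{\mathrm{op}}/\langle y=1\rangle$. Using the presentation of $\mathcal D_*^*(n,m)^{\mathrm{op}}$ given in Lemma \ref{OpDual}, setting $y=1$ in Presentation \eqref{OpDualPres1} yields the presentation with generators $\{x_1,\dots,x_m,z_1,\dots,z_m\}$ and relations $z_ix_{i+1}=x_{i+1}z_{i+1}$ for all $i=1,\dots,m-1$, together with $z_mx_1=x_1z_1$, and the relations obtained from \eqref{OpDualPres1:3}, \eqref{OpDualPres1:4} and \eqref{OpDualPres1:5} after deleting $y$. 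Applying Lemma \ref{OpDualzSlide} (whose proof uses only relations \eqref{OpDualPres1:1} and \eqref{OpDualPres1:2}, hence remains valid when $y=1$), each word $x_i\cdots x_{i+n-1}z_{i+n-1}$ equals $x_iz_ix_{i+1}\cdots x_{i+n-1}$, so the merged family \eqref{OpDualPres1:3}+\eqref{OpDualPres1:4}+\eqref{OpDualPres1:5} becomes, with indices modulo $m$, the single cyclic set of relations $x_iz_ix_{i+1}\cdots x_{i+n-1}=x_jz_jx_{j+1}\cdots x_{j+n-1}$ for all $i,j$. Reading these in the opposite monoid and relabeling $x_k$ by $x_k$ and $z_k$ by $z_k$, they coincide exactly with the relations of Presentation \eqref{DualMonoidz2} for $\mathcal D_*(n,m)$, while the commutation relations $z_ix_{i+1}=x_{i+1}z_{i+1}$ and $z_mx_1=x_1z_1$ become the first line $x_iz_{i+1}=z_ix_i$ (indices modulo $m$). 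Hence the relabeling defines an isomorphism $\mathcal D_*(n,m)\xrightarrow{\cong}\mathcal D_*(n,m)^{\mathrm{op}}$, which concludes the proof.
\end{proof}
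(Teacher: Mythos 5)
Your strategy coincides with the paper's: identify $\mathcal D_*(n,m)^{\mathrm{op}}$ with $\mathcal D_*^*(n,m)^{\mathrm{op}}/\langle y=1\rangle$ via Lemma \ref{MagicOpQuotient}, take Presentation \eqref{OpDualPres1} from Lemma \ref{OpDual}, delete $y$, slide the $z$'s leftward through the commutation relations, and observe that the three families \eqref{OpDualPres1:3}--\eqref{OpDualPres1:5} merge into a single cyclic family modulo $m$. All of that is correct, and invoking Lemma \ref{OpDualzSlide} (noting its proof survives setting $y=1$) is a tidy way to package the slide that the paper redoes by hand. But your last step is genuinely wrong as written, in two respects. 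First, ``reading these in the opposite monoid'' is not a legal move: Presentation \eqref{OpDualPres1} is \emph{already} a presentation of $\mathcal D_*^*(n,m)^{\mathrm{op}}$ (the word reversal and relabeling were performed once and for all in the proof of Lemma \ref{OpDual}), so after deleting $y$ you hold a presentation of $\mathcal D_*(n,m)^{\mathrm{op}}$ that must be compared \emph{directly} with \eqref{DualMonoidz2}; reversing again would produce a presentation of $\mathcal D_*(n,m)$ itself and make the argument circular. Second, the identity relabeling does not carry your relations onto \eqref{DualMonoidz2}: your commutation family is $z_ix_{i+1}=x_{i+1}z_{i+1}$, i.e.\ $\{x_kz_k=z_{k-1}x_k\}_k$, whereas the first line of \eqref{DualMonoidz2} is $\{x_kz_{k+1}=z_kx_k\}_k$; these relation sets differ by a cyclic shift of the $z$-indices, and similarly for the length-$(n+1)$ family.

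The repair is small and is exactly what the paper does. Since $y=1$ turns \eqref{OpDualPres1:2} into $z_mx_1=x_1z_1$, the commutations hold cyclically, so one further slide gives $x_iz_ix_{i+1}\cdots x_{i+n-1}=z_{i-1}x_i\cdots x_{i+n-1}$ (indices modulo $m$); your merged family is then equivalent to $z_{i-1}x_i\cdots x_{i+n-1}=z_{j-1}x_j\cdots x_{j+n-1}$ for all $i,j$, and the \emph{nontrivial} relabeling $z_i\mapsto z_{i+1}$ (cyclically) transforms the whole presentation into \eqref{DualMonoidz2}, completing the proof. So the gap is local and fixable, but as submitted the claimed coincidence of presentations is false and the ``opposite monoid'' reading must be deleted.
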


\begin{proof}
To see this, observe that $\mathcal D_*(n,m)^{\mathrm{op}}$ is isomorphic to $(\mathcal D_*^*(n,m)/\langle y=1\rangle)^{\mathrm{op}}$, which in turn is isomorphic to $\mathcal D_*^*(n,m)^{\mathrm{op}}/\langle y=1\rangle$. Thus, by Lemma \ref{OpDual} a presentation for $\mathcal D_*(n,m)^{\mathrm{op}}$ is
\begin{subequations}\label{BigDualz2}
\begin{align}
    &(1) \,\, \mathrm{Generators}\!:\,  \{x_1,\dots,x_m,z_1,\dots,z_m\};\notag\\
&(2) \,\, \mathrm{Relations}\!: \notag\\
  & z_ix_{i+1}=x_{i+1}z_{i+1} \, \, \forall i=1,\dots,m-1,\label{BigDualz2:1}\\
  &z_{m}x_1=x_{1}z_{1},\label{BigDualz2:2}\\
  &   x_i\cdots x_{i+n-1}z_{i+n-1}=x_{j}\cdots x_{j+n-1}z_{j+n-1}, \,\forall \, 1\leq i<j\leq m-n+1,\label{BigDualz2:3}\\
  & x_i\cdots x_mx_1\cdots x_{i+n-m-1}z_{i+n-m-1}=x_{j}\cdots x_mx_1\cdots x_{j+n-m-1}z_{j+n-m-1}\,,\label{BigDualz2:4}\\
 &  x_{m-n+1}\cdots x_mz_m=x_{m-n+2}\cdots x_mx_1z_1.\label{BigDualz2:5}\\
 & (*)\,\forall \, m-n+2\leq i<j\leq m+1\notag
\end{align}
\end{subequations}
Taking indices modulo $m$, the set of relations \eqref{BigDualz2:1}+\eqref{BigDualz2:2} is equivalent to
\begin{equation}\label{BigDualz3}
    z_ix_{i+1}=x_{i+1}z_{i+1}\, \forall i\in [m].
\end{equation}
Using \eqref{BigDualz3} repeatedly, we have:
\begin{equation}\label{BigDualz2EQ}
\begin{aligned}
x_i\cdots x_{i+n-1}z_{i+n-1}&=z_{i-1}x_i\cdots x_{i+n-1}, \, \forall i=1,\dots,m-n+1,\\
x_i\cdots x_mx_1\cdots x_{i+n-m-1}z_{i+n-m-1}&=z_{i-1}x_i\cdots x_{i+n-1} ,\, \forall i=m-n+2,\dots,m+1,
\end{aligned}
\end{equation}
where indices are taken modulo $m$.\\
Now, Equation \eqref{BigDualz2EQ} allows us to rewrite the sets of relations \eqref{BigDualz2:3}-\eqref{BigDualz2:5} as the set of relations
\begin{equation}\label{BigDualz4}
    z_{i-1}x_i\cdots x_{i+n-1}=z_{j-1}x_j\cdots x_{j+n-1} \, \forall\,  1\leq i<j\leq m,
\end{equation}
where indices are taken modulo $m$.
We get the following presentation for $\mathcal D_*(n,m)^{\mathrm{op}}:$
\begin{equation}\left\langle
       \begin{array}{l|cl}
           x_1,\dots,x_{m}     \,        & \,     z_{i}x_{i+1}=x_{i+1}z_{i+1} \, \, \forall i=1,\dots,m\\
     z_1,\dots,z_{m} \,  &\, z_{i-1}x_{i}\cdots x_{i+n-1}=z_{j-1}x_j\cdots x_{j+n-1}\, \forall 1\leq i<j\leq m 
                          \end{array}
     \right\rangle,\end{equation}
where indices are taken modulo $m$. Relabeling $z_i$ by $z_{i+1}$ for each $i=1,\dots,m-1$ and $z_m$ by $z_1$ yields the presentation
\begin{equation}\left\langle
       \begin{array}{l|cl}
           x_1,\dots,x_{m}     \,        & \,     z_{i}x_{i}=x_{i}z_{i+1} \, \, \forall i=1,\dots,m\\
     z_1,\dots,z_{m} \,  &\, z_{i}x_{i}\cdots x_{i+n-1}=z_{j}x_j\cdots x_{j+n-1}\, \forall 1\leq i<j\leq m 
                          \end{array}
     \right\rangle\end{equation}
for $\mathcal D_*(n,m)^{\mathrm{op}}$ where indices are taken modulo $m$, which is also a presentation of $\mathcal D_*(n,m)$. This concludes the proof.
\end{proof}

\begin{proof}[Proof of Theorem \ref{GarsideDual} (ii)]
This follows by combining Lemmas \ref{MagicPropertyDualz} and \ref{MagicPropertyDualzOp} with Proposition \ref{MagicGarsideOp}.
\end{proof}

%%%%%%%%%%%%%%%%%%%%%%%%%%%%%%%%%%%%%%%%%%%%%%%%%%%%%%%%%%%%%%%%%%%%%%%%%%%%%%%%%%%%%%%%%%%%%%%%%%%%%%%
%%%%%%%%%%%%%%%%%%%%%%%%%%%%%%%%%%%%%%%%%%%%%%%%%%%%%%%%%%%%%%%%%%%%%%%%%%%%%%%%%%%%%%%%%%%%%%%%%%%%%%%
%%%%%%%%%%%%%%%%%%%%%%%%%%%%%%%%%%%%%%%%%%%%%%%%%%%%%%%%%%%%%%%%%%%%%%%%%%%%%%%%%%%%%%%%%%%%%%%%%%%%%%%
%%%%%%%%%%%%%%%%%%%%%%%%%%%%%%%%%%%%%%%%%%%%%%%%%%%%%%%%%%%%%%%%%%%%%%%%%%%%%%%%%%%%%%%%%%%%%%%%%%%%%%%

\subsection{\texorpdfstring{The monoid $\D^*(n,m)$ is Garside}{}}
For this subsection, denote by $Z$ the subset $\{z_1,\dots,z_m\}\subset\mathcal D_*^*(n,m)$.\\
Recall from Definition \ref{DefDualMonoid} that given two coprime integers $2\leq n\leq m$, writing $m=qn+r$ with $q\geq 1$ and $0\leq r\leq n-1$, the \textbf{dual monoid} associated to $\B^*(n,m)$ is the monoid $\mathcal D^*(n,m)$ which admits the following presentation:
\begin{subequations}\label{DualMonoidy2}
\begin{align}
    &(1) \,\, \mathrm{Generators}\!:\,  \{x_1,\dots,x_m,y\};\notag\\
&(2) \,\, \mathrm{Relations}\!:\notag\\
  & x_{i+1}\cdots x_{i+n}=x_i\cdots x_{i+n-1}, \,\forall \,  1\leq i\leq m-n,\label{DualMonoidy2:1}\\
  &   x_{i+1}\cdots x_{m}yx_1\cdots x_{i+n-m}=x_i\cdots x_{m}yx_1\cdots x_{i+n-m-1}, \, \forall \, m-n+1\leq i\leq m.\label{DualMonoidy2:2}
  \end{align}
\end{subequations}
The goal of this subsection is to prove Theorem \ref{GarsideDual} (iii).\\
Using Presentations \eqref{dual3} and \eqref{DualMonoidy2}, one can see that $\mathcal D^*(n,m)=\mathcal D_*^*(n,m)/\langle z_i=1\, \forall i\in [m]\rangle$, thus it is enough to show that the pair $(\eqref{DualMonoidPres2},Z)$ satisfies $(\hyperref[Champagne]{C2})$ and that $\mathcal D^*(n,m)$ is isomorphic to its opposite monoid to apply Proposition \ref{MagicGarsideOp} and prove Theorem \ref{GarsideDual} (iii).

\begin{lemma}\label{MagicDualy}
Writing $P$ for Presentation \eqref{DualMonoidPres2}, the pair $(P,Z)$ satisfies $(\hyperref[Champagne]{C2})$. 
\end{lemma}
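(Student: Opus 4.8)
The plan is to check the three conditions defining property $(\hyperref[Champagne]{C2})$ for the pair $(P,Z)$ one at a time. The first condition, that $P$ is homogeneous and satisfies $(\hyperref[Poulet]{C1})$, is exactly the content of Lemma \ref{CubeDual}. For the second condition I would simply note that every relation of Presentation \eqref{DualMonoidPres2} contains exactly one letter of $Z$ on each side, as is visible from inspecting lines \eqref{DualMonoidPres2:1}--\eqref{DualMonoidPres2:22}. Since relations \eqref{DualMonoidPres2:1} and \eqref{DualMonoidPres2:2} force $\lambda(z_1)=\cdots=\lambda(z_m)$ for the weight function $\lambda$, both weighted sums $\sum_{z\in Z}\lambda(z)|r_1|_z$ and $\sum_{z\in Z}\lambda(z)|r_2|_z$ equal this common weight for every relation $(r_1,r_2)\in R$, as required.

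The core of the proof is the third condition, namely that the monoids presented by $\langle S\,|\,R\rangle_Z$ and $\langle S\,|\,R\rangle/Z$ are isomorphic. Here $\langle S\,|\,R\rangle/Z$ is $\mathcal D^*(n,m)$, with Presentation \eqref{DualMonoidy2}. To obtain $\langle S\,|\,R\rangle_Z$ from the right-full Presentation \eqref{DualMonoidPres2}, I would retain exactly those relations whose two leading letters both lie in $\{x_1,\dots,x_m,y\}$ --- these are \eqref{DualMonoidPres2:7}, \eqref{DualMonoidPres2:9}, \eqref{DualMonoidPres2:10}, \eqref{DualMonoidPres2:11}, \eqref{DualMonoidPres2:14}, \eqref{DualMonoidPres2:15}, \eqref{DualMonoidPres2:16} and \eqref{DualMonoidPres2:22} --- and delete every occurrence of a $z$-letter from each of them, producing a presentation on the generators $\{x_1,\dots,x_m,y\}$.

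It then remains to show that this deleted presentation and Presentation \eqref{DualMonoidy2} are equivalent as systems of monoid relations. The clean way to see this is to set $V_i:=x_i\cdots x_{i+n-1}$ for $1\le i\le m-n+1$ and $W_i:=x_i\cdots x_my x_1\cdots x_{i+n-m-1}$ for $m-n+1\le i\le m+1$, with the boundary conventions $W_{m-n+1}=x_{m-n+1}\cdots x_my$ and $W_{m+1}=yx_1\cdots x_n$. With this notation, the deleted form of \eqref{DualMonoidPres2:7} states that all the $V_i$ coincide, which is equivalent to \eqref{DualMonoidy2:1}; and the deleted forms of \eqref{DualMonoidPres2:9}--\eqref{DualMonoidPres2:11}, \eqref{DualMonoidPres2:14}--\eqref{DualMonoidPres2:16} and \eqref{DualMonoidPres2:22} together state that all the $W_i$ coincide and that each $V_iy$ equals this common value. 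Because equality of every member of a chain is equivalent to equality of consecutive members, ``all $W_i$ equal'' is precisely the family \eqref{DualMonoidy2:2}; conversely \eqref{DualMonoidy2:1} and \eqref{DualMonoidy2:2}, together with the identity $V_{m-n+1}y=W_{m-n+1}$ linking the two families, recover each deleted relation. Hence the two presentations define the same monoid and Proposition \ref{MagicGarsideOp} applies.

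The main obstacle I anticipate is organizational rather than conceptual: correctly reading off from the long Presentation \eqref{DualMonoidPres2} which relations survive the restriction to leading letters in $\{x_1,\dots,x_m,y\}$, what they become after the $z$'s are deleted, and then matching the index ranges so that the ``all $V_i$ equal'' and ``all $W_i$ equal'' descriptions align exactly with \eqref{DualMonoidy2:1} and \eqref{DualMonoidy2:2}. This is the same kind of index bookkeeping already carried out in Lemmas \ref{MagicPropertyy}, \ref{MagicPropertyz} and \ref{MagicPropertyDualz}, whose template I would follow.
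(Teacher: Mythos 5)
Your proposal is correct and takes essentially the same route as the paper: the first two conditions of $(\hyperref[Champagne]{C2})$ are handled exactly as in the paper's proof, and your identification of $\langle S\,|\,R\rangle_Z$ as the relations of Presentation \eqref{DualMonoidPres2} whose two leading letters avoid $Z$, with all $z$-letters deleted, reproduces the paper's Presentation \eqref{BigDualy}. Your $V_i$/$W_i$ bookkeeping, including the key word identity $V_{m-n+1}y=W_{m-n+1}$, is a compact repackaging of the paper's reduction of \eqref{BigDualy} to \eqref{BigDualy2} and its observation that the relations \eqref{BigDualy2:2} are redundant, so the two arguments coincide in substance.
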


\begin{proof}
Write $P=\langle S\,|\, R\rangle$.\\
\underline{The presentation $P$ is homogeneous and satisfies $(\hyperref[Poulet]{C1})$:} This is the content of Lemma \ref{CubeDual}.\\
\underline{For all $(r_1,r_2)\in R$, we have $\sum_{z\in Z}\lambda(z)|r_1|_z=\sum_{z\in Z}\lambda(z)|r_2|_z$:} In all cases, both sides of the equality are equal to 1.\\
\underline{The monoids with presentations $\langle S\, |\, R\rangle_Z$ and $\langle S\, | \, R\rangle/Z$ are isomorphic:} In this context, the monoid with presentation $\langle S\, | \, R\rangle/Z$ is isomorphic to $\mathcal D_*^*(n,m)/\langle z_i=1, \forall i\in [m]\rangle=\mathcal D^*(n,m)$ and $\langle S\, |\, R\rangle_Z$ is 
\begin{subequations}\label{BigDualy}
\begin{align}
    &(1) \,\, \mathrm{Generators}\!:\,  \{x_1,\dots,x_m,y\};\notag\\
&(2) \,\, \mathrm{Relations}\!:\notag\\
  & x_i\cdots x_{i+n-1}=x_j\cdots x_{j+n-1}, \, \forall \, 1\leq i<j\leq m-n+1, \label{BigDualy:1}\\
  &   x_i\cdots x_{i+n-1}y=x_j\cdots x_{m}yx_1\cdots x_{j+n-m-1}, \, (*),\label{BigDualy:2}\\
  &x_i\cdots x_{i+n-1}y=x_myx_1\cdots x_{n-1}, \,\forall \, 1\leq i\leq m-n+1, \label{BigDualy:3}\\
  &x_i\cdots x_{i+n-1}y=yx_1\cdots x_{n}, \,\forall \, 1\leq i\leq m-n+1,\label{BigDualy:4}\\
  &x_i\cdots x_{m}yx_1\cdots x_{i+n-m-1}=x_j\cdots x_myx_1\cdots x_{j+n-m-1},\,(**),\label{BigDualy:5}\\
  &x_i\cdots x_{m}yx_1\cdots x_{i+n-m-1}=x_myx_1\cdots x_{n-1},\,\forall \,m-n+2\leq i\leq m-1,\label{BigDualy:6}\\
  &x_i\cdots x_{m}yx_1\cdots x_{i+n-m-1}=yx_1\cdots x_n ,\,\forall \, m-n+2\leq i\leq m-1,\label{BigDualy:7}\\
  &yx_1\cdots x_{n}=x_myx_1\cdots x_{n-1}.\label{BigDualy:8}\\
  &(*)\,\forall \, i\in[m-n+1] , j\in\llbracket m-n+2,m-1\rrbracket\notag\\
  &(**) \, \forall \,m-n+2\leq i<j\leq m-1\notag
\end{align}
\end{subequations}
First, the set of relations \eqref{BigDualy:2}-\eqref{BigDualy:4} is equivalent the set of relations 
\begin{equation}\label{BigDualyReduce1}
x_i\cdots x_{i+n-1}y=x_j\cdots x_myx_1\cdots x_{j+n-m-1}, \, \forall 1\leq i\leq m-n+1,\,  m-n+2\leq j\leq m+1.
\end{equation}
Moreover, the set of relations \eqref{BigDualy:6}+\eqref{BigDualy:7} imply Line \eqref{BigDualy:8} and the set of relations \eqref{BigDualy:5}-\eqref{BigDualy:7} is equivalent to the set of relations 
\begin{equation}\label{BigDualyReduce2}
x_i\cdots x_myx_1\cdots x_{i+n-m-1}=x_j\cdots x_myx_1\cdots x_{j+n-m-1}, \, \forall m-n+1\leq i<j \leq m+1.
\end{equation}
This shows that $\langle S\, | \, R \rangle_Z$ admits the following presentation:\\
\begin{subequations}\label{BigDualy2}
\begin{align}
    &(1) \,\, \mathrm{Generators}\!:\,  \{x_1,\dots,x_m,y\};\notag\\
&(2) \,\, \mathrm{Relations}\!:\notag \\
  &x_{i}\cdots x_{i+n-1}=x_j\cdots x_{j+n-1}, \, \forall \,1\leq i<j\leq m-n+1,\label{BigDualy2:1}\\
  &   x_i\cdots x_{i+n-1}y=x_j\cdots x_myx_1\cdots x_{j+n-m-1}, \, (*),\label{BigDualy2:2}\\
  & x_{i}\cdots x_{m}yx_1\cdots x_{i+n-m-1}=x_j\cdots x_{m}yx_1\cdots x_{j+n-m-1},  \,\forall \, m-n+1\leq i<j\leq m+1.\label{BigDualy2:3}\\
  &(*) \, \forall \,i\in[m-n+1],j\in\llbracket m-n+2,m+1\rrbracket\notag
\end{align}
\end{subequations}
Finally, the set of relations \eqref{BigDualy2:1} and \eqref{BigDualy2:3} implies \eqref{BigDualy2:2}:
For all $i\in [m-n+1]$ and $j\in \llbracket m-n+2,m+1\rrbracket$, we have
\begin{equation*}
\begin{aligned}
x_{i}\cdots x_{i+n-1}y&\underset{\eqref{BigDualy2:1}}{=} x_{m-n+1}\cdots x_{m}y\underset{\eqref{BigDualy2:3}}{=}x_j\cdots x_myx_1\cdots x_{j+n-m-1}.
\end{aligned}
\end{equation*}
We thus get that $\langle S\, | \, R \rangle_Z$ is isomorphic to $\mathcal D^*(n,m)=\langle S \, | \, R \rangle /Z$, which concludes the proof.
\end{proof}

\begin{lemma}\label{MagicPropertyDualyOp}
Let $n,m\in \N_{\geq 2}$ and assume that $n\leq m$ and $n\wedge m=1$. The monoid $\mathcal D^*(n,m)$ is isomorphic to $\mathcal D^*(n,m)^{\mathrm{op}}$.
\end{lemma}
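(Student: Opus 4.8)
The statement to prove is that $\mathcal D^*(n,m)\cong\mathcal D^*(n,m)^{\mathrm{op}}$, by the same strategy already used for the dual $z$-monoid in Lemma \ref{MagicPropertyDualzOp} and for the classical monoids in Lemmas \ref{MagicPropertyyOp} and \ref{MagicPropertyzOp}. The plan is to realise $\mathcal D^*(n,m)$ as the quotient $\mathcal D_*^*(n,m)/\langle z_i=1\,\forall i\in[m]\rangle$, pass the opposite through this quotient, and then unwind the resulting presentation into Presentation \eqref{DualMonoidy2} after a relabelling of the $x_i$'s.

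First I would observe, exactly as in the proof of Lemma \ref{MagicPropertyDualyOp}'s neighbours, that $\mathcal D^*(n,m)^{\mathrm{op}}$ is isomorphic to $(\mathcal D_*^*(n,m)/\langle z_i=1\,\forall i\in[m]\rangle)^{\mathrm{op}}$, which in turn is isomorphic to $\mathcal D_*^*(n,m)^{\mathrm{op}}/\langle z_i=1\,\forall i\in[m]\rangle$ by Lemma \ref{MagicOpQuotient}. Thus I can read off a presentation of $\mathcal D^*(n,m)^{\mathrm{op}}$ directly from the presentation \eqref{OpDualPres1} of $\mathcal D_*^*(n,m)^{\mathrm{op}}$ given in Lemma \ref{OpDual}, simply deleting every occurrence of the generators $z_1,\dots,z_m$. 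Doing so, Relations \eqref{OpDualPres1:1} and \eqref{OpDualPres1:2} become trivial (they read $x_{i+1}=x_{i+1}$ and $yx_1=yx_1$ after removing the $z$'s), while Relations \eqref{OpDualPres1:3}, \eqref{OpDualPres1:4} and \eqref{OpDualPres1:5} yield
\begin{equation*}
\begin{aligned}
&x_i\cdots x_{i+n-1}=x_j\cdots x_{j+n-1},\quad \forall\,1\leq i<j\leq m-n+1,\\
&x_i\cdots x_myx_1\cdots x_{i+n-m-1}=x_j\cdots x_myx_1\cdots x_{j+n-m-1},\quad \forall\, m-n+2\leq i<j\leq m+1,\\
&x_{m-n+1}\cdots x_my=x_{m-n+2}\cdots x_myx_1.
\end{aligned}
\end{equation*}

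Next I would check that, up to relabelling, this coincides with Presentation \eqref{DualMonoidy2} of $\mathcal D^*(n,m)$. The first block above is exactly the set of relations equivalent to \eqref{DualMonoidy2:1}, and absorbing the last displayed relation (the $i=m-n+1$ instance) into the middle block merges the two families into the single family \eqref{DualMonoidy2:2}, as in the reindexing step of Lemma \ref{OpDual}. Since reversing a word sends $x_i\cdots x_{i+n-1}$ to $x_{i+n-1}\cdots x_i$, I expect the natural relabelling to be $x_i\mapsto x_{m-i+1}$ (and $y$ fixed), which reverses the cyclic order of the $x$'s and matches the two presentations; I would verify that this substitution carries \eqref{DualMonoidy2:1} and \eqref{DualMonoidy2:2} to the displayed relations and vice versa. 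The main obstacle, and the only genuinely fiddly point, is the bookkeeping of indices modulo $m$ when reconciling the ranges (the wrap-around terms $x_1\cdots x_{i+n-m-1}$ and the boundary cases $i=m-n+1$, $j=m+1$); once the relabelling is pinned down, the equivalence of the two relation families is a direct index chase identical in spirit to the one carried out in Lemma \ref{OpDual}. This concludes the proof.
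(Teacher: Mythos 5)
Your overall route is exactly the paper's: pass the opposite through the quotient (Lemma \ref{MagicOpQuotient}), read off a presentation of $\mathcal D^*(n,m)^{\mathrm{op}}$ from Presentation \eqref{OpDualPres1} by deleting the $z_i$'s, and merge the boundary relation $x_{m-n+1}\cdots x_my=x_{m-n+2}\cdots x_myx_1$ into the wrap-around family to recover the single family of relations of Presentation \eqref{DualMonoidy2}. All of that is correct and matches the paper's proof step for step, including the observation that \eqref{OpDualPres1:1} and \eqref{OpDualPres1:2} become trivial.

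The one misstep is your final relabelling $x_i\mapsto x_{m-i+1}$: it is both unnecessary and, taken literally, wrong. The reversal-plus-relabelling has already been performed inside Lemma \ref{OpDual} — that is precisely how Presentation \eqref{OpDualPres1} was derived (the paper reverses the relations of \eqref{dual3}, substitutes $x_i\mapsto f_{m-i+1}$, $z_i\mapsto g_{m-i+1}$, and then renames back). Consequently, the three displayed families you obtain after killing the $z_i$'s are already, after absorbing the last relation into the middle block, \emph{identical} to Presentation \eqref{DualMonoidy2}; the isomorphism is the identity on generators, which is how the paper concludes. If instead you applied the substitution $x_i\mapsto x_{m-i+1}$ (with $y$ fixed) as a plain monoid-presentation relabelling, it would send $x_i\cdots x_{i+n-1}$ to the index-\emph{decreasing} word $x_{m-i+1}x_{m-i}\cdots x_{m-i-n+2}$, which is not of the form occurring in \eqref{DualMonoidy2}: reversing the order of the letters is an anti-homomorphism, which is exactly what you are trying to construct, not something you may invoke. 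Since you flag this step as "to be verified," an honest check would reveal that the required relabelling is trivial, so the fix is immediate; but as written the last step of your argument does not go through.
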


\begin{proof}
Observe that $\mathcal D^*(n,m)^{\mathrm{op}}$ is isomorphic to $(\mathcal D^*(n,m)/\langle z=1\, \forall z\in Z \rangle)^{\mathrm{op}}$, which in turn is isomorphic to $\mathcal D_*^*(n,m)^{\mathrm{op}}/\langle z=1\,  \forall z\in Z\rangle$. Thus, by Lemma \ref{OpDual} a presentation for $\mathcal D^*(n,m)^{\mathrm{op}}$ is
\begin{subequations}\label{BigDualy3}
\begin{align}
    &(1) \,\, \mathrm{Generators}\!:\,  \{x_1,\dots,x_m,y\};\notag\\
&(2) \,\, \mathrm{Relations}\!: \notag\\
  & x_i\cdots x_{n+i-1}=x_{j}\cdots x_{n+j-1},\,\forall \, 1\leq i<j\leq m-n+1, \label{BigDualy3:1}\\ 
  &   x_i\cdots x_myx_1\cdots x_{i+n-m-1}=x_{j}\cdots x_myx_1\cdots x_{j+n-m-1},\,(*),\label{BigDualy3:2}\\
  &x_{m-n+1}\cdots x_my=x_{m-n+2}\cdots x_myx_1.\label{BigDualy3:3}\\
  &(*)\, \forall \, m-n+2\leq i<j\leq m+1\notag
\end{align}
\end{subequations}
From the set of relations \eqref{BigDualy3:2}+\eqref{BigDualy3:3}, for all $j\in \llbracket m-n+2,m+1\rrbracket$ we get
\begin{equation}\label{lastdualy}
\begin{aligned}
x_{m-n+1}\cdots x_my&\underset{\eqref{BigDualy3:3}}{=}x_{m-n+2}\cdots x_myx_1\underset{\eqref{BigDualy3:2}}{=}x_j\cdots x_myx_1\cdots x_{j+n-m-1}.
\end{aligned}
\end{equation}
Now, the set of relations given by \eqref{BigDualy3:2}+\eqref{BigDualy3:3}+\eqref{lastdualy} is equivalent to the set of relations 
$$x_i\cdots x_myx_1\cdots x_{i+n-m-1}=x_j\cdots x_myx_1\dots x_{j+n-m-1} ,\, \forall m-n+1 \leq i<j\leq m+1,$$
thus $\mathcal D^*(n,m)^{\mathrm{op}}$ is isomorphic to $\mathcal D^*(n,m)$.
\end{proof}

\begin{proof}[Proof of Theorem \ref{GarsideDual} (iii)]
This follows by combining Lemmas \ref{MagicDualy}, \ref{MagicPropertyDualyOp} and Proposition \ref{MagicGarsideOp}.
\end{proof}

\end{document}